\documentclass[12pt]{article}

\bibliographystyle{abbrv}

\usepackage{verbatim}

\usepackage{amsfonts}

\usepackage{amsthm}

\usepackage{amssymb}

\usepackage{amsmath}

\usepackage{mathabx}

\usepackage{enumerate}

\usepackage[all]{xy}

\usepackage{graphicx}

\usepackage{centernot}

\usepackage{mathtools}

\usepackage{stmaryrd}

\usepackage[pagebackref,colorlinks,linkcolor=blue,citecolor=blue,urlcolor=blue,hypertexnames=true]{hyperref}

\usepackage[pagebackref,hypertexnames=true]{hyperref}

\newcommand{\N}{\mathbb{N}}

\newcommand{\R}{\mathbb{R}}

\newcommand{\C}{\mathbb{C}}

\newcommand{\LL}{\mathcal{L}}

\newcommand{\CL}{C_{L,u}}

\newcommand{\CLc}{C_{L,c}}

\newcommand{\IL}{I_{L,u}}

\newcommand{\ILc}{I_{L,c}}

\newcommand{\QL}{Q_L}

\newcommand{\K}{\mathcal{K}}

\newcommand{\KKP}{KK_{\mathcal{P}}}

\newcommand{\Hawaii}{Hawai\kern.05em`\kern.05em\relax i}

\theoremstyle{plain}
\newtheorem{theorem}{Theorem}[section]
\newtheorem{lemma}[theorem]{Lemma}
\newtheorem{corollary}[theorem]{Corollary}
\newtheorem{proposition}[theorem]{Proposition}

\newtheorem{definition-theorem}[theorem]{Definition / Theorem}


\newtheorem*{conjecture*}{Conjecture}
\newtheorem*{theorem*}{Theorem}

\theoremstyle{definition}
\newtheorem{definition}[theorem]{Definition}

\theoremstyle{remark}
\newtheorem{remark}[theorem]{Remark}


\newtheorem*{example*}{Example}  
\newtheorem*{remark*}{Remark}

\title{Controlled KK-theory, and a Milnor exact sequence}

\author{Rufus Willett and Guoliang Yu}

\begin{document}

\maketitle

\begin{abstract}
We introduce controlled $KK$-theory groups associated to a pair $(A,B)$ of separable $C^*$-algebras.  Roughly, these consist of elements of the usual $K$-theory group $K_0(B)$ that approximately commute with elements of $A$.  Our main results show that these groups are related to Kasparov's $KK$-groups by a Milnor exact sequence, in such a way that R\o{}rdam's $KL$-group is identified with an inverse limit of our controlled $KK$-groups.

In the case that the $C^*$-algebras involved satisfy the UCT, our Milnor exact sequence agrees with the Milnor sequence associated to a $KK$-filtration in the sense of Schochet, although our results are independent of the UCT.  Applications to the UCT will be pursued in subsequent work.
\end{abstract}

\tableofcontents

\section{Introduction}

Given two $C^*$-algebras $A$ and $B$, Kasparov associated an abelian group $KK(A,B)$ of generalized morphisms between $A$ and $B$.  The Kasparov $KK$-groups were designed to have applications to index theory and the Novikov conjecture \cite{Kasparov:1988dw}, but now play a fundamental role in many aspects of $C^*$-algebra theory (and elsewhere).  This is particularly true in the Elliott program \cite{Elliott:1995dq} to classify $C^*$-algebras by $K$-theoretic invariants.

Our immediate goal in this paper is to introduce \emph{controlled $KK$-theory groups} and relate them to Kasparov's $KK$-theory groups.  The idea -- which we will pursue in subsequent work -- is that the controlled groups allow more flexibility in computations.  Our groups are analogues of the controlled $K$-theory groups introduced by the second author as part of his work on the Novikov conjecture \cite{Yu:1998wj}, and later developed by him in collaboration with Oyono-Oyono \cite{Oyono-Oyono:2011fk}.  Having said that, our approach in this paper is independent of, and in some sense dual to, these earlier developments: controlled $K$-theory abstracts the approach to the Novikov conjecture through operators of controlled propagation as in \cite{Yu:1998wj}, while the controlled $KK$-theory we introduce here abstracts the `dual' approach to the Novikov conjecture through almost flat bundles (see for example \cite{Connes:1990vo} and \cite[Chapter 11]{Willett:2010ay}).  

Our larger goal is to establish a new sufficient condition for a nuclear $C^*$-algebra to satisfy the UCT of Rosenberg and Schochet \cite{Rosenberg:1987bh}, analogously to recent results on the K\"{u}nneth formula using controlled $K$-theory ideas \cite{Oyono-Oyono:2016qd,Willett:2019aa}.  These applications appear in the companion paper \cite{Willett:2021te}.    

Our goal in this paper is to establish the basic theory, which we hope will be useful in other settings: indeed, since the initial submission of this paper, we also found quite different applications to representation stability in group theory \cite{Willett:2024aa}.

\subsection{Controlled $KK$-theory and the Milnor sequence}

We now discuss a version of our controlled $KK$-theory groups in more detail.

Let $B$ be a separable $C^*$-algebra, let $B\otimes \K$ be its stabilization, and let $M(B\otimes \K)$ be its stable multiplier algebra.  Define $\mathcal{P}(B)$ to consist of all projections in $p\in M_2(M(B\otimes\K))$ such that $p-\begin{psmallmatrix} 1 & 0 \\ 0 & 0 \end{psmallmatrix}$ is in the ideal $M_2(B\otimes \K)$.  Then the formula
\begin{equation}\label{k bij}
 \pi_0(\mathcal{P}(B))\to K_0(B),\quad [p]\mapsto [p]-\begin{bsmallmatrix} 1 & 0 \\ 0 & 0 \end{bsmallmatrix}
\end{equation}
gives a bijection from the set of path components of $\mathcal{P}(B)$ to the usual $K_0$-group of $B$.

Now, assume for simplicity that $A$ is a separable, unital, and nuclear\footnote{The assumptions of unitality and nuclearity are not necessary, but simplify the definitions - see the body of the paper for the general versions.  The fact that the version we give here is equivalent to the general definition is a consequence of Proposition \ref{usa rep gives kl} and Remark \ref{kas rem}.} $C^*$-algebra.  Let $\pi:A\to \mathcal{B}(\ell^2)$ be an infinite amplification of a faithful unital representation\footnote{i.e.\ take a faithful unital representation on a separable Hilbert space, and add it to itself countably many times.  It turns out the choices involved here do not matter in any serious way.}, and use the composition 
$$
A\to \mathcal{B}(\ell^2)=M(\K)\subseteq M(B\otimes \K)
$$
of $\pi$ and the canonical inclusion of $M(\K)$ into $M(B\otimes \K)$ to consider $A$ as a $C^*$-subalgebra of $M(B\otimes \K)$.  Having $A$ act as diagonal matrices, we may also identify $A$ with a $C^*$-subalgebra of $M_2(M(B\otimes \K))$.  For a finite subset $X$ of $A$ and $\epsilon>0$, define  
$$
\mathcal{P}_{\epsilon}(X,B):=\{p\in \mathcal{P}(B)\mid \|[p,a]\|<\epsilon \text{ for all } a\in X\}.
$$
Define the \emph{controlled $KK$-theory group\footnote{It is a group in a natural way, in a way that is compatible with the group structure on $K_0(B)$ via the isomorphism in line \eqref{k bij}.} associated to $X$ and $\epsilon$} to be 
$$
KK_\epsilon(X,B):=\pi_0(\mathcal{P}_{\epsilon}(X,B)).
$$
Thanks to the isomorphism of line \eqref{k bij}, we think of $KK_\epsilon(X,B)$ as `the part\footnote{The word `part' is potentially misleading: there is a map $KK_\epsilon(X,B)\to K_0(B)$ defined by forgetting the commutation condition, but it would not be injective in general.} of $K_0(B)$ that commutes with $X$ up to $\epsilon$'.  This idea -- of considering elements of $K$-theory that asymptotically commute with some representation -- is partly inspired by the $E$-theory of Connes and Higson \cite{Connes:1990if}. 

Now, let $(X_n)$ be a nested sequence of finite subsets of $A$ with dense union, and let $(\epsilon_n)$ be a decreasing  sequence of positive numbers than tend to zero.  As it is easier to commute with $X_n$ up to $\epsilon_n$ that it is to commute with $X_{n+1}$ up to $\epsilon_{n+1}$, we get a sequence of `forget control' homomorphisms
$$
\cdots \to KK_{\epsilon_n}(X_n,B)\to KK_{\epsilon_{n-1}}(X_{n-1},B)\to \cdots \to KK_{\epsilon_1}(X_1,B).
$$
Thus we may build the inverse limit ${\displaystyle \lim_{\leftarrow} KK_{\epsilon_n}(X_n,B) }$ of abelian group theory associated to this sequence; moreover, we may consider this inverse limit as a topological abelian group by giving each $KK_\epsilon(X,B)$ the discrete topology and taking the inverse limit in the category of topological abelian groups.  Replacing $B$ with its suspension $SB$, we may also build the $\lim^1$-group\footnote{The $\lim^1$ functor is the first derived functor of the inverse limit functor.  See for example \cite[Section 3.5]{Weibel:1995ty} or  \cite[Chapter 3]{Schochet:2003vq} for concrete definitions of the inverse limit and $\lim^1$ groups, and for some examples of computations.  See \cite{Jensen:1972wu} for the general case.} ${\displaystyle \lim_{\leftarrow}\!{}^1  KK_{\epsilon_n} (X_n,SB)}$ associated to the corresponding sequence.  We are now ready to state a special case of our main theorem.

\begin{theorem}\label{main}
For any separable $C^*$-algebras $A$ and $B$ with $A$ unital and nuclear\footnote{There is also a very similar version for general separable $A$: see the main body of the paper.}, there is a short exact sequence
$$
\xymatrix{ 0 \ar[r] & {\displaystyle \lim_{\leftarrow}\!{}^1  KK_{\epsilon_n} (X_n,SB)}  \ar[r] & KK(A,B) \ar[r] & {\displaystyle \lim_{\leftarrow} KK_{\epsilon_n}(X_n,B) }  \ar[r] & 0 }.
$$
\end{theorem}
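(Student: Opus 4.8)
The plan is to follow the template of the classical Milnor $\lim^1$ sequence; the substantive new ingredient is a ``controlled Paschke duality'' that realises $KK(A,B)$ as the set of path components of a homotopy inverse limit of the tower of controlled groups. Fix admissible sequences $(X_n)$, $(\epsilon_n)$ as in the statement, and regard all the projection spaces as topological spaces in the norm topology, based at $e:=\mathrm{diag}(1,0)\in M_2(M(B\otimes\K))$. The connecting maps of the tower in Theorem~\ref{main} are induced by the inclusions $\mathcal P_{\epsilon_{n+1}}(X_{n+1},B)\subseteq\mathcal P_{\epsilon_n}(X_n,B)$, so the first move is to recognise the relevant homotopy inverse limit. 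Concretely, I would show that $\mathrm{holim}_n\,\mathcal P_{\epsilon_n}(X_n,B)$ is weakly equivalent to the space $\mathcal P_\infty(A,B)$ of norm-continuous paths $p\colon[0,\infty)\to\mathcal P(B)$ with $\|[p(t),a]\|\to 0$ as $t\to\infty$ for every $a\in A$: concatenating the connecting homotopies of an element of the homotopy limit produces such a path (here one uses that $\bigcup_nX_n$ is dense, that $\epsilon_n\to 0$, and that $\|p(t)\|=1$), and conversely restricting an asymptotically central path over a suitable increasing sequence of times recovers an element of the homotopy limit.

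The heart of the proof is then to identify $\pi_i(\mathcal P_\infty(A,B))$ with $KK(A,S^iB)$ for $i=0,1$. For this I would associate to an asymptotically central path of projections $(p(t))$ an asymptotic morphism $A\to B\otimes\K$ (roughly $a\mapsto p(t)\,a\,p(t)$, compressed and conjugated back into a corner of $B\otimes\K$, with the standard adjustments making it asymptotically multiplicative), hence a class in $E(A,B)$; since $A$ is nuclear one has $E(A,B)\cong KK(A,B)$ \cite{Connes:1990if}. Conversely, every $KK$-class for nuclear $A$ is representable by an asymptotic morphism, and such a morphism can be straightened into an asymptotically central path of projections in $\mathcal P(B)$; checking that these constructions are mutually inverse on path components, and natural under the tower maps, gives the identification (equivalently, one could run a direct Paschke-/Kasparov-module version of the same passage). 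This is where the hypotheses that $A$ is unital, nuclear, and represented via an infinite amplification of a faithful representation get used.

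Next, standard $K$-theory homotopy arguments identify the based loop space $\Omega\,\mathcal P_\epsilon(X,B)$, up to weak equivalence, with $\mathcal P_{\epsilon'}(X,SB)$ for a comparable control $\epsilon'$ — a loop of projections approximately commuting with $X$ becomes, via the usual clutching construction, a projection over the suspension still approximately commuting with $X$ — so that $\pi_1(\mathcal P_{\epsilon_n}(X_n,B))\cong KK_{\epsilon_n}(X_n,SB)$ compatibly with the forgetful maps. All the spaces involved are, up to weak equivalence, $K$-theory spaces and hence infinite loop spaces, so their homotopy groups are abelian and the relevant Milnor sequence is one of abelian groups. Feeding these identifications into the Milnor/Bousfield--Kan short exact sequence
\[
0 \to \lim_{\leftarrow}\!{}^1\,\pi_1\big(\mathcal P_{\epsilon_n}(X_n,B)\big)\to \pi_0\big(\mathrm{holim}_n\,\mathcal P_{\epsilon_n}(X_n,B)\big)\to \lim_{\leftarrow}\,\pi_0\big(\mathcal P_{\epsilon_n}(X_n,B)\big)\to 0
\]
for the tower (see \cite[Ch.~3]{Weibel:1995ty}) yields the asserted sequence, using $\pi_0(\mathcal P_{\epsilon_n}(X_n,B))=KK_{\epsilon_n}(X_n,B)$, $\pi_1(\mathcal P_{\epsilon_n}(X_n,B))=KK_{\epsilon_n}(X_n,SB)$, and $\pi_0(\mathrm{holim})=\pi_0(\mathcal P_\infty(A,B))=KK(A,B)$. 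The version for general separable $A$ is obtained by the same argument after replacing the unital representation by a suitable non-degenerate one.

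The main obstacle is the identification in the second step, i.e.\ the controlled/asymptotic Paschke duality $\pi_0(\mathcal P_\infty(A,B))\cong KK(A,B)$: turning an asymptotically central path of projections into an honest Kasparov class — naturally, bijectively on path components, and compatibly with the tower — is exactly the passage from ``approximately commuting $K$-theory'' to Kasparov theory, and most of the analytic work (and the nuclearity hypothesis) is concentrated there. A secondary, more technical difficulty is the bookkeeping of controls: one must keep the $\epsilon$-constraints, the suspension coordinate, and the block-sum group structure mutually consistent throughout, and check that every map in sight is a group homomorphism intertwining the towers.
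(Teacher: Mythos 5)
Your overall architecture---realize $KK(A,B)$ as $\pi_0$ of a space of asymptotically central paths of projections, identify $\pi_0$ and $\pi_1$ of the spaces in the tower with the controlled groups for $B$ and $SB$, and then run a $\lim$--$\lim^1$ sequence for the tower---is essentially the shape of the paper's argument (Sections \ref{pp sec}--\ref{zero sec} do exactly this, with the $\lim$ and $\lim^1$ identifications of Theorems \ref{id with KL} and \ref{c0l1} proved by hand rather than quoted from the Bousfield--Kan/Milnor machinery). The genuine gap is the step you yourself flag as the main obstacle: the isomorphism $\pi_0(\mathcal{P}_\infty(A,B))\cong KK(A,B)$ is asserted, not proved, and the $E$-theoretic mechanism you sketch does not obviously deliver it. Concretely, $a\mapsto p_t\,a\,p_t$ takes values in $M_2(M(B\otimes\K))$, not in $B\otimes\K$, so one must pass to a difference/Cuntz-pair type construction before one even has a cycle; and, more seriously, the two hard directions---that every $KK$-class can be ``straightened'' into an asymptotically central path of projections, and that two such paths with the same class are homotopic \emph{through} asymptotically central paths---are precisely the analytic content of the theorem. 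In the paper this is carried by the localization-algebra isomorphism $KK(A,B)\cong K_0(C_{L,u}(\pi))$ for a strongly absorbing representation (Theorem \ref{kk iso dww}), the continuity-versus-uniform-continuity comparison (Theorem \ref{c to uc}), the vanishing of $K_*$ of the ideal of paths vanishing at infinity (Lemma \ref{eilenberg}), and the double-algebra argument of Theorem \ref{kk iso}; nothing in your proposal substitutes for this, and ``checking the constructions are mutually inverse on path components'' cannot be done until both maps are constructed at that level of detail.

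Two further cautions. First, you claim a weak equivalence between the homotopy limit of the tower and the path space $\mathcal{P}_\infty(A,B)$; only a $\pi_0$/$\pi_1$-level comparison is needed, and even that requires the concatenation, reparametrization and cancellation arguments that the paper spells out (the surjectivity argument in Theorem \ref{id with KL} and the telescope argument in Theorem \ref{c0l1}, including the Lipschitz reparametrization needed so that infinitely many connecting homotopies assemble into a genuinely continuous path)---these are not automatic consequences of general homotopy theory. Second, your closing remark that the general separable (non-unital, non-nuclear) case follows ``by the same argument'' is not available on your route: you use $E(A,B)\cong KK(A,B)$, which requires nuclearity, whereas the paper's localization-algebra/Paschke-duality approach is exactly what makes the result independent of the UCT and of nuclearity (nuclearity in the introduction's picture is used only to know that the specific concrete representation is absorbing). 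So while your outline correctly locates where the work lies, the pivotal identification and the control bookkeeping around it constitute a real gap rather than a routine verification.
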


We will explain the idea of the proof below, but first give a more precise version involving R\o{}rdam's $KL$-groups \cite[Section 5]{Rordam:1995aa}, and some comparisons of the results with the previous literature.

\subsection{The topology on $KK$ and Schochet's Milnor sequence}

Recall that $KK(A,B)$ is equipped with a canonical topology, which makes it a (possibly non-Hausdorff) topological group.  This topology can be described in several different ways that turn out to be equivalent, as established by Dadarlat in \cite{Dadarlat:2005aa} (see also \cite{Schochet:2001vd}).  Define\footnote{The original definition of $KL(A,B)$ is due to R\o{}rdam \cite[Section 5]{Rordam:1995aa}, and only makes sense if the pair $(A,B)$ satisfies the UCT.  The definition we are using was suggested by Dadarlat \cite[Section 5]{Dadarlat:2005aa}, and is equivalent to R\o{}rdam's when $A$ satisfies the UCT by \cite[Theorem 4.1]{Dadarlat:2005aa}.} $KL(A,B)$ to be  to be the associated maximal Hausdorff quotient, i.e.\ the quotient $KK(A,B)/\overline{\{0\}}$ of $KK(A,B)$ by the closure of the zero element.

The following theorem relating our controlled $KK$-theory groups to the topology on $KK$ is a more precise version of Theorem \ref{main}, and is what we actually establish in the main body of the paper.

\begin{theorem}\label{main2}
For any separable $C^*$-algebras $A$ and $B$ with $A$ unital and nuclear\footnote{Again, these extra assumptions on $A$ are not necessary, with minor changes to the definitions.}, there are canonical isomorphisms 
$$
\lim_{\leftarrow}\!{}^1  KK_{\epsilon_n} (X_n,SB)\cong \overline{\{0\}}\quad \text{and} \quad \lim_{\leftarrow} KK_{\epsilon_n}(X_n,B) \cong KL(A,B).
$$
Moreover, the second isomorphism above is an isomorphism of topological groups.
\end{theorem}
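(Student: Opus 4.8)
The plan is to factor Theorem \ref{main2} through an identification of $KK(A,B)$ with the $\pi_0$ of a homotopy inverse limit of the tower of spaces $\bigl(\mathcal{P}_{\epsilon_n}(X_n,B)\bigr)_n$, and then to feed the classical Milnor sequence for the homotopy groups of such a homotopy limit into Dadarlat's description of the topology on $KK$.

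First I would set up the comparison map. A class in $KK(A,B)$ can be represented, after stabilising and using that $\pi$ is an ample representation (and, where necessary, nuclearity of $A$ via the asymptotic-morphism picture of $KK$ \cite{Connes:1990if}), by a projection $p\in\mathcal{P}(B)$ commuting \emph{exactly} with $\pi(A)$; such a $p$ lies in $\mathcal{P}_\epsilon(X,B)$ for every finite $X\subseteq A$ and every $\epsilon>0$, so passing to path components and using the bijection of line \eqref{k bij} produces compatible homomorphisms $KK(A,B)\to KK_\epsilon(X,B)$, hence a homomorphism $\Phi\colon KK(A,B)\to\lim_{\leftarrow}KK_{\epsilon_n}(X_n,B)$. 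The first real step is to upgrade this: I would build a ``controlled Milnor space'' $\mathcal{P}_\infty$, concretely the space of coherent sequences $(p_n)$ with $p_n\in\mathcal{P}_{\epsilon_n}(X_n,B)$ together with paths inside $\mathcal{P}_{\epsilon_n}(X_n,B)$ witnessing that the image of $p_{n+1}$ is homotopic to $p_n$, so that the standard Milnor argument for a tower of group-like spaces yields a short exact sequence
$$
0\to \lim_{\leftarrow}\!{}^1\,\pi_1\bigl(\mathcal{P}_{\epsilon_n}(X_n,B)\bigr)\to \pi_0(\mathcal{P}_\infty)\to \lim_{\leftarrow}\,\pi_0\bigl(\mathcal{P}_{\epsilon_n}(X_n,B)\bigr)\to 0 .
$$
Two identifications make this usable: $\pi_0\bigl(\mathcal{P}_{\epsilon_n}(X_n,B)\bigr)=KK_{\epsilon_n}(X_n,B)$ by definition, and $\pi_1\bigl(\mathcal{P}_{\epsilon_n}(X_n,B)\bigr)\cong KK_{\epsilon_n}(X_n,SB)$, which I would obtain from the usual relation between the homotopy groups of spaces of projections and $K$-theory ($\pi_1$ of the unconstrained space is $K_1(B)=K_0(SB)$), tracking the commutation estimates through that argument; this is the point where the suspension in the statement appears.

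The heart of the proof is to show that $\Phi$ lifts to a \emph{canonical isomorphism} $KK(A,B)\cong\pi_0(\mathcal{P}_\infty)$. Surjectivity means: from a coherent sequence of projections each commuting with $X_n$ up to $\epsilon_n$ one can manufacture a single projection commuting with \emph{all} of $A$ and mapping to it, a limiting/straightening construction that I expect to rest on a semiprojectivity-flavoured estimate (perturb $p_n$ to commute ever more closely with ever larger subsets, with the perturbations summably small), and this is exactly where nuclearity of $A$ enters, through the asymptotic-morphism picture of $KK$ and the approximate-morphism technology of Dadarlat \cite{Dadarlat:2005aa}. Injectivity is the homotopy-parametrised version of the same statement. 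I regard this reconciliation of the rigid object $KK(A,B)$ with the ``$\epsilon$-soft'' tower as the main obstacle; everything before and after it is either formal or a citation.

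Finally I would assemble the pieces. Transporting the displayed sequence along $KK(A,B)\cong\pi_0(\mathcal{P}_\infty)$ turns its right-hand map into $\Phi$, so $\Phi$ is surjective and $\ker\Phi\cong\lim_{\leftarrow}\!{}^1 KK_{\epsilon_n}(X_n,SB)$. It remains to identify $\ker\Phi$ with $\overline{\{0\}}$. Here I would invoke Dadarlat's characterisation of the topology on $KK(A,B)$ \cite{Dadarlat:2005aa}: a class lies in $\overline{\{0\}}$ precisely when, for every finite $X\subseteq A$ and every $\epsilon>0$, a representative can be perturbed inside $\mathcal{P}_\epsilon(X,B)$ to a trivial one, i.e.\ precisely when its image in $KK_\epsilon(X,B)$ vanishes for all $(X,\epsilon)$; since the $X_n$ are cofinal among finite subsets and $\epsilon_n\to0$, this is precisely the condition that the class lie in $\ker\Phi$. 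Hence $\overline{\{0\}}=\ker\Phi\cong\lim_{\leftarrow}\!{}^1 KK_{\epsilon_n}(X_n,SB)$, and passing to the quotient, $KL(A,B)=KK(A,B)/\overline{\{0\}}\cong\lim_{\leftarrow}KK_{\epsilon_n}(X_n,B)$, which is Theorem \ref{main2}; splicing these two isomorphisms into the sequence above recovers Theorem \ref{main}.
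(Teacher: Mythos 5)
Your overall architecture does parallel the paper's: a Milnor $\lim$--$\lim^1$ analysis of the tower of spaces $\mathcal{P}_{\epsilon_n}(X_n,B)$, the identification of $\pi_1$ (loops based at $e$) with the suspended controlled group, and the matching of the kernel with $\overline{\{0\}}$ are exactly what Sections \ref{kl sec} and \ref{zero sec} do by hand (Lemma \ref{con gp op}, Theorems \ref{id with KL} and \ref{c0l1}), with the continuous-path group $\KKP^\pi(A,B)$ playing the role of your $\pi_0(\mathcal{P}_\infty)$. The genuine gap is the step you yourself call the heart: the isomorphism $KK(A,B)\cong\pi_0(\mathcal{P}_\infty)$ is asserted rather than proved, and the mechanism you sketch for it does not work. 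First, a class of $KK(A,B)$ cannot in general be represented by a projection commuting \emph{exactly} with the fixed absorbing representation: since $[p,\pi(a)]=[p-e,\pi(a)]$, such a $p$ forces $p-e$ to lie in the exact commutant of $\pi(A)$ inside $M_2(B\otimes \K)$, which for a typical absorbing representation is far too small to carry all of $KK(A,B)$; this is precisely why the paper works throughout with paths that commute only asymptotically. Second, the proposed straightening of a coherent sequence $(p_n)$ by summably small perturbations is exactly the kind of statement that fails: approximately commuting projections cannot in general be perturbed to exactly commuting ones, and neither a semiprojectivity-type estimate nor nuclearity of $A$ (which plays no role in the paper's main theorems) supplies such a rigidity result.

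For comparison, the paper's substitute for your missing step is Theorem \ref{kk iso}, which identifies $KK(A,B)$ with $\KKP^\pi(A,B)$ through the chain $KK(A,B)\cong K_0(\CLc(\pi))\cong K_0(\QL(\pi))\cong K_0(D_M(\QL(\pi)))$, i.e.\ through the localization algebra of \cite{Dadarlat:2016qc} (resting on Paschke duality and Kasparov's operator-homotopy theorem), the Eilenberg swindle of Lemma \ref{eilenberg}, and a double-algebra argument; no lifting to exactly commuting projections ever occurs. Similarly, your final step --- ``a class lies in $\overline{\{0\}}$ iff its image in every $KK_\epsilon(X,B)$ vanishes'' --- is not available as a citation to \cite{Dadarlat:2005aa}: Dadarlat characterizes the topology via sequences implemented by elements of $KK(A,C(\overline{\N},B))$, and translating that into the controlled statement is the content of Theorem \ref{tops same}, whose proof needs the path picture (building an element of $\mathcal{P}^{1\otimes\pi}(A,C(\overline{\N},B))$ from a convergent sequence). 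So the two places you treat as formal or citable are where the paper's substantive work in Sections \ref{sa sec}--\ref{top sec} lives, and the replacement argument you sketch for the first of them is not salvageable as stated.
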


The short exact sequence in Theorem \ref{main} is an analogue of Schochet's \emph{Milnor exact sequence} \cite{Schochet:1996aa} associated to a \emph{$KK$-filtration}.   A $KK$-filtration consists of a $KK$-equivalence of $A$ with the direct limit of an increasing sequence $(A_n)$ of $C^*$-algebras where each $A_n$ has unitization the continuous functions on some finite CW complex.   Schochet \cite[Theorem 1.5]{Schochet:1996aa} shows that such a filtration exists if and only if $A$ satisfies the UCT.   Schochet \cite[Theorem 1.5]{Schochet:1996aa} then shows that there is an exact sequence
$$
\xymatrix{ 0 \ar[r] & {\displaystyle \lim_{\leftarrow}{}\!^1  KK (A_n,SB) }\ar[r] & KK(A,B)  \ar[r] & {\displaystyle \lim_{\leftarrow} KK(A_n,B) }\ar[r] & 0 }.
$$
It follows from Theorem \ref{main2} and \cite[Proposition 4.1]{Schochet:2002aa} that our Milnor sequence from Theorem \ref{main} agrees with Schochet's when $A$ satisfies the UCT.   Our Milnor sequence can thus be thought of as a generalization of Schochet's sequence that works in the absence of the UCT.

\subsection{Discussion of proofs}

Continuing to assume for simplicity that $A$ is unital and nuclear, let us identify $A$ with a $C^*$-subalgebra of $M(B\otimes \K)$ as in the statement of Theorem \ref{main}.  Then we define $\mathcal{P}(A,B)$ to be the collection of all continuous, bounded, projection-valued functions $p:[1,\infty)\to M_2(M(B\otimes \K))$ such that $[p_t,a]\to 0$ as $t\to\infty$ for all $a\in A$, and so that $p_t-\begin{psmallmatrix} 1 & 0 \\ 0 & 0 \end{psmallmatrix}$ is in $M_2(B\otimes \K)$ for all $t$.  Define $\KKP(A,B)$ to be the quotient of $\mathcal{P}(A,B)$, modulo the equivalence relation one gets by saying $p$ and $q$ are homotopic if they are restrictions to the endpoints of an element of $\mathcal{P}(A,C[0,1]\otimes B)$.

One can then show\footnote{We do not actually show this, only the more general version where $A$ is non-unital and not-necessarily nuclear; nonetheless, this result follows directly from the same methods.} that $KK(A,B)$ is naturally isomorphic to $\KKP(A,B)$.  The first important ingredient in this is the description of $KK(A,B)$ as the $K$-theory of an appropriate \emph{localization algebra}, which was done by Dadarlat, Wu and the first author in \cite[Theorem 4.4]{Dadarlat:2016qc} (inspired by ideas of the second author in the case of commutative $C^*$-algebras \cite{Yu:1997kb}).    The other important (albeit implicit) ingredient we use for the isomorphism $KK(A,B)\cong \KKP(A,B)$ is the fundamental theorem of Kasparov (\cite[Section 6, Theorem 1]{Kasparov:1981vj}, and see also \cite[Theorem 19]{Skandalis:1984aa} and \cite[Section 18.5]{Blackadar:1998yq}) that the equivalence relations on Kasparov cycles induced by operator homotopy, and by homotopy, give rise to the same $KK$-groups.

Having described $KK(A,B)$ using continuous paths of projections, we can now also describe the topology on this group in this language: roughly, a sequence $(p^n)$ converges to $p$ in $\mathcal{P}(A,B)$ if for all $\epsilon>0$ and finite $X\subseteq A$ there is $t_0$ such that for all $t\geq t_0$, $p^n_t$ can be connected to $p_t$ via a homotopy passing through $\mathcal{P}_\epsilon(X,B)$.  This topology on $\mathcal{P}(A,B)$ induces a topology on $\KKP(A,B)$, and we show that this topology agrees with the usual one on $KK(A,B)$ using an abstract characterization of the latter due to Dadarlat \cite[Section 3]{Dadarlat:2005aa} (and based on ideas of Pimsner).

Having got this far, it is not too difficult to see that there is a well-defined map 
\begin{equation}\label{to kl}
\KKP(A,B)\to \lim_{\leftarrow} KK_{\epsilon_n}(X_n,B)
\end{equation}
defined by evaluating a path $(p_t)_{t\in [1,\infty)}$ in $\mathcal{P}(A,B)$ at larger and larger values of $t$, and that there is a well-defined map
\begin{equation}\label{to clo0}
\lim_{\leftarrow}{}{\!}^1 KK_{\epsilon_n}(X_n,SB) \to \KKP(A,B)
\end{equation}
defined by treating an element of $\mathcal{P}_\epsilon(X,SB)$ as a projection-valued function from $[0,1]$ to $\mathcal{P}_\epsilon(X,B)$ that agrees with $\begin{psmallmatrix} 1 & 0 \\ 0 & 0 \end{psmallmatrix}$ at its endpoints, and stringing a countable sequence of these together to get an element $(p_t)_{t\in [1,\infty)}$ in $\mathcal{P}(A,B)$.  Moreover, it is essentially true by definition that the map in line \eqref{to kl} contains the closure of $\{0\}$ in its kernel so descends to a map
$$
\KKP(A,B)/\overline{\{0\}}\to \lim_{\leftarrow} KK_{\epsilon_n}(X_n,B)
$$
and the map in line \eqref{to clo0} takes image if the closure of $\{0\}$ so corestricts to a map
$$
\lim_{\leftarrow}{}{\!}^1 KK_{\epsilon_n}(X_n,SB)\to \overline{\{0\}}.
$$
To establish Theorem \ref{main2}, we show that the maps in the two previous displayed lines are isomorphisms.

\subsection{Notation and conventions}\label{n n c}

We write $\ell^2$ for $\ell^2(\N)$. 

Throughout, the symbols $A$ and $B$ are reserved for separable $C^*$-algebras; the letters $C$, $D$ and others may sometimes refer to non-separable $C^*$-algebras. The unit ball of a $C^*$-algebra $C$ is denoted by $C_1$, its unitization is $C^+$, and its multiplier algebra is $M(C)$.   

Our conventions on Hilbert modules follow those of Lance \cite{Lance:1995ys}.   We will always assume that Hilbert modules are over separable $C^*$-algebras, and are countably generated as discussed on \cite[page 60]{Lance:1995ys}.   If it is not explicitly specified otherwise, all Hilbert modules will be over the $C^*$-algebra called $B$.   For Hilbert $B$-modules $E$ and $F$, we write $\LL(E,F)$ (respectively $\K(E,F)$) for the spaces of adjointable (respectively compact) operators from $E$ to $F$ in the usual sense of Hilbert module theory \cite[pages 8-10]{Lance:1995ys}.  We use the standard  shorthands $\LL(E):=\LL(E,E)$ and $\K(E):=\K(E,E)$.  

In this paper, unless stated otherwise, a \emph{representation of $A$} will refer to a representation of $A$ on a Hilbert \emph{module}, i.e.\ a $*$-homomorphism $\pi:A\to \LL(E)$ for some Hilbert module $E$ (almost always over $B$, as above).  For $a\in A$, we generally just write $a\in \LL(E)$ for $\pi(a)$ to avoid notational clutter (even if $\pi$ is not assumed injective).  We write $E^\infty$ for the (completed) infinite direct sum Hilbert module $\bigoplus_{n=1}^\infty E=\ell^2\otimes E$ \cite[page 6]{Lance:1995ys}, and if $\pi:A\to \LL(E)$ is a representation, we write $\pi^\infty:A\to \LL(E^\infty)$ for the  amplified representation, so in tensor product language $\pi^\infty=1_{\ell^2}\otimes \pi:A\to \LL(\ell^2\otimes E)$.  We say a representation $(\pi,E)$ has \emph{infinite multiplicity} if it isomorphic to $(\sigma^\infty,F^\infty)$ for some other representation $(\sigma,F)$.

The symbol `$\otimes$' always denotes a completed tensor product: either the (external or internal) tensor product of Hilbert modules \cite[Chapter 4]{Lance:1995ys}, or the minimal tensor product of $C^*$-algebras.  

If $E$ is a Banach space and $X$ a locally compact Hausdorff space, we let $C_{b}(X,E)$ (respectively, $C_{ub}(X,E)$, $C_{0}(X,E)$) denote the Banach space of continuous and bounded (respectively uniformly continuous and bounded, continuous and vanishing at infinity) functions from $X$ to $E$.  We write elements of these spaces as $e$ or $(e_x)_{x\in X}$, with $e_x\in E$ denoting the value of $e$ at a point $x\in X$.    We will sometimes say that $e$ is a `...' if it is a pointwise a `...': for example, `$u\in C_{b}([1,\infty),\LL(F_1,F_2))$ is unitary' means `$u_t$ is unitary in $\LL(F_1,F_2)$ for all $t\in [1,\infty)$'; if $E$ is a $C^*$-algebra so $C_b(X,E)$ is also a $C^*$-algebra, then this is consistent with the standard use of `unitary' and so on.  With $u\in C_{b}([1,\infty),\LL(F_1,F_2))$ as above, if $b$ is an element of $\LL(F_1)$ we write $ub$ for the function $t\mapsto u_t b$ in $C_{ub}([1,\infty),\LL(F_1,F_2))$ and similarly for $cu$ with $c\in \LL(F_2)$ and so on.

For $K$-theory, $K_*(A):=K_0(A)\oplus K_1(A)$ denotes the graded $K$-theory group of a $C^*$-algebra, and $KK_*(A,B):=KK_0(A,B)\oplus KK_1(A,B)$ the graded $KK$-theory group.  We will typically just write $KK(A,B)$ instead of $KK_0(A,B)$.

\subsection{Outline of the paper}

Sections \ref{sa sec} and \ref{loc sec} are background.  In Section \ref{sa sec} we recall some facts about `absorbing' representations.  Most of the material in Section \ref{sa sec} is essentially from papers of Kasparov \cite{Kasparov:1980sp}, Thomsen \cite{Thomsen:2000aa}, Dadarlat-Eilers \cite{Dadarlat:2001aa,Dadarlat:2002aa}, and Dadarlat \cite{Dadarlat:2005aa}.  In Section \ref{loc sec} we recall the localization algebra of Dadarlat, Wu and the first author \cite{Dadarlat:2016qc} (inspired by earlier ideas of the second author \cite{Yu:1997kb}), and establish some technical results about this.  

Sections \ref{pp sec} and \ref{top sec} introduce a group $\KKP(A,B)$ that consists of homotopy classes of paths of projections that asymptotically commute with $A$ and relate it to $KK$-theory: the culminating results show that $KK(A,B)$ and $\KKP(A,B)$ are isomorphic as topological groups.  In Section \ref{pp sec} we introduce $\KKP(A,B)$, show that it is a commutative monoid, and then that it is isomorphic to $KK(A,B)$ (whence a group).  In Section \ref{top sec} we introduce a topology on $\KKP(A,B)$.  We then use a characterization of Dadarlat \cite[Section 3]{Dadarlat:2005aa} to identify this with the canonical topology on $KK(A,B)$ that was introduced and studied by Brown-Salinas, Schochet, Pimsner, and Dadarlat in various guises.

Sections \ref{kl sec} and \ref{zero sec} establish Theorem \ref{main2} (and therefore Theorem \ref{main}).  Section \ref{kl sec} identifies the quotient $\KKP(A,B)/\overline{\{0\}}$ with ${\displaystyle \lim_{\leftarrow} KK_\epsilon(X,B)}$ (and therefore identifies $KL(A,B)$ with this inverse limit).  Section \ref{zero sec} identifies the closure of zero in $\KKP(A,B)$ with the appropriate $\lim^1$ group, completing the proof of the main results.

Finally, Appendix \ref{alt pic} gives some alternative pictures of our controlled $KK$-groups that will be useful for our subsequent work.  In particular, we give a slightly simplified picture in the case that $A$ is unital.

\subsection{Acknowledgements}

This paper was written parallel to work of Sarah Browne and Nate Brown on a controlled version of $E$-theory; we are grateful to Browne and Brown for several useful discussions around these subjects.  Many of the results of Section \ref{loc sec} were obtained independently (and earlier) by Jianchao Wu, in many cases with different proofs.  We are grateful to Wu for discussing this with us, and allowing us to include those results here.  We were inspired to learn about $KL$-theory, and connect our controlled $KK$-groups to it, by comments of Marius Dadarlat and Jamie Gabe, and thank them for their suggestions.  We also thank Claude Schochet for some useful suggestions and references.  Finally, we thank the anonymous referees for many useful suggestions.

The authors gratefully acknowledge the support of the US NSF (DMS 1564281, DMS 1700021, DMS 1901522, DMS 2000082, DMS 2247313, DMS 2247968) and the Simons Foundation throughout the writing and revising of this paper.

\section{Strongly absorbing representations}\label{sa sec}

Throughout this section, $A$ and $B$ refer to separable $C^*$-algebras.  All Hilbert modules are countably generated, and all are over $B$ unless explicitly stated otherwise.  All representations of $A$ are on Hilbert $B$-modules unless explicitly stated otherwise.

In this section we establish conventions and terminology regarding representations on Hilbert modules.  The ideas in this section are not original: they come from papers of Kasparov \cite{Kasparov:1980sp}, Thomsen \cite{Thomsen:2000aa}, Dadarlat-Eilers \cite{Dadarlat:2002aa}, and Dadarlat \cite{Dadarlat:2005aa}.   Nonetheless, we need some variants of the material appearing in the literature, so record what we need here for the reader's convenience; we provide proofs where we could not find the precise statement we need in the literature.

The definition of absorbing representation below is essentially\footnote{Thomsen's definition is a little more restrictive: he insists that $B$ be stable, and that the $B$-modules used all be copies $B\otimes \K(\ell^2)$.  Thanks to a combination of Kasparov's stabilization theorem \cite[Theorem 2]{Kasparov:1980sp} and Remark \ref{usa unique} below, our extra generality makes no real difference.} due to Thomsen \cite[Definition 2.6]{Thomsen:2000aa}.

\begin{definition}\label{ab def}
A representation $\pi:A\to \LL(F)$ is \emph{absorbing} (for the pair $(A,B)$) if for any Hilbert $B$-module $E$ and ccp map $\sigma:A \to \LL(E)$, there is a sequence $(v_n)$ of isometries in $\LL(E,F)$ such that:
\begin{enumerate}[(i)]
\item $\sigma(a)-v_n^*\pi(a)v_n\in \K(E)$ for all $a\in A$ and $n\in\N$;
\item $\|\sigma(a)-v_n^*\pi(a)v_n\|\to 0$ as $n\to\infty$ for all $a\in A$.
\end{enumerate}
\end{definition}

We want something slightly stronger.  

\begin{definition}\label{sa rep}
A  representation $\pi:A \to \LL(F)$ is \emph{strongly absorbing} (for the pair $(A,B)$) if $(\pi,F)$ is the infinite amplification $(\sigma^\infty,E^\infty)$ of an absorbing representation $(\sigma,E)$.
\end{definition}

\begin{remark}\label{isoms}
If $(\pi,F)$ is an infinite multiplicity (for example, strongly absorbing) representation then there we can write it as an infinite direct sum of copies of itself.  It follows that there is a sequence $(s_n)_{n=1}^\infty$ of isometries in $\LL(F)$ with mutually orthogonal ranges, that commute with the image of the representation, and are such that the sum $\sum_{n=1}^\infty s_ns_n^*$ converges strictly\footnote{For the strict topology coming from the identification $\LL(F)=M(\K(F))$.  As the partial sums are uniformly bounded, we may equivalently use the topology of pointwise convergence as operators on $F$.} to the identity.
\end{remark}

In \cite[Theorem 2.7]{Thomsen:2000aa}, Thomsen shows that an absorbing representation of $A$ on $\ell^2\otimes B$ always exists.  The following is therefore immediate from the fact that $(\ell^2\otimes B)^\infty\cong \ell^2\otimes B$.

\begin{proposition}\label{abs ex}
There is a strongly absorbing representation of $A$ on $\ell^2\otimes B$. \qed
\end{proposition}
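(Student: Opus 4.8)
The plan is to read this off directly from Thomsen's existence theorem together with the definition of ``strongly absorbing''. First I would invoke \cite[Theorem 2.7]{Thomsen:2000aa}, in the generality recorded in the footnote to Definition \ref{ab def} (the passage from Thomsen's hypotheses to ours being justified by Kasparov's stabilization theorem \cite[Theorem 2]{Kasparov:1980sp} together with Remark \ref{usa unique}), to produce an absorbing representation $\sigma\colon A\to\LL(\ell^2\otimes B)$ for the pair $(A,B)$. Writing $E:=\ell^2\otimes B$, the infinite amplification $(\sigma^\infty,E^\infty)$ is then, essentially by Definition \ref{sa rep}, a strongly absorbing representation of $A$.

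It remains only to transport this onto $\ell^2\otimes B$ itself. Using associativity of the Hilbert module tensor product and a unitary $\ell^2\cong\ell^2\otimes\ell^2$ (equivalently $\ell^2(\N)\cong\ell^2(\N\times\N)$), there is a unitary isomorphism of Hilbert $B$-modules
$$
E^\infty=(\ell^2\otimes B)^\infty=\ell^2\otimes(\ell^2\otimes B)\cong(\ell^2\otimes\ell^2)\otimes B\cong\ell^2\otimes B .
$$
Conjugating $\sigma^\infty$ by this isomorphism gives a representation $\pi\colon A\to\LL(\ell^2\otimes B)$ unitarily equivalent to $(\sigma^\infty,E^\infty)$.

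Finally I would observe that being strongly absorbing is invariant under unitary equivalence: if $(\pi',F')$ is absorbing and $U\in\LL(F',F'')$ is unitary, then $\mathrm{Ad}_U\circ\pi'$ is absorbing (compose the isometries witnessing absorption of $\pi'$ with $U$), and since amplification commutes with conjugation, conjugates of strongly absorbing representations are strongly absorbing. Hence $\pi$ is a strongly absorbing representation of $A$ on $\ell^2\otimes B$, as required.

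There is no genuine obstacle here: all the substance is in Thomsen's theorem, which we quote, plus the elementary fact that the standard Hilbert $B$-module $\ell^2\otimes B$ absorbs countably many copies of itself. The one point worth a sentence of care is checking that Thomsen's framework (stable $B$, modules of the form $B\otimes\K(\ell^2)$) subsumes ours, which is precisely the content of the footnote to Definition \ref{ab def}.
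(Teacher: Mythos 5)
Your proposal is correct and follows essentially the same route as the paper: quote Thomsen's existence theorem for an absorbing representation on $\ell^2\otimes B$, amplify, and use $(\ell^2\otimes B)^\infty\cong\ell^2\otimes B$ (the paper treats this identification, and the unitary-equivalence invariance you spell out, as immediate). No gap.
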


The point of using strongly absorbing representations rather than just absorbing\footnote{We do not know that the lemma fails for absorbing representations, but cannot prove it either.} ones is to get the following lemma.

\begin{lemma}\label{sab lem}
Let $\pi:A\to \LL(F)$ be a strongly absorbing representation, and let $\sigma:A\to \LL(E)$ be a ccp map. Then there is a sequence $(v_n)$ of isometries in $\LL(E,F)$ such that:
\begin{enumerate}[(i)]
\item $\sigma(a)-v_n^*\pi(a)v_n\in \K(E)$ for all $a\in A$ and $n\in\N$;
\item $\|\sigma(a)-v_n^*\pi(a)v_n\|\to 0$ as $n\to\infty$ for all $a\in A$; 
\item $v_n^*v_m=0$ for all $n\neq m$.
\end{enumerate}
\end{lemma}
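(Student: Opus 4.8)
The plan is to obtain the desired sequence from the plain absorption property of Definition~\ref{ab def} together with the infinite-multiplicity isometries furnished by Remark~\ref{isoms}. First I would observe that a strongly absorbing representation is in particular absorbing in the sense of Definition~\ref{ab def}: writing $(\pi,F)=(\sigma^\infty,E_0^\infty)$ with $(\sigma,E_0)$ absorbing (here $\sigma$ is the absorbing representation underlying $\pi$, not the ccp map in the statement --- I will rename it, say $\rho$, to avoid a clash), the compression of $\pi$ to the first direct summand $E_0\subseteq E_0^\infty$ recovers $\rho$, so any sequence of isometries witnessing absorption of $\rho$ for a given ccp map, post-composed with the canonical inclusion $E_0\hookrightarrow F$, witnesses absorption of $\pi$. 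Applying this to the given ccp map $\sigma:A\to\LL(E)$ produces a sequence $(w_n)$ of isometries in $\LL(E,F)$ satisfying (i) and (ii), but not necessarily the orthogonality condition (iii).

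Second, since $(\pi,F)$ is a representation of infinite multiplicity, Remark~\ref{isoms} supplies a sequence $(s_n)_{n=1}^\infty$ of isometries in $\LL(F)$ with mutually orthogonal ranges that commute with the image of $\pi$; taking adjoints, each $s_n^*$ also commutes with $\pi(A)$, and of course $s_n^* s_n = 1_F$ while $s_n^* s_m = 0$ for $n\neq m$.

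Third, I would set $v_n := s_n w_n \in \LL(E,F)$. Each $v_n$ is an isometry, since $v_n^* v_n = w_n^* s_n^* s_n w_n = w_n^* w_n = 1_E$, and condition (iii) is immediate: $v_n^* v_m = w_n^* s_n^* s_m w_m = 0$ for $n \neq m$. Finally, because $s_n$ and $s_n^*$ commute with $\pi(A)$ and $s_n^* s_n = 1_F$, one computes $v_n^*\pi(a)v_n = w_n^* s_n^* \pi(a) s_n w_n = w_n^*\pi(a) s_n^* s_n w_n = w_n^*\pi(a)w_n$ for every $a\in A$, so $\sigma(a) - v_n^*\pi(a)v_n$ is literally the same operator on $E$ as $\sigma(a) - w_n^*\pi(a)w_n$, and hence inherits both the compactness in (i) and the norm convergence in (ii) from the sequence $(w_n)$.

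There is no serious obstacle here; the lemma is essentially a bookkeeping exercise. The only points requiring a little care are verifying that a strongly absorbing representation is absorbing in the literal sense of Definition~\ref{ab def} (so that one may extract $(w_n)$), and keeping track that the $s_n$ commute with $\pi(A)$ on both sides so that the compressions $v_n^*\pi(a)v_n$ genuinely coincide with $w_n^*\pi(a)w_n$ rather than merely agreeing up to a compact perturbation.
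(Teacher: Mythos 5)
Your proof is correct and is essentially the paper's argument: in both cases the point is to orthogonalize the isometries coming from absorption by composing with isometries into mutually orthogonal $\pi$-invariant summands of $F$, and the computation $v_n^*\pi(a)v_n=w_n^*(\cdot)w_n$ is the same. The only cosmetic difference is that the paper applies absorption to the underlying representation $\theta$ with $(\pi,F)=(\theta^\infty,G^\infty)$ and takes $s_n\in\LL(G,F)$ to be the $n$-th summand inclusions, whereas you first note that $\pi$ itself is absorbing and then use the Remark~\ref{isoms} isometries in $\LL(F)$.
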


\begin{proof}
Let $(\pi,F)=(\theta^\infty,G^\infty)$ for some absorbing representation $(\theta,G)$.  Let $(w_n)$ be a sequence of isometries in $\LL(E,G)$ with the properties as in the definition of an absorbing representation for $\sigma$.  For each $n$, let $s_n\in \LL(G,F)$ be the inclusion of $G$ in $F$ as the $n^\text{th}$ summand, and set $v_n:=s_nw_n\in \LL(E,F)$.  Direct checks show that $(v_n)$ has the right properties.
\end{proof}

We will need the following result, which is implicit\footnote{\label{sa footnote}It is also explicit in \cite[Theorem 2.6]{Dadarlat:2016qc}, but with $\pi$ only assumed absorbing, not strongly absorbing.  There seems to be a gap in the proof of that result.  As a result, it seems to be necessary to assume all absorbing modules used in \cite{Dadarlat:2016qc} are actually strongly absorbing.  None of the results of \cite{Dadarlat:2016qc} are further affected if one does this.} in \cite{Dadarlat:2001aa}.  

\begin{proposition}\label{cov isom}
Let $\pi:A\to \LL(E)$ be a strongly absorbing representation.  Then for any ccp map $\sigma:A\to \LL(F)$ there is an isometry $v\in C_{ub}([1,\infty),\LL(F,E))$ such that $v^*\pi(a)v-\sigma(a)\in C_0([1,\infty),\K(F))$.

Moreover, if $\sigma:A\to \LL(F)$ is also a strongly absorbing representation, then there is a unitary $u\in C_{ub}([1,\infty),\LL(F,E))$ such that $u^*\pi(a)u-\sigma(a)\in C_0([1,\infty),\K(F))$.
\end{proposition}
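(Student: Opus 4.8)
The plan is to deduce the first (isometry) statement from Lemma~\ref{sab lem} by interpolating, in a square-root parametrization, between the mutually orthogonal isometries that lemma produces, and then to bootstrap to the unitary statement by applying the isometry statement in both directions and running the Eilenberg-swindle argument underlying the asymptotic uniqueness of strongly absorbing representations. All the real work is in the second step, and that is also exactly where the word \emph{strongly} (i.e.\ infinite multiplicity) is used.

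\textbf{The isometry statement.} Since $\pi$ is strongly absorbing, write $(\pi,E)=(\theta^\infty,G^\infty)$ for an absorbing representation $(\theta,G)$, and let $s_n\colon G\to E$ be the inclusion of the $n$-th summand. Applying Lemma~\ref{sab lem} to the ccp map $\sigma$ and the strongly absorbing representation $\pi$ (the two modules playing the reverse roles to those in the lemma) and inspecting its proof, we obtain isometries $v_n=s_{2n}b_n\in\LL(F,E)$, where $b_n\in\LL(F,G)$ witnesses absorption of $\sigma$ by $\theta$ and where we have deliberately used the \emph{even}-numbered summands. These satisfy: (i) $v_n^*\pi(a)v_n-\sigma(a)\in\K(F)$ for all $a\in A$, $n\in\N$; (ii) $\|v_n^*\pi(a)v_n-\sigma(a)\|\to0$ for all $a\in A$; (iii) $v_n^*v_m=0$ for $n\neq m$; and, since $\pi=\theta^\infty$ leaves each summand $s_kG$ invariant, also (iv) $v_n^*\pi(a)v_m=0$ for all $a\in A$ and all $n\neq m$. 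Now define, for $n\in\N$ and $t\in[n,n+1]$,
\[
v_t:=\sqrt{n+1-t}\;v_n+\sqrt{t-n}\;v_{n+1}.
\]
The two descriptions agree at the integer breakpoints, so $t\mapsto v_t$ is continuous; by (iii) it is pointwise an isometry, so $\|v_t\|=1$ for all $t$; and on each $[n,n+1]$ it is built from the fixed operators $v_n,v_{n+1}$ via the functions $\sqrt{n+1-t},\sqrt{t-n}$, whose moduli of continuity are independent of $n$, so $t\mapsto v_t$ is uniformly continuous. Thus $v\in C_{ub}([1,\infty),\LL(F,E))$. Finally, by (iv) the cross terms in $v_t^*\pi(a)v_t$ vanish, so for $t\in[n,n+1]$,
\[
v_t^*\pi(a)v_t-\sigma(a)=(n+1-t)\big(v_n^*\pi(a)v_n-\sigma(a)\big)+(t-n)\big(v_{n+1}^*\pi(a)v_{n+1}-\sigma(a)\big),
\]
which by (i) and (ii) is a continuous $\K(F)$-valued function of $t$ that vanishes as $t\to\infty$, i.e.\ lies in $C_0([1,\infty),\K(F))$. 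This proves the first statement. (The use of even-numbered summands plays no role here, but is recorded for use below: it guarantees that the range projections $v_tv_t^*$ are all dominated by a single $t$-independent projection $1-e$, where $e$ is the projection onto the odd-numbered summands, and $(\pi,eE)$ is itself a copy of $(\pi,E)$.)

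\textbf{The unitary statement.} Now assume $\sigma$ is also strongly absorbing. Applying the first part once to $(\pi,\sigma)$ and once to $(\sigma,\pi)$ yields isometries $v\in C_{ub}([1,\infty),\LL(F,E))$ and $w\in C_{ub}([1,\infty),\LL(E,F))$ with $v^*\pi(a)v-\sigma(a)\in C_0(\K(F))$ and $w^*\sigma(a)w-\pi(a)\in C_0(\K(E))$, and (by the previous parenthetical remark) such that the ranges of $v$ and of $w$ each avoid a full copy of the ambient representation. It remains to upgrade $v$ to a unitary with the same intertwining property. For this I would run the Eilenberg swindle that proves asymptotic uniqueness of strongly absorbing representations (cf.\ \cite{Dadarlat:2001aa} and \cite[Theorem~2.7]{Thomsen:2000aa}): using Remark~\ref{isoms} to write $(\pi,E)$ and $(\sigma,F)$ as countable direct sums of copies of themselves, one splices $v$, $w$, the flip isometry $E\oplus F\to F\oplus E$, and the leg isometries together in the usual telescoping pattern to produce a unitary path $u\in C_{ub}([1,\infty),\LL(F,E))$; the rotation trick — conjugating by the evident $2\times 2$ scalar matrix, legitimate precisely because the summands being rotated carry \emph{honestly} isomorphic copies of the representation — is what lets the telescope close up while keeping all relevant commutators compact and norm-small. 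I expect the main obstacle to be the bookkeeping here: one must organize the countably many splices and rotations so that the assembled path is genuinely norm-uniformly-continuous on $[1,\infty)$ and so that $u^*\pi(a)u-\sigma(a)$ still lies in $C_0(\K(F))$ for every $a\in A$, and the availability of infinitely many spare copies of each representation, i.e.\ strong rather than mere absorption, is exactly what makes this work. A convenient way to organize the final diagram chase is first to establish that a strongly absorbing $\pi$ ``absorbs $\sigma$ continuously'', namely that there is a \emph{unitary} path $U\in C_{ub}([1,\infty),\LL(E\oplus F,E))$ with $U^*\pi(a)U-(\pi\oplus\sigma)(a)\in C_0(\K(E\oplus F))$; applying this to $\pi$ and to $\sigma$ and composing with the flip then produces $u$ by a short formal computation.
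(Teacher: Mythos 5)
Your first half is correct and is essentially the paper's own argument: one interpolates between the mutually orthogonal isometries supplied by Lemma \ref{sab lem} with the coefficients $(n+1-t)^{1/2},(t-n)^{1/2}$, and your observation (iv) — that $v_n^*\pi(a)v_m=0$ for $n\neq m$ because each $v_n=s_nw_n$ lands in a $\pi$-invariant summand, so the cross terms in $v_t^*\pi(a)v_t$ vanish identically — is exactly the point that makes the ``direct checks'' go through for a merely ccp $\sigma$ (where the two-sided trick of Lemma \ref{v on both sides} is unavailable). The even-summand refinement is harmless but ends up unused.

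The gap is in the unitary half. The skeleton you settle on at the end — build unitary paths implementing $\pi\sim\sigma\oplus\pi$ and $\sigma\sim\sigma\oplus\pi$ modulo $C_0([1,\infty),\K)$ and compose — is precisely the structure of the paper's proof, but the single step carrying all the content, the construction of a \emph{unitary} path $U\in C_{ub}([1,\infty),\LL(E,F\oplus E))$ with $\sigma(a)\oplus\pi(a)-U\pi(a)U^*\in C_0([1,\infty),\K(F\oplus E))$, is only asserted. ``Run the Eilenberg swindle / telescope / rotation trick'' is not a construction here, and a Schr\"oder--Bernstein-style argument from your two isometries $v\in C_{ub}([1,\infty),\LL(F,E))$ and $w\in C_{ub}([1,\infty),\LL(E,F))$ does not upgrade two-sided approximate embeddability to approximate unitary equivalence; you acknowledge as much when you defer the uniform continuity and $C_0(\K)$ verifications to ``bookkeeping.'' What actually works, and what the paper does, uses three ingredients your sketch omits: (1) apply the first part not to $\sigma$ but to its infinite amplification $\sigma^\infty$ on $F^\infty$, so that there is room to peel off one copy of $F$; (2) use Lemma \ref{v on both sides} — which needs $\sigma^\infty$ to be multiplicative, i.e.\ this is exactly where the hypothesis that $\sigma$ is a (strongly absorbing) representation rather than a ccp map enters — to upgrade the one-sided condition to $\pi(a)v-v\sigma^\infty(a)\in C_0([1,\infty),\K(F^\infty,E))$; and (3) apply the explicit Dadarlat--Eilers formula of Lemma \ref{de2.3} pointwise in $t$: $u_t:=(1_F\oplus v_t)wv_t^*+1_E-v_tv_t^*$ is unitary, is a fixed polynomial expression in $v_t,v_t^*$ (hence the path is automatically uniformly continuous), and the quoted norm estimate together with the compactness clause of that lemma give exactly the $C_0([1,\infty),\K(F\oplus E))$ control. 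The quantitative peel-off formula applied to a covering isometry of the \emph{amplified} representation is the missing idea; without it (or an equivalent explicit construction with a norm estimate) the telescoping step you propose cannot be verified, so as written the second statement of Proposition \ref{cov isom} is not proved.
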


For the proof of Proposition \ref{cov isom} will need two lemmas.  The first is a well-known algebraic trick.

\begin{lemma}\label{v on both sides}
Let $\pi:A\to \LL(E)$ and $\sigma:A\to \LL(F)$ be representations, and $v\in \LL(E,F)$ be an isometry.  If $v \in C_{ub}([1,\infty),\LL(F,E))$ is such that $v^*\pi(a)v-\sigma(a)\in C_0([1,\infty),\K(F))$ for all $a\in A$, then $\pi(a)v-v\sigma(a)$ is an element of $C_0([1,\infty),\K(F,E))$ for all $a\in A$.
\end{lemma}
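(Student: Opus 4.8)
The plan is to run the standard ``put $v$ on both sides'' trick, i.e.\ to first control the positive operator $(\pi(a)v-v\sigma(a))^*(\pi(a)v-v\sigma(a))$ and then deduce the statement about $\pi(a)v-v\sigma(a)$ itself. Fix $a\in A$. First I would expand, using that $v$ is a pointwise isometry ($v_t^*v_t=1_F$ for all $t$) and that $\pi,\sigma$ are $*$-homomorphisms:
\[
(\pi(a)v-v\sigma(a))^*(\pi(a)v-v\sigma(a))=v^*\pi(a^*a)v-v^*\pi(a^*)v\,\sigma(a)-\sigma(a^*)\,v^*\pi(a)v+\sigma(a^*a).
\]
Applying the hypothesis to each of $a$, $a^*$ and $a^*a$, the operators $v^*\pi(a^*a)v$, $v^*\pi(a^*)v$, $v^*\pi(a)v$ agree, modulo $C_0([1,\infty),\K(F))$, with $\sigma(a^*a)$, $\sigma(a^*)$, $\sigma(a)$ respectively. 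Since $C_0([1,\infty),\K(F))$ is an ideal of $C_b([1,\infty),\LL(F))$ (because $\K(F)$ is an ideal of $\LL(F)$), multiplying on the right by the bounded constant $\sigma(a)$, or on the left by $\sigma(a^*)$, keeps us in $C_0([1,\infty),\K(F))$. Hence, modulo $C_0([1,\infty),\K(F))$, the four summands on the right are $\sigma(a^*a)$, $-\sigma(a^*a)$, $-\sigma(a^*a)$, $\sigma(a^*a)$ (using $\sigma(a^*)\sigma(a)=\sigma(a^*a)$), which sum to zero. So $(\pi(a)v-v\sigma(a))^*(\pi(a)v-v\sigma(a))\in C_0([1,\infty),\K(F))$.

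Next I would pass to $T:=\pi(a)v-v\sigma(a)$ itself, working pointwise. Fix $t\in[1,\infty)$ and regard $T_t\in\LL(F,E)$, so that $T_t^*T_t\in\K(F)$. As polar decomposition is not available for Hilbert modules, I would argue by approximation: for each $n$ one has $(T_t^*T_t)^{1/n}=f_n(T_t^*T_t)$ with $f_n(s)=s^{1/n}$, $f_n(0)=0$, which lies in the $C^*$-algebra $\K(F)$, so $T_t(T_t^*T_t)^{1/n}\in\LL(F,E)\cdot\K(F)\subseteq\K(F,E)$; and a functional-calculus estimate gives $\|T_t-T_t(T_t^*T_t)^{1/n}\|^2=\sup\{\,s(1-s^{1/n})^2:s\in\mathrm{sp}(T_t^*T_t)\subseteq[0,\|T_t\|^2]\,\}\to0$ as $n\to\infty$ (split the interval at a small $\delta>0$: the integrand is $\le\delta$ on $[0,\delta]$ and $\le\|T_t\|^2(1-\delta^{1/n})^2$ on $[\delta,\|T_t\|^2]$). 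Since $\K(F,E)$ is closed, $T_t\in\K(F,E)$. This holds for every $t$; moreover $\|T_t\|^2=\|T_t^*T_t\|\to0$ because $T^*T\in C_0([1,\infty),\K(F))$, and $T$ is plainly norm-continuous and bounded, being built from the bounded continuous family $v$ and the constants $\pi(a),\sigma(a)$. Therefore $\pi(a)v-v\sigma(a)\in C_0([1,\infty),\K(F,E))$ for every $a\in A$, as required.

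The only step carrying real content is the last implication, ``$T^*T\in\K(F)\Rightarrow T\in\K(F,E)$'' in the Hilbert module setting; the functional-calculus approximation above is the usual substitute for polar decomposition, and I expect it to be the sole point needing care. The expansion and the ideal-membership bookkeeping are routine.
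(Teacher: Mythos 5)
Your proposal is correct and follows essentially the same route as the paper: the paper's proof consists of exactly the algebraic identity expressing $(\pi(a)v-v\sigma(a))^*(\pi(a)v-v\sigma(a))$ in terms of $v^*\pi(\cdot)v-\sigma(\cdot)$ applied to $a$, $a^*$, $a^*a$, leaving the implication ``$T^*T\in C_0([1,\infty),\K(F))\Rightarrow T\in C_0([1,\infty),\K(F,E))$'' implicit. Your functional-calculus argument (approximating $T_t$ by $T_t(T_t^*T_t)^{1/n}$, plus the norm estimate $\|T_t\|^2=\|T_t^*T_t\|\to 0$) correctly supplies that final step.
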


\begin{proof}
This follows from the fact that 
$$
(\pi(a)v-v\sigma(a))^*(\pi(a)v-v\sigma(a))
$$
equals 
$$
v^*\pi(a^*a)v-\sigma(a^*a) -(v^*\pi(a^*)v-\sigma(a^*))\sigma(a)-\sigma(a^*)(v^*\pi(a)v-\sigma(a))
$$
for all $a\in A$.
\end{proof}

The second lemma we need is \cite[Lemma 2.16]{Dadarlat:2002aa}; we recall the statement for the reader's convenience but refer to the reference for a proof.

\begin{lemma}\label{de2.3}
Let $\pi:A\to \LL(E)$ and $\sigma:A\to \LL(F)$ be representations.  Let $\sigma^\infty:A\to \LL(F^\infty)$ be the infinite amplification of $\sigma$, and let $w\in \LL(F^\infty,F\oplus F^\infty)$ be defined by $(\xi_1,\xi_2,\xi_3...)\mapsto \xi_1\oplus (\xi_2,\xi_3,...)$.  Then for any isometry $v\in \LL(F^\infty,E)$, the operator
$$
u:=(1_F\oplus v)wv^*+1_E-vv^*\in \LL(E,F\oplus E)
$$
is unitary and satisfies 
$$
\|\sigma(a)\oplus \pi(a)-u\pi(a)u^*\|\leq 6\|v\sigma^\infty(a)-\pi(a)v\|+4\|v\sigma^\infty(a^*)-\pi(a^*)v\|.
$$
Moreover, if $v\sigma^\infty(a)-\pi(a)v\in \K(F^\infty,E)$ for all $a\in A$, then $\sigma(a)\oplus \pi(a)-u\pi(a)u^*\in \K(F\oplus E)$ for all $a\in A$. \qed
\end{lemma}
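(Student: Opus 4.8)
There is no deep obstacle here: the argument is careful bookkeeping inside $\LL(F^\infty)$ under the identification $F^\infty\cong F\oplus F^\infty$, so I will only sketch the shape of the computation. The plan is to verify the three assertions — that $u$ is unitary, the norm estimate, and the compactness refinement — directly from the formula for $u$, using only that $v$ and $w$ are (co)isometries of Hilbert modules and that $w$ conjugates $\sigma^\infty$ into $\sigma\oplus\sigma^\infty$ exactly. First I would record the elementary facts. Write $\iota_E\colon E\to F\oplus E$ for the inclusion as the second summand and $p_E=\iota_E^*$ for the associated projection. The map $w$ is just the reassociation of $\bigoplus_{n\ge1}F$, hence a unitary in $\LL(F^\infty,F\oplus F^\infty)$ with $w^*\colon\eta\oplus(\zeta_1,\zeta_2,\dots)\mapsto(\eta,\zeta_1,\zeta_2,\dots)$, and a one-line computation on elementary tensors shows $w\sigma^\infty(a)=(\sigma(a)\oplus\sigma^\infty(a))w$ for all $a\in A$. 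Since $v$ is an isometry, so are $1_F\oplus v$ and hence $\tilde v:=(1_F\oplus v)w\in\LL(F^\infty,F\oplus E)$, and a short calculation identifies the range projection $\tilde v\tilde v^*$ with $(\eta,\xi)\mapsto(\eta,vv^*\xi)$, i.e.\ $\mathrm{diag}(1_F,vv^*)$. Throughout I would use the identities $v^*v=1_{F^\infty}$, $\tilde v^*\tilde v=1_{F^\infty}$, and $v^*(1_E-vv^*)=0=(1_E-vv^*)v$, together with the observations that $p_E\tilde v$ and $\tilde v^*\iota_E$ each factor through $v$; these are what make the cross terms below vanish.

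For unitarity I would write $u=\tilde vv^*+\iota_E(1_E-vv^*)$, so $u^*=v\tilde v^*+(1_E-vv^*)p_E$, and expand each of $u^*u$ and $uu^*$ into four terms. In $u^*u$ the two ``diagonal'' terms contribute $v\tilde v^*\tilde vv^*=vv^*$ and $(1_E-vv^*)p_E\iota_E(1_E-vv^*)=1_E-vv^*$, while the two cross terms vanish because $\tilde v^*\iota_E$ and $p_E\tilde v$ factor through $v$ and $v^*(1_E-vv^*)=(1_E-vv^*)v=0$; hence $u^*u=1_E$. Similarly $uu^*=\tilde v\tilde v^*+\iota_E(1_E-vv^*)p_E=\mathrm{diag}(1_F,vv^*)+\mathrm{diag}(0,1_E-vv^*)=1_{F\oplus E}$.

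For the norm estimate, set $\rho(a):=\sigma(a)\oplus\pi(a)$; since $u$ is unitary it is enough to bound $\|\rho(a)u-u\pi(a)\|$, and I would split this along the two summands of $u$. For the $\tilde vv^*$ part, the intertwining property of $w$ gives $\rho(a)\tilde v-\tilde v\sigma^\infty(a)=[\rho(a)(1_F\oplus v)-(1_F\oplus v)(\sigma(a)\oplus\sigma^\infty(a))]w$, and the bracketed operator is $(\eta,\zeta)\mapsto(0,(\pi(a)v-v\sigma^\infty(a))\zeta)$, of norm at most $\|v\sigma^\infty(a)-\pi(a)v\|$; feeding this in and using $\|\sigma^\infty(a)v^*-v^*\pi(a)\|=\|v\sigma^\infty(a^*)-\pi(a^*)v\|$ (take adjoints), this part is $\le\|v\sigma^\infty(a)-\pi(a)v\|+\|v\sigma^\infty(a^*)-\pi(a^*)v\|$. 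For the $\iota_E(1_E-vv^*)$ part, the identity $\rho(a)\iota_E=\iota_E\pi(a)$ reduces it to $\iota_E(vv^*\pi(a)-\pi(a)vv^*)$, and writing $\pi(a)vv^*-vv^*\pi(a)=(\pi(a)v-v\sigma^\infty(a))v^*-v(v^*\pi(a)-\sigma^\infty(a)v^*)$ gives the same bound. Adding, $\|\rho(a)u-u\pi(a)\|\le 2\|v\sigma^\infty(a)-\pi(a)v\|+2\|v\sigma^\infty(a^*)-\pi(a^*)v\|$, which is in particular dominated by $6\|v\sigma^\infty(a)-\pi(a)v\|+4\|v\sigma^\infty(a^*)-\pi(a^*)v\|$. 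For the last assertion, each error term produced above is a product of one of $\pi(a)v-v\sigma^\infty(a)$, $\pi(a^*)v-v\sigma^\infty(a^*)$, or an adjoint of one of these, with adjointable operators ($w$, $\iota_E$, $p_E$, $v$, $v^*$, $\tilde v$); since $\K$ is a $*$-closed, closed, two-sided ideal in $\LL$, the hypothesis gives $\rho(a)u-u\pi(a)\in\K(E,F\oplus E)$, whence $\sigma(a)\oplus\pi(a)-u\pi(a)u^*=(\rho(a)u-u\pi(a))u^*\in\K(F\oplus E)$.

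The points that repay attention — what I would call the ``hard part'', though it is not truly hard — are the identity $\tilde v\tilde v^*=\mathrm{diag}(1_F,vv^*)$ and the vanishing of the cross terms in $u^*u$ and $uu^*$, both of which rely on $v$ being only an isometry (so $vv^*$ is a proper projection) together with $v^*(1_E-vv^*)=(1_E-vv^*)v=0$; and, for the compactness statement, simply being disciplined about exhibiting each error term in the form $(\text{compact})\cdot(\text{adjointable})$ so that the ideal property can be applied.
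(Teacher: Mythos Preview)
Your proof is correct, and in fact you obtain sharper constants ($2$ and $2$) than those stated in the lemma ($6$ and $4$). The paper itself does not prove this lemma: it is quoted from \cite[Lemma 2.16]{Dadarlat:2002aa} with a \qed and an explicit referral to that source for the argument. Your direct computation, splitting $u$ as $\tilde v v^* + \iota_E(1_E-vv^*)$ and tracking the commutator $\rho(a)u-u\pi(a)$ along these two summands, is exactly the kind of calculation one would expect to find in the cited reference; there is no meaningful difference in approach to compare.
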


\begin{proof}[Proof of Proposition \ref{cov isom}]
Assume first that $\sigma:A\to \LL(F)$ is ccp.  Let $(v_n)$ be a sequence of isometries in $\LL(F,E)$ as in Lemma \ref{sab lem}.  For each $n\geq 1$ and each $t\in [n,n+1]$, define  
$$
v_{t}:=(n+1-t)^{1/2}v_n+(t-n)^{1/2} v_{n+1}
$$
Then the resulting family $v:=(v_t)_{t\in [1,\infty)}$ is an isometry in $C_{ub}([1,\infty),\LL(F,E))$ such that $v^*\pi(a)v-\sigma(a)\in C_0([1,\infty),\K(F))$ for all $a\in A$; we leave the direct checks involved to the reader.

Assume now that $\sigma:A\to \LL(F)$ is also a strongly absorbing representation.  Using the first part of the proof applied to the infinite amplification $\sigma^\infty:A\to \LL(F^\infty)$, we get $v\in C_{ub}([1,\infty),\LL(F^\infty,E))$ such that $v^*\pi(a)v-\sigma^\infty(a)\in C_0([1,\infty),\K(F^\infty))$ for all $a\in A$.  Lemma \ref{v on both sides} implies that $\pi(a)v-v\sigma^\infty(a)$ is an element of $C_0([1,\infty),\K(F^\infty,E))$ for all $a\in A$.  Building a unitary out of each $v_t$ using the formula in Lemma \ref{de2.3} gives now a unitary $u_E\in C_{ub}([1,\infty),\LL(E,F\oplus E))$ such that $\sigma(a)\oplus \pi(a)-u_E\pi(a)u_E^*\in C_0([1,\infty),\K(F\oplus E))$ for all $a\in A$.  The situation is symmetric, so there is also a  unitary $u_F\in C_{ub}([1,\infty),\LL(F,F\oplus E))$ such that $\sigma(a)\oplus \pi(a)-u_F\sigma(a)u_F^*\in C_0([1,\infty),\K(F\oplus E))$ for all $a\in A$.  Defining $u=u_E^*u_F$, we are done.
\end{proof}

We need one more technical result about strongly absorbing representations.  The statement and proof are essentially\footnote{The main difference is that we drop a unitality assumption on $C$.  This is important for our applications later in the paper, as we will want to apply the result with $C=C_0(0,1)$ in order to treat suspensions of $C^*$-algebras.} the same as a result of Dadarlat \cite[Proposition 3.2]{Dadarlat:2005aa}.  We give give a proof for the reader's convenience.

\begin{proposition}\label{sa tp}
Let $\pi:A\to \LL(B\otimes \ell^2)$ be a strongly absorbing representation of $A$ on the standard Hilbert $B$-module.  Let $C$ be a separable nuclear $C^*$-algebra, and let $C\otimes B\otimes \ell^2$ denote the $C\otimes B$-Hilbert module given by the exterior tensor product.  Then the amplification $1_C\otimes \pi:A\to \LL(C\otimes B\otimes \ell^2)$\footnote{Here and throughout, by ``$1_C$'' we mean either the unit of $C$ if $C$ is unital, or the unit of the unitization $C^+$ of $C$, acting as a multiplier on $C$.} is strongly absorbing for the pair $(A,C\otimes B)$.
\end{proposition}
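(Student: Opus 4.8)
My plan is to follow the proof of Dadarlat \cite[Proposition 3.2]{Dadarlat:2005aa} closely, indicating its structure and the single place where the non-unitality of $C$ needs to be accommodated.

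First I would reduce to an absorption statement for $1_C\otimes\pi$. Since $\pi$ is strongly absorbing it is in particular absorbing, and since tensoring by $1_C$ commutes with infinite amplification (and $C\otimes E^{\infty}\cong(C\otimes E)^{\infty}$ for any Hilbert $B$-module $E$), one has $1_C\otimes\pi\cong(1_C\otimes\pi)^{\infty}$; in particular $1_C\otimes\pi$ has infinite multiplicity. As $C\otimes B\otimes\ell^2$ is precisely the standard Hilbert module over the separable $C^*$-algebra $C\otimes B$, it therefore suffices to show that $1_C\otimes\pi\colon A\to\LL(C\otimes B\otimes\ell^2)$ is \emph{absorbing} for the pair $(A,C\otimes B)$ in the sense of Definition \ref{ab def}: being also the infinite amplification of itself, it will then be strongly absorbing.

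Next I would reduce the absorption statement to a cleaner form. Given a Hilbert $C\otimes B$-module $F$ and a ccp map $\varphi\colon A\to\LL(F)$, I would dilate $\varphi$ to a $*$-homomorphism by a Stinespring-type construction, adjoin the zero representation, and use Kasparov's stabilization theorem \cite[Theorem 2]{Kasparov:1980sp} to replace the dilating module by $C\otimes B\otimes\ell^2$; compressing the resulting isometries back down to $F$ then reduces the problem to the case in which $\varphi$ is itself a representation of $A$ on $C\otimes B\otimes\ell^2$. To produce the isometries required by Definition \ref{ab def} in this case, I would invoke the characterization of absorbing representations on a standard module developed by Thomsen \cite{Thomsen:2000aa} and Dadarlat--Eilers \cite{Dadarlat:2001aa} (compare \cite[Section 3]{Dadarlat:2005aa}): such representations are singled out by a Voiculescu-type ``largeness'' condition --- roughly, the representation approximately dominates every ccp map modulo compacts, which one controls using quasicentral approximate units of $\K(C\otimes B\otimes\ell^2)$ relative to the representation together with a fullness requirement. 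The plan is then to show that $1_C\otimes\pi$ inherits this largeness from $\pi$ (which has it for the pair $(A,B)$): one builds the quasicentral approximate units and the dominating isometries for $1_C\otimes\pi$ by tensoring those furnished by $\pi$ with elements $c_\mu$ of an approximate unit of $C$, with $c_\mu$ chosen to act approximately as the identity on whichever finite collection of module vectors is being controlled; nuclearity of $C$ enters in keeping the tensor product $C\otimes B$ --- and the point--norm approximations involved in the characterization --- under control.

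The hard part, and the only real departure from \cite[Proposition 3.2]{Dadarlat:2005aa} (where $C$ is unital and one simply tensors with $1_C$, making the relations hold exactly), will be this last step in the absence of a unit: the objects built from $c_\mu$ satisfy the relevant relations only up to errors governed by quantities like $\|(1-c_\mu)(\cdot)\|$, and one must check that these errors lie in, or can be corrected into, $\K(C\otimes B\otimes\ell^2)$ and can be driven to zero. Concretely this comes down to a diagonal argument over the approximate unit $(c_\mu)$ and over an exhausting sequence of finite subsets of $A$ together with tolerances, arranged so that conditions (i) and (ii) of Definition \ref{ab def} hold exactly rather than approximately. I expect essentially all of the genuine work in the proof to sit in this diagonal argument.
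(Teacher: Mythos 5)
There is a genuine gap at the step you yourself identify as the heart of the argument. The difficulty in proving that $1_C\otimes\pi$ absorbs is that a ccp map $\sigma$ for the pair $(A,C\otimes B)$ takes values (after the standard Thomsen-type reduction) in $C\otimes B\otimes \K(\ell^2)$, and these values have nontrivial dependence on the $C$-leg, whereas $1_C\otimes\pi(a)$ is constant in the $C$-direction (it commutes with $C$). Your proposed construction --- tensoring the isometries furnished by the $(A,B)$-absorption of $\pi$ with elements $c_\mu$ of an approximate unit of $C$ --- only ever produces operators that act essentially diagonally in the $C$-leg, so compressions $w^*(1_C\otimes\pi(a))w$ built this way cannot approximate a general element of $C\otimes B\otimes\K(\ell^2)$; in other words, the $(A,B)$-absorption of $\pi$ is simply not applicable to $\sigma$ until the $C$-tensor factor has been ``represented away.'' This is exactly what the paper's proof (following Dadarlat \cite[Proposition 3.2]{Dadarlat:2005aa}) does and what your outline omits: one fixes a unital representation $\delta:C^+\to\mathcal{B}(\ell^2)$ with $\delta^{-1}(\K(\ell^2))=\{0\}$ and uses Kasparov's Voiculescu-type theorem \cite[Theorem 5]{Kasparov:1980sp}, where nuclearity of $C^+$ is the essential hypothesis, to approximate the multiplication representation $\iota:C^+\to\LL(C)$ by compressions of $1_{C^+}\otimes\delta$; after cutting $\delta$ down to finite-rank corners this converts $\sigma(a)$ into elements of $M_{k(n)}(B\otimes\K(\ell^2))$, to which the absorption of $\pi$ over $B$ can finally be applied, and strong absorption is then used to arrange the extra condition $w_n^*b\to 0$. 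Nuclearity thus plays a precise and indispensable role (it licenses the ccp Voiculescu-type absorption of $\iota$), not the vague bookkeeping role you assign it.

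Relatedly, your framing of the unital case is inaccurate: even when $C$ is unital, Dadarlat's proof does not ``simply tensor with $1_C$ so the relations hold exactly''; the $\delta$-plus-Voiculescu step above is the content of his argument as well. The approximate unit of $C$ enters the paper's proof only at one mild point --- replacing $v_{n,(0)}\in\LL(C,C^+\otimes\ell^2)$ by $e_{m(n)}v_{n,(0)}$ so as to pass from $C^+$ back to $C$ --- so the non-unitality is a minor adjustment rather than the locus of the real work. Your opening reduction (from strongly absorbing to absorbing, since $1_C\otimes\pi$ is its own infinite amplification) and the appeal to Kasparov stabilization and Thomsen's characterization are fine and match the paper, but as written the proposal is missing the central mechanism that makes the theorem true.
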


\begin{proof}
As $1_C\otimes \pi$ is isomorphic to the infinite amplification of itself, it suffices to show that $1_C\otimes \pi$ is absorbing.  Let $(1_C\otimes \pi)^+:A^+\to \LL(C\otimes B\otimes \ell^2)$ be the canonical unital extension of $1_C\otimes \pi$ to the unitization $A^+$ of $A$ (even if $A$ is already unital).  Using Kasparov's stabilization theorem \cite[Theorem 2]{Kasparov:1980sp}, the equivalence of (1) and (2) from \cite[Theorem 2.5]{Thomsen:2000aa}, \cite[Theorem 2.1]{Thomsen:2000aa}, and the canonical identifications $C\otimes B\otimes \mathcal{K}(\ell^2)=\K(C\otimes B\otimes \ell^2)$ and $\LL(C\otimes B\otimes \ell^2)=M(\K(C\otimes B\otimes \ell^2))$, it suffices to show that if $\sigma:A^+\to C\otimes B\otimes\mathcal{K}(\ell^2)$ is any ccp map then there is a sequence $(w_n)$ in $\LL(C\otimes B\otimes \K(\ell^2))$ such that 
$$
\lim_{n\to\infty} \|\sigma(a)-w_n^*(1_C\otimes \pi)^+(a)w_n\|= 0 \quad \text{for all}\quad a\in A^+
$$
and such that 
$$
\lim_{n\to\infty} \|w_n^*b\|=0 \quad \text{for all} \quad b\in C\otimes B\otimes \K(\ell^2).
$$

Let $\delta:C^+\to \mathcal{B}(\ell^2)$ be a unital representation of the unitization of $C$ such that $\delta^{-1}(\K(\ell^2))=\{0\}$.  Let $\iota:C^+\to \LL(C)$ be the canonical multiplication representation.  Kasparov's version of Voiculescu's theorem \cite[Theorem 5]{Kasparov:1980sp} combined with nuclearity of $C^+$ imply that there is a sequence $(v_{n,(0)})_{n=1}^\infty$ of isometries in $\LL(C,C^+\otimes \ell^2)$ such that 
$$
\|\iota(c)-v_{n,(0)}^*\big(1_{C^+}\otimes \delta(c)\big)v_{n,(0)}\|\to 0 \quad \text{as} \quad n\to\infty
$$
for all $c\in C^+$.  Perturbing $v_{n,{(0)}}$ slightly, we may assume that actually $v_{n,(0)}$ has image in $C^+\otimes \ell^2\{1,...,k(n)\}$ for some $k(n)$.

Let $(e_m)$ be an approximate unit for $C$.  We may consider multiplication by $e_m$ as defining an operator in $\LL_C(C^+\otimes H,C\otimes H))$ for any Hilbert space $H$, and therefore the product operators $e_mv_{n,(0)}$ make sense in $\LL(C,C\otimes \ell^2\{1,...,k(n)\})$.  For a suitable choice of $m(n)$ we have that if $v_{n,(1)}:=e_{m(n)}v_{n,(0)}$ then
$$
\|\iota(c)-v_{n,(1)}^*\big(1_C\otimes \delta(c)\big)v_{n,(1)}\|\to 0\quad \text{as} \quad n\to\infty
$$
for all $c\in C$.   Let $\delta_n:C\to \mathcal{K}(\ell^2\{1,...,k(n)\})$ be the compression of $\delta$ to the first $n$ basis vectors.  Note that by choice of $k(n)$ we have $v_{n,(1)}^*\big(1_C\otimes \delta(c)\big)v_{n,(1)}=v_{n,(1)}^*\big(1_C\otimes \delta_n(c)\big)v_{n,(1)}$ for all $n$, and thus that 
$$
\|\iota(c)-v_{n,(1)}^*\big(1_C\otimes \delta_n(c)\big)v_{n,(1)}\|\to 0\quad \text{as} \quad n\to\infty
$$
for all $c\in C$. 

Define
$$
\Delta_n:= \delta_n \otimes 1_{B\otimes \K(\ell^2)} :C\otimes B\otimes \K(\ell^2)\to \mathcal{K}(\ell^2\{1,...,k(n)\})\otimes B\otimes\K(\ell^2).
$$
Define $v_{n,(2)}:=v_{n,(1)}\otimes 1_{B\otimes \K(\ell^2)}$, so 
$$
v_{n,(2)}\in \LL_{C\otimes B\otimes \K(\ell^2)}\big(C\otimes B \otimes \K(\ell^2)\, ,\,C\otimes \ell^2\{1,...,k(n)\}\otimes B\otimes \K(\ell^2) \big).
$$  
Note then that 
$$
\|c-v_{n,(2)}^*\big(1_C\otimes \Delta_n(c)\big)v_{n,(2)}\|\to 0\quad \text{as} \quad n\to\infty
$$
for all $c\in C\otimes B\otimes \K(\ell^2)$ and so in particular
$$
\|\sigma(a)-v_{n,(2)}^*\big(1_C\otimes \Delta_n(\sigma(a))\big)v_{n,(2)}\|\to 0\quad \text{as} \quad n\to\infty
$$
for all $a\in A^+$.

To complete the proof, use an identification $\mathcal{K}(\ell^2\{1,...,k(n)\})\otimes\K(\ell^2)\cong \K(\ell^2)$ to give an isomorphism $\phi: \mathcal{K}(\ell^2\{1,...,k(n)\})\otimes B\otimes \K(\ell^2)\to  B\otimes\K(\ell^2)$.  Note that as $\pi$ is absorbing there is a sequence $(v_{n,(3)})_{n=1}^\infty$ in $\LL(B\otimes \K(\ell^2))$ such that 
$$
\|\phi(\Delta_n(\sigma(a)))-v_{n,(3)}^*\pi(a)v_{n,(3)}\|\to 0\quad \text{as} \quad n\to\infty
$$
for all $a\in A^+$ (compare the equivalence of (1) and (2) from \cite[Theorem 2.5]{Thomsen:2000aa} again).  As $\pi$ is strongly absorbing, we may moreover assume that the $v_{n,(3)}$ satisfy $v_{n,(3)}^*b\to 0$ for all $b\in B\otimes \K(\ell^2)$ by ensuring that for all $m$, $v_{n,(3)}v_{n,(3)}^*$ is orthogonal to any element of $B\otimes \K(\ell^2\{1,...,m\})$ for all large $n$.  It is then not too difficult to check that we can choose $l(n)$ such that if we set 
$$
w_n:=(1_C\otimes v_{l(n),(3)})v_{n,(2)}\in \LL(C\otimes B\otimes \K(\ell^2)),
$$ 
then $(w_n)$ has the right properties.
\end{proof}

\section{Localization algebras}\label{loc sec}

Throughout this section, $A$ and $B$ refer to separable $C^*$-algebras.  All Hilbert modules are countably generated, and all are over $B$ unless explicitly stated otherwise.  All representations of $A$ are on Hilbert $B$-modules unless explicitly stated otherwise.

In this section, we define localization algebras following \cite{Dadarlat:2016qc}, and show that uniform continuity can be replaced with continuity in the definition without changing the $K$-theory.  This result was first observed by Jianchao Wu (with a different proof), and we thank him for permission to include it here.  

The following definition comes from \cite[Section 3]{Dadarlat:2016qc}.  We use slightly different notation to that reference to  differentiate between the continuous and uniformly continuous versions. 

\begin{definition}\label{loc alg}
Let $\pi:A\to \LL(E)$ be a representation.  Define $\CL(\pi)$ to be the $C^*$-algebra of all bounded, uniformly continuous functions $b:[1,\infty)\to \LL(E)$ such that $[b_t,a]\to 0$ for all $a\in A$, and such that $ab_t$ is in $\K(E)$ for all $a\in A$ and all $t\in [1,\infty)$.  We call $\CL(\pi)$ the \emph{localization algebra} of $\pi$. 
\end{definition}

The following is\footnote{As explained in footnote \ref{sa footnote}, the cited result should be stated with the assumption that the representation is strongly absorbing, not just absorbing.} \cite[Theorem 4.4]{Dadarlat:2016qc}.

\begin{theorem}\label{kk iso dww}
Let $\pi:A\to \LL(E)$ be a strongly absorbing representation.  Then there is a canonical isomorphism $KK_*(A,B)\to K_*(\CL(\pi))$.   \qed
\end{theorem}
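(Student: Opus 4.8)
This is precisely \cite[Theorem 4.4]{Dadarlat:2016qc}, modulo the caveat recorded in footnote~\ref{sa footnote} that the representation there must be taken strongly absorbing rather than merely absorbing; so strictly speaking one could simply cite the reference. I will nonetheless sketch how I would organize a self-contained argument.

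The plan has three parts. \emph{First}, I would establish that $K_*(\CL(\pi))$ does not depend on the choice of strongly absorbing $\pi$. Given two strongly absorbing representations $\pi:A\to\LL(E)$ and $\sigma:A\to\LL(F)$, Proposition~\ref{cov isom} supplies a unitary $u\in C_{ub}([1,\infty),\LL(F,E))$ with $u^*\pi(a)u-\sigma(a)\in C_0([1,\infty),\K(F))$ for all $a$. Conjugation $b\mapsto u^*bu$ then carries $\CL(\pi)$ onto $\CL(\sigma)$ \emph{modulo} the ideal of bounded norm-continuous functions vanishing at infinity with values in $\K(F)$; since that ideal has vanishing $K$-theory (a standard Eilenberg-swindle argument), a five-lemma argument yields $K_*(\CL(\pi))\cong K_*(\CL(\sigma))$. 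This lets me fix the concrete model $\pi:A\to\LL(\ell^2\otimes B)$ of Proposition~\ref{abs ex}.

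\emph{Second}, I would compare $\CL(\pi)$ with a Paschke-type dual algebra. The defining conditions on $b=(b_t)\in\CL(\pi)$ say that as $t\to\infty$ the operator $b_t$ asymptotically commutes with $\pi(A)$ while remaining ``locally compact'' in the sense that $\pi(a)b_t,\,b_t\pi(a)\in\K(E)$; so the germ of $b$ at $t=\infty$ lives, up to the $K$-trivial ambiguity from the first part, over the dual algebra $\mathcal{D}_\pi:=\{T\in\LL(E)\mid [T,\pi(a)]\in\K(E)\text{ for all }a\in A\}$ (for nonunital $A$ one first passes to $A^+$ and the canonical unital extension of $\pi$). I would make this precise by identifying the ``germ at infinity'' quotient, up to $K$-equivalence, with a suspension of $\mathcal{D}_\pi/\K(E)$ — the suspension coordinate being the parameter space $[1,\infty)$ itself. \emph{Third}, I would invoke the Hilbert-module form of Paschke duality, $KK_i(A,B)\cong K_{1-i}(\mathcal{D}_\pi/\K(E))$ for a nondegenerate strongly absorbing $\pi$ (cf.\ \cite{Thomsen:2000aa, Kasparov:1980sp}), together with $K_*(SC)\cong K_{*+1}(C)$; the two degree shifts cancel, giving $K_*(\CL(\pi))\cong KK_*(A,B)$, and tracking naturality through the construction makes the isomorphism canonical. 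An alternative to the second part that sidesteps Paschke duality is to build the comparison map $KK_*(A,B)\to K_*(\CL(\pi))$ by hand: absorb a Kasparov module into $\pi$ and use the orthogonal, $\pi(A)$-commuting isometries of Remark~\ref{isoms} to ``spread it out along $[1,\infty)$'' into a path of projections whose commutators with $\pi(A)$ tend to $0$, then check directly that this is well defined on homotopy, additive, and bijective.

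The hard part will be the precise identification in the second step. That is exactly where strong absorption, as opposed to mere absorption, is essential: the Eilenberg-swindle and quasidiagonality arguments needed to control the asymptotics rely on the orthogonal isometries of Remark~\ref{isoms}, and their unavailability for a general absorbing representation is the source of the gap flagged in footnote~\ref{sa footnote}. Once that identification is in hand, the remaining bookkeeping is routine.
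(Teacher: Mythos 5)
Your proposal is correct and matches the paper: the paper offers no independent argument for Theorem \ref{kk iso dww}, quoting it directly from \cite[Theorem 4.4]{Dadarlat:2016qc} with the footnoted correction that $\pi$ must be strongly (not merely) absorbing, which is exactly your opening sentence. Your sketch moreover follows the same Paschke-duality/localization route as the cited proof — the one the paper itself recalls as steps (i)--(iv) in the proof of Lemma \ref{func prop} — the only cosmetic difference being that the degree shift there comes from the boundary map of $0\to \CL(\pi)\to \mathcal{D}_L(\pi)\to \mathcal{D}_L(\pi)/\CL(\pi)\to 0$, with $K_*(\mathcal{D}_L(\pi))=0$ killed by the swindle using the isometries of Remark \ref{isoms} (precisely where strong absorption enters, as you note), rather than literally from a suspension.
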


\begin{definition}\label{loc alg}
Let $\pi:A\to \LL(E)$ be a representation of $A$ on a Hilbert $B$-module.  Define $\CLc(\pi)$ to be the $C^*$-algebra of all bounded, continuous functions $b:[1,\infty)\to \LL(E)$ such that $[b_t,a]\to 0$ for all $a\in A$, and such that $ab_t$ is in $\K(E)$ for all $a\in A$ and all $t\in [1,\infty)$.  
\end{definition}

Clearly there is a canonical inclusion $\CL(\pi)\to \CLc(\pi)$.  Our main goal in this section is to establish the following result.

\begin{theorem}\label{c to uc}
Let $\pi:A\to \LL(F)$ be an infinite multiplicity (in particular, $\pi$ could be strongly absorbing) representation of $A$ on a Hilbert $B$-module.  Then the canonical inclusion $\CL(\pi)\to \CLc(\pi)$ induces an isomorphism on $K$-theory.
\end{theorem}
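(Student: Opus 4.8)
The plan is to prove that $\iota\colon\CL(\pi)\to\CLc(\pi)$ is injective and surjective on $K$-theory, the key device being a \emph{reparametrization} trick. First I would record an elementary fact: any bounded continuous $b\colon[1,\infty)\to\LL(F)$ is uniformly continuous on each $[1,n]$, so one can choose a strictly increasing Lipschitz surjection $f\colon[1,\infty)\to[1,\infty)$ with $f(1)=1$ that ``slows down at infinity'' enough that $b\circ f$ is uniformly continuous on all of $[1,\infty)$ -- concretely, build $f$ affinely on intervals $[c_n,c_{n+1}]$ of rapidly growing length, governed by the modulus of continuity of $b$ on $[1,n+3]$, each mapping onto $[n,n+1]$. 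Running the same construction over the compact sets $[0,1]\times[1,n]$ produces, for any $P\in C([0,1],\CLc(\pi))$, a single $f$ with $s\mapsto P_s\circ f$ landing in $C([0,1],\CL(\pi))$. Precomposition $\rho_f\colon b\mapsto b\circ f$ is then an injective $*$-endomorphism of $\CLc(\pi)$ that restricts to one of $\CL(\pi)$ (Lipschitzness of $f$ is exactly what keeps $b\circ f$ uniformly continuous when $b$ is), and the conditions ``$[b_t,a]\to0$'' and ``$ab_t\in\K$'' are preserved since $f(t)\to\infty$. One also checks $\rho_f$ is compatible with matrix amplifications and unitizations in the obvious way.

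Granting this, the theorem reduces to one key lemma: \emph{for $f$ as above, $\rho_f$ induces the identity on $K_*(\CLc(\pi))$ and on $K_*(\CL(\pi))$.} For surjectivity of $\iota_*$, represent a class $x\in K_*(\CLc(\pi))$ by a projection or unitary $p$ over (a matrix algebra over) $\CLc(\pi)^+$, pick $f$ so that $p\circ f$ lies over $\CL(\pi)^+$, and conclude $x=[p]=\rho_{f,*}[p]=[p\circ f]=\iota_*([p\circ f])$. For injectivity, if $p$ lies over $\CL(\pi)^+$ and $\iota_*[p]=0$, take a null-homotopy $P$ of $p$ over $\CLc(\pi)^+$, pick $f$ (using the sets $[0,1]\times[1,n]$) so that $P\circ f$ is a null-homotopy over $\CL(\pi)^+$, and conclude $[p]=\rho_{f,*}[p]=[p\circ f]=0$ in $K_*(\CL(\pi))$. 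The remaining bookkeeping with stabilization, the $K_1$-picture and scalar parts of projections is routine.

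The hard part will be the key lemma. The naive homotopy $\rho_{(1-s)\,\mathrm{id}+sf}$ through reparametrizations does \emph{not} work: $s\mapsto b\circ\big((1-s)\,\mathrm{id}+sf\big)$ is generally not norm continuous, because the two reparametrizing functions differ by an amount comparable with $|t-f(t)|$, which is unbounded in $t$, while $b$ is only bounded. This is exactly where I expect the infinite-multiplicity hypothesis on $\pi$ to be used: via the isometries $(s_n)$ with mutually orthogonal ranges, commuting with $\pi(A)$, and summing strictly to $1$ (Remark~\ref{isoms}), one should be able to absorb the reparametrization into an Eilenberg--swindle-type argument -- building a ``telescope'' out of the iterates $\rho_{f^n}$ and comparing $\mathrm{id}$ with $\rho_f$ up to homotopy -- in the same spirit as the proofs that the $K$-theory of a localization algebra is insensitive to the choice of (strongly) absorbing representation. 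A heavier alternative, which avoids reparametrization invariance altogether, would be to rerun the argument of \cite{Dadarlat:2016qc} underlying Theorem~\ref{kk iso dww} with ``continuous'' in place of ``uniformly continuous'' throughout, producing a canonical isomorphism $K_*(\CLc(\pi))\cong KK_*(A,B)$ compatible with $\iota_*$ and with the isomorphism of Theorem~\ref{kk iso dww}; but I would attempt the reparametrization route first.
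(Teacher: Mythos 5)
Your reduction of the theorem to the key lemma (``$\rho_{f,*}=\mathrm{id}$ on $K_*(\CLc(\pi))$ and $K_*(\CL(\pi))$ for an adapted reparametrization $f$'') is logically sound, and you correctly identify both that the adapted $f$ exists and that the naive homotopy $\rho_{(1-s)\mathrm{id}+sf}$ is not norm continuous. But the proposed repair is where the genuine gap lies, and it does not work as described. Any slowing-down surjection has $f(t)\le t$ with $f(1)=1$, so the iterates satisfy $f^{\circ n}(t)\to 1$ (or to some fixed point of $f$) for every fixed $t$. Consequently the telescope $\Theta(b)_t:=\sum_{n\ge 1}s_n\,b_{f^{\circ n}(t)}\,s_n^*$ does \emph{not} define an element of $\CLc(\pi)$ (nor a multiplier-level homomorphism you could feed into Lemmas \ref{orth sum} and \ref{isom lem}): for $a\in A$ the sum $\sum_n s_n a\,b_{f^{\circ n}(t)}s_n^*$ converges only strictly, and since $\|a\,b_{f^{\circ n}(t)}\|\to\|a\,b_1\|\neq 0$ in general, the condition $a\Theta(b)_t\in\K$ fails; worse, $\|[a,\Theta(b)_t]\|\ge\limsup_n\|[a,b_{f^{\circ n}(t)}]\|\approx\|[a,b_1]\|$ for large $t$, so the defining condition $[a,\Theta(b)_t]\to 0$ as $t\to\infty$ also fails, because iterating $f$ drags every time back into a bounded region and destroys the asymptotic commutation uniformly in the telescope index. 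The same obstruction recurs when you try to connect consecutive stages: $|f^{\circ n}(t)-f^{\circ(n+1)}(t)|$ is unbounded in $t$, so each comparison $\rho_{f^{\circ n}}\simeq\rho_{f^{\circ (n+1)}}$ meets exactly the non-continuity problem you flagged for $\mathrm{id}$ versus $\rho_f$. So ``absorb the reparametrization into an Eilenberg swindle'' is not available on $\CLc(\pi)$ itself, and the key lemma — which for $\CLc(\pi)$ is essentially as strong as the theorem — is left unproved.

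For comparison, the paper never proves global reparametrization invariance. It first decomposes $[1,\infty)$ by a Mayer--Vietoris/pushout argument into the even and odd unions of compact intervals, then uses evaluation at the integer points to reduce to the ideals $\CL^0(\pi;E)$ and $\CLc^0(\pi;E)$ of functions vanishing at those points. The uniformly continuous ideal is null-homotopic by squeezing each interval $[2n,2n+1]$ to its endpoint, and the continuous ideal is killed by an \emph{elementwise} swindle in which the reparametrizations are confined to each interval and contract to the endpoint, where the projection is exactly the scalar $1_k$; it is precisely this confinement that keeps the infinite sums $\sum_l s_l p^{(l)}s_l^*$ inside the algebra (the tails go to $1_k$ in norm, and the arguments never return to small times) and that makes the connecting homotopies equicontinuous via Lemma \ref{equi con}. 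To rescue your route you would either have to localize to compact pieces first — at which point you have reproduced the paper's proof — or actually carry out your ``heavier alternative'' of rerunning the machinery behind Theorem \ref{kk iso dww} with continuity in place of uniform continuity; as written, the argument has a gap at its central step.
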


We will need several preliminary lemmas.   The first of these follows from standard techniques: compare for example \cite[Proposition 4.1.7]{Higson:2000bs}.

\begin{lemma}\label{equi con}
There is a function $\omega:[0,\infty)\to [0,\infty)$ such that $\omega(0)=0$, such that ${\displaystyle \lim_{t\to0} \omega(t)=0}$, and with the following property.  Assume that $D$ is a $C^*$-algebra, and that $p^0,p^1\in C([0,1],D)$ are projections with the property that $\|p^0_t-p^1_t\|<1/2$ for all $t\in [0,1]$.  

Then there is a homotopy $(p^s)_{s\in [0,1]}$ connecting $p^0$ and $p^1$, and with the property that $\|p^s-p^{s'}\|\leq \omega(|s-s'|)$ for all $s,s'\in [0,1]$
\end{lemma}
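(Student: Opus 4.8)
The plan is to run the standard argument that two projections at distance less than $1$ in a $C^*$-algebra are unitarily conjugate along a short path, and to observe that all the constants involved depend only on the universal bound, not on the ambient algebra. Regard $D':=C([0,1],D)$ as an abstract $C^*$-algebra; since $[0,1]$ is compact, the pointwise hypothesis on $p^0,p^1$ upgrades to $\|p^0-p^1\|<1/2$ in $D'$. After adjoining a unit if necessary -- and noting that conjugation by a unitary of $(D')^+$ preserves the ideal $D'$, so the path we build lands back in $D'$ -- it suffices to prove the following: for any unital $C^*$-algebra $E$ and projections $p,q\in E$ with $\|p-q\|<1/2$ there is a path of projections $(p^s)_{s\in[0,1]}$ in $E$ with $p^0=p$, $p^1=q$, and $\|p^s-p^{s'}\|\le\omega(|s-s'|)$ for some $\omega$ depending only on the bound $1/2$.

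For the construction I would set $v:=qp+(1-q)(1-p)\in E$. A routine computation gives $v^*v=pqp+(1-p)(1-q)(1-p)$, hence $1-v^*v=p(1-q)p+(1-p)q(1-p)\ge 0$ with $\|1-v^*v\|\le\|p-q\|^2<1/4$; in particular $v$ is invertible and $v^*v$ -- hence also $|v|:=(v^*v)^{1/2}$ and $|v|^{-1}$ -- commutes with $p$. Let $u:=v|v|^{-1}$ be the unitary in the polar decomposition of $v$. Since $vp=qp=qv$ and $|v|$ commutes with $p$, we get $up=qu$, and therefore $upu^*=q$. The estimates above also give $\|v-1\|\le 2\|p-q\|<1$ and $\|1-v^*v\|<1/4$, which together force $\|u-1\|<2$; hence the spectrum of $u$ avoids $-1$, and applying the principal branch of the logarithm via continuous functional calculus produces a self-adjoint $h$ with $u=e^{ih}$ and $\|h\|<\pi$.

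Finally, define $p^s:=e^{ish}pe^{-ish}$ for $s\in[0,1]$. Each $p^s$ is a projection, $p^0=p$, $p^1=upu^*=q$, and the standard spectral estimate $\|e^{ish}-e^{is'h}\|=\|e^{i(s-s')h}-1\|\le|s-s'|\,\|h\|$ (from $|e^{i\theta}-1|\le|\theta|$) gives
$$
\|p^s-p^{s'}\|\le 2\|e^{ish}-e^{is'h}\|\le 2|s-s'|\,\|h\|<2\pi|s-s'|,
$$
so in particular $(p^s)$ is continuous in $s$. Thus $\omega(t):=2\pi t$ does the job: $\omega(0)=0$, $\lim_{t\to0}\omega(t)=0$, and the constant $2\pi$ plainly depends on nothing but the bound $1/2$. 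I do not expect a genuine obstacle here -- this is essentially \cite[Proposition 4.1.7]{Higson:2000bs} -- the only point needing minor care being to keep every constant, and in particular the bound $\|h\|<\pi$, independent of $D$ and of the two projections, which is automatic since everything is carried out through continuous functional calculus on fixed spectral sets.
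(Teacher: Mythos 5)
Your proof is correct and follows essentially the same route as the paper: both arguments build the standard intertwiner $v=p^1p^0+(1-p^1)(1-p^0)$ in $C([0,1],D)^+$, observe that all the relevant spectra are controlled by the universal bound $\|p^0-p^1\|<1/2$, and then conjugate $p^0$ by a norm-controlled path of unitaries obtained from $v$ by functional calculus. The only difference is how that path is produced -- the paper takes polar parts of the linear segment from $v$ to $1$, while you take a logarithm of the single unitary $u=v|v|^{-1}$ and use $s\mapsto e^{ish}$ -- which has the mild advantage of yielding the explicit Lipschitz modulus $\omega(t)=2\pi t$.
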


\begin{proof}
We will work in $C([0,1],D^+)$, where $D^+$ is the unitization of $D$.  Fix $t\in [0,1]$, and define 
$$
x_t:=p^1_tp^0_t+(1-p^1_t)(1-p^0_t).
$$
Then one checks as in the proof of \cite[Proposition 4.1.7]{Higson:2000bs} that $\|1-x_t\|<1/2$.  For $s\in [0,1]$, define  $x^s_t:=s1+(1-s)x_t$, which also satisfies $\|1-x^s_t\|<1/2$.  Hence each $x^s_t$ is invertible, and the norm of its inverse is at most $2$ by the usual Neumann series representation $(x^s_t)^{-1}=\sum_{n=0}^\infty (1-x^s_t)^n$.  Define moreover $u_t^s:=x^s_t((x^s_t)^*x^s_t)^{-1/2}$, which is unitary, and depends continuously on $(s,t)\in [0,1]\times [0,1]$ by the functional calculus (compare \cite[Lemma 1.2.5]{Rordam:2000mz}).  One computes as in the proof of \cite[Proposition 4.1.7]{Higson:2000bs} that 
$$
u_t^1p^0_t(u^1_t)^*=p^1_t
$$
for all $t\in [0,1]$.  Define finally $p^s_t:=u_t^sp^0_t(u^s_t)^*$.  Then $(p^s)_{s\in [0,1]}$ is a homotopy connecting $p^0$ and $p^1$.  The existence of the function $\omega$ follows from continuity of $u_t^s$ as a function of $(s,t)\in [0,1]\times [0,1]$ and a compactness argument.
\end{proof}

For the statement of the next lemma, recall that if $C$ and $D$ are $C^*$-algebras equipped with surjections $\pi_C:C\to Q$ and $\pi_D:D\to Q$ to a third $C^*$-algebra $Q$, then the \emph{pullback} is the $C^*$-algebra 
$$
P:=\{(c,d)\in C\oplus D\mid \pi_C(c)=\pi_D(d)\}.
$$
Such a pullback gives rise to a canonical diagram
\begin{equation}\label{po}
\xymatrix{ P \ar[r] \ar[d] & C \ar[d]^-{\pi_C} \\ D \ar[r]_-{\pi_D} & Q }
\end{equation}
where the arrows out of $P$ are the natural (surjective) coordinate projections.  We call any square isomorphic to one of this form a \emph{pullback square}.

See for example \cite[Proposition 2.7.15]{Willett:2010ay} for a proof of the next result.

\begin{lemma}\label{mv lem}
Given a pullback diagram as in line \eqref{po} above, there is a six-term exact sequence 
$$
\xymatrix{ K_0(P) \ar[r] & K_0(C)\oplus K_0(D) \ar[r] & K_0(Q) \ar[d] \\ K_1(Q) \ar[u] & K_1(C)\oplus K_1(D) \ar[l] & K_1(P) \ar[l] }
$$
of $K$-theory groups.  The diagram is natural for maps between pullback squares. \qed
\end{lemma}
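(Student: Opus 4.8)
The plan is to exhibit the pushout square as a morphism between two short exact sequences of $C^*$-algebras that share a common ideal, and then to splice the two resulting six-term $K$-theory sequences together by a purely homological (Barratt--Whitehead) argument.

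First I would record the elementary structure of $P=\{(c,d)\in C\oplus D\mid\pi_C(c)=\pi_D(d)\}$. Write $p_C\colon P\to C$ and $p_D\colon P\to D$ for the coordinate projections and set $J:=\ker\pi_C$. Since $\pi_C$ is surjective, $p_D$ is surjective: given $d\in D$, choose $c$ with $\pi_C(c)=\pi_D(d)$, so $(c,d)\in P$ and $p_D(c,d)=d$; moreover $\ker p_D=\{(c,0)\mid c\in J\}$, which I identify with $J$. This gives a short exact sequence $0\to J\to P\xrightarrow{p_D}D\to 0$, sitting alongside the standard extension $0\to J\to C\xrightarrow{\pi_C}Q\to 0$; the maps $\mathrm{id}_J$, $p_C$, $\pi_D$ assemble these into a commuting ladder of extensions, commutativity of the right-hand square being precisely the defining relation $\pi_C\circ p_C=\pi_D\circ p_D$ on $P$. (Symmetrically one could instead use $\ker\pi_D$ together with the projection $p_C$; the choice is immaterial.)

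Next I would apply the standard six-term exact sequence in $K$-theory (see e.g.\ \cite{Blackadar:1998yq}) to each row of the ladder, and invoke its naturality to obtain a morphism of six-term exact sequences which is the identity on each $K_*(J)$ group. At this point the conclusion is formal: a morphism of $\Z/2$-graded long exact sequences that is an isomorphism on every third term yields, by the Barratt--Whitehead lemma (the algebraic mapping-cone construction), a long exact sequence
$$
\cdots\to K_n(P)\xrightarrow{((p_C)_*,\,(p_D)_*)}K_n(C)\oplus K_n(D)\xrightarrow{(\pi_C)_*-(\pi_D)_*}K_n(Q)\to K_{n-1}(P)\to\cdots
$$
whose connecting map $K_n(Q)\to K_{n-1}(P)$ is the composite of the index map $K_n(Q)\to K_{n-1}(J)$ of the bottom row with the map $K_{n-1}(J)\to K_{n-1}(P)$ induced by the inclusion $J\hookrightarrow P$. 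Unrolling this $\Z/2$-graded sequence is exactly the six-term hexagon in the statement (the sign on the second map and the ordering of the summands $K_*(C)\oplus K_*(D)$ do not affect exactness). If one prefers not to quote Barratt--Whitehead, one can instead verify exactness at each of the three vertices of the hexagon by a direct diagram chase in the two six-term sequences; this is routine.

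For naturality, a morphism of pushout squares restricts to compatible morphisms of the two extensions above (it carries $\ker\pi_C$ into $\ker\pi'_{C}$ and is compatible with the coordinate projections $p_C,p_D$), hence induces a morphism of the two ladders of six-term sequences and therefore of the Barratt--Whitehead output, which is the asserted naturality. The whole argument is bookkeeping; the only genuine inputs are the surjectivity of the coordinate projections out of $P$ (which is where surjectivity of $\pi_C$, resp.\ $\pi_D$, enters) and the standard $C^*$-algebraic six-term sequence, so the one step I expect to need care is checking that the Barratt--Whitehead splicing really produces the maps named above.
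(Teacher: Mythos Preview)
The paper does not give a proof of this lemma at all: the statement is immediately followed by a \texttt{\textbackslash qed} and a citation to \cite[Proposition 2.7.15]{Willett:2010ay}. Your argument is correct and is in fact the standard derivation of the Mayer--Vietoris sequence for a pullback of $C^*$-algebras (identify $\ker p_D\cong\ker\pi_C$, build the ladder of extensions, and splice the two six-term sequences via Barratt--Whitehead), so there is nothing to compare beyond noting that you have supplied a proof where the paper merely cites one.
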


Before the proof of Theorem \ref{c to uc}, we record  two more well-known $K$-theoretic lemmas.  See for example \cite[Proposition 2.7.5 and Lemma 2.7.6]{Willett:2010ay} for proofs\footnote{The statement of \cite[Proposition 2.7.5]{Willett:2010ay} has a typo: $vv^*$ should be $v^*v$ where it appears there.}.

\begin{lemma}\label{orth sum}
If $\alpha,\beta:C\to D$ are $*$-homomorphisms with orthogonal images, then $\alpha+\beta:C\to D$ is also a $*$-homomorphism, and $(\alpha+\beta)_*=\alpha_*+\beta_*$ as maps on $K$-theory. \qed
\end{lemma}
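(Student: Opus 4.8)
The plan is to verify the two assertions in turn. The first is a direct computation; the second reduces, via functoriality of $K$-theory, to the standard computation of the $K$-theory of a direct sum, the point being to move the ``addition'' back to the domain, where the diagonal embedding is a genuine $*$-homomorphism, rather than manipulating projections over $D^+$ directly.

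First I would check that $\alpha+\beta$ is a $*$-homomorphism. Additivity and $*$-compatibility are immediate from the corresponding properties of $\alpha$ and $\beta$. For multiplicativity, observe that ``orthogonal images'' means $\alpha(c_1)\beta(c_2)=0$ for all $c_1,c_2\in C$; passing to adjoints (and using that $\alpha(C)$ and $\beta(C)$ are $*$-closed) also gives $\beta(c_2)\alpha(c_1)=0$, so the convention chosen does not matter. Hence for all $c,c'\in C$ the two cross terms in the expansion of $(\alpha(c)+\beta(c))(\alpha(c')+\beta(c'))$ vanish, so that
$$
(\alpha+\beta)(c)(\alpha+\beta)(c')=\alpha(c)\alpha(c')+\beta(c)\beta(c')=\alpha(cc')+\beta(cc')=(\alpha+\beta)(cc').
$$
An algebraic $*$-homomorphism between $C^*$-algebras is automatically contractive, so nothing further is needed.

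For the $K$-theory identity I would introduce the map $\gamma:C\oplus C\to D$, $\gamma(c_1,c_2):=\alpha(c_1)+\beta(c_2)$; the same orthogonality computation as above shows that $\gamma$ is a $*$-homomorphism. Writing $\iota_1,\iota_2:C\to C\oplus C$ for the two coordinate inclusions, $p_1,p_2:C\oplus C\to C$ for the two coordinate projections, and $j:C\to C\oplus C$ for the diagonal, one has, on the nose, $\alpha=\gamma\circ\iota_1$, $\beta=\gamma\circ\iota_2$, and $\alpha+\beta=\gamma\circ j$. Applying $K_*$, it remains to see that $j_*=(\iota_1)_*+(\iota_2)_*\colon K_*(C)\to K_*(C\oplus C)$. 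Here I would invoke the standard identification $K_*(C\oplus C)\cong K_*(C)\oplus K_*(C)$ (e.g.\ \cite[Proposition 2.7.5]{Willett:2010ay}): it is realized by $((p_1)_*,(p_2)_*)$, and since $p_i\circ\iota_i=\mathrm{id}_C$, $p_1\circ\iota_2=p_2\circ\iota_1=0$, and $p_i\circ j=\mathrm{id}_C$, the class $j_*x$ corresponds to $(x,x)$ while $(\iota_1)_*x$ and $(\iota_2)_*x$ correspond to $(x,0)$ and $(0,x)$. Since a group homomorphism out of a direct sum is the sum of its restrictions, $j_*=(\iota_1)_*+(\iota_2)_*$, and therefore
$$
(\alpha+\beta)_*=\gamma_*\circ j_*=\gamma_*\circ(\iota_1)_*+\gamma_*\circ(\iota_2)_*=(\gamma\iota_1)_*+(\gamma\iota_2)_*=\alpha_*+\beta_*.
$$

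I do not expect any real obstacle. The only subtleties are the bookkeeping with unitizations hidden inside the cited direct-sum computation, and the (standard) fact that the zero $*$-homomorphism induces the zero map on $K_*$, used implicitly in identifying $p_1\circ\iota_2$ and $p_2\circ\iota_1$ with the zero map; neither causes any difficulty.
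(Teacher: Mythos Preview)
Your proof is correct. The paper does not give its own proof of this lemma but simply cites \cite[Proposition 2.7.5 and Lemma 2.7.6]{Willett:2010ay}; your argument via the auxiliary $*$-homomorphism $\gamma:C\oplus C\to D$ and the direct-sum decomposition of $K$-theory is exactly the standard proof one finds in such references.
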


\begin{lemma}\label{isom lem}
Let $\alpha,\beta:C\to D$ be $*$-homomorphisms, and assume that there is a partial isometry $v$ in the multiplier algebra of $D$ such that $\alpha(c)v^*v=\alpha(c)$ for all $c\in C$, and so that $v\alpha(c)v^*=\beta(c)$ for all $c\in C$.  Then $\alpha$ and $\beta$ induce the same maps on $K$-theory. \qed
\end{lemma}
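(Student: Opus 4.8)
The plan is to dilate the partial isometry $v$ to a genuine unitary over $M_2(B)$ that intertwines stabilized versions of $\alpha$ and $\beta$, and then to invoke the standard fact that conjugation by a unitary in a multiplier algebra acts as the identity on $K$-theory. First I would unpack the hypotheses. Applying the relation $\alpha(a)v^*v = \alpha(a)$ to $a^*$ and taking adjoints gives $v^*v\,\alpha(a) = \alpha(a)$ for all $a \in A$, so $v^*v$ is a two-sided support unit for $\alpha(A)$; and from $v\alpha(a)v^* = \beta(a)$ together with the partial-isometry identities $v^*vv^* = v^*$, $vv^*v = v$ one likewise reads off $vv^*\,\beta(a) = \beta(a) = \beta(a)\,vv^*$.

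Next, set $D := M_2(B)$, so $M(D) = M_2(M(B))$, and put
$$
V := \begin{pmatrix} v & 1 - vv^* \\ 1 - v^*v & v^* \end{pmatrix} \in M_2(M(B)).
$$
Using only that $v$ is a partial isometry (so $v^*vv^* = v^*$, $vv^*v = v$, and $1-v^*v$, $1-vv^*$ are projections), a direct computation gives $V^*V = VV^* = 1$, hence $V$ is unitary in $M(D)$. Let $e\colon B \to D$ be the top-left corner inclusion $b \mapsto \mathrm{diag}(b,0)$, which induces an isomorphism on $K$-theory by stability, and set $\tilde\alpha := e\circ\alpha$ and $\tilde\beta := e\circ\beta$. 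A matrix computation using the support relations then yields $V\tilde\alpha(a)V^* = \tilde\beta(a)$ for all $a \in A$: multiplying out, the $(2,1)$-entry vanishes because $(1-v^*v)\alpha(a) = 0$, the $(1,2)$-entry vanishes because $v\alpha(a)(1-v^*v) = 0$, the $(2,2)$-entry is $0$, and the $(1,1)$-entry is exactly $v\alpha(a)v^* = \beta(a)$. Thus $\tilde\beta = \mathrm{Ad}(V)\circ\tilde\alpha$.

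Finally, conjugation by a unitary of a multiplier algebra is trivial on $K$-theory. If one prefers a self-contained argument to a citation, observe that $\mathrm{diag}(V, V^*)$ is connected to the identity in the unitary group of $M(M_2(D))$ via the Whitehead rotation homotopy, so $\mathrm{Ad}(\mathrm{diag}(V,V^*))$ is homotopic through $*$-homomorphisms to $\mathrm{id}_{M_2(D)}$ and hence induces the identity on $K_*(M_2(D))$; restricting to the top-left corner identifies this with $\mathrm{Ad}(V)_* = \mathrm{id}$ on $K_*(D)$. Consequently $\tilde\beta_* = (\mathrm{Ad}\,V)_*\circ\tilde\alpha_* = \tilde\alpha_*$, and cancelling the isomorphism $e_*$ from $\tilde\alpha_* = e_*\circ\alpha_*$ and $\tilde\beta_* = e_*\circ\beta_*$ gives $\alpha_* = \beta_*$. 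The only point needing care — bookkeeping rather than a genuine obstacle — is keeping track of which form of the support relation ($\alpha(a)v^*v = \alpha(a)$ versus $v^*v\,\alpha(a) = \alpha(a)$) is invoked in each entry of the two matrix computations; everything else is immediate from standard $K$-theoretic facts.
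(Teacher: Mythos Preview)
Your argument is correct. The dilation of $v$ to the $2\times 2$ unitary $V$ and the subsequent Whitehead rotation are exactly the standard route to this lemma, and your bookkeeping with the support relations is accurate. The paper itself does not prove the statement but simply cites \cite[Proposition~2.7.5 and Lemma~2.7.6]{Willett:2010ay} (with a typo correction noted in a footnote), so there is nothing to compare against beyond observing that your proof is the usual one underlying such references.
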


\begin{proof}[Proof of Theorem \ref{c to uc}]
Let $E:=\bigsqcup_{n\geq 1} [2n,2n+1]$ and $O:=\bigsqcup_{n\geq 1} [2n-1,2n]$, equipped with the restriction of the metric from $[1,\infty)$ .  Let $\CL(\pi;E)$ denote the collection of all bounded, uniformly continuous functions $b:E\to \LL(F)$ such that $ab_t\in \K(F)$ for all $a\in A$, and such that $[a,b_t]\to 0$ as $t\to\infty$.  Define $\CL(\pi;O)$ and $\CL(\pi;E\cap O)$ similarly, and define $\CLc(\pi;E)$, $\CLc(\pi;O)$, and $\CLc(\pi;O\cap E)$ analogously, but with `uniformly continuous' replaced by `continuous'.  Then we have a commutative diagram of pullback squares
$$
\xymatrix{ \CL(\pi) \ar[rr]\ar[dd] \ar[dr] & &  \CL(\pi;E)\ar[dr] \ar'[d][dd] & \\
& \CLc(\pi) \ar[rr] \ar[dd] & & \CLc(\pi;E) \ar[dd] \\
\CL(\pi;O)  \ar'[r][rr] \ar[dr] & & \CL(\pi;E\cap O) \ar[dr]& \\
& \CLc(\pi;O) \ar[rr] & & \CLc(\pi;E\cap O) }
$$
where the diagonal arrows are the canonical inclusions, and all the other arrows are the obvious restriction maps.  Using Lemma \ref{mv lem} and the five lemma, it thus suffices to show that the maps $\CL(\pi;G)\to \CLc(\pi;G)$ induce isomorphisms on $K$-theory for $G\in \{E,O,E\cap O\}$.  For $E\cap O$, which just equals $\N\cap [1,\infty)$, this is clear: the map is the identity on the level of $C^*$-algebras as there is no difference between continuity and uniform continuity in this case.  The cases of $E$ and $O$ are essentially the same, so we just focus on $E$.  

Let now $E_\N:=E\cap 2\N=\{2,4,6,...\}$ be the set of positive even numbers.  Then we have a surjective $*$-homomorphism $\CL(\pi;E)\to \CL(\pi;E_\N)$ defined by restriction, and similarly for $\CLc$; write $\CL^0(\pi;E)$ and $\CLc^0(\pi;E)$ for the respective kernels.  Then we have a commutative diagram
$$
\xymatrix{ 0\ar[r] & \CL^0(\pi;E) \ar[r]  \ar[d] & \CL(\pi;E) \ar[r]\ar[d] & \CL(\pi;E_\N) \ar[d] \ar[r] & 0 \\
0\ar[r] & \CLc^0(\pi;E) \ar[r]  & \CLc(\pi;E) \ar[r] & \CLc(\pi;E_\N) \ar[r] & 0 }
$$
of short exact sequences where the vertical maps are the canonical inclusions.  The right hand vertical map is the identity as there is no difference between continuity and uniform continuity for maps out of $E_\N$.  Hence by the five lemma and the usual long exact sequence in $K$-theory, it suffices to show that the left hand vertical map induces an isomorphism on $K$-theory.  For $r\in [0,1]$ let us define a $*$-homomorphism $h_r:\CL^0(\pi;E)\to \CL^0(\pi;E)$ by the following prescription.  For $b\in \CL^0(\pi;E)$ and $t\in [2n,2n+1]$, we set 
$$
(h_rb)_t:=b_{2n+r(t-2n)}
$$
(in words, $h_r$ contracts $[2n,2n+1]$ to $\{2n\}$ as $r$ varies from $1$ to $0$).  Using uniform continuity, $(h_r)_{r\in [0,1]}$ is a null-homotopy of $\CL^0(\pi;E)$, and therefore $K_*(\CL^0(\pi;E))=0$.  It thus suffices to show that $K_*(\CLc^0(\pi;E))=0$, which we spend the rest of the proof doing.

We will focus on the case of $K_0$ (which is in any case all we use in this paper); the case of $K_1$ is similar.  Take then  an arbitrary element $x\in K_0(\CLc^0(\pi;E))$, which we may represent by a formal difference $x=[p]-[1_k]$ where $p$ is a projection in the $m\times m$ matrices $M_m(\CLc^0(\pi;E)^+)$ over the unitization $\CLc^0(\pi;E)^+$ of $\CLc(\pi;E)$ for some $m$, and $1_k\in M_m(\C)\subseteq M_m(\CLc^0(\pi;E)^+)$ is the scalar matrix with $1$s in the first $k$ diagonal entries and $0$s elsewhere for some $k\leq m$.  Without loss of generality, we may think of $p$ as a continuous projection-valued function 
$$
p:E\to M_m(\LL(F))
$$
such that $a(p-1_k)\in M_m(\K(F))$ for all $a\in A$ (here we use the amplification of the representation of $A$ to a representation on $M_m(\LL(F))$ to make sense of this), such that $[a,p_t]\to 0$ for all $a\in A$, and such that $p_{2n}=1_k$ for all $n\in \N$.  

Now, for each $n$, the restriction $p|_{[2n,2n+1]}$ is uniformly continuous, whence there is some $r_n\in (0,1)$ such that if $t,s\in [2n,2n+1]$ satisfy $|t-s|\leq 1-r_n$, then $\|p_t-p_s\|<1/2$.  For each $l\in \N\cup\{0\}$, define $p^{(l)}:E\to M_m(\LL(F))$ to be the function whose restriction to $[2n,2n+1]$ is defined by 
$$
p^{(l)}_t:=p_{2n+(t-2n)(r_n)^l}.
$$  
Fix a sequence $(s_l)_{l=0}^\infty$ of isometries as in Remark \ref{isoms} and consider the formal difference 
$$
x_\infty:=\Bigg[\sum_{l=0}^\infty s_lp^{(l)}s_l^*\Bigg] - \Bigg[\sum_{l=0}^\infty s_l1_ks_l^*\Bigg]
$$
where the sum converges strictly in $M_m(\LL(F))\cong \LL(F^{\oplus m})$, pointwise in $t$ (we are abusing notation slightly: we should really have replaced $s_l$ by $1_{M_m(\C)}\otimes s_l$).  As $r_n<1$ and as $p_{2n}=1_k$ for each $n$, we see that for any $t$, $p^{(l)}_t-1_k\to 0$ as $l\to\infty$; it follows from this and the fact that each $s_l$ commutes with the representation of $A$ that $x_\infty$ gives a well-defined element of $K_0(\CLc^0(\pi;E))$.  

Now, let us consider the element $x_\infty+x$ of $K_0(\CLc^0(\pi;E))$. We claim this equals $x_\infty$.  As $K_0$ is a group, this forces $x=0$, and thus $K_0(\CLc^0(\pi;E))=0$ as required.  Indeed, first note that conjugating by the isometry 
$$
s:=\sum_{l=0}^\infty s_{l+1}s_{l}^*
$$
in the multiplier algebra of $\CLc^0(\pi;E)$ and applying Lemma \ref{isom lem} shows that 
$$
x_\infty=\Bigg[\sum_{l=1}^\infty s_lp^{(l-1)}s_l^*\Bigg] - \Bigg[\sum_{l=1}^\infty s_l1_ks_l^*\Bigg].
$$
The choice of the sequence $(r_n)$ and Lemma \ref{equi con}\footnote{For the case of $K_1$ one needs an analogue of Lemma \ref{equi con} that works for unitaries: this follows from an analogous functional calculus argument using that if $u,v$ are unitaries with $\|u-v\|<1/2$ then there is self-adjoint $h$ with $\|h\|\leq \pi$, and $uv^*=e^{ih}$.} guarantees the existence of a homotopy between $p^{(l-1)}$ and $p^{(l)}$ for each $l\geq 1$, and moreover that these homotopies can be assumed equicontinuous as $l$ varies (this includes the fact that the restrictions to the various intervals $[2n,2n+1]$ are equicontinuous  as $n$ varies).  It follows that 
\begin{equation}\label{xinf reform}
x_\infty=\Bigg[\sum_{l=1}^\infty s_lp^{(l)}s_l^*\Bigg] - \Bigg[\sum_{l=1}^\infty s_l1_ks_l^*\Bigg]
\end{equation}
On the other hand, applying Lemma \ref{isom lem} again, we have that 
$$
x=[s_0ps_0^*]-[s_01_ks_0^*].
$$
Hence combining this with line \eqref{xinf reform} above and also Lemma \ref{orth sum}
\begin{align*}
x+x_\infty & =[s_0ps_0^*]-[s_01_ks_0^*]+\Bigg[\sum_{l=1}^\infty s_lp^{(l)}s_l^*\Bigg] - \Bigg[\sum_{l=1}^\infty s_l1_ks_l^*\Bigg] \\ & =\Bigg[\sum_{l=0}^\infty s_lp^{(l)}s_l^*\Bigg] - \Bigg[\sum_{l=0}^\infty s_l1_ks_l^*\Bigg] \\ & =x_\infty
\end{align*}
and we are done.
\end{proof}

We finish this section with some technical results that we will need later.   The first goal is to show that $K_*(\CLc(\pi))$ only really depends on information `at $t=\infty$' in some sense.  This is made precise in Corollary \ref{quot iso} below, but we need some more notation first.

\begin{definition}\label{id quot}
Let $\pi:A\to \LL(E)$ be a representation of $A$.  Define $\ILc(\pi)$ to be the ideal in $\CLc(\pi)$ consisting of all functions $b$ such that $ab\in C_0([1,\infty),\K(E))$ for all $a\in A$.  Define $\QL(\pi):=\CLc(\pi)/\ILc(\pi)$ to be the corresponding quotient. 
\end{definition}

\begin{lemma}\label{eilenberg}
Let $\pi:A\to \LL(E)$ be an infinite multiplicity representation of $A$.  Then $\ILc(\pi)$ has trivial $K$-theory.
\end{lemma}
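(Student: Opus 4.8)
The proof will be an Eilenberg swindle, modeled closely on the argument used to show $K_*(\CLc^0(\pi;E))=0$ in the proof of Theorem \ref{c to uc}. I treat $K_0$; the case of $K_1$ is analogous. Fix a sequence $(s_l)_{l=0}^\infty$ of isometries in $\LL(E)$ as in Remark \ref{isoms}: mutually orthogonal ranges, commuting with $\pi(A)$, and with $\sum_l s_ls_l^*$ converging strictly to $1$. Viewed as constant functions, each $s_l$ is a multiplier of $\ILc(\pi)$ (and of its matrix amplifications), and so is the shift isometry $s:=\sum_l s_{l+1}s_l^*$, which satisfies $s^*s=1$ and $1-ss^*=s_0s_0^*$. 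Let $x\in K_0(\ILc(\pi))$ be arbitrary, represented by a formal difference $x=[p]-[1_k]$, where $p$ is a continuous, bounded, projection-valued function $p:[1,\infty)\to M_m(\LL(E))$ with $p-1_k\in M_m(\ILc(\pi))$, for some $m$ and $k\leq m$.

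The heart of the proof is a construction of ``reparametrized and pushed-to-infinity'' copies of $p$. Using continuity of $p$, choose a partition $1=t_0<t_1<t_2<\cdots$ with $t_n\to\infty$, with the interval lengths $t_{n+1}-t_n$ decreasing, and fine enough that the oscillation of $p$ on each $[t_n,t_{n+2}]$ is $<1/2$ and, moreover, so that these oscillations tend to $0$ fast enough to yield a single modulus $\omega_p$ (with $\omega_p(\delta)\to 0$ as $\delta\to 0$) controlling the oscillation of $p$ over sub-intervals of every partition cell $[t_n,t_{n+1}]$. Let $\sigma:[1,\infty)\to[1,\infty)$ be the piecewise-linear, strictly increasing function with $\sigma(t_n)=t_{n+1}$; since the lengths decrease, $\sigma$ is $1$-Lipschitz, and $\sigma(s)\geq s$ for all $s$. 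Set $p^{(l)}_t:=p_{\sigma^l(t)}$, so $p^{(0)}=p$. Each $p^{(l)}-1_k$ lies in $M_m(\ILc(\pi))$, since $\sigma^l(t)\to\infty$ as $t\to\infty$; consecutive copies satisfy $\|p^{(l)}_t-p^{(l+1)}_t\|<1/2$ for all $t$, since $\sigma^l(t)$ and $\sigma^{l+1}(t)$ always lie in a common $[t_n,t_{n+2}]$; and the family $\{p^{(l)}\}$ is equicontinuous in $t$ with modulus $\omega_p$, since each $\sigma^l$ maps partition cells onto partition cells $1$-Lipschitzly.

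Now form $x_\infty:=\big[\sum_{l\geq0}s_lp^{(l)}s_l^*\big]-\big[\sum_{l\geq0}s_l1_ks_l^*\big]$, the sums converging strictly pointwise in $t$. This is a well-defined element of $K_0(\ILc(\pi))$: the difference $\sum_l s_l(p^{(l)}_t-1_k)s_l^*$ lies in $M_m(\ILc(\pi))$ because, for each $a\in A$, (i) $a\sum_l s_l(p^{(l)}_t-1_k)s_l^*=\sum_l s_l a(p_{\sigma^l(t)}-1_k)s_l^*$ is block-diagonal with compact blocks whose norms tend to $0$ as $l\to\infty$ (as $\|a(p_s-1_k)\|\to 0$ when $s\to\infty$), hence is a norm-limit of finite sums of compacts and so compact; (ii) $\sup_l\|a(p_{\sigma^l(t)}-1_k)\|\leq\sup_{s\geq t}\|a(p_s-1_k)\|\to 0$ as $t\to\infty$, since $\sigma^l(t)\geq t$; and (iii) it is continuous in $t$, because $\big\|\sum_l s_l(p^{(l)}_t-p^{(l)}_{t'})s_l^*\big\|=\sup_l\|p^{(l)}_t-p^{(l)}_{t'}\|\leq\omega_p(|t-t'|)$ by equicontinuity. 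The remaining steps mirror the proof of Theorem \ref{c to uc}: conjugating by $s$ and applying Lemma \ref{isom lem} gives $x_\infty=\big[\sum_{l\geq1}s_lp^{(l-1)}s_l^*\big]-\big[\sum_{l\geq1}s_l1_ks_l^*\big]$; the uniform bound $\|p^{(l-1)}_t-p^{(l)}_t\|<1/2$ and Lemma \ref{equi con} furnish homotopies $p^{(l-1)}\simeq p^{(l)}$ with a common modulus of continuity, which (after checking that the resulting paths stay in $M_m(\ILc(\pi)^+)$ and assemble to a homotopy in $t$ as well, using the same equicontinuity estimates and that $p^{(l)}_t\equiv 1_k$ modulo $a$-compacts) yields $x_\infty=\big[\sum_{l\geq1}s_lp^{(l)}s_l^*\big]-\big[\sum_{l\geq1}s_l1_ks_l^*\big]$; and Lemma \ref{isom lem} again gives $x=[s_0p^{(0)}s_0^*]-[s_01_ks_0^*]$. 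Combining these with Lemma \ref{orth sum} gives $x+x_\infty=x_\infty$, so $x=0$. As $x$ was arbitrary, $K_0(\ILc(\pi))=0$, and $K_1(\ILc(\pi))=0$ similarly.

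I expect the main obstacle to be point (iii): elements of $\ILc(\pi)$ are only required to be continuous, not uniformly continuous, and can oscillate arbitrarily fast as $t\to\infty$, so a naive ``infinitely many shifted copies'' construction will fail to be continuous. The resolution is exactly that one chooses the reparametrization $\sigma$ adapted to a sufficiently fine partition of $[1,\infty)$, which forces the copies $p^{(l)}$ to be equicontinuous in $t$; once that is in place, the swindle from the proof of Theorem \ref{c to uc} applies essentially unchanged. A small point to be checked carefully is that the partition can always be chosen with $t_n\to\infty$ even when $p$ oscillates very fast near infinity, which works because $p$ is uniformly continuous near every finite point, so the cell lengths, though possibly tiny, cannot sum to a finite value.
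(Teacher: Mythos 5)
Your argument is essentially correct, but it takes a genuinely different route from the paper's. The paper proves this lemma in two steps: it first shows that the inclusion $\IL(\pi):=\CL(\pi)\cap\ILc(\pi)\hookrightarrow\ILc(\pi)$ of the uniformly continuous ideal is a $K$-theory isomorphism ``by the same argument as in the proof of Theorem \ref{c to uc}'', and then kills $K_*(\IL(\pi))$ by an Eilenberg swindle at the level of $*$-homomorphisms: with $(s_n)$ as in Remark \ref{isoms} one sets $\iota(b)=s_0bs_0^*$ and $\alpha(b)=\sum_{n\geq1}s_n\alpha_n(b)s_n^*$, where $\alpha_s(b)_t:=b_{t+s}$ is translation; the decay condition defining $\ILc$ guarantees $\alpha$ lands in the algebra, uniform continuity makes the homotopy $r\mapsto\sum_n s_n\alpha_{n-1+r}(b)s_n^*$ norm-continuous, and then Lemmas \ref{orth sum} and \ref{isom lem} give $\alpha_*+\iota_*=\alpha_*$, so $\iota_*=0$ while $\iota_*=\mathrm{id}$. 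You instead run the swindle directly in $\ILc(\pi)$ at the level of $K_0$-classes, replacing the translations $t\mapsto t+n$ by iterates of a $1$-Lipschitz reparametrization $\sigma\geq\mathrm{id}$ adapted to a partition on which $p$ has small oscillation; this is exactly the mechanism inside the paper's proof of Theorem \ref{c to uc} (there via the contractions $t\mapsto 2n+(t-2n)r_n^l$), with the role of the condition $p_{2n}=1_k$ now played by the $C_0$-decay of $a(p-1_k)$ along $\sigma^l(t)\geq t$. The paper's route buys a swindle with no continuity estimates at all once the reduction is made; your route avoids having to recheck that the Theorem \ref{c to uc} machinery (the pushout decomposition and the quotient to $E_\N$) goes through for the ideals, at the cost of doing the continuity and decay bookkeeping by hand.

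Two points should be stated more carefully, though neither is fatal. First, a single modulus $\omega_p$ with $\sup_l\|p^{(l)}_t-p^{(l)}_{t'}\|\leq\omega_p(|t-t'|)$ for \emph{all} $t,t'$ does not exist in general: for $|t-t'|$ small but spanning many short cells far out, the oscillation of $p$ over the union is not controlled by $|t-t'|$. What your construction does give, and what suffices, is the local statement: if the oscillations of $p$ over $[t_n,t_{n+2}]$ tend to $0$, then at each fixed $t$ the family $(p^{(l)})_l$ is equicontinuous (for large $l$ the points $\sigma^l(t),\sigma^l(t')$ lie in far-out adjacent cells of tiny oscillation, and the finitely many remaining $l$ are handled by plain continuity), and pointwise equicontinuity is all that membership in $\CLc$ requires. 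Second, in the homotopy step the blockwise homotopies from Lemma \ref{equi con} do assemble, but unlike in the proof of Theorem \ref{c to uc} the blocks do not converge to $1_k$ in norm, so you must check that the explicit formulas there propagate your estimates with universal constants: $\|a(p^{(l),s}_t-1_k)\|$ and $\|[a,p^{(l),s}_t]\|$ are bounded by a constant times the corresponding quantities for $p^{(l-1)}_t$ and $p^{(l)}_t$ (replace $ap$ and $aq$ by $a1_k$ up to the given errors to control $a(x_t-1)$, and factor $z^{-1/2}-1=(z-1)g(z)$ to handle the square root), which is what makes the block sum satisfy the $C_0$- and commutator-conditions uniformly in $l$. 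You flag both issues; with these checks written out the proof is complete.
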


\begin{proof}
Set $\IL(\pi):=\CL(\pi)\cap \ILc(\pi)$.  The same argument in the proof of Theorem \ref{c to uc} shows that the inclusion $\IL(\pi)\to \ILc(\pi)$ induces an isomorphism on $K$-theory.  It thus suffices to prove that $K_*(\IL(\pi))=0$, which we now do.

Let $(s_n)_{n=0}^\infty$ be a sequence of isometries in $\LL(E)$ that commute with $A$, and that have orthogonal ranges as in Remark \ref{isoms}.  We regard each $s_n$ as an isometry in the multiplier algebra of $\IL(\pi)$ by having it act pointwise in $t$.  Define 
$$
\iota:\IL(\pi)\to \IL(\pi),\quad b\mapsto s_0bs_0^*,
$$
which is a $*$-homomorphism that induces the identity map on $K$-theory by Lemma \ref{isom lem}.  On the other hand, for each $s\geq 0$, define a $*$-endomorphism $\alpha_s$ of $\IL(\pi)$ by the formula $\alpha_s(b)_t:=b_{t+s}$.  Note that for any norm-bounded sequence $(b_n)$ in $\LL(E)$, the sum 
$$
\sum_{n=1}^\infty s_nb_ns_n^*
$$
converges in the strict topology of $\LL(E)=M(\K(E))$.  Therefore we get a $*$-homomorphism
$$
\alpha:\IL(\pi)\to \IL(\pi), \quad \alpha(b):=\sum_{n=1}^\infty s_n\alpha_n(b) s_n^*
$$
(the image is in $\IL(\pi)$ as $ab_t\to 0$ as $t\to\infty$ for all $a\in A$, which implies that for each fixed $t$ and any $a\in A$, $a\alpha_n(b_t)\to 0$ as $n\to\infty$).  Now, the maps $\alpha$ and $\iota$ have orthogonal ranges, whence by Lemma \ref{orth sum}, $\alpha+\iota$ is also a $*$-homomorphism, and we have that as maps on $K$-theory, $\alpha_*+\iota_*=(\alpha_*+\iota_*)$.  Define $s:=\sum_{n=0}^\infty s_{n+1}s_n^*$ (convergence in the strict topology), which we think of as a multiplier of $\IL(\pi)$.  Applying Lemma \ref{isom lem} again, we see that $\iota+\alpha$ induces the same map on $K$-theory as the map $b\mapsto s(\iota(b)+\alpha(b))s^*$, which is the map
$$
\IL(\pi)\to \IL(\pi),\quad b\mapsto \sum_{n=1}^\infty s_n \alpha_{n-1}(b)s_n^*.
$$
On the other hand, using that elements of $\IL(\pi)$ are uniformly continuous, we get a homotopy 
$$
b\mapsto \sum_{n=1}^\infty s_n \alpha_{n-1+r}(b)s_n^*,\quad r\in [0,1]
$$
between this map and $\alpha$.  In other words, we now have that $\alpha_*+\iota_*=\alpha_*$ as maps on $K$-theory.  This forces $\iota_*$ to be the zero map on $K_*(\IL(\pi))$.  However, we also observed already that $\iota_*$ is the identity map, so $K_*(\IL(\pi))$ is indeed zero. 
\end{proof}

The following corollary is immediate from the six-term exact sequence in $K$-theory.

\begin{corollary}\label{quot iso}
Let $\pi:A\to \LL(E)$ be an infinite multiplicity representation of $A$ on a Hilbert $B$-module.  Then the  canonical quotient map $\CLc(\pi)\to \QL(\pi)$ induces an isomorphism on $K$-theory. \qed
\end{corollary}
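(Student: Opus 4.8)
The plan is simply to feed the defining extension of $\QL(\pi)$ into the six-term exact sequence in $K$-theory. By Definition \ref{id quot}, $\ILc(\pi)$ is a closed two-sided ideal in $\CLc(\pi)$ with quotient $\QL(\pi)$, so we have a short exact sequence of $C^*$-algebras
$$
0 \longrightarrow \ILc(\pi) \longrightarrow \CLc(\pi) \stackrel{q}{\longrightarrow} \QL(\pi) \longrightarrow 0,
$$
where $q$ is the canonical quotient map appearing in the statement.

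First I would invoke Lemma \ref{eilenberg}, whose hypothesis (that $\pi$ has infinite multiplicity) is exactly what we have assumed: this gives $K_0(\ILc(\pi)) = K_1(\ILc(\pi)) = 0$. Next I would write down the associated cyclic six-term exact sequence
$$
\xymatrix{ K_0(\ILc(\pi)) \ar[r] & K_0(\CLc(\pi)) \ar[r]^-{q_*} & K_0(\QL(\pi)) \ar[d] \\ K_1(\QL(\pi)) \ar[u] & K_1(\CLc(\pi)) \ar[l]_-{q_*} & K_1(\ILc(\pi)) \ar[l] }.
$$
Since the two corner groups $K_0(\ILc(\pi))$ and $K_1(\ILc(\pi))$ both vanish, exactness at $K_0(\CLc(\pi))$, $K_0(\QL(\pi))$, $K_1(\CLc(\pi))$, and $K_1(\QL(\pi))$ forces $q_*$ to be both injective and surjective in degrees $0$ and $1$; hence $q$ induces an isomorphism on $K_*(\,\cdot\,) = K_0(\,\cdot\,) \oplus K_1(\,\cdot\,)$, as claimed.

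I do not expect any genuine obstacle here: the only substantive ingredient, the vanishing $K_*(\ILc(\pi)) = 0$, has already been established in Lemma \ref{eilenberg}, and the rest is a formal consequence of the long exact sequence. The sole point meriting a moment's care is confirming that the short exact sequence above really is the correct one, i.e.\ that $\QL(\pi)$ is literally $\CLc(\pi)/\ILc(\pi)$ and that $q$ is the map named in the statement --- both of which hold by construction in Definition \ref{id quot}.
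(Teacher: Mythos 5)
Your proposal is correct and coincides with the paper's argument: the paper also deduces the corollary immediately from the six-term exact sequence applied to $0\to \ILc(\pi)\to \CLc(\pi)\to \QL(\pi)\to 0$, using Lemma \ref{eilenberg} to kill $K_*(\ILc(\pi))$.
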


We will need one more definition and lemma about the structure of $\CLc(\pi)$.

\begin{definition}\label{clpi}
Let $\pi:A\to \LL(E)$ be a representation of $A$ on a Hilbert $B$-module.   Define 
$$
\CLc(\pi;\K):=C_{b}([1,\infty),\K(E))\cap \CLc(\pi),
$$
which is an ideal in $\CLc(\pi)$.
\end{definition}

\begin{lemma}\label{cl sum}
Let $\pi:A\to \LL(E)$ be a representation of $A$ on a Hilbert $B$-module.  With notation as in Definitions \ref{id quot} and \ref{clpi}, we have 
$$
\CLc(\pi)=\CLc(\pi;\K)+\ILc(\pi).
$$
In particular, the restriction of the quotient map $\CLc(\pi)\to \QL(\pi)$ to $\CLc(\pi;\K)$ is surjective.
\end{lemma}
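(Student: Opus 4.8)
The plan is to prove the displayed equality by showing directly that every $b\in\CLc(\pi)$ decomposes as $b=c+d$ with $c\in\CLc(\pi;\K)$ and $d\in\ILc(\pi)$; the reverse inclusion $\CLc(\pi;\K)+\ILc(\pi)\subseteq\CLc(\pi)$ is automatic since both summands are ideals in $\CLc(\pi)$ (Definitions \ref{id quot} and \ref{clpi}). The only real tool needed is a sequential approximate unit of the separable $C^*$-algebra $A$, used to ``cut down'' each $b_t$ to a compact operator.

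First I would fix a sequential approximate unit $(u_n)_{n=1}^\infty$ of $A$ with $0\leq u_n\leq 1$, and recall that $\|au_n-a\|\to 0$ and $\|u_na-a\|\to 0$ for every $a\in A$, hence $\|[u_n,a]\|\to 0$. Then define $c\in C_b([1,\infty),\LL(E))$ by interpolation: for $n\in\N$ and $t\in[n,n+1]$ set
$$
c_t:=(n+1-t)\,\pi(u_n)b_t+(t-n)\,\pi(u_{n+1})b_t .
$$
The two formulas agree at each integer endpoint, so $c$ is continuous, and clearly $\|c_t\|\leq\|b\|$. Since $u_n\in A$, the defining property of $\CLc(\pi)$ gives $\pi(u_n)b_t\in\K(E)$ for all $n$ and $t$, hence $c_t\in\K(E)$ for all $t$. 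Finally, to see $c\in\CLc(\pi)$ it remains to check $[c_t,a]\to 0$ as $t\to\infty$ for all $a\in A$: expanding $[\pi(u_n)b_t,a]=\pi(u_n)[b_t,a]+\pi([u_n,a])b_t$, the first term tends to $0$ because $[b_t,a]\to 0$, while the second is bounded by $\|[u_n,a]\|\,\|b\|$, which tends to $0$ as $n\to\infty$ and hence as $t\to\infty$. Thus $c\in\CLc(\pi;\K)$.

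Next I would set $d:=b-c$. Since $\CLc(\pi)$ is an algebra containing $c$ we have $d\in\CLc(\pi)$, and $\pi(a)d_t$ is compact for every $a$ and $t$ because both $\pi(a)b_t$ and $c_t$ are. To conclude $d\in\ILc(\pi)$ it remains only to check $\|\pi(a)d_t\|\to 0$ as $t\to\infty$ for all $a\in A$. For $t\in[n,n+1]$ one computes
$$
\pi(a)d_t=(n+1-t)\,\pi(a-au_n)b_t+(t-n)\,\pi(a-au_{n+1})b_t ,
$$
so $\|\pi(a)d_t\|\leq\max\big(\|a-au_n\|,\|a-au_{n+1}\|\big)\,\|b\|$, which tends to $0$ as $n\to\infty$ and hence as $t\to\infty$. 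This yields $b=c+d\in\CLc(\pi;\K)+\ILc(\pi)$. The ``in particular'' assertion is then immediate: for $b\in\CLc(\pi)$ the class $b+\ILc(\pi)\in\QL(\pi)$ equals $c+\ILc(\pi)$ with $c\in\CLc(\pi;\K)$, so the restriction of the quotient map $\CLc(\pi)\to\QL(\pi)$ to $\CLc(\pi;\K)$ is surjective.

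I do not anticipate a genuine obstacle: this is the standard quasi-central-approximate-unit argument. The points requiring a little care are that the cut-down function $c$ be genuinely continuous on all of $[1,\infty)$ (handled by linearly interpolating between $\pi(u_n)b_t$ and $\pi(u_{n+1})b_t$, which match at the integers) and that $c$ still asymptotically commute with $A$ — which is precisely where one uses that $(u_n)$ is asymptotically central in $A$, together with the hypothesis $[b_t,a]\to 0$.
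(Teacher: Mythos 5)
Your proof is correct and is essentially the paper's argument: the paper also takes a sequential approximate unit $(h_n)$ of $A$, interpolates it linearly over $[n,n+1]$ to get $h\in C_{ub}([1,\infty),A)$, and decomposes $b=hb+(1-h)b$ with $hb\in\CLc(\pi;\K)$ and $(1-h)b\in\ILc(\pi)$ — your $c$ and $d$ are exactly $hb$ and $(1-h)b$. The only cosmetic difference is that the paper phrases $h$ as a multiplier of $\CLc(\pi)$ and leaves the verifications as "direct checks," which you have written out correctly.
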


\begin{proof}
Let $(h_n)$ be a sequential approximate unit for $A$, and define $h\in C_{ub}([1,\infty),A)$ by setting $h_t:=(n+1-t)h_n+(t-n)h_{n+1}$ for $t\in [n,n+1]$.  Then a direct check using that $[a,h]\in C_0([1,\infty),A)$ for any $a\in A$ shows that $h$ defines a multiplier of $\CLc(\pi)$.  Moreover, for any $b\in \CLc(\pi)$, $b=(1-h)b+hb$, and one checks directly that $(1-h)b$ is in $\ILc(\pi)$ and that $hb$ is in $\CLc(\pi;\K)$.  This gives the result on the sum, and the result on the quotient follows immediately. 
\end{proof}

Our final goal in this section is to check that the isomorphisms from Theorem \ref{kk iso dww} and Theorem \ref{c to uc} are compatible with a special case of functoriality for $KK$-theory.  

For this, let $C$ be a separable $C^*$-algebra, and let $\phi:B\to C$ be a $*$-homomorphism.  Let $E$ be a Hilbert $B$-module, and let $E\otimes_\phi C$ be the internal tensor product defined using $\phi$, which is a Hilbert $C$-module.  As discussed on \cite[page 42]{Lance:1995ys} there is a canonical $*$-homomorphism 
$$
\Phi:\LL(E)\to \LL(E\otimes_\phi C), \quad a\mapsto a\otimes 1_C.
$$
Abusing notation slightly, we also write $\Phi$ for the $*$-homomorphism 
$$
C_b([1,\infty),\LL(E))\to C_b([1,\infty),\LL(E\otimes_\phi C))
$$
defined by applying $\Phi$ pointwise.  Let $\pi_B:A\to \LL(E)$ and $\pi_C:A\to \LL(F)$ be representations of $A$ on Hilbert $B$- and $C$-modules respectively.  

\begin{definition}\label{cov isom def}
With notation as above, a \emph{covering isometry} for $\phi$ (with respect to $\pi_B$ and $\pi_C$) is any isometry $v\in C_{b}([1,\infty),\LL(E\otimes_\phi C,F))$ such that 
$$
v^*\pi_C(a)v-(\Phi\circ \pi_B)(a)\in C_0([1,\infty),\K(E\otimes_\phi C))
$$
for all $a\in A$.
\end{definition}

\begin{lemma}\label{cov isom $*$-hom}
With notation as above, if $v$ is a covering isometry for $\phi$, then the formula 
$$
\phi^v:\CLc(\pi_B)\to \CLc(\pi_C),\quad \phi^v(b):=v\Phi(b)v^*
$$
gives a well-defined $*$-homomorphism.  Moreover, the induced map 
$$
\phi^v_*:K_*(\CLc(\pi_B))\to K_*(\CLc(\pi_C))
$$  
on $K$-theory does not depend on the choice of $v$.  Finally, if $\pi_C$ is strongly absorbing, then a covering isometry for $\phi$ always exists, and can be taken to belong to $C_{ub}([1,\infty),\LL(E\otimes_\phi C,F))$ (i.e.\ to be uniformly continuous, not just continuous).
\end{lemma}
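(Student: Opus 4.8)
The plan is to verify the three claims in turn: well-definedness of $\phi^v$, independence of the induced $K$-theory map on the choice of $v$, and existence of (a uniformly continuous) covering isometry when $\pi_C$ is strongly absorbing.

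First I would check that $\phi^v$ is well-defined. Given $b\in\CLc(\pi_B)$, the function $t\mapsto v_t\Phi(b_t)v_t^*$ is bounded and continuous since $v$ and $\Phi(b)$ are. For the commutator condition: $[\phi^v(b)_t,\pi_C(a)] = [v_t\Phi(b_t)v_t^*,\pi_C(a)]$, and using $\pi_C(a)v_t = v_t(\Phi\circ\pi_B(a))_t + (\text{something in }\K)$ — more precisely, the algebraic trick of Lemma \ref{v on both sides} gives $\pi_C(a)v - v\,\Phi(\pi_B(a))\in C_0([1,\infty),\K(E\otimes_\phi C,F))$ — one rewrites the commutator in terms of $\Phi([b_t,\pi_B(a)])$, which tends to $0$ because $b\in\CLc(\pi_B)$, plus terms involving $\pi_C(a)v_t - v_t\Phi(\pi_B(a))_t$, which tend to $0$ as well; hence $[\phi^v(b)_t,\pi_C(a)]\to0$. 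For the compactness condition $\pi_C(a)\phi^v(b)_t\in\K(F)$: write $\pi_C(a)v_t\Phi(b_t)v_t^*$; using the same covering relation modulo $\K$, $\pi_C(a)v_t\approx v_t\Phi(\pi_B(a))_t$, and $\Phi(\pi_B(a))_t\Phi(b_t) = \Phi(\pi_B(a)b_t)$ with $\pi_B(a)b_t\in\K(E)$ by hypothesis, so $\Phi(\pi_B(a)b_t)\in\K(E\otimes_\phi C)$ (the functor $\Phi$ maps compacts to compacts on internal tensor products — this uses that $E\otimes_\phi C$ is the module obtained from $E$). Thus $\pi_C(a)\phi^v(b)_t\in\K(F)$. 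That $\phi^v$ is a $*$-homomorphism is immediate from $v_t$ being an isometry so $v_t^*v_t=1$. The main subtlety here is keeping track of which pieces are compact-valued versus $C_0$-of-compact-valued, but this is routine.

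Next, independence of $\phi^v_*$ on $v$. Given two covering isometries $v,w$, I would form the isometry $v\oplus w$ (or rather use a rotation argument): the standard device is to consider the path of isometries $\cos(\theta)v\oplus$ (partial isometry into a second copy) — more directly, since $\pi_C$ has infinite multiplicity when it is the target of interest, one can conjugate to move $v$ to $w$. The clean approach: consider the isometry $V:=\begin{pmatrix}v&0\\0&w\end{pmatrix}$ into $F\oplus F$ and use that $\phi^v\oplus\phi^w$ and, say, $\phi^v\oplus 0$ differ by an orthogonal-sum / Lemma \ref{orth sum} argument together with a homotopy of isometries $v_\theta = \cos\theta\, \tilde v + \sin\theta\, \tilde w$ inside $\LL(E\otimes_\phi C, F\oplus F)$ connecting the inclusion onto the first summand composed with $v$ to that onto the second composed with $w$; since each $v_\theta$ is a covering isometry (the covering condition is preserved under this rotation because both $v$ and $w$ satisfy it and the error terms stay in $C_0$ of compacts), this yields a homotopy of $*$-homomorphisms, hence equality on $K$-theory after applying Lemma \ref{isom lem} to strip off the second summand. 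This is the step I expect to be the main obstacle: one must be careful that the rotation path consists of genuine covering isometries and that the homotopy is through $*$-homomorphisms into $\CLc(\pi_C)$ enlarged by matrices — but this is exactly the kind of argument used throughout the absorbing-representation literature (e.g.\ \cite{Dadarlat:2005aa,Thomsen:2000aa}), so no genuinely new idea is needed.

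Finally, existence of a covering isometry when $\pi_C$ is strongly absorbing. The composition $\Phi\circ\pi_B:A\to\LL(E\otimes_\phi C)$ is a representation (in particular a ccp map) of $A$ on the Hilbert $C$-module $E\otimes_\phi C$. Since $\pi_C$ is strongly absorbing for the pair $(A,C)$, Proposition \ref{cov isom} applied with $\pi=\pi_C$ and $\sigma=\Phi\circ\pi_B$ produces an isometry $v\in C_{ub}([1,\infty),\LL(E\otimes_\phi C,F))$ with $v^*\pi_C(a)v - \Phi(\pi_B(a))\in C_0([1,\infty),\K(E\otimes_\phi C))$ for all $a\in A$ — which is exactly the definition of a covering isometry, and it is uniformly continuous as required. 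This completes the proof.
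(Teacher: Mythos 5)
Your proof is correct, and two of the three parts (well-definedness via Lemma \ref{v on both sides} together with the fact that $\Phi$ maps $\K(E)$ into $\K(E\otimes_\phi C)$, and existence/uniform continuity via Proposition \ref{cov isom} applied to the ccp map $\Phi\circ\pi_B$) are exactly the paper's argument. Where you diverge is the independence of $\phi^v_*$ on the choice of covering isometry: you double the module and run a rotation homotopy $v_\theta=\cos\theta\,\tilde v+\sin\theta\,\tilde w$ through covering isometries into $F\oplus F$, then recover the statement from homotopy invariance plus a corner/flip argument (which implicitly uses the identification $\CLc(\pi_C\oplus\pi_C)\cong M_2(\CLc(\pi_C))$ and matrix stability of $K$-theory to ``strip off'' the second summand). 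The paper avoids all of this: it observes that $u:=wv^*$ is a partial isometry in the multiplier algebra of $\CLc(\pi_C)$ (checked by the same kind of computation as in the well-definedness step), that $\phi^v(b)\,u^*u=\phi^v(b)$ and $u\,\phi^v(b)\,u^*=\phi^w(b)$, and then quotes Lemma \ref{isom lem} directly — no matrices, no homotopy invariance. Your route is a standard and perfectly valid alternative (and does not actually need the infinite-multiplicity remark you make in passing, since the doubling trick supplies the extra room); the paper's route buys a shorter argument resting only on the conjugation lemma, at the cost of the small multiplier-algebra verification for $wv^*$, while yours buys a more template-like argument at the cost of the $M_2$ bookkeeping you would need to spell out.
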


\begin{proof}
Let $v$ be a covering isometry for $\phi$.  For notational simplicity, write $\sigma:=\Phi\circ \pi_B$.  Using Lemma \ref{v on both sides} we have that 
$$
\pi_C(a)v-v\sigma(a)\in C_0([1,\infty),\K(E\otimes _\phi C,F))
$$ 
for all $a\in A$.  Note that for $a\in A$ and $b\in \CLc(\pi_B)$
$$
\pi_C(a)\phi^v(b)=(\pi_C(a)v-v\sigma(a))\Phi(b)v^*+v\Phi(\pi_B(a)b)v^*;
$$
using that $\Phi$ takes $\K(E)$ to $\K(E\otimes_\phi C)$ (see \cite[Proposition 4.7]{Lance:1995ys}), this shows that $\pi_C(a)\phi^v(b)\in C_{b}([1,\infty),\K(F))$ .  Similarly,  
\begin{align*}
[\pi_C(a),\phi^v(b)] =\big(\pi_C(a)v-v\sigma(a)\big)\Phi(b)v^*   & +v\Phi([\pi(a),b])v^* \\ & +v\Phi(b)\big(v^*\pi_C(a)-\sigma(a)v^*\big),
\end{align*}
whence $[\pi_C(a),\phi^v(b)]\in C_0([1,\infty),\K(F))$.  It follows that $\phi^v$ is indeed a well-defined $*$-homomorphism $\CLc(\pi_B)\to \CLc(\pi_C)$. 

Let now $v,w$ be possibly different covering isometries for $\phi$.   Using similar computations to the above, one checks that $wv^*$ is an element of the multiplier algebra of $\CLc(\pi_C)$ that conjugates the $*$-homomorphisms $\phi^v$ and $\phi^w$ to each other.  The fact that $\phi^v_*=\phi^w_*$ as maps $K_*(\CLc(\pi_B))\to K_*(\CLc(\pi_C))$ follows from this and Lemma \ref{isom lem}.  

Finally, if $\pi_C$ is strongly absorbing, then covering isometries exist, and can be assumed uniformly continuous, by Proposition \ref{cov isom}.
\end{proof}

\begin{definition}\label{cl func}
Let $\pi_B:A\to \LL(E)$ and $\pi_C:A\to \LL(F)$ be representations of $A$ on a Hilbert $B$-module and Hilbert $C$-module respectively, with $\pi_C$ strongly absorbing.  Let $\phi:B\to C$ be any $*$-homomorphism.  Then Lemma \ref{cov isom $*$-hom} gives a well-defined homomorphism $K_*(\CLc(\pi_B))\to K_*(\CLc(\pi_C))$, which we denote $\phi_*$.   
\end{definition}

On the other hand, for a $*$-homomorphism $\phi:B\to C$, let us write $\phi_*:KK(A,B)\to KK(A,C)$ for the usual functorially induced map on $KK$-theory.  The following lemma gives compatibility between these two maps.

\begin{lemma}\label{func prop}
With notation as above, assume that both $\pi_B:A\to \LL(E)$ and $\pi_C:A\to \LL(F)$ are strongly absorbing, and let $KK(A,B)\to K_0(\CLc(\pi_B))$ and $KK(A,C)\to K_0(\CLc(\pi_C))$ be the isomorphisms from Theorem \ref{kk iso dww} and Theorem \ref{c to uc}.  Then the diagram 
$$
\xymatrix{ KK(A,B) \ar[r] \ar[d]^-{\phi_*} & K_0(\CLc(\pi_B)) \ar[d]^-{\phi_*} \\
KK(A,C) \ar[r]  & K_0(\CLc(\pi_C))  }
$$
commutes.
\end{lemma}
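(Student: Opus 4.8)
The plan is to reduce the statement to a direct comparison at the level of the localization algebra picture of $KK$. Recall that the isomorphism $KK(A,B)\to K_0(\CLc(\pi_B))$ of Theorem \ref{kk iso dww} and Theorem \ref{c to uc} factors as $KK(A,B)\to K_0(\CL(\pi_B))\to K_0(\CLc(\pi_B))$, where the first map is the Dadarlat--Wu--Willett isomorphism and the second is induced by the inclusion $\CL(\pi_B)\hookrightarrow\CLc(\pi_B)$. Since both halves are involved, I would first observe that the inclusion $\CL(\pi)\hookrightarrow\CLc(\pi)$ is itself natural for the functoriality maps: given a covering isometry $v$ for $\phi$ that is \emph{uniformly} continuous (which exists by the last sentence of Lemma \ref{cov isom $*$-hom}, using that $\pi_C$ is strongly absorbing), the formula $\phi^v(b)_t=v_t\Phi(b_t)v_t^*$ restricts to a $*$-homomorphism $\CL(\pi_B)\to\CL(\pi_C)$, and the square relating these restrictions to the full $\CLc$ maps commutes on the nose. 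Hence it suffices to prove the analogous commuting square with $\CL$ in place of $\CLc$, i.e.\ that the DWW isomorphism $KK(A,B)\cong K_0(\CL(\pi_B))$ intertwines $\phi_*$ on $KK$ with $\phi^v_*$ on $K$-theory of localization algebras.

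Next I would recall how the DWW isomorphism is constructed and how functoriality in the $B$-variable of $KK$ is implemented. On the $KK$ side, for a $*$-homomorphism $\phi:B\to C$, the induced map $\phi_*:KK(A,B)\to KK(A,C)$ is given on a Kasparov cycle $(E,\pi,T)$ (with $E$ a Hilbert $B$-module) by the internal tensor product: $(E\otimes_\phi C,\pi\otimes 1,T\otimes 1)$. The DWW isomorphism sends a class in $KK(A,B)$ — represented, after passing to an absorbing picture, by a suitable family of projections or by an element of $K_0$ of $\CL(\pi_B)$ — to its image under the standard recipe converting Kasparov cycles into paths of projections over $[1,\infty)$. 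The key point is that the operation ``$-\otimes_\phi C$'' on cycles corresponds, under the passage to the localization algebra, precisely to applying $\Phi:\LL(E)\to\LL(E\otimes_\phi C)$ pointwise in $t$, followed by conjugation by the covering isometry $v$ to land back in the chosen absorbing representation $\pi_C$ on $F$. So I would unwind both the construction of the DWW isomorphism and the definition of $\phi^v$ and check that, on representatives, the two routes around the square agree up to the adjustments (choice of covering isometry, choice of absorbing representation) that are already known to be $K$-theory-trivial by Lemma \ref{cov isom $*$-hom} and Lemma \ref{isom lem} together with the absorption uniqueness in Proposition \ref{cov isom}.

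Concretely, the steps in order: (1) reduce from $\CLc$ to $\CL$ using naturality of the inclusion with respect to a uniformly continuous covering isometry and the five-lemma-free fact that the $\CL\hookrightarrow\CLc$ squares commute exactly; (2) recall the explicit form of the DWW isomorphism from \cite[Theorem 4.4]{Dadarlat:2016qc}, tracing through how a Kasparov cycle produces an element of $K_*(\CL(\pi_B))$; (3) recall that $\phi_*$ on $KK$ is internal tensor product by $C$ over $\phi$, and identify the effect of this on the localization-algebra representative as ``apply $\Phi$ pointwise''; (4) show that the localization-algebra class represented by ``apply $\Phi$ pointwise, then transport to $\pi_C$ via $v$'' is exactly $\phi^v$ of the original representative, hence equals $\phi_*$ of Definition \ref{cl func}; (5) note that any ambiguity in choices of $v$ or of the absorbing representation does not affect the $K$-theory class, by Lemma \ref{cov isom $*$-hom} and Proposition \ref{cov isom}. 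The main obstacle I anticipate is step (2)–(4): making the identification between the internal-tensor-product description of $\phi_*$ on Kasparov cycles and the ``pointwise $\Phi$'' description on the localization algebra genuinely precise, since the DWW isomorphism is stated abstractly and one must commute the cycle-to-path construction past $-\otimes_\phi C$. I expect this to come down to the observation that the cycle-to-path construction is itself natural in the coefficient algebra — because it is built out of functional calculus and strict limits that are preserved by $\Phi$ — so that the verification, while somewhat lengthy, reduces to compatibility of $\Phi$ with functional calculus and with the compact operators \cite[Proposition 4.7]{Lance:1995ys}, both of which are standard.
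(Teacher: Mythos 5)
There is a genuine gap at the heart of your plan, namely in steps (2)--(4). You describe the isomorphism of Theorem \ref{kk iso dww} as a ``cycle-to-path construction'' built from functional calculus and strict limits, and you propose that its naturality in the coefficient algebra should follow because these operations are preserved by $\Phi$. But that is not how the isomorphism $KK(A,B)\cong K_*(\CL(\pi_B))$ is actually built: it factors through Thomsen's Paschke duality $KK(A,B)\cong K_1(\mathcal{D}(\pi_B)/\mathcal{C}(\pi_B))$, followed by the passage to the path algebras $\mathcal{D}_T,\mathcal{D}_L$ and the index/boundary map $\partial:K_1(\mathcal{D}_L/\CL)\to K_0(\CL)$. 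The squares involving the constant inclusion, the map $\eta$, the boundary map, and the inclusion $\CL\hookrightarrow\CLc$ do commute for essentially formal reasons (naturality of $\partial$ and commutativity at the level of $*$-homomorphisms), and your step (1) is fine; the paper handles these in the same way. The real content, which your sketch does not engage, is the compatibility of the Paschke duality isomorphism itself with $\phi_*=-\otimes_\phi C$ on $KK$ and with compression by the covering isometry $v_1$ on $K_1(\mathcal{D}(\pi_C)/\mathcal{C}(\pi_C))$.

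Why this is not a functional-calculus triviality: Paschke duality of the tensored cycle $[\pi_B\otimes 1_C,E\otimes_\phi C,w\otimes 1_C]$ is computed by choosing an absorption unitary $U\in\LL((E\otimes_\phi C)\oplus F,F)$ with $U((\pi_B(a)\otimes 1_C)\oplus\pi_C(a))U^*-\pi_C(a)\in\K(F)$, yielding the class $[U((w\otimes 1_C)\oplus 1)U^*]$, whereas the other route around the square yields $[v_1(w\otimes 1_C)v_1^*+(1-v_1v_1^*)]$. These involve two a priori unrelated choices ($U$ versus $v_1$), and identifying the two $K_1$-classes requires an actual argument, not just independence-of-choice statements: in the paper this is done by exploiting that $\pi_C$ is \emph{strongly} absorbing to split $(\pi_C,F)\cong(\pi_C\oplus\pi_C,F\oplus F)$, arrange $v$ to take values in the first summand, and then conjugate by the explicit partial isometry $V:=(s\oplus v_1)U^*$ (with $s$ the inclusion of the second summand), invoking Lemma \ref{isom lem}. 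Nothing in your proposal produces this comparison, and Lemma \ref{cov isom $*$-hom} only gives independence of the choice of covering isometry, not compatibility with the absorption unitary entering Paschke duality. So the proposal as written identifies the right pressure point but does not close it; to complete the proof you would need to carry out (or replace) precisely this Paschke-duality comparison.
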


\begin{proof}
The proof is unfortunately long as there is a lot to check, but the checks are fairly routine.  We recall first the precise form of the isomorphism $KK_*(A,B) \to  K_*(\CL(\pi_B))$ of Theorems \ref{kk iso dww} and \ref{c to uc}.  It is a composition of the following maps (see also \cite[Definition 3.1]{Dadarlat:2016qc} for the various algebras involved).
\begin{enumerate}[(i)]
\item The Paschke duality isomorphism $P:KK(A,B) \to K_1(\mathcal{D}(\pi_B)/\mathcal{C}(\pi_B))$ of \cite[Theorem 3.2]{Thomsen:2000aa}, where $\mathcal{D}(\pi_B):=\{b\in \LL(E)\mid [b,a]\in \K(E) \text{ for all } a\in A\}$, and $\mathcal{C}(\pi_B):=\{b\in \mathcal{D}(\pi_B)\mid ab\in \K(E) \text{ for all } a\in A\}$.
\item The map on $K$-theory 
$$
\iota_*:K_1(\mathcal{D}(\pi_B)/\mathcal{C}(\pi_B))\to K_1(\mathcal{D}_T(\pi_B)/\mathcal{C}_T(\pi_B))
$$
induced by the constant inclusion $\iota:\mathcal{D}(\pi_B)\to \mathcal{D}_T(\pi_B)$, where $\mathcal{D}_T(\pi_B):=C_{ub}([1,\infty),\mathcal{D}(\pi_B))$ and $\mathcal{C}_T(\pi_B):=C_{ub}([1,\infty),\mathcal{C}(\pi_B))$.
\item The map on $K$-theory 
$$
\eta_*^{-1}:K_1(\mathcal{D}_T(\pi_B)/\mathcal{C}_T(\pi_B))\to K_1(\mathcal{D}_L(\pi_B)/\CL(\pi_B))
$$
which is induced by the inverse (it turns out to be an isomorphism of $C^*$-algbras) of the map $\eta:\mathcal{D}_T(\pi_B)/\mathcal{C}_T(\pi_B)\to \mathcal{D}_{L}(\pi_B)/\CL(\pi_B)$ induced by the inclusion $\mathcal{D}_{L}(\pi_B)\to \mathcal{D}_{T}(\pi_B)$, where $\mathcal{D}_{L}(\pi_B):=\{b\in \mathcal{D}_T(\pi_B)\mid [a,b_t]\to 0 \text{ as }t\to\infty \text{ for all } a\in A\}$.
\item The usual $K$-theory boundary map 
$$
\partial: K_1(\mathcal{D}_L(\pi_B)/\CL(\pi_B))\to K_0(\CL(\pi_B)).
$$
\item The isomorphism $\kappa_*:K_0(\CL(\pi_B))\to K_0(\CLc(\pi_B))$ of Theorem \ref{c to uc} induced by the canonical inclusion. 
\end{enumerate}
Now, if $v$ is a uniformly continuous covering isometry for $\phi$, then one sees from analogous arguments to those given in the proof of Lemma \ref{cov isom $*$-hom} that the formula  
$$
\phi^v(b)_t:=v_t\Phi(b_t)v_t^*
$$
from Lemma \ref{cov isom $*$-hom} also defines $*$-homomorphisms
$$
\phi^v:\left\{\begin{array}{l}\mathcal{D}_L(\pi_B)/\CL(\pi_B) \to \mathcal{D}_L(\pi_C)/\CL(\pi_C)  \\
 \mathcal{D}_T(\pi_B)/\mathcal{C}_T(\pi_B) \to \mathcal{D}_T(\pi_C)/ \mathcal{C}_T(\pi_C)\\\end{array}\right\}.
$$
Moreover, the formula 
$$
\phi^{v_1}(b):=v_1\Phi(b)v_1^*
$$
defines a $*$-homomorphism $\mathcal{D}(\pi_B)/\mathcal{C}(\pi_B)\to \mathcal{D}(\pi_C)/\mathcal{C}(\pi_B)$.  Putting all this together, we get a diagram 
 \begin{equation}\label{com rectangle}
\xymatrix{ K_1(\mathcal{D}(\pi_B)/\mathcal{C}(\pi_B)) \ar[d]^-{\iota_*} \ar[r]^-{\phi^{v_1}_*} & K_1(\mathcal{D}(\pi_C)/\mathcal{C}(\pi_C)) \ar[d]^-{\iota_*} \\
K_1(\mathcal{D}_T(\pi_B)/\mathcal{C}_T(\pi_B)) \ar[r]^-{\phi^{v}_*} \ar[d]^-{\eta_*^{-1}} & K_1(\mathcal{D}_T(\pi_C)/\mathcal{C}_T(\pi_C)) \ar[d]^-{\eta_*^{-1}} \\
K_1(\mathcal{D}_L(\pi_B)/\CL(\pi_B)) \ar[r]^-{\phi^{v}_*} \ar[d]^-\partial & K_1(\mathcal{D}_L(\pi_C)/\CL(\pi_C)) \ar[d]^-\partial \\
K_0(\CLc(\pi_B)) \ar[r]^-{\phi^{v}_*} \ar[d]^-{\kappa_*} & K_0(\CLc(\pi_C)) \ar[d]^-{\kappa_*} \\
K_0(\CLc(\pi_B)) \ar[r]^-{\phi^v_*} &  K_0(\CLc(\pi_C)) }
\end{equation}
\noindent{}We claim that this commutes.  Indeed, the first square commutes as $\iota_*$ is an isomorphism on $K$-theory (\cite[Proposition 4.3 (b)]{Dadarlat:2016qc}), whence its inverse on the level of $K$-theory is the map induced  by the evaluation-at-one homomorphism $e:\mathcal{D}_T(\pi_B)\to \mathcal{D}(\pi_B)$, and the diagram
$$
\xymatrix{ K_1(\mathcal{D}(\pi_B)/\mathcal{C}(\pi_B)) \ar[r]^-{\phi^{v_1}_*} & K_1(\mathcal{D}(\pi_C)/\mathcal{C}(\pi_C))  \\
K_1(\mathcal{D}_T(\pi_B)/\mathcal{C}_T(\pi_B)) \ar[r]^-{\phi^{v}_*} \ar[u]^{e_*} & K_1(\mathcal{D}_T(\pi_C)/\mathcal{C}_T(\pi_C)) \ar[u]^-{e_*}  }
$$
commutes on the level of $*$-homomorphisms.  The second square in line \eqref{com rectangle} commutes as the  diagram 
$$
\xymatrix{ K_1(\mathcal{D}_T(\pi_B)/\mathcal{C}_T(\pi_B)) \ar[r]^-{\phi^{v}_*}  & K_1(\mathcal{D}_T(\pi_C)/\mathcal{C}_T(\pi_C)) \\
K_1(\mathcal{D}_L(\pi_B)/\CL(\pi_B)) \ar[r]^-{\phi^{v}_*} \ar[u]^-{\eta_*} & K_1(\mathcal{D}_L(\pi_C)/\CL(\pi_C)) \ar[u]^-{\eta_*} }
$$
commutes on the level of $*$-homomorphisms.  The third square commutes by naturality of the boundary map in $K$-theory.  Finally, the fourth square commutes as it commutes on the level of $*$-homomorphisms.

Now, the diagram in the statement in the lemma `factors' as the rectangle from line \eqref{com rectangle}, augmented on the top with the diagram below
$$
\xymatrix{ KK(A,B) \ar[r]^-{\phi_*} \ar[d]^-P &  KK(A,C) \ar[d]^-P\\ 
K_1(\mathcal{D}(\pi_B)/\mathcal{C}(\pi_B)) \ar[r]^-{\phi^{v_1}_*}   &  K_1(\mathcal{D}(\pi_C)/\mathcal{C}(\pi_C))}
$$
involving the Paschke duality isomorphism $P$.  To complete the proof, it suffices to show that this commutes.  We will actually work with the diagram
\begin{equation}\label{paschke square}
\xymatrix{ KK(A,B) \ar[r]^-{\phi_*}  &  KK(A,C) \\ 
K_1(\mathcal{D}(\pi_B)/\mathcal{C}(\pi_B)) \ar[u]^-{P^{-1}} \ar[r]^-{\phi^{v_1}_*}   &  K_1(\mathcal{D}(\pi_C)/\mathcal{C}(\pi_C)) \ar[u]^-{P^{-1}}}
\end{equation}
involving the inverse Paschke duality isomorphism, as this is a little simpler.

Let then $[u]\in K_1(\mathcal{D}(\pi_B)/\mathcal{C}(\pi_B))$ be a class.  The existence of a sequence of isometries as in Remark \ref{isoms} implies that we may assume that $u$ is in $\mathcal{D}(\pi_B)/\mathcal{C}(\pi_B)$ (not `just' in some matrix algebra over this $C^*$-algebra).  Abusing notation slightly, we also write $u\in \mathcal{D}(\pi_B)$ for some choice of lift of this unitary.  Then the triple $(\pi_B,E,u)$ is an even Kasparov cycle for $(A,B)$ (in the ungraded picture of the even $KK$-group) and the inverse Paschke duality map is defined by 
$$
P^{-1}:K_1(\mathcal{D}(\pi_B)/\mathcal{C}(\pi_B))\to KK(A,B),\quad [u]\mapsto [\pi_B,E,u] 
$$
(see \cite[Section 3]{Thomsen:2000aa} and \cite[Remarque 2.8]{Skandalis:1988rr}).

Now, the `up-right' composition 
$$
\xymatrix{ KK(A,B) \ar[r]^-{\phi_*}  &  KK(A,C) \\ 
K_1(\mathcal{D}(\pi_B)/\mathcal{C}(\pi_B)) \ar[u]^-{P^{-1}}   &  }
$$
composition from line \eqref{paschke square} takes $[u$ first to $[\pi_B,E,u]$, and then to $[\pi_B\otimes 1_C,E\otimes_\phi C,u\otimes 1_C]$.  On the other hand, the `right-up' composition 
$$
\xymatrix{  &  KK(A,C) \\ 
K_1(\mathcal{D}(\pi_B)/\mathcal{C}(\pi_B)) \ar[r]^-{\phi^{v_1}_*}   &  K_1(\mathcal{D}(\pi_C)/\mathcal{C}(\pi_C)) \ar[u]^-{P^{-1}}}
$$
from line \eqref{paschke square} takes $[u]$ first to $[v_1(u\otimes 1_C)v_1^*+(1-v_1v_1^*)]$ (compare \cite[Exercise 8.5]{Rordam:2000mz} to see where ``$1-v_1v_1^*$'' is coming from), and then to $[v_1(\pi_B\otimes 1_C)v_1^*,F,v_1(u\otimes 1_C)v_1^*+(1-v_1v_1^*)]$.  Our task is therefore to show that 
\begin{equation}\label{k1 desid}
[\pi_B\otimes 1_C,E\otimes_\phi C,u\otimes 1_C]=[v_1(\pi_B\otimes 1_C)v_1^*,F,v_1(u\otimes 1_C)v_1^*+(1-v_1v_1^*)].
\end{equation}
in $KK(A,C)$.

Indeed, we note first that $(\pi_B\otimes 1_C,E\otimes_\phi C,u\otimes 1_C)$ is unitarily equivalent to $(p\pi_C p,pF,v_1(u\otimes 1_C)v_1^*)$, where $p=v_1v_1^*$ vai the unitary isomorphism $v_1:E\otimes _\phi C\to pF$ (note that $p$ commutes with $\pi_C(A)$ as the compression of $\pi_C$ by $p$ agrees with $v_1(pi_B\otimes 1_C)v_1^*$, and the latter is a $*$-homomorphism.  On the other hand, the Kasparov module $((1-p)\pi_C(1-p),(1-p)F,1-p)$ is degenerate, so represents zero in $KK(A,C)$ we thus have that 
\begin{align*}
[\pi_B\otimes 1_C,E\otimes_\phi C,u\otimes 1_C] & =[p\pi_C p,pF,v_1(u\otimes 1_C)v_1^*] \\ & =[p\pi_C p,pF,v_1(u\otimes 1_C)v_1^*] \\ &\quad \quad \oplus [(1-p)\pi_C(1-p),(1-p)F,1-p].
\end{align*}
As $p=v_1v_1^*$ commutes with $\pi_C$, the right hand side agrees with the right hand side of line \eqref{k1 desid} above, and we are done.
\end{proof}

\section{Paths of projections}\label{pp sec}

Throughout this section, $A$ and $B$ refer to separable $C^*$-algebras.  All Hilbert modules are countably generated, and all are over $B$ unless explicitly stated otherwise.  All representations of $A$ are on Hilbert $B$-modules unless explicitly stated otherwise.

Our goal in this section is to introduce a new model of $KK$-theory based on paths of projections.   We will need some more terminology about representations.

\begin{definition}\label{apt}
Let $(\pi,E)$ be a representation of $A$ on a Hilbert $B$-module.
\begin{enumerate}[(i)]
\item $(\pi,E)$ is \emph{graded} if it comes with a fixed decomposition $(\pi,E)=(\pi_0\oplus \pi_1,E_0\oplus E_1)$ as a direct sum of two subrepresentations.  
\item If $\pi:A\to \LL(E)$ is a graded representation, the \emph{neutral projection} is the projection $e\in \LL(E)$ onto the first summand in $E=E_0\oplus E_1$.  
\item A graded representation $(\pi,E)$ is \emph{balanced}\footnote{Compare \cite[Definition 8.3.10]{Higson:2000bs}.} if it is graded and if $(\pi_0,E_0)=(\pi_1,E_1)$ in the given decomposition.
\item A graded representation $(\pi,E)$ is \emph{infinite multiplicity} (respectively, \emph{strongly absorbing}) if $(\pi_0,E_0)$ has infinite multiplicity in the sense of Subsection \ref{n n c} (respectively, is strongly absorbing in the sense of Definition \ref{sa rep}).
\end{enumerate}
\end{definition}

Note that a graded representation $(\pi,E)$ is balanced and infinite multiplicity if and only if 
\begin{equation}\label{hb}
(\pi,E)=(1_{\C^2\otimes \ell^2\otimes \ell^2}\otimes \sigma,\C^2\otimes \ell^2\otimes \ell^2\otimes F)
\end{equation}
for some representation $\sigma:A\to \LL(F)$: here, a tensor factor of $\ell^2$ comes from the infinite multiplicity assumption, and we use an identification $\ell^2=\ell^2\otimes\ell^2$ to split off an extra tensorial factor of $\ell^2$.  We record some useful observations arising from this as a lemma. 

\begin{lemma}\label{reps}
Let $\pi:A\to \LL(E)$ be a graded, balanced, infinite multiplicity representation.  Arising from a decomposition as in line \eqref{hb}, there are canonical unital inclusions 
\begin{equation}\label{can incl}
M_2(\C)\subseteq  \LL(E) \quad \text{and}\quad \mathcal{B}(\ell^2)\subseteq  \LL(E)
\end{equation}
as the the $C^*$-subalgebras 
$$
M_2(\C) \otimes 1_{\ell^2\otimes \ell^2\otimes F} \quad \text{and}\quad  1_{\C^2}\otimes \mathcal{B}(\ell^2)\otimes 1_{\ell^2\otimes F}
$$
respectively.
These inclusions have the following properties:
\begin{enumerate}[(i)]
\item The neutral projection corresponds to the element $\begin{psmallmatrix} 1 & 0 \\ 0 & 0 \end{psmallmatrix}\in M_2(\C)$.
\item The subalgebras $\mathcal{B}(\ell^2)$ and $M_2(\C)$ of $\LL(E)$ commute with each other, and with $A$.  
\item The compositions 
$$
\mathcal{B}(\ell^2)\to \LL(E)\to \LL(E)/\K(E) \quad \text{and} \quad M_2(\C)\to \LL(E)\to \LL(E)/\K(E)
$$
of the inclusions in line \eqref{can incl} with the quotient map to the Calkin algebra are injective.    \qed
\end{enumerate}
\end{lemma}

The following is the key definition of this section.

\begin{definition}\label{proj path}
Let $\pi:A\to \LL(E)$ be a graded representation, and define $\mathcal{P}^\pi(A,B)$ to be the set of self-adjoint contractions $p\in C_{b}([1,\infty),\LL(E))$ such that:
\begin{enumerate}[(i)]
\item $p-e\in C_{b}([1,\infty),\K(E))$\footnote{To make sense of this, we follow our usual conventions and identify $e$ with a constant function in $C_b([1,\infty),\LL(E))$, and similarly for elements of $A$ for the later parts of the definition.};
\item for all $a\in A$, $[a,p]\in C_0([1,\infty),\LL(E))$;
\item for all $a\in A$, $a(p^2-p)\in C_0([1,\infty),\K(E))$.
\end{enumerate}
\end{definition}


Our next goal is to define an equivalence relation on $\mathcal{P}^\pi(A,B)$ such that the equivalence classes give a realization of $KK(A,B)$.  For this (and other purposes later), it will be convenient to introduce a parameter space $Y$.  Let then $C=C_0(Y)$ be a separable commutative $C^*$-algebra: for our applications, $Y$ will be one of the intervals $[0,1]$ or $(0,1)$, or the one-point compactification $\overline{\N}$ of the natural numbers.  Let $(\pi,E)$ be a representation of $A$ on a Hilbert $B$-module, and let $C\otimes E$ denote the tensor product Hilbert $C\otimes B$-module.  Let $1\otimes \pi:A\to \LL(C\otimes E)$ be the amplification of $\pi$.   If $\pi$ is graded then $1\otimes \pi$ inherits a grading in a natural way, and so if we are in the graded case we may consider $\mathcal{P}^{1\otimes \pi}(A,C\otimes B)$.

The following lemma characterizes elements of $\mathcal{P}^{1\otimes \pi}(A,C\otimes B)$ in terms of doubly parametrized families $(p_t^y)_{t\in [1,\infty),y\in Y}$.

\begin{lemma}\label{loc func lem 0}
Let $(\pi,E)$ be a graded representation of $A$ on a Hilbert $B$-module.  With notation as above, there is a natural identification between elements $p$ of $\mathcal{P}^{1\otimes \pi}(A,C\otimes B)$ and doubly parametrized families of self-adjoint contractions $(p_{t}^y)_{t\in [1,\infty),y\in Y}$ that define a function 
$$
p:[1,\infty)\to C_b(Y,\LL(E)),\quad t\mapsto (y\mapsto p^y_t)
$$
with the following properties:
\begin{enumerate}[(i)]
\item the function $p-e$ is in $C_{b}([1,\infty),C_0(Y, \K(E)))$;
\item $[p,a]\in C_0([1,\infty),C_b(Y,\LL(E)))$  for all $a\in A$;
\item $a(p^2-p)\in C_0([1,\infty),C_0(Y,\K(E)))$ for all $a\in A$.  
\end{enumerate}
\end{lemma}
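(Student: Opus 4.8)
The plan is to prove the lemma by unwinding the definitions against the standard description of the external tensor product of Hilbert modules. First I would identify the Hilbert $C_0(Y)\otimes B$-module $C\otimes E = C_0(Y)\otimes E$ with the module $C_0(Y,E)$ of norm-continuous $E$-valued functions on $Y$ vanishing at infinity, and accordingly $\K(C_0(Y,E))$ with $C_0(Y,\K(E)) = C_0(Y)\otimes\K(E)$, whose $C^*$-norm is $k\mapsto\sup_{y\in Y}\|k^y\|$ (see \cite[Chapter 4]{Lance:1995ys}). I would then write $e$ both for the neutral projection of $\pi$ in $\LL(E)$ and for the corresponding constant function $y\mapsto e$, which is the neutral projection of $1\otimes\pi$ acting on $C_0(Y,E)$. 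The only fact I need about $\LL(C_0(Y,E)) = M(C_0(Y)\otimes\K(E))$ is the elementary one that a bounded norm-continuous function $b\colon Y\to\LL(E)$ acts pointwise as a multiplier of $C_0(Y)\otimes\K(E)$, with multiplier norm $\sup_y\|b^y\|$, and is self-adjoint (respectively a contraction) precisely when each $b^y$ is; and, conversely, that a self-adjoint contraction $q\in\LL(C_0(Y,E))$ with $q - e\in C_0(Y,\K(E))$ is recovered this way from $y\mapsto e + (q - e)^y$. Importantly I never need the full description of $M(C_0(Y)\otimes\K(E))$ in terms of strictly continuous families, because condition~(i) confines the operators under discussion to the set consisting of $e$ plus an element of $C_0(Y,\K(E))$, where norm continuity in $y$ is automatic.

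With this in place I would check the two directions. Given $p\in\mathcal{P}^{1\otimes\pi}(A,C\otimes B)$: condition~(i) of Definition~\ref{proj path} reads $p - e\in C_b([1,\infty),\K(C\otimes E)) = C_b([1,\infty),C_0(Y,\K(E)))$, which is verbatim condition~(i) of the lemma, and which moreover exhibits each $p_t$ as a norm-continuous family $(p_t^y)_{y\in Y}$ of self-adjoint contractions and shows (since $e$ is constant and $t\mapsto p_t$ is norm continuous) that $t\mapsto(y\mapsto p_t^y)$ is a bounded norm-continuous map $[1,\infty)\to C_b(Y,\LL(E))$. Because $\pi(a)$ is block diagonal for the grading, $1\otimes\pi(a)$ commutes with $e$ and is independent of $y$, so $[a,p] = [a,p - e]$ and $a(p^2 - p)$ already take values in $C_0(Y,\K(E))$, on which the norm from $\LL(C\otimes E)$ agrees with the supremum over $y$; hence conditions~(ii) and~(iii) of Definition~\ref{proj path} (which require these functions of $t$ to lie in $C_0([1,\infty),\LL(C\otimes E))$, respectively $C_0([1,\infty),\K(C\otimes E))$) translate directly into conditions~(ii) and~(iii) of the lemma. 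Conversely, given a family $(p_t^y)$ with the three listed properties, for each $t$ the function $y\mapsto p_t^y$ is a bounded norm-continuous family of self-adjoint contractions with $p_t^y - e\in\K(E)$ and $\|p_t^y - e\|\to 0$ as $y\to\infty$, so it defines a self-adjoint contraction $p_t\in\LL(C\otimes E)$ with $p_t - e\in\K(C\otimes E)$; property~(i) then upgrades $t\mapsto p_t - e$ to an element of $C_b([1,\infty),\K(C\otimes E))$, so $p := (p_t)_{t}\in C_b([1,\infty),\LL(C\otimes E))$, and properties~(ii) and~(iii) supply the remaining clauses of Definition~\ref{proj path}. These two passages are manifestly inverse to one another, which gives the asserted natural identification.

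I expect essentially no genuine obstacle here: the argument is pure bookkeeping, and the one ingredient that is not purely formal --- the identification of $\K$ and $\LL$ of the tensor-product module, and the fact that self-adjointness, positivity and the norm pass through it evaluationwise --- consists of standard facts about $C_0(Y)\otimes D$ and about tensor products of Hilbert modules. The only subtlety worth flagging is that a generic element of $\LL(C_0(Y)\otimes E)$ corresponds merely to a \emph{strictly} continuous bounded family on $Y$; this is harmless here precisely because condition~(i) forces every operator in sight to differ from the constant $e$ by an element of $C_0(Y,\K(E))$, which is automatically norm continuous in $y$.
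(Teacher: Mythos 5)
Your proposal is correct and follows essentially the same route as the paper: both rest on the canonical identification $\K(C\otimes E)=C\otimes\K(E)=C_0(Y,\K(E))$ together with the observation that condition (i) confines $p$ to the subset $\{e\}+C_0(Y,\K(E))$ of $C_b(Y,\LL(E))$, after which the remaining checks are direct bookkeeping (the paper leaves them implicit, while you spell them out, including the harmlessness of strict-versus-norm continuity).
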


\begin{proof}
An element of $\mathcal{P}^{1\otimes \pi}(A,C\otimes B)$ is a function $p:[1,\infty)\to \LL(C\otimes E)$ satisfying the conditions of Definition \ref{proj path}.  Using the canonical identifications
$$
\K(C\otimes E)=C\otimes \K(E)=C_0(Y,\K(E))
$$
(for the first, see for example \cite[pages 37 and 10]{Lance:1995ys}) and the fact that $p-e\in C_b([1,\infty),\K(E))$, we identify $p$ with a function $p:[1,\infty)\to C_b(Y,\LL(E))$ (with image in the subset $C_0(Y,\K(E))+\{e\}\subseteq C_b(Y,\LL(E))$).  The remaining checks are direct.
\end{proof}

\begin{definition}\label{pp hom}
Let $(\pi,E)$ be a graded representation of $A$ on a Hilbert $B$-module.  Elements $p^0$ and $p^1$ of $\mathcal{P}^\pi(A,B)$ are \emph{homotopic} if (with notation as in  Lemma \ref{loc func lem 0}) there is an element 
$$
p=(p_t^s)_{t\in [1,\infty),s\in [0,1]}\in \mathcal{P}^{1\otimes \pi}(A,C[0,1]\otimes B)
$$
that agrees with $p^0$ and $p^1$ at the endpoints $s\in \{0,1\}$.  We write $p^0\sim p^1$ if $p^0$ and $p^1$ are homotopic, and write $\KKP^\pi(A,B)$ for the quotient set $\mathcal{P}^\pi(A,B)/\sim$.
\end{definition}

We will need the following elementary fact a few times, so record it here for ease of reference.

\begin{lemma}\label{start irrelevant}
Assume that $p$ and $q$ are elements of $\mathcal{P}^\pi(A,B)$ such that $p_t-q_t\to0$ as $t\to\infty$.  Then $p\sim q$.
\end{lemma}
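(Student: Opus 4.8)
The statement is essentially that the endpoint behaviour of a path of projections determines its homotopy class in $\KKP^\pi(A,B)$, so I expect a direct construction of an explicit homotopy. The plan is to build an element $P \in \mathcal{P}^{1\otimes\pi}(A,C[0,1]\otimes B)$ that interpolates between $p$ and $q$ by \emph{deforming the interval parameter $t$}, rather than by convex combination (convex combinations of self-adjoint contractions satisfying (iii) only asymptotically need not stay well-controlled, and a straight-line homotopy would require the two paths to be close everywhere, which we do not have).

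First I would reduce to the case where $p$ and $q$ agree from some point on. Since $p_t - q_t \to 0$ as $t\to\infty$, a reparametrisation argument lets us replace $q$ by a homotopic element: concretely, for $s\in[0,1]$ define $(q^s)_t := q_{t+s\cdot\phi(t)}$ for a suitable unbounded increasing reparametrisation is not quite right because it changes values; instead I would use the standard trick of ``stretching toward infinity''. The cleanest route: define, for $s\in[0,1]$, the path $r^s$ by $r^s_t = p_t$ for $t\le 1/s$ (interpreting $1/0=\infty$) and $r^s_t = q$-values for large $t$, smoothly glued — but the glueing point must be pushed to infinity as $s\to 0$. So set $P_t^s := p_{h_s(t)}$ on an initial segment and $:= q_{k_s(t)}$ afterwards, where $h_s,k_s:[1,\infty)\to[1,\infty)$ are continuous, and the transition happens near $t = 1 + (1-s)\cdot(\text{something}\to\infty)$. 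Because $p_t - q_t\to 0$, near any fixed transition time the two pieces agree up to an arbitrarily small error as $s$ decreases; to make this literally a homotopy (continuous in $s$, with exact matching) I would instead interpolate in a small window $[N(s), N(s)+1]$ by the convex combination $(1-\lambda(t,s))p_t + \lambda(t,s)q_t$ where $N(s)\to\infty$ as $s\to 0$, and $\lambda$ goes from $0$ to $1$ across the window. On such a window $\|p_t-q_t\|$ is as small as we like (by choosing $N(0^+)$ large, i.e. choosing the window far out), so the convex combination stays a near-projection with the required asymptotic bounds, and conditions (i)–(iii) of Lemma~\ref{loc func lem 0} are preserved: (i) because $p-e$ and $q-e$ are compact-valued and bounded; (ii) because $[a,p],[a,q]\to 0$ and convex combination preserves this; (iii) because on the window $p^2-p$, $q^2-q$ are small after multiplying by $a$, and the cross terms involve $p_t-q_t$ which is small there.

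Concretely: pick a continuous increasing bijection-type function and set, for $s\in(0,1]$, $N(s) := \lceil 1/s\rceil$ (so $N(s)\to\infty$ as $s\to 0$), choose a continuous $\lambda:[1,\infty)\times(0,1]\to[0,1]$ with $\lambda(t,s)=0$ for $t\le N(s)$, $\lambda(t,s)=1$ for $t\ge N(s)+1$, and define
$$
P^s_t := \bigl(1-\lambda(t,s)\bigr)\,p_t + \lambda(t,s)\,q_t \qquad (s\in(0,1]),
$$
and $P^0_t := p_t$. Then $P^1 = q$ off an initial compact set and $P^1_t = q_t$ for $t$ large; one more homotopy (of the same stretching type, or just Lemma~\ref{start irrelevant} applied again in the trivial direction, or simply noting $P^1$ and $q$ agree for large $t$ — but that is circular) — better: arrange $\lambda(t,1)\equiv 1$ so that $P^1 \equiv q$ exactly, by taking $N(1)=1$. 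The only thing to verify carefully is continuity of $s\mapsto P^s$ at $s=0$: as $s\to 0$, $N(s)\to\infty$, so the window recedes to infinity and $P^s_t \to p_t$ uniformly on compacta; since $p-e$ and $q-e$ are uniformly small in norm far out (they're in $C_b([1,\infty),\K(E))$, hmm — not vanishing at infinity). Here is the subtlety: $p-e$ need not vanish at infinity, so $P^s$ need not converge to $p$ \emph{in norm} as $s\to 0$. But for a homotopy we only need $P \in \mathcal{P}^{1\otimes\pi}(A,C[0,1]\otimes B)$, i.e. continuity in $s$ for each fixed $t$ (and the joint bounds), and for \emph{fixed $t$}, $\lambda(t,s)=0$ once $s < 1/t$, so $P^s_t = p_t$ for all small $s$ — continuity at $s=0$ is automatic, pointwise in $t$. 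Uniform-in-$t$ control (boundedness, and the $C_0$-conditions (ii),(iii) being uniform in $s$) follows because the window estimates are uniform.

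**Main obstacle.** The only real work is checking that the family $P$ genuinely lies in $\mathcal{P}^{1\otimes\pi}(A,C[0,1]\otimes B)$ — i.e. verifying conditions (i)–(iii) of Lemma~\ref{loc func lem 0} \emph{uniformly in $s$}, and in particular that condition (iii), $a(P^2-P)\in C_0$, survives the convex combination on the transition window. This reduces to the algebraic identity
$$
(1-\lambda)p+\lambda q)^2 - \bigl((1-\lambda)p+\lambda q\bigr) = (1-\lambda)(p^2-p) + \lambda(q^2-q) - \lambda(1-\lambda)(p-q)^2,
$$
so after multiplying by $a\in A$ the first two terms are in $C_0([1,\infty),\K(E))$ uniformly, and the last term is controlled by $\|p_t-q_t\|^2$, which is small precisely where $\lambda\notin\{0,1\}$ (namely on $\bigcup_s [N(s),N(s)+1]$, a region receding to infinity). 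Since $p_t - q_t \to 0$, given $\epsilon>0$ we choose the windows to lie beyond where $\|p_t-q_t\|<\sqrt\epsilon$, uniformly in $s$. The commutator condition (ii) is easier: $[a,P^s_t] = (1-\lambda)[a,p_t] + \lambda[a,q_t] + (\partial_t\text{-free terms are zero since }\lambda\text{ is scalar})$, and both $[a,p_t],[a,q_t]\to 0$. Condition (i) is immediate from boundedness of $p-e$, $q-e$ in $C_b([1,\infty),\K(E))$. I would write this up as: state the interpolation formula, record the quadratic identity above, and then dispatch (i)–(iii) in a short paragraph each.
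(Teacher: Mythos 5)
Your construction does work (up to a small defect at the $s=1$ endpoint, noted below), but it is considerably more elaborate than necessary, and the reason you give for rejecting the straight-line homotopy is not valid. The paper's proof is exactly the straight-line homotopy $r^s:=sp+(1-s)q$: membership in $\mathcal{P}^\pi(A,B)$ (equivalently, the conditions of Lemma \ref{loc func lem 0} with $Y=[0,1]$) only constrains behaviour as $t\to\infty$, so it is irrelevant that $p_t$ and $q_t$ may be far apart for small $t$. Your own quadratic identity already contains the whole argument:
$$
(r^s)^2-r^s \;=\; s(p^2-p)+(1-s)(q^2-q)-s(1-s)(p-q)^2,
$$
and $p-q=(p-e)-(q-e)$ is compact-valued with $\|p_t-q_t\|\to0$, so $a\big((r^s)^2-r^s\big)\in C_0([1,\infty),\K(E))$ uniformly in $s$; similarly $[a,r^s]=s[a,p]+(1-s)[a,q]$ gives condition (ii), and $r^s-e=s(p-e)+(1-s)(q-e)$ gives condition (i). So the receding-window reparametrisation buys nothing here; that kind of device would be needed if the homotopy had to pass through honest projections, or stay inside a fixed $\mathcal{P}^\pi_\epsilon(X,B)$, but for the purely asymptotic conditions defining $\mathcal{P}^\pi(A,B)$ the naive convex combination suffices, and that is the paper's (one-line) proof.

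There is also a small literal error in your version: with $\lambda(t,s)=0$ for $t\le N(s)$ and $N(1)=1$, you do not get $\lambda(t,1)\equiv 1$; your $P^1$ agrees with $q$ only for $t\ge 2$ and is a convex combination on $[1,2]$. To have $P^1=q$ exactly you should take, say, $N(1)=0$, or follow your homotopy by a further straight-line homotopy correcting the discrepancy on the compact region $[1,2]$ --- which is harmless (not circular) precisely because the defining conditions are asymptotic in $t$. With that repair, the verification you sketch (the identity above together with your uniform-in-$s$ window estimates) is correct.
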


\begin{proof}
A straight line homotopy $(sp+(1-s)q)_{s\in [0,1]}$ works: we leave the direct checks involved to the reader.
\end{proof}

In order to define a semigroup structure on $\KKP^\pi(A,B)$, we assume $\pi$ is graded, balanced, and infinite multiplicity as in Definition \ref{apt}, and fix a tensorial decomposition as in line \eqref{hb} (which will remain fixed for the rest of the section).  Fix also two isometries $s_1$ and $s_2$ in $\mathcal{B}(\ell^2)$ that satisfy the Cuntz relation $s_1s_1^*+s_2s_2^*=1$.  Using the canonical (unital) inclusion $\mathcal{B}(\ell^2)\subseteq \LL(E)$ from line \eqref{can incl} in Lemma \ref{reps}, we think of these isometries as adjointable operators on $E$ that commute with $A\subseteq \LL(E)$ and with the neutral projection $e\in \LL(E)$.  

\begin{lemma}\label{group lem}
Let $\pi:A\to \LL(E)$ be a graded, balanced, and infinite multiplicity representation.  Then with notation as above, the operation defined by 
$$
[p]+ [q] :=[s_1ps_1^*+s_2qs_2^*]
$$
makes $\KKP^\pi(A,B)$ into an abelian semigroup.  The operation does not depend on the choice of $s_1,s_2$ within $\mathcal{B}(\ell^2)$.
\end{lemma}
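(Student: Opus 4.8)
The plan is to verify the semigroup axioms directly, treating each as a statement that two specific elements of $\mathcal{P}^\pi(A,B)$ are homotopic, and then reducing everything to the basic observation that $\mathcal{B}(\ell^2)$ contains enough isometries to absorb ``rotations''. First I would check that the operation is well-defined: if $p^0\sim p^1$ and $q^0\sim q^1$ via homotopies $p\in\mathcal{P}^{1\otimes\pi}(A,C[0,1]\otimes B)$ and $q\in\mathcal{P}^{1\otimes\pi}(A,C[0,1]\otimes B)$, then $s_1ps_1^*+s_2qs_2^*$ (interpreting $s_i$ as acting pointwise in the $C[0,1]$ variable, which is legitimate since $s_i$ commutes with $A$ and with $e$, and $s_1s_1^*+s_2s_2^*=1$ keeps the ``$p-e$ is compact'' condition intact) is a homotopy between $s_1p^0s_1^*+s_2q^0s_2^*$ and $s_1p^1s_1^*+s_2q^1s_2^*$. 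One also needs that $s_1ps_1^*+s_2qs_2^*$ genuinely lies in $\mathcal{P}^\pi(A,B)$: it is a self-adjoint contraction (the $s_i$ have orthogonal ranges summing to $1$, so this is a direct sum computation), condition (i) uses $s_1es_1^*+s_2es_2^*=e$ since $s_i$ commute with $e$, and conditions (ii) and (iii) follow since $s_i$ commute with $A$.

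Next I would handle commutativity and associativity. For commutativity, $s_1ps_1^*+s_2qs_2^*$ and $s_1qs_1^*+s_2ps_2^*$ differ by conjugation by the unitary $u=s_1s_2^*+s_2s_1^*\in\mathcal{B}(\ell^2)\subseteq\LL(E)$, which commutes with $A$ and $e$; so it suffices to show that for any $p\in\mathcal{P}^\pi(A,B)$ and any unitary $u$ in $\mathcal{B}(\ell^2)$ commuting with $A$ and $e$, one has $upu^*\sim p$. This is standard: connect $u$ to $1$ through unitaries in $\mathcal{B}(\ell^2)$ — using that $u\oplus u^* \oplus u \oplus \cdots$-type rotation tricks are available inside $\mathcal{B}(\ell^2)$, or more concretely that $\begin{pmatrix}u&0\\0&u^*\end{pmatrix}$ is connected to the identity in $U(\mathcal{B}(\ell^2)\otimes M_2(\C))\subseteq U(\mathcal{B}(\ell^2))$ via a path commuting with $A$ and $e$ — and then conjugate; the resulting path of projections is the required homotopy. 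Actually the cleaner route is: since $\pi$ is substantial, $\mathcal{B}(\ell^2)$ acts commuting with $A$, so $U(\mathcal{B}(\ell^2))$ is path-connected in norm (it is $U(\mathcal{B}(H))$ for infinite-dimensional $H$), and a norm-continuous path $u_s$ from $1$ to $u$ gives $s\mapsto u_s p u_s^*$ in $\mathcal{P}^\pi(A,B)$, which by Lemma \ref{start irrelevant}-style reasoning (or directly, parametrizing in the $C[0,1]$ variable) is a homotopy. For associativity, both triple sums are of the form $w_1 p w_1^* + w_2 q w_2^* + w_3 r w_3^*$ for two different triples $(w_1,w_2,w_3)$ of isometries with orthogonal ranges summing to $1$ inside $\mathcal{B}(\ell^2)$, and any two such triples are conjugate by a unitary in $\mathcal{B}(\ell^2)$ commuting with $A$ and $e$, so the same rotation argument applies.

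Finally, independence of the choice of $s_1,s_2$: if $s_1',s_2'$ is another Cuntz pair in $\mathcal{B}(\ell^2)$, then $v:=s_1's_1^*+s_2's_2^*$ is a unitary in $\mathcal{B}(\ell^2)$ with $vs_iv^*$-type relations making $v(s_1ps_1^*+s_2qs_2^*)v^* = s_1'ps_1'^* + s_2'qs_2'^*$, and again $v$ commutes with $A$ and $e$, so the two sums represent the same class by the rotation homotopy. The main obstacle, such as it is, is purely bookkeeping: making sure that in every one of these ``conjugate by a unitary in $\mathcal{B}(\ell^2)$ commuting with $A$'' steps the conjugating path is genuinely continuous and the conjugated family genuinely satisfies conditions (i)–(iii) of Definition \ref{proj path} (condition (i) is the only slightly delicate one, but it holds because $ueu^* = e$ for $u\in\mathcal{B}(\ell^2)$ and $u(p-e)u^*$ stays in $C_b([1,\infty),\K(E))$). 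I expect no conceptual difficulty; the whole proof is an exercise in the rotation trick, leveraging that $\pi$ being substantial builds a copy of $\mathcal{B}(\ell^2)$ commuting with everything in sight.
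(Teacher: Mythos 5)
Your proposal is correct and follows essentially the same route as the paper: the paper's proof likewise reduces everything to the fact that conjugation by a unitary in the canonical copy of $\mathcal{B}(\ell^2)$ (which commutes with $A$ and $e$) acts trivially on $\KKP^\pi(A,B)$ because the unitary group of $\mathcal{B}(\ell^2)$ is norm-connected, and then conjugates by the explicit unitaries $s_1s_2^*+s_2s_1^*$, $s_1s_1s_1^*+s_1s_2s_1^*s_2^*+s_2s_2^*s_2^*$, and $s_1t_1^*+s_2t_2^*$ for commutativity, associativity, and independence of the Cuntz pair. Your extra checks (well-definedness on homotopy classes, membership of $s_1ps_1^*+s_2qs_2^*$ in $\mathcal{P}^\pi(A,B)$) and your more abstract phrasing of associativity via conjugate triples of isometries are just fuller versions of the same argument.
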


\begin{proof}
As the unitary group of $\mathcal{B}(\ell^2)$ is connected (in the norm topology), conjugation by a unitary in $\mathcal{B}(\ell^2)$ induces the trivial map on $\KKP^\pi(A,B)$.  Hence conjugating by the unitaries $s_1s_2^*+s_2s_1^*$ and $s_1s_1s_1^*+s_1s_2s_1^*s_2^*+s_2s_2^*s_2^*$ show that the operation is commutative and associative.  On the other hand, if $t_1,t_2\in \mathcal{B}(\ell^2)$ also satisfy the Cuntz relation, then conjugating by the unitary $s_1t_1^*+s_2t_2^*$ shows that the pairs $(s_1,s_2)$ and $(t_1,t_2)$ induce the same operation on $\KKP^\pi(A,B)$.    
\end{proof}

Our next goal is to show that the semigroup $\KKP^\pi(A,B)$ is a monoid.  We first state a well-known lemma about paths of projections in a $C^*$-algebra. It follows from the arguments of \cite[Proposition 4.1.7 and Corollary 4.1.8]{Higson:2000bs}, for example.

\begin{lemma}\label{pp lem}
Let $I$ be either $[a,b]$ or $[a,\infty)$ for some $a,b\in \R$, and let $(p_t)_{t\in I}$ be a continuous path of projections in a unital $C^*$-algebra $D$.  Then there is a continuous path of unitaries $(u_t)_{t\in I}$ in $D$ such that $u_a=1$, and such that $p_t=u_tp_au_t^*$ for all $t$. \qed
\end{lemma}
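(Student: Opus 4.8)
The plan is to reduce to a local statement and then concatenate, following the standard ``straightening'' argument (as in \cite[Proposition 4.1.7 and Corollary 4.1.8]{Higson:2000bs}). The key elementary fact, which already appears in the proof of Lemma \ref{equi con}, is the following: if $p,q$ are projections in a unital $C^*$-algebra with $\|p-q\|<1/2$, then the element $x:=pq+(1-p)(1-q)$ satisfies $\|1-x\|<1/2$, so $x$ is invertible with $\|x^{-1}\|\leq 2$; moreover $xq=px$ and $x^*x$ commutes with $q$, so the unitary $u:=x(x^*x)^{-1/2}$ obtained by polar decomposition satisfies $uqu^*=p$ and $u=1$ when $p=q$. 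Finally, $x$ and hence $u$ depend norm-continuously on $(p,q)$: $x$ is a fixed polynomial in $p,q$, and $x^*x$ has spectrum inside $[1/4,9/4]$ for all such pairs, so $(x^*x)^{-1/2}$ depends continuously on $x^*x$ by continuous functional calculus. When $D$ is non-unital one works in $D^+$ throughout; since $x-1=2pq-p-q\in D$, the functional calculus keeps $u$ inside $1+D$.

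First I would produce the partition. Using uniform continuity of $t\mapsto p_t$ on compact subintervals of $I$, choose points $a=t_0<t_1<t_2<\cdots$ — finitely many if $I=[a,b]$, and with $t_k\to\infty$ if $I=[a,\infty)$ — such that $\|p_t-p_{t_k}\|<1/2$ for all $t\in[t_k,t_{k+1}]$. On each $[t_k,t_{k+1}]$ set
$$
x^{(k)}_t:=p_tp_{t_k}+(1-p_t)(1-p_{t_k}),\qquad w^{(k)}_t:=x^{(k)}_t\big((x^{(k)}_t)^*x^{(k)}_t\big)^{-1/2}.
$$
By the local fact, $(w^{(k)}_t)_{t\in[t_k,t_{k+1}]}$ is a norm-continuous path of unitaries with $w^{(k)}_{t_k}=1$, with $w^{(k)}_tp_{t_k}(w^{(k)}_t)^*=p_t$, and with $w^{(k)}_t-1\in D$.

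Next I would splice. Define the accumulated unitaries $v_0:=1$ and $v_{k+1}:=w^{(k)}_{t_{k+1}}v_k$, and set $u_t:=w^{(k)}_tv_k$ for $t\in[t_k,t_{k+1}]$. An easy induction using $w^{(k-1)}_{t_k}p_{t_{k-1}}(w^{(k-1)}_{t_k})^*=p_{t_k}$ gives $v_kp_av_k^*=p_{t_k}$; and since $w^{(k)}_{t_k}=1$, the two expressions for $u_{t_k}$ coming from $[t_{k-1},t_k]$ and from $[t_k,t_{k+1}]$ both equal $v_k$, so $u$ is well defined and continuous on all of $I$ (in the case $I=[a,\infty)$ this is no problem, since the partition is locally finite, so continuity — a local property — only ever involves finitely many pieces). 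Finally $u_a=w^{(0)}_{t_0}v_0=1$, and for $t\in[t_k,t_{k+1}]$ we get $u_tp_au_t^*=w^{(k)}_t\,v_kp_av_k^*\,(w^{(k)}_t)^*=w^{(k)}_t p_{t_k}(w^{(k)}_t)^*=p_t$, as required.

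I do not expect a genuine obstacle here — this is a classical lemma — but the one point needing care is the globally continuous concatenation: the local unitaries must be normalized to equal $1$ at the left endpoint of their subinterval (which is exactly why each piece is built relative to the basepoint $p_{t_k}$ rather than to $p_a$), and the accumulated correction $v_k$ must be carried along; in the non-compact case one must also choose the partition so that it exhausts $[a,\infty)$ while staying locally finite, which is automatic once the $t_k$ are taken increasing to $\infty$.
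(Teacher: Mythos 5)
Your proof is correct, and it is essentially the argument the paper itself relies on: the lemma is stated with a reference to \cite[Proposition 4.1.7 and Corollary 4.1.8]{Higson:2000bs}, whose proof is exactly this subdivision-plus-polar-decomposition straightening, with the same local conjugating unitary $u=x(x^*x)^{-1/2}$, $x=pq+(1-p)(1-q)$, normalized to $1$ at the left endpoint of each subinterval and spliced by carrying along the accumulated unitary. Your handling of the locally finite partition on $[a,\infty)$ and of the non-unital case via $D^+$ fills in the details correctly, so there is nothing to object to.
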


\begin{lemma}\label{id lem}
Let $\pi:A\to \LL(E)$ be a graded, balanced, and infinite multiplicity representation of $A$.  Let $p$ be an element of $\mathcal{P}^\pi(A,B)$, and let $v$ be an isometry in the canonical copy of $\mathcal{B}(\ell^2)\subseteq \LL$ from line \eqref{can incl} from Lemma \ref{reps}.  Then the element
$$
q:=vpv^*+(1-vv^*)e\in C_{b}([1,\infty),\LL(E))
$$
is in $\mathcal{P}^\pi(A,B)$ and satisfies $p\sim q$.
\end{lemma}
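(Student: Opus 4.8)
\noindent\textbf{Part 1: $q\in\mathcal{P}^\pi(A,B)$.} Here everything is bookkeeping once one uses the two facts from Lemma~\ref{reps} that $v$ commutes with $A$ and with the neutral projection $e$, together with $v^*v=1$. First, $q$ is a self-adjoint contraction: its two summands $vpv^*$ and $(1-vv^*)e$ have mutually orthogonal ranges (as $e$ commutes with $vv^*$) and each has norm at most $1$; and $q$ is bounded and continuous in $t$ with modulus of continuity no worse than that of $p$, since conjugation by the fixed operator $v$ and addition of the fixed operator $(1-vv^*)e$ are norm–decreasing on increments. Next, using $vev^*=vv^*e$ one gets the clean identities
$$
q-e=v(p-e)v^*,\qquad [a,q]=v[a,p]v^*,\qquad q^2-q=v(p^2-p)v^*,
$$
the last because $v^*v=1$ and $(1-vv^*)^2=1-vv^*$ kill all cross terms. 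Conditions (i)--(iii) of Definition~\ref{proj path} for $q$ now follow from those for $p$, because $\K(E)$ is an ideal in $\LL(E)$ and conjugation by $v$ preserves the classes $C_b([1,\infty),\K(E))$, $C_0([1,\infty),\LL(E))$ and $C_0([1,\infty),\K(E))$.

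\noindent\textbf{Part 2: $p\sim q$.} The idea is a $2\times 2$ completion--to--a--unitary trick. Note first $q=e+v(p-e)v^*$. Fix isometries $\iota_1,\iota_2$ in the copy of $\mathcal{B}(\ell^2)\subseteq\LL(E)$ of Lemma~\ref{reps} with $\iota_j^*\iota_k=\delta_{jk}$ and $\iota_1\iota_1^*+\iota_2\iota_2^*=1$; these commute with $A$ and $e$, and conjugation by the unitary $\Phi\colon E\oplus E\to E$, $\Phi(\xi,\eta)=\iota_1\xi+\iota_2\eta$, implements a $*$-isomorphism $M_2(\LL(E))\cong\LL(E)$. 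Under $\Phi$ the unitary $\bigl(\begin{smallmatrix} v & 1-vv^* \\ 0 & v^*\end{smallmatrix}\bigr)\in M_2(\LL(E))$ becomes the unitary $\widehat V:=\iota_1 v\iota_1^*+\iota_1(1-vv^*)\iota_2^*+\iota_2 v^*\iota_2^*$, which lies in $\mathcal{B}(\ell^2)\subseteq\LL(E)$; and since $\bigl(\begin{smallmatrix} v & 1-vv^* \\ 0 & v^*\end{smallmatrix}\bigr)\bigl(\begin{smallmatrix} p & 0 \\ 0 & e\end{smallmatrix}\bigr)\bigl(\begin{smallmatrix} v^* & 0 \\ 1-vv^* & v\end{smallmatrix}\bigr)=\bigl(\begin{smallmatrix} q & 0 \\ 0 & e\end{smallmatrix}\bigr)$, applying $\Phi$ gives $\widehat V\,\widetilde p\,\widehat V^*=\widetilde q$, where $\widetilde p:=\iota_1 p\iota_1^*+\iota_2 e\iota_2^*=\iota_1 p\iota_1^*+(1-\iota_1\iota_1^*)e$ and $\widetilde q:=\iota_1 q\iota_1^*+\iota_2 e\iota_2^*$; both are in $\mathcal{P}^\pi(A,B)$ by Part~1 (applied with $\iota_1$ in place of $v$). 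Because the unitary group of $\mathcal{B}(\ell^2)$ is norm--path--connected, choose a norm--continuous path $(\widehat V_r)_{r\in[0,1]}$ of unitaries in $\mathcal{B}(\ell^2)\subseteq\LL(E)$ from $1$ to $\widehat V$. Then $r\mapsto\widehat V_r\widetilde p\,\widehat V_r^*$ is a path in $\mathcal{P}^\pi(A,B)$ (each member lies in $\mathcal{P}^\pi$ by Part~1 with the unitary $\widehat V_r$, and $\widehat V_r e\widehat V_r^*=e$), and since the path is norm--continuous uniformly in $t$ it assembles, via Lemma~\ref{loc func lem 0}, to an element of $\mathcal{P}^{1\otimes\pi}(A,C[0,1]\otimes B)$. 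Hence $\widetilde p\sim\widetilde q$, so it remains to prove $p\sim\widetilde p$ and, by the same statement applied to $q$, that $q\sim\widetilde q$.

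\noindent\textbf{The main obstacle.} The only nonformal point is this last reduction: that $p$ is homotopic to the ``doubled--up'' projection $\widetilde p=\iota_1 p\iota_1^*+(1-\iota_1\iota_1^*)e$, i.e.\ that absorbing a copy of the basepoint $e$ costs nothing. This is precisely where I expect the real work to be, and where the hypothesis that $\pi$ is \emph{substantial} (infinite multiplicity, Definition~\ref{apt}) is essential: naive straight--line or ``conjugate--then--fill'' homotopies fail because the relevant cross terms $a(p-e)^2$--type expressions do not tend to $0$, and one cannot connect the proper isometry $\iota_1$ to $1$ through isometries. The plan is to run an absorption (Eilenberg--swindle) argument distributed over the infinite--multiplicity tensor factor of $E$, using Lemma~\ref{pp lem} to produce, in the commutant of $A\cup\{e\}$ inside $\LL(E)$ (which contains a copy of $\mathcal{B}(\ell^2)$ and is ``stable'' enough that the relevant projections are properly infinite), a norm--continuous path of unitaries commuting with $A$ and $e$ along which $p$ is carried to $\widetilde p$; one may also reach $\widetilde p$ first from an intermediate element reachable from $p$ by Lemma~\ref{start irrelevant}. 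Granting $p\sim\widetilde p$ and its analogue for $q$, the chain $p\sim\widetilde p\sim\widetilde q\sim q$ completes the proof.
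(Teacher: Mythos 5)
Your Part 1 is correct: the identities $q-e=v(p-e)v^*$, $[a,q]=v[a,p]v^*$ and $q^2-q=v(p^2-p)v^*$ (valid because $v$ commutes with $A$ and $e$ and $v^*v=1$) give membership of $q$ in $\mathcal{P}^\pi(A,B)$ directly.  The $2\times 2$ completion trick in Part 2 is also correct as far as it goes: the matrix $\bigl(\begin{smallmatrix} v & 1-vv^*\\ 0 & v^*\end{smallmatrix}\bigr)$ is unitary, it conjugates $p\oplus e$ to $q\oplus e$, and conjugating by a norm-continuous path of unitaries in the canonical copy of $\mathcal{B}(\ell^2)$ (all of which commute with $A$ and $e$) does give $\widetilde p\sim\widetilde q$.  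But this step only transfers the problem: what remains --- and what you yourself flag as ``the main obstacle'' --- is precisely the statement of the lemma for the particular isometry $\iota_1$, namely $p\sim \iota_1 p\iota_1^*+(1-\iota_1\iota_1^*)e$.  An arbitrary isometry in $\mathcal{B}(\ell^2)$ is no harder to treat than $\iota_1$, so the reduction buys nothing on the genuinely hard content, and your proposal does not prove it; it only records a plan.  That is a genuine gap.

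Moreover, the plan as sketched would not work as stated.  You ask for a norm-continuous path of unitaries commuting with $A$ and $e$ along which $p$ is carried to $\widetilde p$; no such \emph{exact} conjugation can be expected, because $p_t-e$ is a compact operator spread over all of $E$, and nothing in the commutant copy of $\mathcal{B}(\ell^2)$ conjugates $p_t$ exactly onto $\iota_1 p_t\iota_1^*+(1-\iota_1\iota_1^*)e$ in general.  The missing idea, which is the heart of the paper's argument, is a preliminary truncation: using $p_t-e\in\K(E)$ and compactness of finite time intervals, one builds a continuous path of projections $(r_t)_{t\in[1,\infty)}$ in $\mathcal{B}(\ell^2)$ with $r_t$ and $1-r_t$ of infinite rank and $\|(1-r_t)(p_t-e)\|\to 0$, and sets $p'_t:=r_tp_tr_t+(1-r_t)e$.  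Then $\|p'_t-p_t\|\to 0$, so $p\sim p'$ by Lemma \ref{start irrelevant}; and because $1-r_t$ and $1-vv^*+v(1-r_t)v^*$ are infinite-rank projections varying continuously, Lemma \ref{pp lem} lets one write down a continuous unitary path $u_t=vr_t+w_t^*$ in $\mathcal{B}(\ell^2)$ with $u_tp'_tu_t^*=vp'_tv^*+(1-vv^*)e$ exactly; contracting $u$ to $1$ through the unitary group of $\mathcal{B}(\ell^2)$ (Kuiper's theorem) then yields the homotopy, and a final application of Lemma \ref{start irrelevant} replaces $p'$ by $p$.  Without this truncation-and-exact-conjugation construction (or an equivalent substitute), the chain $p\sim\widetilde p\sim\widetilde q\sim q$ is not established, so the central step of the lemma remains unproved in your proposal.
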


\begin{proof}
For each $n\geq 1$, a compactness argument gives a finite rank projection 
$$
e_n\in \mathcal{K}(\ell^2)\subseteq \mathcal{B}(\ell^2)\subseteq \LL(E)
$$
(where the last inclusion is that from line \eqref{can incl} from Lemma \ref{reps}) such that 
$$
\|(1-e_n)(p_t-e)\|<\frac{1}{n}
$$ 
for all $t\in [1,n+1]$.  Choose now a projection $r_1\geq e_1$ such that $r_1-e_1$ and $1-r_1$ both have infinite rank.  Given $r_n$, define $r_{n+1}$ to be the max of $r_n$ and $e_{n+1}$.  In this way we get an increasing sequence $r_1\leq r_2\leq \cdots$ of projections in $\mathcal{B}(\ell^2)$ such that $r_n\geq e_m$ for all $n$ and all $m\leq n$, and such that $r_n-e_m$ and $1-r_n$ both have infinite rank for all $n$ and all $m\leq n$.   For each $n$, $(1-e_n)r_n$ and $(1-e_n)r_{n+1}$ are projections with infinite-dimensional kernel and image as operators on $(1-e_n)\ell^2$, and are thus connected by a continuous path of projections $(r^0_t)_{t\in [n,n+1]}$ in $\mathcal{B}((1-e_n)\ell^2)$.  Set $r_t:=e_n+r^0_{t}$ for $t\in [n,n+1]$.  In this way we get a continuous path of projections $r=(r_t)_{t\in [1,\infty)}$ in $\mathcal{B}(\ell^2)\subseteq \LL(E)$ such that if $\lfloor t\rfloor$ is the floor function of $t$ then 
\begin{equation}\label{rtpt norm}
\|(1-r_t)(p_t-e)\|\leq \|(1-e_{\lfloor t \rfloor})(p_t-e)\|< \frac{1}{\lfloor t\rfloor},
\end{equation}
and such that $r_t$ and $1-r_t$ have infinite rank as operators on $\ell^2$ for each $t$.  

Note now that as $r_t$ commutes with $e$, line \eqref{rtpt norm} implies in particular that $\|[r_t,p_t]\|<2/\lfloor t \rfloor $.  Define $p'\in C_{b}([1,\infty),\LL(E))$ by 
$$
p'_t:=r_tp_tr_t +(1-r_t)e.
$$
As $r_tp_tr_t-r_te$ is in $\K(E)$ for all $t$, we see that $p_t'-e$ is in $\K(E)$ for all $t$.   Moreover, 
$$
\|p_t'-p_t\|\leq \|[r_t,p_t]\|+\|(1-r_t)(p_t-e)\|\to 0 \quad \text{as}\quad  t\to\infty,
$$
and so $p':=(p'_t)$ defines an element of $\mathcal{P}^\pi(A,B)$ such that $p'\sim p$ by Lemma \ref{start irrelevant}.

Now, let $v\in \mathcal{B}(\ell^2)\subseteq \LL(E)$ be an isometry as in the statement of the lemma.  Lemma \ref{pp lem} gives a continuous path $(u^r_t)_{t\in [1,\infty)}$ of unitaries in $\mathcal{B}(\ell^2)$ such that $r_t=u^r_tr_1(u^r_t)^*$ for all $t$.   Similarly, we get a continuous path of unitaries $(u^v_t)_{t\in [1,\infty)}$ such that $u^v_t(1-vv^*+v(1-r_1)v^*)(u^v_t)^*=1-vv^*+v(1-r_t)v^*$ for all $t$.  Choose any partial isometry $w\in \mathcal{B}(\ell^2)$ such that $ww^*=r_1$ and $w^*w=1-vv^*+v(1-r_1)v^*$ (such exists as $r_1$ and $1-vv^*+v(1-r_1)v^*$ are both infinite rank), and define $w_t:=u^r_tw(u^v_t)^*$.  Then $(w_t)_{t\in [1,\infty)}$ is a continuous path of partial isometries in $\mathcal{B}(\ell^2)$ such that $w_tw_t^*=1-r_t$ and $w_t^*w_t=1-vv^*+v(1-r_t)v^*$.   Define 
$$
u_t:=vr_t+w_t^*\in \mathcal{B}(\ell^2)\subseteq \LL(E).
$$
Then $u=(u_t)_{t\in [1,\infty)}$ is a continuous path of unitaries such that $up'u^* = vp'v^*+(1-vv^*)e$.  Let $(h^s:\mathcal{U}(\ell^2)\to \mathcal{U}(\ell^2))_{s\in [0,1]}$ be a norm-continuous contraction of the unitary group of $\ell^2$ to the identity element (such exists by Kuiper's theorem: see for example  \cite[Theorem on page 433]{Cuntz:1987ly}) and note that the path $(h^s(u)p'h^s(u^*))_{s\in [0,1]}$ shows that $p'\sim vp'v^*+(1-vv^*)e$.  In conclusion, we have that 
$$
p\sim p'\sim vp'v^*+(1-vv^*)e\sim vpv^*+(1-vv^*)e
$$
and are done.
\end{proof}

\begin{corollary}\label{monoid}
Let $(\pi,E)$ be a graded, balanced, and infinite multiplicity representation of $A$.  Then for any $p\in \mathcal{P}^\pi(A,B)$, we have $s_1ps_1^*+s_2es_2^*\sim p$.  

In particular, the semigroup $\KKP^\pi(A,B)$ is a commutative monoid with identity given by the class $[e]$ of the neutral projection.
\end{corollary}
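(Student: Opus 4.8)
The plan is to reduce everything to Lemma \ref{id lem}. First I would observe that $s_1ps_1^* + s_2es_2^*$ is a concrete element of $\mathcal{P}^\pi(A,B)$: since $s_1$ and $s_2$ live in the canonical copy of $\mathcal{B}(\ell^2)\subseteq \LL(E)$ and therefore commute with $A$ and with $e$, the three defining conditions of Definition \ref{proj path} for $s_1ps_1^* + s_2es_2^*$ follow from those for $p$ together with the fact that $s_2es_2^*-s_2es_2^*=0$ and $(1-s_1s_1^*)e = s_2s_2^*e = s_2es_2^*$, so that $s_1ps_1^* + s_2es_2^* - e = s_1(p-e)s_1^* \in C_b([1,\infty),\K(E))$, and similarly for conditions (ii) and (iii). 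The key point is then that $q := s_1ps_1^* + (1-s_1s_1^*)e$ is exactly the element produced by Lemma \ref{id lem} applied to $p$ with the isometry $v := s_1$. Hence Lemma \ref{id lem} gives directly $p \sim s_1ps_1^* + s_2es_2^*$.

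Next I would deduce that $[e]$ is a two-sided identity for the semigroup operation. By definition $[p] + [e] = [s_1ps_1^* + s_2es_2^*] = [p]$ by the first part, and commutativity of the operation (Lemma \ref{group lem}) gives $[e] + [p] = [p]$ as well. Thus $\KKP^\pi(A,B)$, already known to be an abelian semigroup by Lemma \ref{group lem}, has an identity element and is therefore a commutative monoid.

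I do not anticipate a serious obstacle here: the corollary is essentially a repackaging of Lemma \ref{id lem} in the special case $v=s_1$, once one checks the elementary point that $(1-s_1s_1^*)e = s_2es_2^*$ (which holds because $s_1s_1^*+s_2s_2^*=1$ and $s_2$ commutes with $e$) and confirms that $s_1ps_1^*+s_2es_2^*$ genuinely lies in $\mathcal{P}^\pi(A,B)$. The only mild care needed is the bookkeeping that in the tensorial decomposition of line \eqref{hb} these operators really do act as claimed and commute with $A$ and $e$ — but this is precisely the content of Lemma \ref{reps}, so no new work is required.
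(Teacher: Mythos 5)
Your argument is correct and is essentially identical to the paper's proof: apply Lemma \ref{id lem} with $v=s_1$, note $1-s_1s_1^*=s_2s_2^*$ and that $s_2$ commutes with $e$, so $s_1ps_1^*+s_2es_2^*\sim p$, and conclude via Lemma \ref{group lem} that $[e]$ is the identity.
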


\begin{proof}
Apply Lemma \ref{id lem} with $v=s_1$, whence $1-vv^*=s_2s_2^*$, and use that $s_2$ commutes with $e$.
\end{proof}

Our next goal, which is the main point of this section, is to show that if $\pi$ is in graded, balanced, and strongly absorbing then $\KKP^\pi(A,B)\cong KK(A,B)$ (and therefore in particular that $\KKP^\pi(A,B)$ is a group).  We need some preliminaries.

Let $(\pi,E)$ be a graded, balanced, and infinite multiplicity representation of $A$, and fix the decomposition of line \eqref{hb} and the Cuntz isometries of Lemma \ref{group lem}.   Let $\QL(\pi)$ and $\CLc(\pi;\K)$ be as in Definitions \ref{id quot} and \ref{clpi} respectively, so Lemma \ref{cl sum} gives us a surjection $\rho:\CLc(\pi;\K)\to \QL(\pi)$.  This induces a $*$-homomorphism $\overline{\rho}:M(\CLc(\pi;\K))\to M(\QL(\pi))$ on multiplier algebras, which is uniquely determined by the condition that $\overline{\rho}(m)\cdot \rho(b)=\rho(mb)$ for all $m\in M(\CLc(\pi;\K))$ and $b\in \CLc(\pi;\K)$ (see \cite[Chapter 2]{Lance:1995ys} for this).  We define 
\begin{equation}\label{mdef}
M:=\overline{\rho}(M(\CLc(\pi;\K))),
\end{equation}
which is a unital $C^*$-subalgebra\footnote{It could be all of $M(\QL(\pi))$, although this does not seem to be obvious: note that the noncommutative Tietze extension theorem \cite[Proposition 6.8]{Lance:1995ys} is not available here as $\CLc(\pi;\K)$ is not $\sigma$-unital.} of $M(\QL(\pi))$ containing $\QL(\pi)$ as an ideal.

\begin{lemma}\label{km0}
With notation as in line \eqref{mdef} above, $M$ has trivial $K$-theory.  
\end{lemma}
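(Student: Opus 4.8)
The plan is to prove this by an Eilenberg swindle, in the spirit of the proof of Lemma~\ref{eilenberg}. First I would use the infinite multiplicity of $\pi$ together with Remark~\ref{isoms} to fix a sequence $(s_n)_{n\geq 1}$ of isometries in $\LL(E)$ with mutually orthogonal ranges, commuting with $\pi(A)$, and with $\sum_n s_n s_n^*$ converging strictly to $1$. Acting pointwise in $t$, each $s_n$ is a multiplier of $\CLc(\pi;\K)$: for $b\in\CLc(\pi;\K)$ the function $s_n b$ has values $s_n b_t\in\K(E)$, and $[s_n b_t,\pi(a)]=s_n[b_t,\pi(a)]\to 0$ for every $a\in A$, and symmetrically for $bs_n$. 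Hence $\bar s_n:=\overline{\rho}(s_n)$ are isometries in $M$ with mutually orthogonal ranges. The crucial point is the claim that $\sum_n \bar s_n\bar s_n^*$ converges strictly in $M(\QL(\pi))$ to $1_M=\overline{\rho}(1)$; equivalently, the projections $q_N:=\overline{\rho}\bigl(\sum_{n\leq N}s_n s_n^*\bigr)$ form an approximate unit for the ideal $\QL(\pi)$ of $M$.

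Granting this claim, the swindle is routine. Since $\overline{\rho}$ is strictly continuous on bounded sets, the claim lets one form, for any projection $p$ over $M$, the strictly convergent sum $p_\infty:=\sum_{n\geq 1}\bar s_n p\bar s_n^*$ (with $\bar s_n$ amplified to matrices), which is again a projection over $M$; and for any unitary $u$ over $M$ the strictly convergent sum $u_\infty:=\sum_{n\geq 1}\bar s_n u\bar s_n^*$, which is unitary over $M$ since $\sum_n\bar s_n\bar s_n^*=1$. The shift $\bar s:=\sum_{n\geq 1}\bar s_{n+1}\bar s_n^*$ is an isometry in $M$, satisfies $\bar s p_\infty\bar s^*=p_\infty-\bar s_1 p\bar s_1^*$, and conjugates $u_\infty$ to $u_\infty-\bar s_1 u\bar s_1^*$ on its range. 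Applying Lemma~\ref{isom lem} (conjugation by $\bar s_1$ and by $\bar s$), Lemma~\ref{orth sum}, and their standard $K_1$ analogues then gives $[p]+[p_\infty]=[p_\infty]$ in $K_0(M)$ and $[u]+[u_\infty]=[u_\infty]$ in $K_1(M)$, whence $[p]=0$ and $[u]=0$. As $K_0(M)$ and $K_1(M)$ are generated by such classes, this shows $K_*(M)=0$.

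The work, then, is entirely in the claim that $\sum_n\bar s_n\bar s_n^*=1_M$ strictly, and this is the step I expect to be the main obstacle --- note that the analogous statement \emph{fails} in $M(\CLc(\pi;\K))$, the obstruction being ``compact mass escaping to $t=\infty$'', which is exactly what is annihilated in the quotient $\QL(\pi)$. By Lemma~\ref{cl sum} every element of $\QL(\pi)$ has the form $\rho(b)$ with $b\in\CLc(\pi;\K)$, so it suffices to show that $\operatorname{dist}\bigl((1-\sum_{n\leq N}s_n s_n^*)b,\ILc(\pi)\bigr)\to 0$ as $N\to\infty$ for each such $b$. I would prove this using three ingredients: that the $s_n$ commute with $\pi(A)$, so that $\pi(a)(1-\sum_{n\leq N}s_ns_n^*)b_t=(1-\sum_{n\leq N}s_ns_n^*)\pi(a)b_t$ lies in $(1-\sum_{n\leq N}s_ns_n^*)\K(E)$; that $(1-\sum_{n\leq N}s_ns_n^*)k\to 0$ in norm for every $k\in\K(E)$; and that truncating an element of $\CLc(\pi;\K)$ to a bounded interval in the $t$-variable lands it in $\ILc(\pi)$, which supplies the required approximants. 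The delicate point is to control the $N\to\infty$ and $t\to\infty$ limits together, uniformly enough; once that is in hand, everything else is bookkeeping with the swindle.
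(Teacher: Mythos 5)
Your closing swindle bookkeeping is fine, but the claim you correctly single out as the crux --- that $q_N=\overline{\rho}\bigl(\sum_{n\leq N}s_ns_n^*\bigr)$ is an approximate unit for $\QL(\pi)$, equivalently that $\sum_n\overline{\rho}(s_n)\overline{\rho}(s_n)^*$ converges strictly to $1$ in $M(\QL(\pi))$ --- is simply false, so no amount of care with the joint $N\to\infty$, $t\to\infty$ limits can rescue this route. Already for $A=B=\C$ (take $\pi(1)=1$ for simplicity, so $\CLc(\pi;\K)=C_b([1,\infty),\K(E))$, $\ILc(\pi)=C_0([1,\infty),\K(E))$, and the quotient norm is $\limsup_{t\to\infty}\|\cdot\|$) it fails: write $P_N=\sum_{n\leq N}s_ns_n^*$, fix a unit vector $\zeta\in E$, put $\eta_k:=s_{k+1}\zeta$, choose $t_k\to\infty$ and a norm-continuous path of unit vectors $(\xi_t)_{t\geq 1}$ with $\xi_{t_k}=\eta_k$, and let $b_t$ be the rank-one projection onto $\xi_t$. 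Then $b\in\CLc(\pi;\K)$, but for every $N$ and every $k\geq N$ one has $(1-P_N)\eta_k=\eta_k$, so $\|(1-P_N)b_{t_k}\|=1$ and hence $\|\rho(b)-q_N\rho(b)\|=\limsup_t\|(1-P_N)b_t\|=1$ for all $N$. (The same phenomenon occurs for absorbing $\pi$ by keeping the $\xi_t$ in the range of $\pi(1)$, and for general unital $A$ by using wandering rank-one projections in the copy of $\K(\ell^2)\subseteq\mathcal{B}(\ell^2)$ commuting with $A$, cut by a quasicentral approximate unit of $\K(E)$.) The diagnosis in your last paragraph is exactly where this goes wrong: the quotient by $\ILc(\pi)$ annihilates compact mass that escapes in the $t$-direction, or that $A$ does not see, but it does nothing about rank-one mass that wanders off \emph{spatially} in $E$ as $t\to\infty$, and that is what defeats the partial sums $P_N$. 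Even granting the claim, you would also still need the strict limits $p_\infty$, $u_\infty$ to lie in matrices over $M$, not merely over $M(\QL(\pi))$, a point the proposal does not address.

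The paper's proof avoids any such convergence statement by running the swindle one level up, as an endomorphism of $M$ itself rather than element by element on $K$-theory classes. It includes $\mathcal{B}(\ell^2)$ unitally into $M(\CLc(\pi;\K))$ (acting pointwise in $t$, using that the copy of $\mathcal{B}(\ell^2)$ from Lemma \ref{reps} commutes with $A$), defines $\iota_0(m)=s_0ms_0^*$ and $\alpha_0(m)=\sum_{n\geq 1}s_nms_n^*$ on $M(\CLc(\pi;\K))$, with the sum taken pointwise in $t$ in the strict topology of $\LL(E)$, and uses the description of $\ker\overline{\rho}$ as $\{m\mid m\,\CLc(\pi;\K)\subseteq\ILc(\pi)\}$ to see that both descend to $*$-homomorphisms $\iota,\alpha\colon M\to M$. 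Lemma \ref{orth sum} gives $\iota_*+\alpha_*=(\iota+\alpha)_*$, conjugation by the shift isometry $\sum_n s_{n+1}s_n^*$ (an honest element of $\mathcal{B}(\ell^2)\subseteq M$) together with Lemma \ref{isom lem} gives $(\iota+\alpha)_*=\alpha_*$, and Lemma \ref{isom lem} with $v=s_0$ gives $\iota_*=\mathrm{id}$, whence $K_*(M)=0$. All identities used are operator identities inside $\mathcal{B}(\ell^2)$ transported into $M$ by the unital inclusion; at no point do the partial sums $P_N$ need to converge to $1$ in any topology on $M$ or $M(\QL(\pi))$. If you want to keep your element-wise formulation, the repair is precisely to establish first that $x\mapsto\overline{\rho}\bigl(\sum_{n\geq 1}s_nms_n^*\bigr)$ is a well-defined endomorphism of $M$; then $p_\infty$ is its entrywise image of $p$ (and the $K_1$ case is handled by the usual correction $u\mapsto\alpha(u)+1-\alpha(1)$), and the false approximate-unit claim is never needed.
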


\begin{proof}
The unital inclusion $\mathcal{B}(\ell^2)\subseteq \LL(E)$ of Lemma \ref{reps} induces a unital inclusion $\mathcal{B}(\ell^2)\subseteq M(\CLc(\pi;\K))$ by having $\mathcal{B}(\ell^2))$ act pointwise in the variable $t$ (this uses that $\mathcal{B}(\ell^2)$ commutes with $A$).  This in turn descends to a unital inclusion $\mathcal{B}(\ell^2)\subseteq M$.  Let $(s_n)_{n=0}^\infty$ be a sequence of isometries in $\mathcal{B}(\ell^2)\subseteq M$ with orthogonal ranges.  

Consider the maps 
$$
\iota_0:M(\CLc(\pi;\K))\to M(\CLc(\pi;\K)), \quad b\mapsto s_0 bs_0^*,
$$
and 
$$
\alpha_0:M(\CLc(\pi;\K))\to M(\CLc(\pi;\K)),\quad b\mapsto \sum_{n=1}^\infty s_nbs_n^*
$$
(the sum converges in the strict topology of $\LL(E)$, pointwise in $t$).  With $\ILc(\pi)$ as in Definition \ref{id quot}, the kernel of the map $\overline{\rho}:M(\CLc(\pi;\K))\to M$ is 
$$
\{m\in M(\CLc(\pi;\K))\mid mb\in \ILc(\pi) \text{ for all } b\in \CLc(\pi;\K)\},
$$ 
whence $\iota_0$ and $\alpha_0$ descend to well-defined $*$-homomorphisms $\iota,\alpha:M\to M$.  

As $\alpha$ and $\iota$ have orthogonal ranges, Lemma \ref{orth sum} implies that $\alpha+\iota$ is a $*$-homomorphism and that as maps on $K$-theory, $\alpha_*+\iota_*=(\alpha+\iota)_*$.  Moreover, conjugating by the isometry $s:=\sum_{n=0}^\infty s_ns_{n+1}^*\in \mathcal{B}(\ell^2)\subseteq M$ (the sum converges in the strong topology of $\mathcal{B}(\ell^2)$) and applying Lemma \ref{isom lem} implies that $(\alpha+\iota)_*=\alpha_*$ as maps on $K$-theory.  We thus have 
$$
\alpha_*+\iota_*=(\alpha+\iota)_*=\alpha_*,
$$ 
whence $\iota_*=0$.  However, $\iota_*$ is an isomorphism by Lemma \ref{isom lem} again, whence $K_*(M)$ is zero as required.
\end{proof}

We need one more preliminary definition and lemma before we get to the isomorphism $\KKP^\pi(A,B)\cong KK(A,B)$.

\begin{definition}\label{double}
For an ideal $I$ in a $C^*$-algebra $N$, the \emph{double} of $I$ along $N$ is the $C^*$-algebra defined by
$$
D_N(I):=\{(a,b)\in N\oplus N\mid a-b\in I\}.
$$
\end{definition}

Note that $D_N(I)$ fits into a short exact sequence
\begin{equation}\label{double ses}
\xymatrix{ 0 \ar[r] & I \ar[r] & D_N(I) \ar[r] & N \ar[r] & 0 }
\end{equation}
with the maps $I\to D_N(I)$ and $D_N(I)\to N$ given by $a\mapsto (a,0)$ and $(a,b)\mapsto b$ respectively. 

\begin{lemma}\label{dm iso}
Assume that $I$ is an ideal in a unital $C^*$-algebra $N$, let $D_N(I)$ be the double from Definition \ref{double}, and assume that $K_*(N)=0$.  Then $D_N(I)$ has the following properties:
\begin{enumerate}[(i)]
\item \label{dm1} The inclusion $I\to D_N(I)$ from line \eqref{double ses} induces an isomorphism on $K$-theory;
\item \label{dm2} any class in $K_0(D_N(I))$ of the form $[p,p]$ for some projection $p\in M_n(N)$ is zero;
\item \label{dm3} for any $[p,q]\in K_0(D_N(I))$, we have $-[p,q]=[q,p]$;
\item \label{dm4} any element in $K_0(D_N(I))$ can be written as $[p,q]$ for a projection $(p,q)$ in some matrix algebra $M_n(D_N(I))$.
\end{enumerate}
\end{lemma}

\begin{proof}
Part \eqref{dm1} follows from the six-term exact sequence in $K$-theory and the assumption that $K_*(N)=0$.  Part \eqref{dm2} follows as any such class is in the image of the map induced on $K$-theory by the $*$-homomorphism
$$
N\to D_N(I),\quad a\mapsto (a,a)
$$
and is thus zero as $K_*(N)=0$.  For part \eqref{dm3}, say $[p,q]\in K_0(D_N(I))$ with $p,q\in M_n(N)$.  Then $[p,q]+[q,p]=[p\oplus q,q\oplus p]$.  As $p-q\in M_n(I)$, the formula 
$$
[0,\pi/2]\owns s\mapsto \Bigg(\begin{pmatrix} p & 0 \\ 0 &  q\end{pmatrix} , \begin{pmatrix} \cos(s) & \sin(s) \\ -\sin(s) & \cos(s)\end{pmatrix} \begin{pmatrix} q & 0 \\ 0 & p \end{pmatrix} \begin{pmatrix} \cos(s) & -\sin(s) \\ \sin(s) & \cos(s)\end{pmatrix} \Bigg)
$$
defines a homotopy between $(p\oplus q,q\oplus p)$ and $(p\oplus q,p\oplus q)$ passing through projections in $M_{2n}(D_N(I))$.  The latter defines the zero class in $K_0$ by part \eqref{dm2}, which gives part \eqref{dm3}.  Part \eqref{dm4} follows directly from part \eqref{dm3}.
\end{proof}

Here is the main result of this section.

\begin{theorem}\label{kk iso}
Let $\pi:A\to \LL(E)$ be a graded, balanced, and strongly absorbing representation on a Hilbert $B$-module $E$.  Let $M$ be as in line \eqref{mdef}, $\QL(\pi)$ as in Definition \ref{id quot} and $D_M(\QL(\pi))$ be as in Definition \ref{double}.  Then the formula 
\begin{equation}\label{kkp to dmq}
\KKP^\pi(A,B) \to K_0(D_M(\QL(\pi))),\quad [p]\mapsto [p,e]
\end{equation}
defines an isomorphism of commutative monoids.  In particular $\KKP^\pi(A,B)$ is an abelian group.

Moreover, there is a canonical isomorphism $\KKP^\pi(A,B)\cong KK(A,B)$. 
\end{theorem}

\begin{proof}
We first have to show that the map in line \eqref{kkp to dmq} is well-defined.  It is not difficult to see that if $p\in \mathcal{P}^\pi(A,B)$, then $(p,e)$ is a projection in $D_M(\QL(\pi))$.  For well-definedness, we need to show that if $p^0\sim p^1$ in $\mathcal{P}^\pi(A,B)$, then the projections $(p^0,e)$ and $(p^1,e)$ in $D_M(\QL(\pi))$ define the same $K$-theory class.  Let then $(p^s)_{s\in [0,1]}$ be a homotopy implementing the equivalence between $p^0$ and $p^1$.  Let 
$$
1\otimes \pi:A\to \LL(C[0,1]\otimes E)
$$
be the amplification of $\pi$ to the $C[0,1]\otimes B$-module $C[0,1]\otimes E$, and let $\CLc(1\otimes \pi)$ be the associated localization algebra.  Note that $p:=(p^s)_{s\in [0,1]}$ defines an element of the multiplier algebra $M(\CLc(1\otimes \pi))$ such that $p-e$ is in $\CLc(1\otimes \pi)$, and so that $[p,e]$ is a well-defined class in $D_{M_C}(\QL(1\otimes \pi))$, where $M_C$ is defined analogously to $M$, but starting with $1\otimes \pi$.  

As $\pi$ is graded, balanced, and strongly absorbing we can write $E=E_0\oplus E_1$ and Remark \ref{usa unique} implies that $E_0$ is isomorphic as a Hilbert $B$-module to $\ell^2\otimes B$.  Hence we may apply Proposition \ref{sa tp} to conclude that $1\otimes \pi$ is (graded, balanced, and) strongly absorbing, and thus Theorems \ref{kk iso dww} and \ref{c to uc} give an isomorphism 
$$
KK(A,C[0,1]\otimes B) \stackrel{\cong}{\to} K_0(\CLc(1\otimes \pi)).
$$
Let $\epsilon^0,\epsilon^1:C[0,1]\otimes B\to B$ be given by evaluation at the endpoints.   Lemma \ref{func prop} then gives a commutative diagram 
$$
\xymatrix{ KK(A,C[0,1]\otimes B) \ar[d]^-{\epsilon^i_*} \ar[r]^-\cong  & K_0(\CLc(\iota\otimes \pi)) \ar[d]^-{\epsilon^i_*} \\  KK(A,B) \ar[r]^-\cong  & K_0(\CLc(\pi))}
$$
for $i\in \{0,1\}$.   Homotopy invariance of $KK$-theory gives that the maps $\epsilon^0_*,\epsilon^1_*:KK(A,C[0,1]\otimes B)\to KK(A,B)$ are the same, whence the maps $\epsilon^0_*,\epsilon^1_*:K_0(\CLc(1\otimes \pi))\to K_0(\CLc(\pi))$ are too.  On the other hand each $\epsilon^i$ induces maps $\epsilon^i:\QL(1\otimes \pi)\to \QL(\pi)$ and $\epsilon^i:\CLc(1\otimes \pi;\K)\to \CLc(\pi;\K)$, and therefore induces a map $D_{M_C}(\QL(1\otimes \pi))\to D_M(\QL(1\otimes \pi))$.  All this gives rise to a commutative diagram
$$
\xymatrix{ \CLc(1\otimes \pi) \ar[d]^-{\epsilon^i}  \ar[r]  & \QL(1\otimes \pi) \ar[d]^-{\epsilon^i}  \ar[r] & D_{M_C}(\QL(1\otimes \pi))\ar[d]^-{\epsilon^i} \\ \CLc(\pi)  \ar[r] & \QL(\pi)  \ar[r]  & D_{M}(\QL(\pi)) }
$$
where the first pair of horizontal maps are the canonical quotients, and the second pair are the inclusions $a\mapsto (a,0)$.    The horizontal maps induce isomorphisms on $K$-theory by Corollary \ref{quot iso} (first pair), and Lemmas \ref{km0} and \ref{dm iso} (second pair).  Hence the maps $\epsilon^0_*,\epsilon^1_*:K_0(D_{M_C}(\QL(1\otimes \pi)))\to K_0(D_M(\QL(\pi)))$ are the same.  We thus see that
$$
[p^0,e]=\epsilon^0_*[p,e]=\epsilon^1_*[p,e]=[p^1,e],
$$
which is the statement needed for well-definedness of the map in line \eqref{kkp to dmq}.

We now show that the map in line \eqref{kkp to dmq} statement is a homomorphism.  Indeed, for $p,q\in \mathcal{P}^\pi(A,B)$, the element $[s_1ps_1^*+s_2qs_2^*]$ of $\KKP^\pi(A,B)$ gets sent to 
$$
[s_1ps_1^*+s_2qs_2^*,e]=[s_1ps_1^*+s_2qs_2^*,s_1e s_1^*+s_2es_2^*],
$$
where we have used that $s_1s_1^*+s_2s_2^*=1$ and that $s_1,s_2$ commute with $e$.  As $s_1xs_1^*$ is orthogonal to $s_1ys_2^*$ for any $x,y$ we have that  
$$
[s_1ps_1^*+s_2qs_2^*,s_1e s_1^*+s_2es_2^*]=[s_1ps_1^*,s_1es_1^*]+[s_2qs_2^*,s_2es_2^*]
$$
and as conjugation by $s_1$ and $s_2$ has no effect on $K$-theory by Lemma \ref{isom lem}, this equals 
$$
[p,e]+[q,e],
$$
which is the sum of the images of $[p]$ and $[q]$.

We now show that the map in line \eqref{kkp to dmq} is surjective.  Using Lemma \ref{dm iso}, an arbitrary element of $K_0(D_M(\QL(\pi)))$ can be represented as a class $[p,q]$ with $p,q$ projections in $M_n(M)$ for some $n$, and with $p-q\in M_n(\QL(\pi))$.  We have that $[1-q,1-q]=0$ by Lemma \ref{dm iso}, and thus $[p,q]=[p\oplus 1-q,q\oplus 1-q]$.  The matrix $u=\begin{psmallmatrix} q & 1-q \\ 1-q & q \end{psmallmatrix}$ is a unitary in $M_{2n}(M)$, whence conjugating by $(u,u)$ we see that 
$$
[p,q]=[p\oplus 1-q,q\oplus 1-q]=[u(p\oplus q)u^*,u(q\oplus 1-q)u^*]=[u(p\oplus q)u^*,1_n\oplus 0_n],
$$
where $1_n$ and $0_n$ are the unit and zero in $M_n(M)$.  Choose now $2n$ isometries $v_1,...,v_{2n}$ in $\mathcal{B}(\C^2\otimes \ell^2)\subseteq \LL$ such that $\sum_{i=1}^n v_iv_i^*=e$ and $\sum_{i=1}^{2n} v_iv_i^*=1_{2n}$.  The matrix
$$
v:=\begin{pmatrix} v_1 & v_2 & \cdots & v_{2n} \\ 0 & 0 & \cdots & 0 \\ \vdots & \vdots & \ddots & \vdots \\ 0 & 0 & \cdots & 0 \end{pmatrix} \in M_{2n}(M)
$$
is an isometry, whence conjugation by $(v,v)$ induces the trivial map on $K_0(D_M(\QL(\pi)))$ by Lemma \ref{isom lem}.  Hence 
$$
[p,q]=[vu(p\oplus q)u^*v^*,v(1_n\oplus 0)v^*]=[r\oplus 0_{2n-1},e\oplus 0_{2n-1}],
$$
where $r\in M$ is a projection such that $a:=r-e$ is in $\QL(\pi)$.  We may lift $a$ to a self-adjoint element $b\in \CLc(\pi;\K)$ by Lemma \ref{cl sum}.  Consider the self-adjoint element $(b+e,e)\in D_{M(\CLc(\pi;\K))}(\CLc(\pi;\K))$, which maps to $(r,e)\in D_M(\QL(\pi))$ under the $*$-homomorphism
$$
D_{M(\CLc(\pi;\K))}(\CLc(\pi;\K))\to D_M(\QL(\pi))
$$
induced by the quotient map $\CLc(\pi;\K)\to \QL(\pi)$ of Lemma \ref{cl sum}.  Note that if $f:\R\to [-1,1]$ is the function defined by 
$$
f(t):=\left\{\begin{array}{ll} 1 & t> 1 \\ t & -1\leq t\leq 1 \\ -1 & t<-1 \end{array}\right.
$$
then in $D_{M(\CLc(\pi;\K))}(\CLc(\pi;\K))$
$$
f(b,e)=(f(b+e),f(e))=(f(b+e),e),
$$
and this element still maps to $(r,e)$ by naturality of the functional calculus.  Set that $c=f(b+e)$.  Then one checks that $c$ is an element of $\mathcal{P}^\pi(A,B)$ such that $[c,e]=[r,e]=[p,q]$, so we are done with surjectivity.  

To see injectivity of the map in line \eqref{kkp to dmq}, assume that $[p]\in \KKP^\pi(A,B)$ is such that $[p,e]$ is zero in $K_0(D_M(\QL(\pi)))$.  In particular, $[p,e]=[e,e]$ by Lemma \ref{dm iso}, and therefore there is a projection $(q_1,q_2)\in M_{n}(D_M(\QL(\pi)))$ and a homotopy $p_{(1)}=(p_{(1)}^s)_{s\in [0,1]}$ between $(p\oplus q_1,e\oplus q_2)$ and $(e\oplus q_1,e\oplus q_2)$ in $M_{n+1}(D_M(\QL(\pi)))$.  We will manipulate this homotopy to build a homotopy between $p$ and $e$ in $\mathcal{P}^\pi(A,B)$.
\begin{itemize}
\item Replacing $p_{(1)}$ by $p_{(2)}:=p_{(1)}\oplus (q_2,q_1)$, we get a homotopy between $(p\oplus q_1\oplus q_2,e\oplus q_2\oplus q_1)$ and $(e\oplus q_1\oplus q_2,e\oplus q_2\oplus q_1)$.  
\item As $q_1-q_2\in M_n(\QL(\pi))$, we get a homotopy 
$$
s\mapsto \Big(p\oplus q_1\oplus q_2,e\oplus \begin{psmallmatrix} \cos(s) & \sin(s) \\ -\sin(s) & \cos(s)\end{psmallmatrix}\begin{psmallmatrix} q_2 & 0 \\ 0 & q_1 \end{psmallmatrix}\begin{psmallmatrix} \cos(s) & \sin(s) \\ -\sin(s) & \cos(s)\end{psmallmatrix}\Big)
$$
between $(p\oplus q_1\oplus q_2,e\oplus q_2\oplus q_1)$ and $(p\oplus q_1\oplus q_2,e\oplus q_1\oplus q_2)$, and similarly between $(e\oplus q_1\oplus q_2,e\oplus q_2\oplus q_1)$ and $(e\oplus q_1\oplus q_2,e\oplus q_1\oplus q_2)$.  Concatenating these with the homotopy $p_{(2)}$ gives a homotopy $(p_{(3)}^s)_{s\in [0,1]}$ between $(p\oplus q_1\oplus q_2,e\oplus q_1\oplus q_2)$ and $(e\oplus q_1\oplus q_2,e\oplus q_1\oplus q_2)$.
\item Setting $r=q_1\oplus q_2$ and replacing $p_{(3)}$ with 
$$
p_{(4)}^s:=p_{(3)}\oplus \big( (1-r),(1-r)\big)
$$ 
gives a homotopy between $(p\oplus r\oplus (1-r),e\oplus r\oplus 1-r\oplus 0_{4n})$ and $(e\oplus r\oplus (1-r),e\oplus r\oplus (1-r))$.
\item Set $u=\begin{psmallmatrix} r & 1-r \\ 1-r & r\end{psmallmatrix}$, which is a unitary in $M_{4n}(M)$.  Moreover, $u$ is self-adjoint, so connected to the identity via some path $(u^s)_{s\in [0,1]}$ of unitaries.   Then 
$$
(1\oplus u^{s} ,1\oplus u^{s})\big(p\oplus r\oplus (1-r),e\oplus r\oplus (1-r)\big)(1\oplus u^{s} ,1\oplus u^{s})^*
$$
defines a homotopy between $\big(p\oplus r\oplus (1-r),e\oplus r\oplus (1-r)\big)$ and $(p\oplus 1_{2n}\oplus 0_{2n},e\oplus 1_{2n}\oplus 0_{2n})$.  Similarly, we get a homotopy between $\big(e\oplus r\oplus (1-r),e\oplus r\oplus (1-r)\big)$ and $(e\oplus 1_{2n}\oplus 0_{2n},e\oplus 1_{2n}\oplus 0_{2n})$.  Concatenating these with $p_{(4)}$ gives a homotopy $p_{(5)}$ between $(p\oplus 1_{2n}\oplus 0_{2n},e\oplus 1_{2n}\oplus 0_{2n})$ and $(e\oplus 1_{2n}\oplus 0_{2n},e\oplus 1_{2n}\oplus 0_{2n})$.
\item Write $p_{(5)}^s=(p_{0}^{s},p_{1}^{s})$ for paths of projections $(p^{s}_0)_{s\in [0,1]}$ and $(p^{s}_1)_{s\in [0,1]}$ in $M_{4n+1}(M)$. Then Lemma \ref{pp lem} gives a continuous path of unitaries $(v^s)_{s\in [0,1]}$ in $M_{4n+1}(M)$ with $v^0=1$, and $p^{s}_1=v_s(e\oplus 1_{2n}\oplus 0_{2n})v_s^*$ for all $s\in [0,1]$.  Note in particular that $v_1(e\oplus 1_{2n}\oplus 0_{2n})v_1^*=(e\oplus 1_{2n}\oplus 0_{2n})$, even though we may not have that $v^1=1$.  Define then
$$
p_{(6)}^s:=(v_s,v_s)^*p_{(5)}^s (v_s,v_s),
$$
which gives a new homotopy between $(p\oplus 1_{2n}\oplus 0_{2n},e\oplus 1_{2n}\oplus 0_{2n})$ and $(e\oplus 1_{2n}\oplus 0_{2n},e\oplus 1_{2n}\oplus 0_{2n})$ with the additional property of being constant in the second variable.
\item Let $M_{1\times 4n}(M)$ be the $1\times 4n$ row matrices, and choose an isometry $w\in M_{1\times 4n}(\mathcal{B}(\ell^2))\subseteq M$ be such that $w(1_{2n}\oplus 0_{2n})w^*=s_2es_2^*$.  Define 
$$
t:=\begin{pmatrix} s_1 & w \end{pmatrix}\in M_{1\times 4n+1}(\mathcal{B}(\ell^2))\subseteq M_{1\times 4n+1}(M),
$$
which is an isometry, and define $p_{(7)}^s:= tp_{(6)}^st^*$.  Then this is a homotopy in $D_M(\QL(\pi))$ between $(s_1ps_1^*+s_2es_2^*,s_1es_1^*+s_2es_2^*)$ and $(s_1es_1^*+s_2es_2^*,s_1es_1^*+s_2es_2^*)$ that is constant in the second variable.  
\end{itemize}
Now, restricting the homotopy $p_{(7)}$ to the first variable gives a homotopy of projections in $M$, say $(p^s)_{s\in [0,1]}$, between $s_1ps_1^*+s_2es_2^*$ and $e$, and such that $p^s-e$ is in $\QL(\pi)$ for all $s$.  The function 
$$
[0,1]\to D_M(\QL(\pi)),\quad s\mapsto (p^s,e)
$$
defines an idempotent, say $q$, in $C[0,1]\otimes D_M(\QL(\pi))$.  As the natural $*$-homomorphism 
$$
C[0,1]\otimes D_{M(\CLc(\pi;\K))}(\CLc(\pi;\K))\to C[0,1]\otimes D_M(\QL(\pi))
$$
is surjective (this follows from Lemma \ref{cl sum}), $q$ lifts to a self-adjoint contraction of the form 
$$
(a,e)\in C[0,1]\otimes D_{M(\CLc(\pi;\K))}(\CLc(\pi;\K))
$$
analogously to the argument at the end of the surjectivity part.  The element $a$ defines a homotopy in $\mathcal{P}^\pi(A,B)$ between $s_1ps_1^*+s_2es_2^*$ and $e$.  On the other hand, $s_1ps_1^*+s_2es_2^*\sim p$ by Corollary \ref{monoid}, whence we have 
$$
p\sim s_1ps_1^*+s_2es_2^*\sim e.
$$
Corollary \ref{monoid} then shows that $[p]=0$, and so we have injectivity.

To complete the proof of Theorem \ref{kk iso}, we need to show that $\KKP^\pi(A,B)\cong KK(A,B)$.  This follows by combining: the isomorphism $\KKP^\pi(A,B)\cong K_0(D_M(\QL(\pi)))$ established above; the isomorphism $K_*(D_M(\QL(\pi)))\cong K_*(\QL(\pi))$ of Lemma \ref{dm iso}; the isomorphism $K_*(\QL(\pi))\cong K_*(\CLc(\pi))$ of Corollary \ref{quot iso}; and the isomorphism $K_0(\CLc(\pi))\cong KK(A,B)$ of Theorems \ref{kk iso dww} and \ref{c to uc}.
\end{proof}

Finally in this section we establish a technical lemma about functoriality that we will need later.

\begin{lemma}\label{loc func lem}
Let $\pi:A\to \LL(E)$ be a graded, balanced, and strongly absorbing representation on a Hilbert $B$-module, and let $C=C_0(Y)$ be a separable and commutative $C^*$-algebra.  For $y\in Y$, let $e^y:C_0(Y)\to \C$ be the $*$-homomorphism defined by evaluation at $y$.  Let $\phi_B:KK(A,B) \to \KKP^\pi(A,B)$ be the isomorphism of Theorem \ref{kk iso}.  Then if $p$ is an element of $\mathcal{P}^{1\otimes \pi}(A,C\otimes B)$ with corresponding family $(p_{t}^y)_{t\in [1,\infty),y\in Y}$ as in Lemma \ref{loc func lem 0}, we have that 
$$
e^y_*(\phi_{C\otimes B}^{-1}[p])=\phi_B^{-1} [p^y].
$$
\end{lemma}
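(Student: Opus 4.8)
The plan is to establish the identity by showing that the isomorphism $\phi_B$ of Theorem \ref{kk iso} is natural in $B$ with respect to the $*$-homomorphism $e^y\colon C\otimes B\to B$ (by which we mean $e^y\otimes 1_B$), and then identifying the resulting induced map $\KKP^{1\otimes\pi}(A,C\otimes B)\to\KKP^\pi(A,B)$ with $[p]\mapsto[p^y]$. Recall that $\phi_B^{-1}$ is the composition
\[
\KKP^\pi(A,B)\xrightarrow{[p]\mapsto[p,e]}K_0(D_M(\QL(\pi)))\xrightarrow{\cong}K_0(\QL(\pi))\xrightarrow{\cong}K_0(\CLc(\pi))\xrightarrow{\cong}KK(A,B)
\]
coming from Theorem \ref{kk iso}, Lemma \ref{dm iso}, Corollary \ref{quot iso}, and Theorems \ref{kk iso dww}--\ref{c to uc}, and that there is an analogous composition for $C\otimes B$ built from the amplification $1\otimes\pi$. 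One first checks that $1\otimes\pi$ is substantially absorbing: $E$ is isomorphic as a Hilbert $B$-module to $\ell^2\otimes B$ (as used in the proof of Theorem \ref{kk iso}), so Proposition \ref{sa tp} applies, and the decomposition \eqref{hb} for $\pi$ transports to one for $1\otimes\pi$. Hence $\phi_{C\otimes B}$ and the algebras $\CLc(1\otimes\pi)$, $\QL(1\otimes\pi)$, $M_C$, $D_{M_C}(\QL(1\otimes\pi))$ all make sense.

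First I would pick a convenient covering isometry for $e^y$. Unwinding the internal tensor product, the rule $(f\otimes\eta)\otimes b\mapsto f(y)\,\eta b$ defines a unitary $u\colon(C\otimes E)\otimes_{e^y}B\to E$, and because $1\otimes\pi$ is constant in the $Y$-variable one gets $u^*\pi(a)u=\Phi\big((1\otimes\pi)(a)\big)$ on the nose; so $u$, viewed as a constant function of $t$, is a covering isometry for $e^y$ in the sense of Definition \ref{cov isom def}. By Lemma \ref{cov isom $*$-hom} the associated $*$-homomorphism $\CLc(1\otimes\pi)\to\CLc(\pi)$ is $b\mapsto u\Phi(b)u^*$, which under the correspondence of Lemma \ref{loc func lem 0} is exactly evaluation at $y$; call it $\mathrm{ev}_y$. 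Since the map on $K$-theory induced by a covering isometry is independent of the choice (Lemma \ref{cov isom $*$-hom}), Lemma \ref{func prop} (with $\pi_B=1\otimes\pi$, $\pi_C=\pi$, $\phi=e^y$) says that the square relating $KK(A,C\otimes B)\cong K_0(\CLc(1\otimes\pi))$ and $KK(A,B)\cong K_0(\CLc(\pi))$ via $e^y_*$ and $(\mathrm{ev}_y)_*$ commutes.

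Next I would propagate $\mathrm{ev}_y$ through the remaining isomorphisms. One checks that $\mathrm{ev}_y$ maps $\ILc(1\otimes\pi)$ into $\ILc(\pi)$ and restricts to a \emph{surjection} $\CLc(1\otimes\pi;\K)\to\CLc(\pi;\K)$ (given $b'$ in the target and $f\in C_0(Y)$ with $f(y)=1$, the element $t\mapsto f\otimes b'_t$ is a preimage); therefore $\mathrm{ev}_y$ descends to $\QL(1\otimes\pi)\to\QL(\pi)$ compatibly with the quotient maps, extends to the multiplier algebras, and --- compatibly with the maps $\overline{\rho}$ of \eqref{mdef} --- induces $M_C\to M$ and hence a $*$-homomorphism $D_{M_C}(\QL(1\otimes\pi))\to D_M(\QL(\pi))$ intertwining the inclusions $a\mapsto(a,0)$ and $a\mapsto(a,a)$. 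Passing to $K$-theory and invoking Corollary \ref{quot iso} and Lemmas \ref{km0} and \ref{dm iso}, every remaining square in the ladder comparing the two defining compositions commutes. Finally, for $p\in\mathcal{P}^{1\otimes\pi}(A,C\otimes B)$ one has $p-e\in\CLc(1\otimes\pi;\K)$, so $\mathrm{ev}_y$ applies and carries the projection $(p,e)$ of $D_{M_C}(\QL(1\otimes\pi))$ to $(p^y,e)$ in $D_M(\QL(\pi))$ (the neutral projection of $1\otimes\pi$ evaluating to that of $\pi$). Concatenating the commuting squares gives $\phi_B(e^y_*(\phi_{C\otimes B}^{-1}[p]))=[p^y]$, which is the assertion.

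I expect the main obstacle to be the bookkeeping around $M$ and the double $D_M(\QL(\pi))$: these were introduced somewhat ad hoc in the proof of Theorem \ref{kk iso}, so there is no packaged functoriality statement to quote, and one must check by hand that $\mathrm{ev}_y$ extends to the relevant multiplier algebras (this is where surjectivity of $\mathrm{ev}_y$ on the ideals $\CLc(\,\cdot\,;\K)$ is needed) and that these extensions respect the quotient maps $\overline{\rho}$ and the structural maps defining the double. By contrast, verifying that $u$ is a covering isometry and that $u\Phi(\,\cdot\,)u^*$ is evaluation at $y$ is a routine computation once $(C\otimes E)\otimes_{e^y}B$ is written out, and the $KK$-versus-$K_0(\CLc)$ compatibility is taken care of wholesale by Lemma \ref{func prop}.
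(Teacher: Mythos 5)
Your proof is correct and follows essentially the same route as the paper: both establish the identity by comparing the two defining compositions $KK \to K_0(\CLc) \to K_0(\QL) \to K_0(D_M(\QL)) \to \KKP$ via vertical evaluation-at-$y$ maps, with the first square handled by Lemma \ref{func prop} (and Proposition \ref{sa tp}) and the rest by direct checks. In fact you supply more detail than the paper does, since the paper leaves the commutativity of the remaining squares (your explicit covering isometry realizing evaluation at $y$, and the multiplier/double bookkeeping) to the reader.
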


\begin{proof}
The map
$$
\mathcal{P}^{1\otimes \pi}(A,C\otimes B)\to \mathcal{P}^\pi(A,B),\quad p\mapsto p^y
$$
induces a homomorphism 
$$
e^y_*:\KKP^{1\otimes \pi}(A,C\otimes B)\to \KKP^\pi(A,B).
$$
Moreover, with notation as in the first paragraph of the proof of Theorem \ref{kk iso}, $e^y$ induces $*$-homomorphisms 
$$
e^y:\QL(1\otimes \pi)\to \QL(\pi) \quad\text{and}\quad e^y:D_{M_C}(\QL(1\otimes \pi))\to D_M(\QL(\pi)).
$$
Consider now the diagram 
$$ 
\xymatrix{ KK(A,C\otimes B) \ar[d] \ar[r]^-{e^y_*} & KK(A,B) \ar[d] \\
K_0(\CLc(1\otimes \pi)) \ar[d] \ar[r]^-{e^y_*}  & K_0(\CLc( \pi)) \ar[d] \\
K_0(\QL(1\otimes \pi)) \ar[d]  \ar[r]^-{e^y_*} & K_0(\QL(\pi)) \ar[d] \\
K_0(D_{M_C}(\QL(1\otimes \pi))) \ar[r]^-{e^y_*}  \ar[d] & K_0(D_M(\QL(\pi))) \ar[d] \\
\KKP^{1\otimes \pi}(A,C\otimes B)\ar[r]^-{e^y_*} &  \KKP^\pi(A,B) }
$$
\noindent{}where: the first pairs of vertical arrows are the isomorphisms of Theorems \ref{kk iso dww} and \ref{c to uc}; the second pair of vertical arrows are induced by the canonical quotient map; the third pair of vertical arrows are induced by the inclusion $a\mapsto (a,0)$; and the  last pair of vertical arrows are the isomorphisms of Theorem \ref{kk iso}.  The first square commutes by Lemma \ref{func prop} (using also Proposition \ref{sa tp} to see that the representation $1\otimes \pi$ is strongly absorbing).  It is straightforward to see that the remaining squares commute: we leave this to the reader.  As the isomorphisms $\phi_{C\otimes B}$ and $\phi_B$ are by definition the compositions of the all the vertical arrows on the left and right respectively, the result follows.
\end{proof}

\section{The topology on $KK$}\label{top sec}

Throughout this section, $A$ and $B$ refer to separable $C^*$-algebras.  All Hilbert modules are countably generated, and all are over $B$ unless explicitly stated otherwise.  All representations of $A$ are on Hilbert $B$-modules unless explicitly stated otherwise.

Our goal in this section is to recall the canonical topology on $KK(A,B)$, and describe it in terms of the isomorphism $KK(A,B)\cong \KKP^\pi(A,B)$ of Theorem \ref{kk iso}.

We need a quantitative version of Definition \ref{proj path}; this will also be important to us later when we define our controlled $KK$-theory groups.  See Definition \ref{apt} for graded representations and the neutral projection $e$ used in the next definitions.

\begin{definition}\label{alm com}
Let $A$ and $B$ be separable $C^*$-algebras, and let $\pi:A\to \LL(E)$ be a graded representation on a Hilbert $B$-module.  Let $X$ be a finite subset of the unit ball $A_1$ of $A$, and let $\epsilon>0$.  Define $\mathcal{P}^\pi_{\epsilon}(X,B)$ to be the set of self-adjoint contractions $p$ in $\LL(E)$ satisfying the following conditions:
\begin{enumerate}[(i)]
\item $p-e$ is in $\K(E)$;
\item $\|[p,a]\|<\epsilon$ for all $a\in X$;
\item $\|a(p^2-p)\|<\epsilon$ for all $a\in X$.
\end{enumerate}
\end{definition}

For the next definition, see Definition \ref{proj path} for the notation $\mathcal{P}^\pi(A,B)$.

\begin{definition}\label{tause}
Let $A$ and $B$ be separable $C^*$-algebras, and let $\pi:A\to \LL(E)$ be a graded representation on a Hilbert $B$-module.  For a finite subset $X$ of $A_1$ and $\epsilon>0$, define a function $\tau_{X,\epsilon}:\mathcal{P}^\pi(A,B)\to [1,\infty)$ by 
$$
\tau_{X,\epsilon}(p):=\inf\{t_0\in [1,\infty)\mid p_t\in \mathcal{P}^\pi_\epsilon(X,B) \text{ for all } t\geq t_0\}.
$$
For each $p\in \mathcal{P}^\pi(A,B)$, define $U(p;X,\epsilon)$ to be the subset of $\mathcal{P}^\pi(A,B)$ consisting of all $q$ such that there exists $t\geq \max\{\tau_{X,\epsilon}(p),\tau_{X,\epsilon}(q)\}$ and a norm continuous path $(p^s)_{s\in [0,1]}$ in $\LL(E)$ such that each $p^s$ is in $\mathcal{P}_\epsilon^\pi(X,B)$, and with endpoints $p^0=p_t$ and $p^1=q_t$.
\end{definition}

For the next lemma, recall the homotopy equivalence relation $\sim$ on $\mathcal{P}^\pi(A,B)$ from Definition \ref{pp hom}.

\begin{lemma}\label{quot lem}
Let $\pi:A\to \LL(E)$ be a graded representation of $A$ on a graded Hilbert $B$-module.  Let $p\in \mathcal{P}^\pi(A,B)$, $X$ be a finite subset of $A_1$, and $\epsilon>0$.  Then:
\begin{enumerate}[(i)]
\item \label{ql1} if $p'\sim p$, then $U(p;X,\epsilon)=U(p';X,\epsilon)$;
\item \label{ql2} if $q\in U(p;X,\epsilon)$ and $q\sim q'$, then $q'\in U(p;X,\epsilon)$.
\end{enumerate}
\end{lemma}

\begin{proof}
Part \eqref{ql2} follows from part \eqref{ql1} on noting that $q$ is in $U(p;X,\epsilon)$ if and only if $p$ is in $U(q;X,\epsilon)$.  It thus suffices to prove \eqref{ql1}.

Assume then that $p\sim p'$, so there is a homotopy $(p^s)_{s\in [0,1]}$ in $\mathcal{P}^{1\otimes \pi}(A,C[0,1]\otimes B)$ between $p$ and $p'$.  The definition of a homotopy gives $t_p\geq \max\{\tau_{X,\epsilon}(p),\tau_{X,\epsilon}(p')\}$ such that $p_{t_p}^s$ is in $\mathcal{P}^\pi_\epsilon(X,B)$ for all $s\in [0,1]$.  Let $q$ be an element of $U(p;X,\epsilon)$, and let $t_q\geq \{\tau_{X,\epsilon}(q),\tau_{X,\epsilon}(p)\}$ be such that there is a homotopy $(q^s)_{s\in [0,1]}$ connecting $p_{t_q}$ and $q_{t_q}$.  Write $I$ for whichever of the intervals $[t_p,t_q]$ or $[t_q,t_p]$ makes sense.  Then concatenating the homotopies $(p^s_{t_p})_{s\in [0,1]}$, $(p_t)_{t \in I}$ and $(q^s)_{s\in [0,1]}$ shows that $q$ is in $U(p';X,\epsilon)$.  Hence $U(p;X,\epsilon)\subseteq U(p';X,\epsilon)$.  The opposite inclusion follows by symmetry.
\end{proof}

\begin{definition}\label{top}
Let $\pi:A\to \LL(E)$ be a graded representation on a Hilbert $B$-module.  For a finite subset $X$ of $A_1$, $\epsilon>0$, and $[p]\in \KKP^\pi(A,B)$, define the \emph{$X$-$\epsilon$ neighbourhood} of $[p]$ to be 
$$
V([p];X,\epsilon):=\{[q]\in \KKP^\pi(A,B) \mid q\in U(p;X,\epsilon)\}.
$$
(note that $V([p];X,\epsilon)$ does not depend on the representative of the class $[p]$ by Lemma \ref{quot lem}).  The \emph{asymptotic topology} on $\KKP^\pi(A,B)$ is the topology generated by the subsets $V([p];X,\epsilon)$ of $\KKP^\pi(A,B)$ as $X$ ranges over finite subsets of $A_1$, $\epsilon$ over $(0,\infty)$, and $p$ over $\mathcal{P}^\pi(A,B)$.
\end{definition}

\begin{lemma}\label{fc lem}
Let $\pi:A\to \LL(E)$ be a graded representation on a Hilbert $B$-module.  For any $[p]\in \KKP^\pi(A,B)$, the collection of sets $V([p];X,\epsilon)$ as $X$ ranges over finite subsets of $A_1$ and $\epsilon$ over $(0,\infty)$ form a neighbourhood base of $[p]$.  Moreover, the asymptotic topology is first countable.
\end{lemma}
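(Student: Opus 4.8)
The plan is to isolate a single monotonicity property of the basic sets $V([p];X,\epsilon)$ and deduce both assertions from it by essentially formal arguments, using separability of $A$ only at the very end. The property I would prove first is: if $X_1,X_2$ are finite subsets of $A_1$ and $\epsilon_1,\epsilon_2>0$ satisfy $\mathcal{P}^\pi_{\epsilon_1}(X_1,B)\subseteq \mathcal{P}^\pi_{\epsilon_2}(X_2,B)$, then for any $q\in \mathcal{P}^\pi(A,B)$ with $[p]\in V([q];X_2,\epsilon_2)$ one has $V([p];X_1,\epsilon_1)\subseteq V([q];X_2,\epsilon_2)$. Observe that the inclusion of quantitative path-spaces forces $\tau_{X_2,\epsilon_2}\leq \tau_{X_1,\epsilon_1}$ pointwise on $\mathcal{P}^\pi(A,B)$, and that each $\tau$ is finite because every element of $\mathcal{P}^\pi(A,B)$ eventually enters every $\mathcal{P}^\pi_\epsilon(X,B)$ by the conditions of Definition \ref{proj path}.

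For the proof of this monotonicity lemma I would argue by concatenation of paths. Given $[r]\in V([p];X_1,\epsilon_1)$, choose witnessing parameters $t_p$ (for $p\in U(q;X_2,\epsilon_2)$) and $t_r$ (for $r\in U(p;X_1,\epsilon_1)$), taken strictly above the relevant infima, which is harmless and sidesteps the mild issue that the infimum defining $\tau$ need not be attained. Set $t:=\max\{t_p,t_r\}$. Then $t\geq \max\{\tau_{X_2,\epsilon_2}(q),\tau_{X_2,\epsilon_2}(r)\}$, and one connects $q_t$ to $r_t$ inside $\mathcal{P}^\pi_{\epsilon_2}(X_2,B)$ by stringing together: the arc $(q_s)$ sliding $q_t$ back to $q_{t_p}$, the given $(X_2,\epsilon_2)$-path from $q_{t_p}$ to $p_{t_p}$, the arc $(p_s)$ sliding $p_{t_p}$ to $p_{t_r}$, the given $(X_1,\epsilon_1)$-path from $p_{t_r}$ to $r_{t_r}$ (which lies in $\mathcal{P}^\pi_{\epsilon_2}(X_2,B)$ by hypothesis), and the arc $(r_s)$ sliding $r_{t_r}$ out to $r_t$. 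Every parameter occurring along these arcs exceeds the relevant $\tau_{X_2,\epsilon_2}$-value, so each arc stays in $\mathcal{P}^\pi_{\epsilon_2}(X_2,B)$; hence $r\in U(q;X_2,\epsilon_2)$, i.e.\ $[r]\in V([q];X_2,\epsilon_2)$.

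With the monotonicity lemma in hand the neighbourhood-base statement is immediate. Taking $q=p$, $X_1=X_2$, $\epsilon_1=\epsilon_2$ shows $[r]\in V([p];X,\epsilon)$ implies $V([r];X,\epsilon)\subseteq V([p];X,\epsilon)$; since $[r]\in V([r];X,\epsilon)$ always, each $V([p];X,\epsilon)$ is a union of subbasic open sets, hence open, and it visibly contains $[p]$. Conversely, any open neighbourhood of $[p]$ contains a finite intersection $W=\bigcap_{i=1}^n V([q_i];X_i,\epsilon_i)$ of subbasic sets with $[p]\in W$; putting $X=\bigcup_i X_i$, $\epsilon=\min_i\epsilon_i$ gives $\mathcal{P}^\pi_\epsilon(X,B)\subseteq \mathcal{P}^\pi_{\epsilon_i}(X_i,B)$ for every $i$, so the lemma (with $q=q_i$) yields $V([p];X,\epsilon)\subseteq V([q_i];X_i,\epsilon_i)$ for all $i$, hence $V([p];X,\epsilon)\subseteq W$. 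Thus the sets $V([p];X,\epsilon)$ form a neighbourhood base at $[p]$.

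Finally, for first countability I would fix $[p]$ and a countable dense subset $\{a_k\}_{k\in\N}$ of the separable set $A_1$, put $D_n=\{a_1,\dots,a_n\}$, and claim that $\{V([p];D_n,1/n)\}_{n\in\N}$ is a countable neighbourhood base at $[p]$. Given $V([p];X,\epsilon)$ with $X\subseteq A_1$ finite, approximate each element of $X$ to within $\epsilon/4$ by some $a_k$ and choose $n$ so that $D_n$ contains all these approximants and $1/n\leq \epsilon/2$; the elementary estimates $\|[q,b]\|\leq \|[q,a]\|+2\|b-a\|$ and $\|b(q^2-q)\|\leq \|a(q^2-q)\|+2\|b-a\|$ for self-adjoint contractions $q$ then give $\mathcal{P}^\pi_{1/n}(D_n,B)\subseteq \mathcal{P}^\pi_\epsilon(X,B)$, so $V([p];D_n,1/n)\subseteq V([p];X,\epsilon)$ by the monotonicity lemma, and the previous paragraph finishes the argument. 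The one genuinely delicate point throughout is the bookkeeping in the concatenation argument — keeping every arc-parameter strictly above the relevant $\tau$-values and absorbing the non-attainment of the infimum defining $\tau$ — while everything else is routine point-set topology together with the two-line norm estimates above.
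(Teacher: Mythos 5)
Your proposal is correct and follows essentially the same route as the paper: both reduce everything to a transitivity/monotonicity property of the sets $U(p;X,\epsilon)$ (equivalently $V([p];X,\epsilon)$) proved by concatenating the two witnessing homotopies with arcs $(p_t)$, $(q_t)$, $(r_t)$ of the paths themselves, and both obtain first countability from a nested sequence of finite sets with dense union in $A_1$ together with the elementary estimate showing $\mathcal{P}^\pi_{1/n}(X_n,B)\subseteq\mathcal{P}^\pi_\epsilon(X,B)$. Your single, slightly more general monotonicity lemma (allowing $\mathcal{P}^\pi_{\epsilon_1}(X_1,B)\subseteq\mathcal{P}^\pi_{\epsilon_2}(X_2,B)$) and the explicit care with the non-attained infimum defining $\tau_{X,\epsilon}$ and with equalizing the final time parameter are only minor refinements of the paper's argument, not a different approach.
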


\begin{proof}
Using Lemma \ref{quot lem}, the asymptotic topology on $\KKP^\pi(A,B)$ is the quotient topology induced by the canonical surjection $\mathcal{P}^\pi(A,B)\to \KKP^\pi(A,B)$, where $\mathcal{P}^\pi(A,B)$ is equipped with the topology generated by the sets $U(p;X,\epsilon)$; moreover, the quotient map is open.  It thus suffices to show that the family 
$$
\{U(p;X,\epsilon)\mid X\subseteq A_1 \text{ finite, } \epsilon>0\}$$
form a neighborhood basis of $p\in \mathcal{P}^\pi(A,B)$, and that this topology on $\mathcal{P}^\pi(A,B)$ is first countable.

For the neighbourhood base claim, we must show that whenever $q_1,...,q_n$, $X_1,...,X_n$ and $\epsilon_1,...,\epsilon_n$ are such that $p\in \bigcap_{i=1}^n U(q_i;X_i,\epsilon_i)$, then there exist $X$, $\epsilon$ with 
$$
U(p;X,\epsilon)\subseteq \bigcap_{i=1}^n U(q_i;X_i,\epsilon_i).
$$
As whenever $Y\supseteq X$ and $\delta\leq \epsilon$, we have that $U(p;Y,\delta)\subseteq U(p;X,\epsilon)$, it suffices to prove this for $n=1$.  Assume then we are given $q\in \mathcal{P}^\pi(A,B)$, a finite subset $X\subseteq A_1$, and $\epsilon>0$ such that $p\in U(q;X,\epsilon)$.  We claim that $U(p;X,\epsilon)\subseteq U(q;X,\epsilon)$, which will suffice to complete the neighbourhood base part of the proof.  Indeed, say $r$ is in $U(p;X,\epsilon)$.  Then there exists $t_r\geq \max\{\tau_{X,\epsilon}(p),\tau_{X,\epsilon}(r)\}$ and a homotopy $(r^s)_{s\in [0,1]}$ passing through $\mathcal{P}^\pi_\epsilon(X,B)$ connecting $p_{t_r}$ and $r_{t_r}$.  Similarly, there exists $t_q\geq \max\{\tau_{X,\epsilon}(p),\tau_{X,\epsilon}(q)\}$ and a homotopy $(q^s)_{s\in [0,1]}$ passing through $\mathcal{P}^\pi_\epsilon(X,B)$ connecting $q_{t_q}$ and $p_{t_q}$.  Let $I$ be the closed interval bounded by $t_r$ and $t_q$.  Then concatenating the three paths $(q^s)_{s\in [0,1]}$, $(p_{t})_{t\in I}$, and $(r^s)_{s\in [0,1]}$ shows that $r$ is in $U(q;X,\epsilon)$, so we are done.

We now show first countability.  As $A$ is separable, there exists a nested sequence $X_1\subseteq X_2\subseteq $ of finite subsets of the unit ball $A_1$ with dense union.  Fix a point $p\in \mathcal{P}^\pi(A,B)$.  We claim that the sets $U(p;X_n,1/n)$ form a neighbourhood basis at $p$.  Indeed, given what we have already proved, it suffices to show that for any finite $X\subseteq A_1$ and any $\epsilon>0$ there exists $n$ with $U(p;X_n,1/n)\subseteq U(p;X,\epsilon)$.  Let $n$ be so large so that for all $a\in X$ there is $a'\in X_n$ with $\|a-a'\|<\epsilon/2$, and also so that $1/n<\epsilon/2$.  From the choice of $n$, it follows that $\mathcal{P}_{1/n}^\pi(X_n,B)\subseteq \mathcal{P}^\pi_\epsilon(X,B)$, from which the inclusion $U(p;X_n,1/n)\subseteq U(p;X,\epsilon)$ follows.
\end{proof}

We now recall the canonical topology on $KK(A,B)$, which has been introduced and studied in different pictures by several authors: see for example the discussion in \cite{Dadarlat:2005aa} for background and references.  Dadarlat\footnote{Dadarlat attributes some of the idea here to unpublished work of Pimsner.} showed in \cite[Lemma 3.1]{Dadarlat:2005aa} that this topology is characterized by the following property (and used this to show that the various different descriptions that had previously appeared in the literature agree).

\begin{proposition}\label{kk top} 
Let $A$ and $B$ be separable $C^*$-algebras.  Let $\overline{\N}=\N\cup\{\infty\}$ be the one point compactification of the natural numbers, and for each $n\in \overline{\N}$, let $e^n:C(\overline{\N},B)\to B$ be the $*$-homomorphism defined by evaluation at $n$.  Then the canonical topology on $KK(A,B)$ is characterized by the following conditions.
\begin{enumerate}[(i)]
\item It is first countable.
\item \label{kk top con} A sequence $(x_n)$ in $KK(A,B)$ converges to $x_\infty$ in $KK(A,B)$ if and only if there is an element $x\in KK(A,C(\overline{\N},B))$ such that $e^n_*(x)=x_n$ for all $n\in \overline{\N}$. \qed
\end{enumerate}
\end{proposition}

\begin{theorem}\label{tops same}
Let $\pi:A\to \LL(E)$ be a graded, balanced, and strongly absorbing representation.  Then the isomorphism of Theorem \ref{kk iso} is a homeomorphism between the asymptotic topology on $\KKP^\pi(A,B)$ and the canonical topology on $KK(A,B)$. 
\end{theorem}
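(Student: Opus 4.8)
The plan is to deduce the theorem from Dadarlat's characterization of the canonical topology recalled in Lemma \ref{kk top}. Since that lemma says the canonical topology is the \emph{unique} first countable topology on $KK(A,B)$ whose convergent sequences are exactly those detected by an element of $KK(A,C(\overline{\N},B))$, and since the asymptotic topology on $\KKP^\pi(A,B)$ is first countable by Lemma \ref{fc lem} and is carried to \emph{some} first countable topology on $KK(A,B)$ by the isomorphism of Theorem \ref{kk iso}, it is enough to check that this transported topology has the sequential convergence property from Lemma \ref{kk top}. Using the identification $C(\overline{\N},B)=C(\overline{\N})\otimes B$, the fact that $1\otimes\pi$ is again substantially absorbing (Proposition \ref{sa tp}, as in the proof of Theorem \ref{kk iso}), and Lemma \ref{loc func lem} to identify $e^n_*$ with the evaluation map $[P]\mapsto[P^n]$, the task becomes concrete: given $[p^n]\in\KKP^\pi(A,B)$ for $n\in\overline{\N}$, I must show that $[p^n]\to[p^\infty]$ in the asymptotic topology if and only if there is $P\in\mathcal{P}^{1\otimes\pi}(A,C(\overline{\N})\otimes B)$ whose associated family $(P^n_t)_{t\in[1,\infty),\,n\in\overline{\N}}$ from Lemma \ref{loc func lem 0} satisfies $[P^n]=[p^n]$ in $\KKP^\pi(A,B)$ for every $n\in\overline{\N}$.

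For the ``if'' direction I would start from such a $P$. Conditions (ii) and (iii) of Lemma \ref{loc func lem 0} say precisely that $\sup_{n\in\overline{\N}}\|[P^n_t,a]\|$ and $\sup_{n\in\overline{\N}}\|a(P^{n\,2}_t-P^n_t)\|$ tend to $0$ as $t\to\infty$ for each $a\in A$, while condition (i) gives $P^n_t-e\in\K(E)$ and norm continuity of $n\mapsto P^n_t$, hence $\|P^n_t-P^\infty_t\|\to0$ as $n\to\infty$ for each fixed $t$. Given a finite $X\subseteq A_1$ and $\epsilon>0$, the first two facts produce a single $t_0$ with $\tau_{X,\epsilon}(P^n)\le t_0$ for all $n\in\overline{\N}$; fixing $t=t_0$ and taking $n$ large enough that $\|P^n_{t_0}-P^\infty_{t_0}\|$ is small, the straight-line path $s\mapsto(1-s)P^\infty_{t_0}+sP^n_{t_0}$ stays in $\mathcal{P}^\pi_{\epsilon}(X,B)$ — the commutator bound is convex in $s$, the condition $q-e\in\K(E)$ is preserved, and the remaining term is handled by the one-line identity $q^2-q-\big((1-s)(x^2-x)+s(y^2-y)\big)=-s(1-s)(y-x)^2$ for $q=(1-s)x+sy$. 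Thus $P^n\in U(P^\infty;X,\epsilon)$ for all large $n$, so $[p^n]\to[p^\infty]$.

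The ``only if'' direction is the real work and requires an explicit construction. Fix a nested sequence $X_1\subseteq X_2\subseteq\cdots$ of finite subsets of $A_1$ with dense union. By Lemma \ref{fc lem}, $[p^n]\to[p^\infty]$ means there are $N_1<N_2<\cdots$ so that whenever $N_k\le n<N_{k+1}$ there is $t_n\ge\max\{\tau_{X_k,1/k}(p^\infty),\tau_{X_k,1/k}(p^n)\}$ and a norm-continuous path $\gamma^n$ from $p^n_{t_n}$ to $p^\infty_{t_n}$ through $\mathcal{P}^\pi_{1/k}(X_k,B)$. I would pick an increasing sequence $T_n\to\infty$ with $T_n\ge t_n+2$, set $P^\infty:=p^\infty$, set $P^n:=p^n$ for the finitely many $n<N_1$, and for $n\ge N_1$ define $P^n$ by concatenation: $P^n_t=p^\infty_t$ on $[1,T_n]$; on $[T_n,T_n+\tfrac12]$ run $p^\infty$ (reparametrized) from $p^\infty_{T_n}$ back to $p^\infty_{t_n}$; on $[T_n+\tfrac12,T_n+1]$ run $\gamma^n$ backwards to $p^n_{t_n}$; on $[T_n+1,T_n+2]$ run $p^n$ (reparametrized) from $p^n_{t_n}$ up to $p^n_{T_n+2}$; and $P^n_t=p^n_t$ for $t\ge T_n+2$. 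Every piece is a path of self-adjoint contractions $q$ with $q-e\in\K(E)$, and pieces match at endpoints, so $t\mapsto P^n_t$ is continuous; since $\overline{\N}$ has $\infty$ as its only non-isolated point and $T_n\to\infty$, for each fixed $t$ one has $P^n_t=p^\infty_t=P^\infty_t$ once $T_n>t$, so $n\mapsto P^n_t$ is norm continuous. The uniform-in-$n$ decay required by conditions (ii) and (iii) of Lemma \ref{loc func lem 0} holds because, for large $t$, each $P^n_t$ is either $p^\infty_t$ (for the $n$ with $T_n\ge t$, fine since $p^\infty\in\mathcal{P}^\pi(A,B)$), or lies on a transitional piece or honest tail with $n\ge N_{k_0}$ — where it belongs to $\mathcal{P}^\pi_{1/k}(X_k,B)$ for some $k\ge k_0$, giving commutator and defect bounds $\le 1/k_0$ plus an approximation error from $X_{k_0}$ — or corresponds to one of the finitely many $n<N_{k_0}$, each eventually equal to $p^n$ and hence controlled past its own threshold. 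Finally $[P^n]=[p^n]$ for all $n\in\overline{\N}$: $P^\infty=p^\infty$, and for finite $n$ we have $P^n_t=p^n_t$ for all large $t$, whence $[P^n]=[p^n]$ by Lemma \ref{start irrelevant}. This $P$ is the required element.

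The hard part is this last paragraph: choosing the reparametrizations and the cut-off sequence $(T_n)$ compatibly so that the single function $P$ inherits the uniform-in-$n$ decay of Lemma \ref{loc func lem 0} while remaining continuous at $n=\infty$. Everything else — the translation through Lemma \ref{loc func lem} and the estimates in the ``if'' direction — is routine unwinding of the definitions.
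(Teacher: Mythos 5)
Your proposal is correct and is essentially the paper's own argument: both reduce to Dadarlat's characterization (Lemma \ref{kk top}) via first countability (Lemma \ref{fc lem}), prove the ``if'' direction from the uniform-in-$n$ decay given by Lemma \ref{loc func lem 0} plus a straight-line homotopy between close almost-projections (transferring between $P^n$ and $p^n$ via Lemma \ref{quot lem}, where the paper instead concatenates the corresponding homotopies explicitly and invokes Lemma \ref{close proj}), and prove the ``only if'' direction by splicing $p^\infty$ on an initial segment, a connecting homotopy through $\mathcal{P}^\pi_{1/k}(X_k,B)$ on a transition window, and the tail of $p^n$ into an element of $\mathcal{P}^{1\otimes\pi}(A,C(\overline{\N},B))$, finishing with Lemma \ref{start irrelevant}. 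The only small adjustment needed is in your ``if'' direction: choose $t_0$ so that the uniform commutator and defect bounds are $<\epsilon/2$ (as the paper does via $\tau_{X,\epsilon/2}$), so that the extra term $s(1-s)(P^n_{t_0}-P^\infty_{t_0})^2$ in your straight-line estimate has room to stay below $\epsilon$.
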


We need an ancillary lemma.

\begin{lemma}\label{close proj}
Let $\pi:A\to \LL(E)$ be a graded representation on a Hilbert $B$-module.  For any $\epsilon>0$ and any finite $X\subseteq A_1$, if $p,q\in \mathcal{P}^\pi_{\epsilon/2}(X,B)$ satisfy $\|p-q\|<\epsilon/6$, then there exists a homotopy $(p^s)_{s\in [0,1]}$ connecting $p$ and $q$ and passing through $\mathcal{P}^\pi_{\epsilon}(X,B)$.
\end{lemma}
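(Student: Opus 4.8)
The plan is to use the straight-line homotopy $p^s := (1-s)p + sq$, $s \in [0,1]$, and to verify directly that every $p^s$ lies in $\mathcal{P}^\pi_\epsilon(X,B)$. Two of the three conditions in Definition \ref{alm com} are immediate: each $p^s$ is a self-adjoint contraction, being a convex combination of the self-adjoint contractions $p$ and $q$; and $p^s - e = (1-s)(p-e) + s(q-e)$ lies in $\K(E)$ since $p-e, q-e \in \K(E)$ and $\K(E)$ is a linear subspace. Condition (ii) is also easy: for $a \in X$, linearity of $b \mapsto [b,a]$ gives $[p^s,a] = (1-s)[p,a] + s[q,a]$, so $\|[p^s,a]\| \le (1-s)\|[p,a]\| + s\|[q,a]\| < \epsilon/2 < \epsilon$.

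The substantive point -- and the only place where the gap hypothesis $\|p-q\| < \epsilon/6$ is used -- is condition (iii), $\|a((p^s)^2 - p^s)\| < \epsilon$ for $a \in X$. Here I would first record the algebraic identity
$$
(p^s)^2 - p^s = (1-s)(p^2 - p) + s(q^2 - q) - s(1-s)(p-q)^2,
$$
which follows from expanding $\big((1-s)p + sq\big)^2$ and using $(1-s) + s = 1$. Multiplying on the left by $a$, taking norms, and using $\|a\| \le 1$ gives
$$
\|a((p^s)^2 - p^s)\| \le (1-s)\,\|a(p^2-p)\| + s\,\|a(q^2-q)\| + s(1-s)\,\|p-q\|^2 .
$$
The first two terms are each $< \epsilon/2$ since $p,q \in \mathcal{P}^\pi_{\epsilon/2}(X,B)$. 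For the last term, bound $s(1-s) \le 1/4$ and $\|p-q\|^2 \le 2\|p-q\|$ (valid because $\|p-q\| \le \|p\| + \|q\| \le 2$), so that $s(1-s)\|p-q\|^2 \le \tfrac12\|p-q\| < \epsilon/12$. Adding up, $\|a((p^s)^2 - p^s)\| < \epsilon/2 + \epsilon/12 < \epsilon$, which is exactly condition (iii).

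I do not expect any real obstacle beyond keeping track of constants. The one conceptual ingredient is the identity for $(p^s)^2 - p^s$: it isolates the failure of $p^s$ to be idempotent into the single correction term $-s(1-s)(p-q)^2$, which is quadratically small in $\|p-q\|$, and this is precisely what makes the (rather generous) numerical hypothesis $\|p-q\| < \epsilon/6$ suffice.
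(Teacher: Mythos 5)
Your proof is correct and follows exactly the route the paper intends: the paper's proof is simply "a straight line homotopy between $p$ and $q$ works," with the details left to the reader, and you have supplied those details (the key identity $(p^s)^2-p^s=(1-s)(p^2-p)+s(q^2-q)-s(1-s)(p-q)^2$ and the resulting estimates) accurately.
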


\begin{proof}
A straight line homotopy between $p$ and $q$ works. We leave the direct checks to the reader.
\end{proof}

\begin{proof}[Proof of Theorem \ref{tops same}]
We have already seen that the asymptotic topology is first countable in Lemma \ref{fc lem}.  Hence by Proposition \ref{kk top}, it suffices to show that sequential convergence in the asymptotic topology is characterized by condition \eqref{kk top con} from Proposition \ref{kk top}.

Assume first that $([p^n])_{n\in \overline{\N}}$ is a collection of elements of $\KKP^\pi(A,B)$.  Let $1\otimes \pi$ be the amplification of $\pi$ to the Hilbert $C(\overline{\N})\otimes B$ module $C(\overline{\N})\otimes E$, and let $q\in \mathcal{P}^{1\otimes \pi}(A,C(\overline{\N},B))$ be such that for all $n\in \overline{\N}$ we have $e^n_*[q]=[p^n]$.  We want to show that the sequence $([p^n])_{n\in \N}$ converges to $[p^\infty]$ in the asymptotic topology.  For this, it follows from Lemmas \ref{quot lem} and \ref{fc lem} that it suffices to fix a finite subset $X$ of $A_1$ and $\epsilon>0$, and show that $p^n$ is in $U(p^\infty;X,\epsilon)$ for all suitably large $n$.

Recall the function $\tau_{X,\epsilon}$ of Definition \ref{tause}.  As $q$ is an element of $\mathcal{P}^{1\otimes \pi}(A,C(\overline{\N},B))$, the number $\tau:=\sup_{n\in \overline{\N}}\tau_{X,\epsilon/2}(q^n)$ is finite.  As $q$ is in $\mathcal{P}^{1\otimes \pi}(A,C(\overline{\N},B))$ we also see from Lemma \ref{loc func lem 0} that there exists $N$ such that $\|q^n_\tau-q^\infty_\tau\|<\epsilon/6$ for all $n\geq N$.  We claim that $p^n$ is in $U(p^\infty;X,\epsilon)$ for all $n\geq N$, which will complete the first half of the proof.

Using Lemma \ref{loc func lem 0}, we may identify $q$ with a collection $(q^n)_{n\in \overline{\N}}$ of elements of $\mathcal{P}^\pi(A,B)$ (satisfying certain conditions).  Now let $n\geq N$ and consider the following homotopies.
\begin{enumerate}[(i)]
\item As $e^\infty_*[q]=[p^\infty]$, Theorem \ref{kk iso} implies that $q^\infty\sim p^\infty$, and thus there is $t_\infty\geq \max\{\tau,\tau_{X,\epsilon}(p^\infty)\}$ and a homotopy passing through $\mathcal{P}^\pi_\epsilon(X,B)$ and connecting $p^\infty_{t_\infty}$ and $q^\infty_{t_\infty}$.  
\item Similarly to (i), there is $t_n\geq \max\{\tau,\tau_{X,\epsilon}(p^n)\}$ and a homotopy passing through $\mathcal{P}^\pi_\epsilon(X,B)$ and connecting $p^n_{t_n}$ and $q^n_{t_n}$.  
\item As $\|q^n_\tau-q^\infty_\tau\|<\epsilon/6$ for all $n\geq N$ and as $\tau=\sup_{n\in \overline{\N}}\tau_{X,\epsilon/2}(q^n)$, Lemma \ref{close proj} gives a homotopy passing through $\mathcal{P}^\pi_\epsilon(X,B)$ and connecting $q^\infty_\tau$ and $q^n_\tau$.  
\item The path $(q^n_t)_{t\in [\tau,t_n]}$ is a homotopy passing through $\mathcal{P}^\pi_\epsilon(X,B)$ that connects $q^n_\tau$ and $q^n_{t_n}$.
\item The path $(q^\infty_t)_{t\in [\tau,t_\infty]}$ is a homotopy passing through $\mathcal{P}^\pi_{\epsilon}(X,B)$ that connects $q^\infty_\tau$ and $q^\infty_{t_\infty}$.  
\end{enumerate}
Now let $t_{\max}=\max\{t_n,t_\infty\}$.  Concatenating the five homotopies above with the homotopies $(p_t^n)_{t\in [t_n,t_{\max}]}$ and $(p^\infty_t)_{t\in [t_\infty,t_{\max}]}$ (which pass through $P^\pi_\epsilon(X,B)$) shows that $p^n$ is in $U(p^\infty;X,\epsilon)$ for $n\geq N$.

For the converse, fix a sequence $X_1\subseteq X_2\subseteq \cdots$ of nested finite subsets of $A_1$ with dense union.  Let us assume that $([p^n])_{n\in \N}$ is a sequence in $\KKP^\pi(A,B)$ that converges to $[p^\infty]$ in the asymptotic topology.  We want to construct an element $q\in \mathcal{P}^{1\otimes \pi}(A,C(\overline{\N},B))$ such that $e^n_*[q]=[p^n]$ for each $n$ in $\overline{\N}$.  We will define new representatives of the classes $[p^n]$ as follows.  For each $m$, let $N_m\in \N$ be the smallest natural number such that $p^n$ is in $U(p^\infty;X_m,1/m)$ for all $n\geq N_m$; as $[p^n]$ converges to $[p^\infty]$ in the asymptotic topology, such an $N_m$ exists, and the sequence $N_1,N_2,...$ is non-decreasing.  

Choose a sequence $t_1\leq t_2\leq \cdots$ in $[1,\infty)$ that tends to infinity in the following way.  For $n< N_1$, let $t_n=1$.  Otherwise, let $m$ be as large as possible so that $n\geq N_m$.  Let $t_n=\max\{\tau_{X_m,1/m}(p^n),\tau_{X_m,1/m}(p^\infty),t_1,...,t_{n-1}\}+1$, and note the choice of $N_m$ implies that $p^n\in U(p^\infty;X_m,1/m)$, so there exists a homotopy between $p^n_{t_n}$ and $p^\infty_{t_n}$ parametrized as usual by $[0,1]$ that passes through $\mathcal{P}^\pi_{1/m}(X_m,B)$.  Approximating this homotopy by a piecewise-linear homotopy, we may assume that it is Lipschitz, and still passing through $\mathcal{P}^\pi_{1/m}(X_m,B)$.  Moreover, by lengthening the interval parametrizing the homotopy, we may assume that it is $1$-Lipschitz.  In conclusion, for some suitably large $r_n\in [1,\infty)$, we may assume that we have a $1$-Lipschitz homotopy $(p^{n,t})_{t\in [t_n,t_n+r_n]}$ between $p^n_{t_n}$ and $p^\infty_{t_n}$.   Define for each $n\in \N$
$$
q^n:=\left\{\begin{array}{ll} p^\infty_t & t\in [1,t_n] \\ p^{n,t} & t\in [t_n,t_n+r_n] \\ p^n_t & t\geq t_n+r_n \end{array}\right.,
$$
and note that $[q^n]=[p^n]$ for all $n\in \N$ using Lemma \ref{start irrelevant}.  Define $q^\infty=p^\infty$.  Using the characterization of Lemma \ref{loc func lem 0}, one checks directly that $q=(q^n)_{n\in \overline{\N}}$ defines an element of $\mathcal{P}^{1\otimes \pi}(A,C(\overline{\N},B))$.  This element satisfies $e^n_*[q]=[p^n]$ by construction, so we are done.
\end{proof}

\section{Controlled $KK$-theory and $KL$-theory}\label{kl sec}

Throughout this section, $A$ and $B$ refer to separable $C^*$-algebras.  All Hilbert modules are countably generated, and all are over $B$ unless explicitly stated otherwise.  All representations of $A$ are on Hilbert $B$-modules unless explicitly stated otherwise.

Recall from the introduction that, following Dadarlat \cite[Section 5]{Dadarlat:2005aa}, we define the \emph{$KL$-group of $(A,B)$} by 
\begin{equation}\label{kl def}
KL(A,B):=KK(A,B)/\overline{\{0\}},
\end{equation}
i.e.\ the quotient of $KK(A,B)$ by the closure of zero for the topology from Proposition \ref{kk top}.  This makes $KL(A,B)$ into a Hausdorff topological group when equipped with the quotient topology.  As already mentioned in the introduction, $KL(A,B)$ was originally introduced by R\o{}rdam \cite[Section 5]{Rordam:1995aa} using a purely algebraic definition that agrees with the intrinsic topological definition above under a UCT assumption on $A$.  

Our goal in this section is to define \emph{controlled $K$-homology groups} $KK_\epsilon^\pi(X,B)$, arrange these into an inverse system, and show that the inverse limit of these is canonically isomorphic to $KL(A,B)$; the isomorphism moreover holds on the level of topological groups when the inverse limit is taken in the category of topological groups and each $KK_\epsilon(X,B)$ is given the discrete topology.

We start with the controlled $KK$-theory groups.  For the next definition, recall the notion of a graded representation (plus other conditions on representations) from Definition \ref{apt}, and the set $\mathcal{P}^\pi_\epsilon(X,B)$ from Definition \ref{alm com}.

\begin{definition}\label{con kk}
Let $\pi:A\to \LL(E)$ be a graded representation of $A$ on a Hilbert $B$-module.  Let $X\subseteq A_1$ be a finite subset of the unit ball $A_1$ of $A$ and let $\epsilon>0$.  Equip $\mathcal{P}^\pi_\epsilon(X,B)$ with the norm topology it inherits from $\LL(E)$, and define $KK^\pi_\epsilon(X,B):=\pi_0(\mathcal{P}^\pi_\epsilon(X,B))$ to be the associated set of path components.
\end{definition}

Our first goal is to define a group structure on $KK^\pi_\epsilon(X,B)$.  For this, let us assume that the representation $(\pi,E)$ is graded, balanced, and infinite multiplicity (see Definition \ref{apt} for terminology), and fix two isometries $s_1,s_2$ in $\mathcal{B}(\ell^2)$ satisfying the Cuntz relation $s_1s_1^*+s_2s_2^*=1$.  Using the inclusion $\mathcal{B}(\ell^2)\subseteq \LL(E)$ from line \eqref{can incl} in Lemma \ref{reps}, we think of $s_1$ and $s_2$ as isometries in $\LL(E)$ that commute with the subalgebra $\pi(A)$ and the neutral projection $e$.  We define an operation on $KK^\pi_\epsilon(X,B)$ by 
$$
[p]+[q]:=[s_1ps_1^*+s_2qs_2^*].
$$
The following lemma can be proved in exactly the same way as Lemma \ref{group lem}: we leave the direct checks involved to the reader.

\begin{lemma}\label{group lem 2}
With notation as above, the set $KK^\pi_\epsilon(X,B)$ is a commutative semigroup.  The group operation does not depend on the choice of $s_1$ and $s_2$. \qed
\end{lemma}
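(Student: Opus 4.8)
The plan is to follow the proof of Lemma \ref{group lem} essentially verbatim, with "homotopic in $\mathcal{P}^\pi(A,B)$" replaced throughout by "joined by a norm-continuous path inside $\mathcal{P}^\pi_\epsilon(X,B)$". The single structural fact doing all the work is the one recorded in Lemma \ref{reps}: the copy of $\mathcal{B}(\ell^2)$ sitting inside $\LL(E)$ commutes with $A$ and with the neutral projection $e$, and in particular the Cuntz isometries $s_1,s_2$ do.

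First I would check that the operation is well-defined on $KK^\pi_\epsilon(X,B)$. Given $p,q\in\mathcal{P}^\pi_\epsilon(X,B)$, the element $s_1ps_1^*+s_2qs_2^*$ is a self-adjoint contraction (its norm is $\max(\|p\|,\|q\|)$ by orthogonality of the ranges and coranges of the two summands). Since $s_1,s_2$ commute with $e$ we get $s_1ps_1^*+s_2qs_2^*-e=s_1(p-e)s_1^*+s_2(q-e)s_2^*\in\K(E)$; since they commute with each $a\in X$ we get $[s_1ps_1^*+s_2qs_2^*,a]=s_1[p,a]s_1^*+s_2[q,a]s_2^*$ and $a\bigl((s_1ps_1^*+s_2qs_2^*)^2-(s_1ps_1^*+s_2qs_2^*)\bigr)=s_1a(p^2-p)s_1^*+s_2a(q^2-q)s_2^*$, and in both expressions the two summands have orthogonal ranges and coranges, so the norms equal $\max(\|[p,a]\|,\|[q,a]\|)$ and $\max(\|a(p^2-p)\|,\|a(q^2-q)\|)$, each $<\epsilon$. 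Hence $s_1ps_1^*+s_2qs_2^*\in\mathcal{P}^\pi_\epsilon(X,B)$, and running the identical computation on norm-continuous paths shows $[s_1ps_1^*+s_2qs_2^*]$ depends only on $[p]$ and $[q]$.

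Next, commutativity and associativity: the same orthogonality bookkeeping shows that for any \emph{unitary} $u\in\mathcal{B}(\ell^2)$ and any $r\in\mathcal{P}^\pi_\epsilon(X,B)$, one has $uru^*\in\mathcal{P}^\pi_\epsilon(X,B)$ on the nose (no worsening of $\epsilon$), because $u$ commutes with $A$ and $e$. As the unitary group of $\mathcal{B}(\ell^2)$ is norm-connected, conjugation by such a $u$, carried along a path of unitaries from $1$ to $u$, induces the identity map on $KK^\pi_\epsilon(X,B)$. Applying this to the flip unitary $s_1s_2^*+s_2s_1^*$ gives $[p]+[q]=[q]+[p]$, and applying it to the reshuffling unitary $s_1s_1s_1^*+s_1s_2s_1^*s_2^*+s_2s_2^*s_2^*$ gives $([p]+[q])+[r]=[p]+([q]+[r])$. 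Finally, if $t_1,t_2\in\mathcal{B}(\ell^2)$ is another Cuntz pair, then $u:=s_1t_1^*+s_2t_2^*$ is a unitary in $\mathcal{B}(\ell^2)$ with $u(t_1pt_1^*+t_2qt_2^*)u^*=s_1ps_1^*+s_2qs_2^*$, so the conjugation-invariance just established shows the two operations on $KK^\pi_\epsilon(X,B)$ coincide.

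I do not expect a genuine obstacle: the proof is purely formal, identical in structure to Lemma \ref{group lem}. The only point requiring any care is to keep every isometry and unitary used in the construction inside the canonical copy of $\mathcal{B}(\ell^2)$ from Lemma \ref{reps}, so that they commute with $A$ and $e$ and hence preserve $\mathcal{P}^\pi_\epsilon(X,B)$ \emph{exactly} — it is essential here that conjugation does not degrade the quantitative constant $\epsilon$, unlike what would happen for a general approximately-central unitary.
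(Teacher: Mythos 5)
Your proposal is correct and is exactly the paper's argument: the paper proves this lemma by declaring it "proved in exactly the same way as Lemma \ref{group lem}", i.e.\ well-definedness from the fact that the Cuntz isometries lie in the canonical copy of $\mathcal{B}(\ell^2)$ commuting with $A$ and $e$ (so conjugation preserves $\mathcal{P}^\pi_\epsilon(X,B)$ without degrading $\epsilon$), followed by conjugation along paths of unitaries in the norm-connected unitary group of $\mathcal{B}(\ell^2)$ by the flip, reshuffling, and intertwining unitaries. Your write-up just makes explicit the quantitative bookkeeping the paper leaves implicit; nothing is missing.
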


In order to show that $KK^\pi_\epsilon(X,B)$ is a monoid, we need an analogue of Lemma \ref{id lem}.

\begin{lemma}\label{id lem 2}
Let $\pi:A\to\LL(E)$ be a graded, balanced, and infinite multiplicity representation of $A$ on a Hilbert $B$-module.  Let $X$ be a finite subset of $A_1$, let $\epsilon>0$, let $p$ be an element of $\mathcal{P}^\pi_\epsilon(X,B)$, and let $v$ be an isometry in the canonical copy $\mathcal{B}(\ell^2)\subseteq \LL(E)$ from line \eqref{can incl} from Lemma \ref{reps}.  Then the formula 
$$
vpv^*+(1-vv^*)e
$$
defines an element of $\mathcal{P}^\pi_\epsilon(X,B)$ in the same path component as $p$.
\end{lemma}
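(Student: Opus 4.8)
The plan is to first check that $q:=vpv^*+(1-vv^*)e$ actually belongs to $\mathcal{P}^\pi_\epsilon(X,B)$, and then to exhibit a norm-continuous path inside $\mathcal{P}^\pi_\epsilon(X,B)$ from $p$ to $q$. For the membership, recall from Lemma \ref{reps} that the isometry $v$ (lying in the canonical copy of $\mathcal{B}(\ell^2)\subseteq\LL(E)$) commutes with both $A$ and the neutral projection $e$, and satisfies $v^*v=1$. A direct computation then gives $q-e=v(p-e)v^*\in\K(E)$, $[q,a]=v[p,a]v^*$, and $q^2-q=v(p^2-p)v^*$ for $a\in A$, so taking norms and using that $v$ is a contraction commuting with each $a\in X$ verifies conditions (i)--(iii) of Definition \ref{alm com}. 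The same computation shows, more generally, that $x\mapsto vxv^*+(1-vv^*)e$ maps $\mathcal{P}^\pi_\epsilon(X,B)$ into itself; I will use this below.

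For the path, I would follow the ``make room, then rotate'' strategy of Lemma \ref{id lem}, which is essentially forced here: one cannot in general connect $v$ to $1$ through isometries (there is a semi-Fredholm index obstruction --- e.g.\ when $v$ is a shift), so one first replaces $p$ by a compact perturbation of $e$ supported on a finite-rank corner. Set $\eta:=\epsilon-\max_{a\in X}\max\{\|[p,a]\|,\,\|a(p^2-p)\|\}>0$. Since $p-e\in\K(E)$, a compactness argument produces a finite-rank projection $e_0$ in $\mathcal{K}(\ell^2)\subseteq\mathcal{B}(\ell^2)\subseteq\LL(E)$ (so $e_0$ commutes with $e$ and with $A$) with $\|(1-e_0)(p-e)\|$ as small as we wish. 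Put $p':=e_0pe_0+(1-e_0)e$; this is a self-adjoint contraction with $p'-e=e_0(p-e)e_0\in\K(E)$ and $\|p'-p\|\leq 3\|(1-e_0)(p-e)\|$. Choosing $e_0$ so that $\|p'-p\|$ is small enough relative to $\eta$, the straight-line homotopy $r\mapsto(1-r)p+rp'$ stays inside $\mathcal{P}^\pi_\epsilon(X,B)$, so $p$ and $p'$ lie in the same path component; applying the map $x\mapsto vxv^*+(1-vv^*)e$ pointwise along this line then shows $q$ and $q':=vp'v^*+(1-vv^*)e$ also lie in the same path component of $\mathcal{P}^\pi_\epsilon(X,B)$.

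Finally I would connect $p'$ to $q'$ by conjugation with a genuine unitary. Because $e_0$ and $ve_0v^*$ are finite-rank projections in $\mathcal{B}(\ell^2)$ of equal rank, with infinite-rank complements of equal corank, there is a unitary $U\in\mathcal{B}(\ell^2)\subseteq\LL(E)$ with $Ue_0=ve_0$. Since $p'-e=e_0(p'-e)e_0$ and $U$ commutes with $e$ and $A$, one computes $Up'U^*=e+U(p'-e)U^*=e+v(p'-e)v^*=q'$. As the unitary group of $\mathcal{B}(\ell^2)$ is norm-path-connected (by Kuiper's theorem, or directly via the Borel functional calculus), pick a norm-continuous path $(U_s)_{s\in[0,1]}$ of unitaries in $\mathcal{B}(\ell^2)$ from $1$ to $U$; each $U_s$ commutes with $A$ and $e$, so $(U_sp'U_s^*)_{s\in[0,1]}$ is a path in $\mathcal{P}^\pi_\epsilon(X,B)$ from $p'$ to $q'$. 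Concatenating the three paths shows that $p$, $p'$, $q'$, and $q$ all lie in a single path component of $\mathcal{P}^\pi_\epsilon(X,B)$, as desired.

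I expect the only genuine bookkeeping to be in the penultimate step: quantifying how small $\|(1-e_0)(p-e)\|$ must be so that the straight-line homotopy from $p$ to $p'$ never leaves $\mathcal{P}^\pi_\epsilon(X,B)$ (one must control $\|[\,\cdot\,,a]\|$ and $\|a(\,\cdot^2-\cdot\,)\|$ along the segment, which is routine given the slack $\eta$). The conceptual point, exactly as in Lemma \ref{id lem}, is that one must first cut $p$ down to a finite-rank corner before the conjugating operator can be taken unitary; everything here is simpler than in Lemma \ref{id lem} because there is no auxiliary parameter $t$ to track.
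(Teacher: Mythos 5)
Your proof is correct and follows essentially the same route as the paper's: cut $p$ down to a compact perturbation of $e$ supported on a corner determined by a projection in the canonical copy of $\mathcal{B}(\ell^2)$, connect $p$ to the cut-down by a straight-line homotopy using the slack in the strict inequalities, and then conjugate by a unitary in $\mathcal{B}(\ell^2)$ (connected to $1$ since the unitary group of $\mathcal{B}(\ell^2)$ is norm-connected) that implements $v$ on the relevant corner. The only cosmetic difference is that you use a finite-rank cut-down projection $e_0$ and a unitary $U$ with $Ue_0=ve_0$, whereas the paper uses an infinite-rank projection $r$ with infinite-rank complement and the unitary $u=vr+w^*$; both devices play the identical role.
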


\begin{proof}
The fact that $vpv^*+(1-vv^*)e$ is an element of $\mathcal{P}^\pi_\epsilon(X,B)$ follows from the fact that $v$ commutes with $A$.  We fix $\delta\in (0,1)$, to be determined in the course of the proof by $X$, $p$, and $\epsilon$.   As $p-e$ is in $\K(E)$, there exists an infinite rank projection $r\in \mathcal{B}(\ell^2)$ such that $1-r$ also has infinite rank, and such that 
\begin{equation}\label{rtpt norm 2}
\|(1-r)(p-e)\|<\delta.
\end{equation}
Note that as $r$ commutes with $e$, line \eqref{rtpt norm 2} implies that 
\begin{equation}\label{r p com}
\|[r,p]\|<2\delta.
\end{equation}
As $r$ is a projection and commutes with elements of $A$, and as $p$ is a contraction, this implies that for any $a\in X$, 
\begin{equation}\label{rpr proj}
\|a((rpr)^2-rpr)\|\leq \|r[p,r]pr\|+\|ra(p^2-p)r\|<2\delta+\max_{a\in X} \|a(p^2-p)\|.
\end{equation}

Define now $q:=rpr +(1-r)e$, which is a self-adjoint contraction.  Note that $q-e=rpr-re=r(p-e)r$, 
so $q-e$ is in $\K(E)$.  We have $q^2-q=(rpr)^2-rpr$, and so line \eqref{rpr proj} implies that for all $a\in X$,
$$
\|a(q^2-q)\|<2\delta+\max_{a\in X} \|a(p^2-p)\|.
$$ 
Moreover, 
$$
\|q-p\|=\|rpr-rp+(1-r)e-(1-r)p\|\leq  \|[r,p]\|+\|(1-r)(p-e)\|<3\delta
$$
by lines \eqref{rtpt norm} and \eqref{r p com}.  Hence as long as $\delta$ is so suitably small (depending on $\epsilon$ and ${\displaystyle \epsilon-\max_{a\in X} \|a(p^2-p)\|}$), we see that $q$ is in $\mathcal{P}^\pi_\epsilon(X,B)$. Moreover, for suitably small $\delta$, we have that the path 
$$
[0,1]\mapsto \LL(E), \quad s\mapsto sp+(1-s)q
$$
is norm continuous and passes through $\mathcal{P}^\pi_\epsilon(X,B)$, and so shows that $p\sim q$.  Hence it suffices to prove the result with $p$ replaced by $q$.  

Now, let $v\in \mathcal{B}(\ell^2)\subseteq \LL(E)$ be an isometry as in the original statement.  Choose a partial isometry $w\in \mathcal{B}(\ell^2)$ such that $ww^*=1-r$ and $w^*w=1-vv^*+v(1-r)v^*$; such exists as the operators appearing on the right hand sides of these equations are infinite rank projections.  Define 
$$
u:=vr+w^*\in \mathcal{B}(\ell^2)\subseteq \LL(E).
$$
Then one checks that $u$ is unitary, and moreover that $uqu^* = vqv^*+(1-vv^*)e$.  Let $(u_s)_{s\in [0,1]}$ be any norm continuous path of unitaries in $\mathcal{B}(\ell^2)$ connecting $u$ to the identity.  Then if we write ``$r\sim s$'' to mean that $r,s\in \mathcal{P}^\pi_\epsilon(X,B)$ are in the same path component, the homotopy $(u_squ_s^*)_{s\in [0,1]}$ shows that $q\sim vqv^*+(1-vv^*)e$.  In conclusion, we have that 
$$
p\sim q\sim vqv^*+(1-vv^*)e\sim vpv^*+(1-vv^*)e,
$$
where the last `$\sim$' follows from the homotopy $\big(v(sp+(1-s)q)v^*+(1-vv^*)e\big)_{s\in [0,1]}$.
\end{proof}

\begin{corollary}\label{kkeps mon}
Let $\pi:A\to\LL(E)$ be a graded, balanced, and infinite multiplicity representation.  Let $X$ be a finite subset of $A_1$, let $\epsilon>0$.  Then the commutative semigroup $KK^\pi_\epsilon(X,B)$ is a commutative monoid with identity element $[e]$.  
\end{corollary}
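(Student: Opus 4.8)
By Lemma \ref{group lem 2}, $KK^\pi_\epsilon(X,B)$ is already a commutative semigroup, so the only thing to check is that $[e]$ is an identity element for the operation $[p]+[q]=[s_1ps_1^*+s_2qs_2^*]$. The plan is to reduce this to Lemma \ref{id lem 2}, exactly as Corollary \ref{monoid} is deduced from Lemma \ref{id lem}.

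First I would observe that $e$ genuinely defines a class in $KK^\pi_\epsilon(X,B)$, i.e.\ that $e\in \mathcal{P}^\pi_\epsilon(X,B)$: indeed $e-e=0\in\K(E)$, and since $\pi$ is substantial the neutral projection $e$ commutes with $A$ (Lemma \ref{reps}), so $\|[e,a]\|=0<\epsilon$ for all $a\in X$, while $e^2-e=0$ gives $\|a(e^2-e)\|=0<\epsilon$. Hence $[e]\in KK^\pi_\epsilon(X,B)$.

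Next, fix an arbitrary $p\in\mathcal{P}^\pi_\epsilon(X,B)$ and apply Lemma \ref{id lem 2} with $v=s_1$. Since $s_1,s_2$ satisfy the Cuntz relation, $1-vv^*=1-s_1s_1^*=s_2s_2^*$, and because $s_2$ commutes with $e$ we have $(1-vv^*)e=s_2s_2^*e=s_2es_2^*$. Lemma \ref{id lem 2} therefore tells us that
$$
s_1ps_1^*+s_2es_2^*=vpv^*+(1-vv^*)e
$$
lies in the same path component of $\mathcal{P}^\pi_\epsilon(X,B)$ as $p$; that is, $[p]+[e]=[s_1ps_1^*+s_2es_2^*]=[p]$. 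By commutativity of the operation this gives $[e]+[p]=[p]$ as well, so $[e]$ is a two-sided identity and $KK^\pi_\epsilon(X,B)$ is a commutative monoid. There is no real obstacle here beyond correctly matching the Cuntz-relation bookkeeping to the hypotheses of Lemma \ref{id lem 2}.
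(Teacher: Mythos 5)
Your proof is correct and is essentially the paper's own argument: the paper also deduces the corollary from Lemma \ref{group lem 2} together with Lemma \ref{id lem 2} applied with $v=s_1$, using $1-s_1s_1^*=s_2s_2^*$ and the fact that $s_2$ commutes with $e$. The extra check that $e$ itself lies in $\mathcal{P}^\pi_\epsilon(X,B)$ is a harmless (and correct) addition.
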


\begin{proof}
This follows from Lemma \ref{group lem 2}, and Lemma \ref{id lem 2} with $v=s_1$ (whence $1-vv^*=s_2s_2^*$).  
\end{proof}

\begin{proposition}\label{kkeps gp}
Let $\pi:A\to\LL(E)$ be a graded, balanced, and infinite multiplicity representation.  Let $X$ be a finite subset of $A_1$ and let $\epsilon>0$.  Then the monoid $KK_\epsilon^\pi(X,B)$ is a group.
\end{proposition}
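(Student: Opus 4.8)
The group operation and its independence of the chosen isometries are Lemma~\ref{group lem 2}, and $[e]$ is the identity by Corollary~\ref{kkeps mon}; so it remains only to prove that every class $[p]$ has an additive inverse. The plan is to produce an explicit inverse $[\bar p]$, patterned on the construction of inverses inside the proof of Theorem~\ref{kk iso} but carried out at the fixed finite scale $(X,\epsilon)$.

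Let $w$ be the self-adjoint unitary in the copy of $M_2(\C)\subseteq\LL(E)$ of Lemma~\ref{reps} corresponding to $\begin{pmatrix}0&1\\1&0\end{pmatrix}$; thus $w$ commutes with $A$ and with $\mathcal{B}(\ell^2)$, and $wew^*=1-e$. Given $p\in\mathcal{P}^\pi_\epsilon(X,B)$, set $y:=e-w(p-e)w^*$ and let $\bar p:=f(y)$, where $f\colon\R\to[-1,1]$, $f(t)=\max\{-1,\min\{1,t\}\}$, is the truncation already used in the proof of Theorem~\ref{kk iso}. Then $\bar p$ is a self-adjoint contraction; since $p-e\in\K(E)$ we have $y\equiv e$ modulo $\K(E)$, so $\bar p-e=f(y)-f(e)\in\K(E)$ by continuity of the functional calculus modulo $\K(E)$; and since $w$ commutes with $A$, the commutator $[y,a]=-w[p,a]w^*$ has norm $<\epsilon$ for $a\in X$. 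The one delicate point is that applying $f$ should not destroy the estimates $\|[\cdot,a]\|<\epsilon$ and $\|a((\cdot)^2-\cdot)\|<\epsilon$: the operator $f(y)-y$ is supported on the spectral subspace of $y$ where $|t|>1$, and a direct spectral bookkeeping shows that there $\|f(y)-y\|$ is controlled by how far the corresponding part of $\sigma(p)$ sits from $\{0,1\}$ — which is exactly the region on which $\|a(p^2-p)\|<\epsilon$ forces $a$ to act with small norm. Feeding this in gives $\bar p\in\mathcal{P}^\pi_\epsilon(X,B)$ (choosing, if necessary, $f$ adapted to $p$ rather than the fixed piecewise-linear one).

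It then remains to show $[p]+[\bar p]=[e]$, i.e.\ that $s_1ps_1^*+s_2\bar ps_2^*$ lies in the same path component as $e=s_1es_1^*+s_2es_2^*$ inside $\mathcal{P}^\pi_\epsilon(X,B)$. This is a controlled version of the homotopy realizing $[q]+[1-q]=[1]$ in matricial $K$-theory. As in the proof of Theorem~\ref{kk iso}, I would first normalize $p$ (and $\bar p$) using the argument of Lemma~\ref{id lem 2} — conjugating by suitable isometries in $\mathcal{B}(\ell^2)$ and replacing $p$ by $rpr+(1-r)e$ for a projection $r$ as produced there — so that the non-neutral parts of $p$ and of $\bar p$ are concentrated compatibly; one then connects $s_1ps_1^*+s_2\bar ps_2^*$ to $e$ by a finite concatenation of: straight-line homotopies between operators within $\epsilon/6$ of one another (harmless by a computation as in Lemma~\ref{id lem 2}); conjugations by paths of unitaries commuting with $A$ and with $e$, supplied by Lemma~\ref{pp lem} and Kuiper's theorem; and a rotation mixing the non-neutral part of $p$ in the first summand with that of $\bar p$ in the second, exactly as the complement-rotation works in $K$-theory. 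Absorbing the degenerate neutral summands via Lemma~\ref{id lem 2} and Corollary~\ref{kkeps mon} then finishes the proof.

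The main obstacle, I expect, is precisely the interplay flagged above: the naive complement (something like $1-w(p-e)w^*$, of norm up to $2$) is not a contraction, so a truncation is unavoidable, and one must verify that the spectral regions it touches are exactly those on which the controlled hypotheses already make $X$ act with small norm. The remaining ingredients — the conjugations, the straight-line homotopies, the Cuntz-isometry manipulations — are routine, essentially because every auxiliary unitary or isometry is chosen to commute with $A$.
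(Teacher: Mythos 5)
Your inverse is, up to the truncation, exactly the paper's one ($u(1-p)u$, with $u$ the off-diagonal symmetry in $M_2(\C)\subseteq\LL(E)$, so that the complement is again a compact perturbation of $e$), and the mixing rotation you describe is the paper's rotation homotopy; so the approach matches, but your sketch has a genuine gap at the decisive step, and it is not the step you flag as ``the main obstacle''. The complement rotation carrying $s_1ps_1^*+s_2(1-p)s_2^*$ to $\mathrm{diag}(1,0)=s_1s_1^*$ preserves the two norm estimates (the paper computes $(p^{(1)}_t)^2-p^{(1)}_t$ and $[p^{(1)}_t,a]$ explicitly), but it destroys condition (i) of Definition \ref{alm com}: already after rotating $u$ to $1$, and all along the mixing rotation and at its endpoint $s_1s_1^*$, the element is no longer a compact perturbation of $e$, so the path leaves $\mathcal{P}^\pi_\epsilon(X,B)$ and ends at an element that is not the identity ($s_1s_1^*-e\notin\K(E)$). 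None of the tools you list can repair this: if $w$ is a unitary commuting with $e$ then $w^*pw-e=w^*(p-e)w$, so conjugation by your ``unitaries commuting with $A$ and with $e$'' preserves the defect, straight-line homotopies do too, and Lemma \ref{id lem 2} / Corollary \ref{kkeps mon} only add neutral summands and cannot turn $s_1s_1^*$ into $e$. The one non-routine move in the paper's proof is precisely the one missing from your outline: push the concatenated path into the Calkin algebra $\LL(E)/\K(E)$, where (since $p\equiv e$ modulo $\K(E)$) it becomes a path of honest projections in the injectively embedded copy of $\mathcal{B}(\C^2\otimes\ell^2)$ from Lemma \ref{reps}; apply Lemma \ref{pp lem} there to obtain a norm-continuous path of unitaries $(w_t)$ in $\mathcal{B}(\C^2\otimes\ell^2)$ with $w_0=1$ conjugating the image of the starting point onto the image of $p_t$; these $w_t$ commute with $A$ but emphatically not with $e$, and the corrected path $(w_t^*p_tw_t)$ stays in $\mathcal{P}^\pi_\epsilon(X,B)$ and ends exactly at $e$ by injectivity of the quotient map on $\mathcal{B}(\C^2\otimes\ell^2)$.

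Separately, your truncation does not preserve the scale $\epsilon$, and here that matters: the inverse must be an element of $\mathcal{P}^\pi_\epsilon(X,B)$, not of $\mathcal{P}^\pi_{C\epsilon}(X,B)$. Concretely $f(u(1-p)u)=u(1-p_+)u$ with $p_+$ the positive part of $p$, so $[\bar p,a]=-u([p,a]+[p_-,a])u$; the hypotheses give $\|ap_-\|\le\|a(p^2-p)\|<\epsilon$ (via $p_-^2\le(p^2-p)^2$) and similar one-sided bounds, but nothing smaller, so the commutator estimate for $\bar p$ degrades to a constant multiple of $\epsilon$, and no choice of truncation function avoids this, since the entire negative part of $p$ must be removed and its interaction with $X$ is bounded only by $\epsilon$. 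The paper simply takes $u(1-p)u$ itself, whose commutator and approximate-idempotent estimates coincide exactly with those of $p$; the contraction normalization you worry about is a minor wrinkle (and the only point where the paper is terse), whereas the compactness/endpoint correction above is where the actual content of the proof lies.
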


\begin{proof}
For simplicity of notation, if $p,q\in \mathcal{P}_\epsilon^\pi(X,B)$, write $p\sim q$ if $p,q$ are in the same path component, i.e.\ if they represent the same element of $KK^\pi_\epsilon(X,B)$.

Let $p$ be an element of $\mathcal{P}^\pi_\epsilon(X,B)$.  According to Lemma \ref{group lem 2} and Corollary \ref{kkeps mon} it suffices to show that $p$ has an inverse, i.e.\ to find an element $q\in \mathcal{P}_\epsilon(X,B)$ such that 
\begin{equation}\label{q inverse}
s_1qs_1^*+s_2ps_2^*\sim e.
\end{equation}
Let $M_2(\C)$ be unitally included in $\LL(E)$ as in line \eqref{can incl} from Lemma \ref{reps}, and let $u$ be the element $\begin{psmallmatrix} 0 & 1 \\ 1 & 0 \end{psmallmatrix}\in M_2(\C)$, so $ueu=1-e$.   The self-adjoint contraction 
$$
q:=s_1es_1^*+s_2u(1-p)us_2^*
$$
then defines an element of $\mathcal{P}^\pi_\epsilon(X,B)$, and we claim this satisfies the condition in line \eqref{q inverse} above.

We first define 
$$
v:=s_2s_1^*s_1^*+s_1s_1s_2^*+s_1s_2s_2^*s_1^*,
$$
which is unitary in $\mathcal{B}(\ell^2)\subseteq \LL(E)$.  Computing,
\begin{equation}\label{v stuff}
v(s_1qs_1^*+s_2ps_2^*)v^*=s_1(s_1ps_1^*+s_2u(1-p)us_2^*)s_1^*+s_2es_2^*.
\end{equation}
On the other hand, Lemma \ref{id lem 2} (with $v=s_1$) implies that 
\begin{equation}\label{no e}
s_1(s_1ps_1^*+s_2u(1-p)us_2^*)s_1^*+s_2es_2^*\sim s_1ps_1^*+s_2u(1-p)us_2^*. 
\end{equation}
Moreover, $v$ is connected to the identity in the unitary group of $\mathcal{B}(\ell^2)$; as $\mathcal{B}(\ell^2)$ commutes with $A$ and $e$, this implies that 
\begin{equation}\label{no v}
vrv^*\sim r \quad \text{for any} \quad r\in \mathcal{P}_\epsilon^\pi(X,B).
\end{equation}
Combining lines \eqref{v stuff}, \eqref{no e}, and \eqref{no v} we get that 
$$
s_1qs_1^*+s_2ps_2^*\sim s_1ps_1^*+s_2u(1-p)us_2^*.
$$
Comparing this with line \eqref{q inverse}, it thus suffices to show that 
\begin{equation}\label{desid sim}
s_1ps_1^*+s_2u(1-p)us_2^*\sim e.
\end{equation}
We will show this in two steps:
\begin{enumerate}[(i)]
\item \label{step 1 inv} construct a homotopy $(p_t)_{t\in [0,1]}$ between $s_1ps_1^*+s_2u(1-p)us_2^*$ and $s_1s_1^*$ such that $\|[p_t,a]\|<\epsilon$ and $\|a(p_t^2-p_t)\|<\epsilon$ for all $t$ and all $a\in X$;
\item \label{step 2 inv} show how to `fix' this homotopy so that is also satisfies $p_t-e\in \K(E)$ for all $t\in [0,1]$.
\end{enumerate}

Let us start on Step \eqref{step 1 inv} above.  Connect $u$ to the identity through unitary elements of $M_2(\C)$.  This gives a path, say $(p_t^{(0)})_{t\in [0,1]}$ connecting $s_1ps_1^*+s_2u(1-p)us_2^*$ to $s_1ps_1^*+s_2(1-p)s_2^*$ and that satisfies $\|[p_t^{(0)},a]\|<\epsilon$ and $\|a((p_t^{(0)})^2-p_t^{(0)})\|<\epsilon$ for all $t$ and all $a\in X$.  

At this point, to simplify notation, let us write elements of $\LL(E)$ as $2\times 2$ matrices with respect to the matrix units $e_{ij}:=s_is_j^*$.  With this notation\footnote{In more formal notation, $p_t^{(1)}=s_1(p+\sin^2(t)(1-p))s_1^*+s_1\cos(t)\sin(t)(1-p)s_2^* +s_2\cos(t)\sin(t)(1-p)s_1^*+s_2\cos^2(t)(1-p)s_2^*$.}, consider the path $(p_t^{(1)})_{t\in [0,\pi/2]}$ defined by 
$$
p_t^{(1)}:=\begin{pmatrix} p & 0 \\ 0 & 0 \end{pmatrix} +\begin{pmatrix} \cos(t) & \sin(t) \\ -\sin(t) & \cos(t) \end{pmatrix} \begin{pmatrix} 0 & 0 \\ 0 & 1-p \end{pmatrix}\begin{pmatrix} \cos(t) & -\sin(t) \\ \sin(t) & \cos(t) \end{pmatrix}.
$$
One computes that 
$$
(p_t^{(1)})^2-p_t^{(1)}=\begin{pmatrix} \cos(t)(p^2-p) & 0 \\ 0 & \cos^2(t)(p^2-p) \end{pmatrix},
$$
whence $\|a((p_t^{(1)})^2-p_t^{(1)})\|<\epsilon$ for all $t\in [0,\pi/2]$ and all $a\in X$.  Another computation gives that for any $a\in A$ and $t\in [0,\pi/2]$,
$$
[a,p_t^{(1)}]=[a,p]\begin{pmatrix} \cos^2(t) & -\cos(t)\sin(t) \\ -\cos(t)\sin(t) & -\cos^2(t) \end{pmatrix}.
$$
The norm of the matrix appearing on the right hand side is $|\cos(t)|$, and therefore $\|[a,p_t^{(1)}]\|<\epsilon$ for all $a\in X$ and all $t\in [0,\pi/2]$.  

Concatenating the paths $(p_t^{(0)})_{t\in [0,1]}$ and $(p_t^{(1)})_{t\in [0,\pi/2]}$, and reparametrizing, we get a new path $(p_t)_{t\in [0,1]}$ connecting $s_1ps_1^*+s_2u(1-p)us_2^*$ and $s_1s_1^*$.  This completes Step \eqref{step 1 inv} above.

For Step \eqref{step 2 inv}, let $\varpi:\LL(E)\to \LL(E)/\K(E)$ be the quotient map.  With respect to the decomposition in Lemma \ref{reps}, $\varpi$ is injective on the canonical copy of $\mathcal{B}(\C^2\otimes \ell^2)\subseteq \LL(E)$.  As $p-e\in \K(E)$ and $e$ is a projection, we see that the path $(\varpi(p_t))_{t\in [0,1]}$ passes through projections in $\mathcal{B}(\C^2\otimes \ell^2)$, and it connects $e$ and $s_1s_1^*$.  Hence using Lemma \ref{pp lem}, there exists a continuous path of unitaries $(w_t)_{t\in [0,1]}$ in $\mathcal{B}(\C^2\otimes \ell^2)$ with $w_0=1$ and such that $\varpi(p_t)=w_t\varpi(p_0)w_t^*$ for all $t$.  The path $(w_t^*p_tw_t)_{t\in [0,1]}$ then lies in $\mathcal{P}^\pi_{\epsilon}(X,B)$, and connects $s_1ps_1^*+s_2u(1-p)us_2^*$ and $e$.  This completes Step \eqref{step 2 inv}, and the proof.
\end{proof}

We now have that $KK_\epsilon^\pi(X,B)$ is a group whenever $\pi$ is graded, balanced, and infinite multiplicity.  Under a slightly stronger assumption on the representations involved, the groups $KK^\pi_\epsilon(X,B)$ are independent of the representation $\pi$.

\begin{lemma}\label{rep ind}
Let $\pi:A\to \LL(E)$ and $\sigma:A\to \LL(F)$ be graded, balanced, and strongly absorbing representations in the sense of Definition \ref{apt}.  Then the groups $KK^\pi_\epsilon(X,B)$ and $KK^\sigma_\epsilon(X,B)$ are isomorphic.  
\end{lemma}

\begin{proof}
Analogously to line \eqref{hb} above, we may write 
$$
(\pi,E)=(1_{\C^2\otimes \ell^2}\otimes \pi_0,\C^2\otimes \ell^2\otimes E_0)
$$
where $\pi_0:A\to \LL(E_0)$ is a strongly absorbing representation of $A$, and analogously for $(\sigma,F)$.  Let $e^E=\begin{psmallmatrix} 1 & 0 \\ 0 & 0 \end{psmallmatrix} \otimes 1_{\ell^2\otimes E_0}$ denote the neutral projection for $(\pi,E)$ as in Definition \ref{apt}, and choose a pair of isometries in $s_1^E,s_2^E\in 1_{\C^2}\mathcal{B}(\ell^2)\otimes 1_{F_0}$ satisfying the Cuntz relation and implementing the group operation on $KK^\epsilon(X,B)$ as in Lemma \ref{group lem 2}.  Let $e^F$ be defined similarly, and let $s_1^F$ and $s_2^F$ be chosen compatibly with $s_1^E$ and $s_2^E$.  

Proposition \ref{cov isom} gives us a path of unitaries 
$$
u^0=(u_t^0)_{t\in [1,\infty)}\in C_b([1,\infty),\LL(F_0,E_0))
$$
such that $u_t^*\pi_0(a)u_t-\sigma_0(a)\to 0$ as $t\to\infty$, and such that $u_t^*\pi(a)u_t-\sigma(a)\in \mathcal{K}(F_0)$ for all $t$.  Let $u_t=1_{\C^2\otimes \ell^2}\otimes u^0_t\in \LL(F,E)$.   

Now, let $p\in \mathcal{P}^\pi_\epsilon(X,B)$.  One checks that there exists $t_p\geq 1$ such that for all $t\geq t_p$ suitably large, $u_t^*pu_t\in \mathcal{P}^\sigma_\epsilon(X,B)$.  Provisionally define a group homomorphism by  
\begin{equation}\label{rep change map}
KK^\pi_\epsilon(X,B)\to KK^\sigma_\epsilon(X,B),\quad [p]\mapsto [u_t^*pu_t] \text{ for any } t\geq t_p.
\end{equation}
Note that the class $[u_t^*pu_t]$ does not depend on the choice of $t$: indeed, if $t,s\geq t_p$ and $I$ is the interval between them, then $(u_t^*pu_t)_{t\in I}$ defines a homotopy between $u_t^*pu_t$ and $u_s^*pu_s$ in $\mathcal{P}_\epsilon^\sigma(X,B)$.  Note also that this map does not depend on the choice of representative $p$ of the class $[p]$.  Indeed, if $p^0$ and $p^1$ are two such representatives, then there is a homotopy $\mathbf{p}=(p^s)_{s\in [0,1]}$ between them in $\mathcal{P}_\epsilon^\pi(X,B)$.  A compactness argument gives $t_\mathbf{p}\geq 1$ such that for all $t\geq t_{\mathbf{p}}$ and all $s\in [0,1]$, $u_t^*p^su_t\in \mathcal{P}_\epsilon^\sigma(X,B)$.  Hence for any $t\geq t_{\mathbf{p}}$, $(u_t^*p^su_t)_{s\in [0,1]}$ defines a homotopy between $u_t^*p^0u_t$ and $u_t^*p^1u_t$.  

From the discussion above, the map in line \eqref{rep change map} is well-defined.  The choice of $u_t$ guarantees that $u_t^*s^E_iu_t=s_i^F$ for $i\in \{0,1\}$ and all $t$, and also that $u_te^Eu_t^*=e^F$ for all $t$.  Hence the map in line \eqref{rep change map} is a group homomorphism.  Switching the roles of $u_t$ and $u_t^*$ gives a group homomorphism in the other direction; as the two are easily seen to be mutually inverse, this completes the proof that $KK^\pi_\epsilon(X,B)$ is isomorphic to $KK^\sigma_\epsilon(X,B)$.
\end{proof}

From Lemma \ref{rep ind}, the next definition is reasonable.

\begin{definition}\label{con kk gp}
Let $\pi:A\to\LL(E)$ be a graded, balanced, and strongly absorbing representation on a Hilbert $B$-module.  We call $KK^\pi_\epsilon(X,B)$ the \emph{controlled $KK$-theory group} of $A$ associated to $X$, and $\epsilon$.
\end{definition}

Having established that each $KK^\pi_\epsilon(X,B)$ is a group, we now arrange these groups into an inverse system, and show that the resulting inverse limit agrees with $KL(A,B)$.  

\begin{definition}\label{seps}
Let $\mathcal{X}$ be the set of all pairs $(X,\epsilon)$ where $X$ is a finite subset of $A_1$, and $\epsilon>0$.  We equip $\mathcal{X}$ with the partial order defined by $(X,\epsilon)\leq (Y,\delta)$ if for any graded representation $\pi:A\to \LL(E)$ we have that $\mathcal{P}^\pi_\delta(Y,B)\subseteq \mathcal{P}^\pi_\epsilon(X,B)$.
\end{definition}

\begin{remark}\label{seps rem}
We record some basic properties of the partially ordered set $\mathcal{X}$.
\begin{enumerate}[(i)]
\item \label{naive} Note that $(X,\epsilon)\leq (Y,\delta)$ if $X\subseteq Y$ and $\delta\leq \epsilon$.
\item It follows from this that $\mathcal{X}$ is directed: an upper bound for $(X_1,\epsilon_1)$ and $(X_2,\epsilon_2)$ is given by $(X_1\cup X_2,\min\{\epsilon_1,\epsilon_2\})$.
\item \label{cof seq} Recall that a subset $S$ of a directed set $I$ is \emph{cofinal} if for all $i\in I$ there is $s\in S$ with $s\geq i$.  The partial order in Definition \ref{seps} contains a lot more comparable pairs than those arising from the  `naive ordering' on the set $\mathcal{X}$ defined by `$(X,\epsilon)\leq (Y,\delta)$ if $X\subseteq Y$ and $\delta\leq \epsilon$' as in \eqref{naive} above.  For example, the naive ordering never contains cofinal sequences (even for $A=\C)$, while the ordering from Definition \ref{seps} always does.  To see this, let $(a_n)_{n=1}^\infty$ be a dense sequence in $A_1$, and define $X_n:=\{a_1,...,a_n\}$.  Then the sequence $(X_n,1/n)_{n=1}^\infty$ is cofinal in $\mathcal{X}$ for the ordering from Definition \ref{seps}.
\item \label{cof seq 2} If $A$ is generated as a $C^*$-algebra by a finite set $X\subseteq A_1$, then the sequence $(X,1/n)_{n=1}^\infty$ is cofinal in $\mathcal{X}$.
\end{enumerate}
\end{remark}

As it is not so widely used in $C^*$-algebra theory, and to establish conventions, we recall the definition of an inverse limit in the category of topological abelian groups.  We will need both a purely algebraic notion and a notion that incorporates a topology: for the purely algebraic notion, just omit all the words in parentheses, and include them all for the version incorporating a topology.

\begin{definition}\label{inv lim def}
Let $I$ be a directed set.  An \emph{inverse system} of (topological) abelian groups indexed by $I$ is a collection $(G_i)_{i\in I}$ of (topological) abelian groups, and (continuous) group homomorphisms $\phi_{ij}:G_j\to G_i$ for each $j\geq i$ such that $\phi_{ii}=\text{id}_{G_i}$ for each $i$, and such that $\phi_{ij}\circ \phi_{jk}$ whenever $k\geq j\geq i$.  

Given such an inverse system, its \emph{inverse limit} is defined to be 
$$
\lim_{\leftarrow}G_i:=\Bigg\{(g_i)\in \prod_{i\in I} G_i \mid \phi_{ij}(g_j)=g_i \text{ for all }j\geq i\Bigg\},
$$
equipped with the group operations (and the topology that it inherits from $\prod_{i\in I} G_i$).
\end{definition}

We recall some basic facts about inverse limits; we leave the direct checks involved to the reader.

\begin{remark}\label{gen inv lim rem}
\begin{enumerate}[(i)]
\item \label{inv lim uni} The inverse limit ${\displaystyle \lim_{\leftarrow}G_i}$ of an inverse system $(G_i)$ of (topological) abelian groups has the following universal property.  Restricting to the $i^\text{th}$ coordinate gives a canonical (continuous) homomorphism $\phi_i:{\displaystyle \lim_{\leftarrow}G_i}\to G_i$ for each $i$.  Then for any (topological) abelian group $H$ equipped with a family of (continuous) homomorphisms $\psi_i:H\to G_i$ such that the diagrams
$$
\xymatrix{ & H \ar[dl]_-{\psi_j} \ar[dr]^-{\psi_i} & \\
G_j \ar[rr]^-{\phi_{ij}} & & G_i }
$$
commute for each $j\geq i$, there is a unique (continuous) homomorphism $H \to {\displaystyle \lim_{\leftarrow}G_i}$ making the following diagrams 
$$
\xymatrix{ & H \ar@/_1pc/[ddl]_-{\psi_j} \ar@/^1pc/[ddr]^-{\psi_j} \ar[d] & \\
& {\displaystyle \lim_{\leftarrow}G_i}\ar[dl]_-{\phi_j} \ar[dr]^-{\phi_i} & \\ 
G_j \ar[rr]^-{\phi_{ij}} & & G_i }
$$
commute for all $j\geq i$.
\item \label{cof iso} Any cofinal subset $J$ of a directed set $I$ defines the same inverse limit.  Precisely, the canonical `restriction' map 
$$
\prod_{i\in I}G_i\to \prod_{i\in J} G_i
$$
defined by forgetting all coordinates outside of $J$ restricts to an isomorphism ${\displaystyle \lim_{\leftarrow,i\in I} G_i\to  \lim_{\leftarrow,i\in J} G_i}$, which is also a homeomorphism in the topological setting.
\end{enumerate}
\end{remark}

We now come to the inverse limit we are interested in.  

\begin{definition}\label{good i l}
Assume that $(X,\epsilon)\leq (Y,\delta)$ as elements of the partially ordered set $\mathcal{X}$ from Definition \ref{seps}.  Then for any graded representation $\pi:A\to \LL(E)$ there is a canonical `forget control' map
\begin{equation}\label{forget}
\varphi_{X,\epsilon}^{Y,\delta}:KK^\pi_\delta(Y,B)\to KK^\pi_\epsilon(X,B)
\end{equation}
induced by the inclusion $\mathcal{P}_\delta^\pi(Y,B)\to \mathcal{P}_\epsilon^\pi(X,B)$.  

Assume moreover that $\pi$ is balanced and infinite multiplicity so that each $KK_\epsilon^\pi(X,B)$ is an abelian group, which we equip with the discrete topology.  Then the collection 
$$
(KK^\pi_\epsilon(X,B))_{(X,\epsilon)\in \mathcal{X}}
$$
becomes an inverse system of topological abelian groups and we define ${\displaystyle \lim_{\leftarrow}KK^\pi_\epsilon(X,B)}$ to be its inverse limit as in Definition \ref{inv lim def}.
\end{definition}

Our goal in the remainder of this section is to show that with notation as in line \eqref{kl def} and Definition \ref{good i l}
$$
\lim_{\leftarrow}KK^\pi_\epsilon(X,B)\cong KL(A,B)
$$ 
whenever $\pi$ is graded, balanced, and strongly absorbing as in Definition \ref{apt}.

For the next lemma, recall the notation $\tau_{X,\epsilon}(p)$ from Definition \ref{tause} above and the notation $\KKP^\pi(A,B)$ from Definition \ref{pp hom}.  It is an abelian group under the assumptions in the Lemma by Theorem \ref{kk iso}.

\begin{lemma}\label{comp map}
Let $\pi:A\to \LL(E)$ be a graded, balanced, and strongly absorbing representation of $A$ on a Hilbert $B$-module.  For each $(X,\epsilon)$ in the set $\mathcal{X}$ of Definition \ref{seps} there is a group homomorphism
$$
\psi_{X,\epsilon}:\KKP^\pi(A,B)\to KK^\pi_{\epsilon}(X,B)
$$
defined by sending $[p]$ to the class of $[p_{t}]$, where $t=\tau_{X,\epsilon}(p)+1$.  Moreover, the family of homomorphisms $(\psi_{X,\epsilon})_{(X,\epsilon)\in \mathcal{X}}$ are compatible with the forget control maps in line \eqref{forget} above in the sense that the diagrams
$$
\xymatrix{\KKP^\pi(A,B) \ar[d]^-{\psi_{Y,\delta}} \ar@{=}[r] & \KKP^\pi(A,B) \ar[d]^-{\psi_{X,\epsilon}} \\ KK^\pi_\delta(Y,B) \ar[r]^-{\varphi_{X,\epsilon}^{Y,\delta}} & KK^\pi_\epsilon(X,B) }
$$
commute.  In particular, the maps $\psi_{X,\epsilon}$ determine a group homomorphism 
$$
\psi:\KKP^\pi(A,B)\to \lim_{\leftarrow}KK_\epsilon(X,B).
$$
\end{lemma}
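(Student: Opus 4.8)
The plan is, for each fixed $(X,\epsilon)\in\mathcal{X}$, to show that $[p]\mapsto [p_{\tau_{X,\epsilon}(p)+1}]$ is a well-defined homomorphism $\pi_{X,\epsilon}:\KKP^\pi(A,B)\to KK^\pi_\epsilon(X,B)$, then that these intertwine the forgetful maps, and finally to invoke the universal property of the inverse limit. The key preliminary observation is that for $p\in\mathcal{P}^\pi(A,B)$, conditions (i)--(iii) of Definition \ref{proj path} say precisely that $p_t-e\in\K(E)$ for all $t$ and that $\|[a,p_t]\|,\|a(p_t^2-p_t)\|\to0$ as $t\to\infty$ for every $a\in A$; as $X$ is finite this forces $p_t\in\mathcal{P}^\pi_\epsilon(X,B)$ for all large $t$, so $\tau_{X,\epsilon}(p)<\infty$, and in fact $p_t\in\mathcal{P}^\pi_\epsilon(X,B)$ for every $t>\tau_{X,\epsilon}(p)$. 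Hence if $t,t'>\tau_{X,\epsilon}(p)$ the restriction of $p$ to the interval between them is a norm-continuous path inside $\mathcal{P}^\pi_\epsilon(X,B)$, so $[p_t]=[p_{t'}]$ in $KK^\pi_\epsilon(X,B)$; in particular the specific value $\tau_{X,\epsilon}(p)+1$ is irrelevant, and I write $\pi_{X,\epsilon}([p])$ for this common class.

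For well-definedness on homotopy classes, suppose $p^0\sim p^1$, witnessed by $p=(p^s_t)\in\mathcal{P}^{1\otimes\pi}(A,C[0,1]\otimes B)$. Translating the membership conditions of Lemma \ref{loc func lem 0}, namely $[p,a]\in C_0([1,\infty),C_b([0,1],\LL(E)))$ and $a(p^2-p)\in C_0([1,\infty),C_0([0,1],\K(E)))$, yields a single $T\geq 1$, which we may enlarge past $\tau_{X,\epsilon}(p^0)$ and $\tau_{X,\epsilon}(p^1)$, such that $p^s_t\in\mathcal{P}^\pi_\epsilon(X,B)$ for all $t\geq T$ and all $s\in[0,1]$. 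Concatenating the three norm-continuous paths $(p^0_t)_{t\in[\tau_{X,\epsilon}(p^0)+1,T]}$, $(p^s_T)_{s\in[0,1]}$ and $(p^1_t)_{t\in[\tau_{X,\epsilon}(p^1)+1,T]}$, all lying in $\mathcal{P}^\pi_\epsilon(X,B)$ by the previous paragraph and the choice of $T$, shows $\pi_{X,\epsilon}([p^0])=\pi_{X,\epsilon}([p^1])$.

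Additivity uses that the Cuntz isometries $s_1,s_2$ sit inside $\mathcal{B}(\ell^2)\subseteq\LL(E)$, hence commute with $A$ and with $e$, and satisfy $s_1s_1^*+s_2s_2^*=1$: from $(s_1rs_1^*+s_2r's_2^*)-e=s_1(r-e)s_1^*+s_2(r'-e)s_2^*$, $[a,s_1rs_1^*+s_2r's_2^*]=s_1[a,r]s_1^*+s_2[a,r']s_2^*$, the analogue for $(\cdot)^2-(\cdot)$, and the fact that orthogonal summands contribute independently to the norm, one reads off that $s_1rs_1^*+s_2r's_2^*\in\mathcal{P}^\pi_\epsilon(X,B)$ iff $r,r'\in\mathcal{P}^\pi_\epsilon(X,B)$. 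Thus $\tau_{X,\epsilon}(s_1ps_1^*+s_2qs_2^*)=\max\{\tau_{X,\epsilon}(p),\tau_{X,\epsilon}(q)\}$, and evaluating at any $t$ exceeding this maximum, the definition of the operation on $KK^\pi_\epsilon(X,B)$ together with the $t$-independence above gives $\pi_{X,\epsilon}([p]+[q])=[s_1p_ts_1^*+s_2q_ts_2^*]=[p_t]+[q_t]=\pi_{X,\epsilon}([p])+\pi_{X,\epsilon}([q])$. For compatibility, if $(X,\epsilon)\leq(Y,\delta)$ then $\mathcal{P}^\pi_\delta(Y,B)\subseteq\mathcal{P}^\pi_\epsilon(X,B)$, so $\tau_{X,\epsilon}(p)\leq\tau_{Y,\delta}(p)$, and $\varphi^{Y,\delta}_{X,\epsilon}$ is the map induced on path components by this inclusion of sets; hence $\varphi^{Y,\delta}_{X,\epsilon}(\pi_{Y,\delta}([p]))$ is the class in $KK^\pi_\epsilon(X,B)$ of $p_t$ for $t>\tau_{Y,\delta}(p)\geq\tau_{X,\epsilon}(p)$, which is $\pi_{X,\epsilon}([p])$. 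The universal property in Remark \ref{seps rem}\eqref{inv lim uni} then produces the unique homomorphism $\varpi:\KKP^\pi(A,B)\to\lim_{\leftarrow}KK_\epsilon(X,B)$ with $\varpi_{X,\epsilon}\circ\varpi=\pi_{X,\epsilon}$ for all $(X,\epsilon)$.

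The step requiring the most care is the homotopy invariance, the one place where one must upgrade the pointwise-in-$(s,t)$ description of a homotopy to a single $T$ uniform in $s$; this is exactly what the ``$C_0$-in-$t$'' membership in Lemma \ref{loc func lem 0} provides, and everything else is bookkeeping on top of the first-paragraph observation.
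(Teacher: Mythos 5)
Your proof is correct and follows essentially the same route as the paper's: homotopy invariance via concatenating the two time-paths with the homotopy at a single sufficiently large time, additivity via the Cuntz isometries commuting with $A$ and $e$, compatibility from the monotonicity of $\tau$, and the universal property of the inverse limit. The only difference is your explicit preliminary observation that the class $[p_t]$ is independent of the choice of $t>\tau_{X,\epsilon}(p)$, which the paper uses implicitly and which slightly streamlines the additivity and compatibility checks.
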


\begin{proof}
To see that the map $\psi_{X,\epsilon}$ is well-defined, let $(p^s)_{s\in [0,1]}$ be a homotopy between $p^0,p^1$ in the set $\mathcal{P}^\pi(A,B)$ of paths of projections as in Definition \ref{pp hom}.  Let $t_0=\tau_{X,\epsilon}(p^0)+1$, $t_1=\tau_{X,\epsilon}(p^1)+1$, and choose $t_2$ such that $t_2\geq \max \{t_0,t_1\}$, and such that $p^s_{t_2}$ is in $\mathcal{P}^\pi_\epsilon(X,B)$ for all $s$ (such a number $t_2$ exists by compactness of $[0,1]$).  Then concatenating the homotopies $(p^0_t)_{t\in [t_0,t_2]}$, $(p^s_{t_2})_{s\in [0,1]}$, and $(p^1_t)_{t\in [t_1,t_2]}$ connects $p^0_{t_0}$ and $p^1_{t_1}$ in $\mathcal{P}^\pi_\epsilon(X,B)$, and we get well-definedness.  

To see that $\psi_{X,\epsilon}$ is a group homomorphism, let $s_1,s_2\in \mathcal{B}(\ell^2)\subseteq \LL(E)$ be a pair of isometries satisfying the Cuntz relation, and used to define the group operations on both $\KKP^\pi(A,B)$ and $KK^\pi_\epsilon(X,B)$, and let $[p],[q]\in \mathcal{P}^\pi(A,B)$.  Then $[p]+[q]$ is the class of $[s_1ps_1^*+s_2qs_2^*]$, and we have that  
$$
\psi_{X,\epsilon}:[s_1ps_1^*+s_2qs_2^*]\mapsto [s_1p_{t_{p+q}}s_1^*+s_1q_{t_{p+q}}s_1^*]
$$ 
where $t_{p+q}:=\tau_{X,\epsilon}(s_1ps_1^*+s_2qs_2^*)+1$.  On the other hand, if we define $t_p:=\tau_{X,\epsilon}(p)+1$ and $t_q:=\tau_{X,\epsilon}(q)$, then 
$$
\psi_{X,\epsilon}[p]+\psi_{X,\epsilon}[q]=[s_1p_{t_p}s_1^*+s_2q_{t_q}s_2^*].
$$
Define $t_{p+q}:=\max\{t_p,t_q\}$, and say without loss of generality that $t_p\geq t_q$.  Then the path $(s_1p_{t_p}s_1^*+s_2q_{t}s_2^*)_{t\in [t_q,t_p]}$ shows that $\psi_{X,\epsilon}([p]+[q])=\psi_{X,\epsilon}[p]+\psi_{X,\epsilon}[q]$ as required.

Compatibility of the maps $\psi_{X,\epsilon}$ with the forget control maps in line \eqref{forget} can be shown via similar arguments; we leave the details to the reader.  The existence of $\psi$ follows from this and the universal property of the inverse limit as in Remark \ref{gen inv lim rem}, part \eqref{inv lim uni}.
\end{proof}

Using the ideas in the previous section, we now get the promised relationship to $KL$.  

\begin{theorem}\label{id with KL}
Let $\pi:A\to \LL(E)$ be a graded, balanced, and strongly absorbing representation on a Hilbert $B$-module.  The homomorphism $\psi$ in Lemma \ref{comp map} is surjective and descends to a homeomorphic isomorphism 
$$
\psi: KL(A,B)\to  \lim_{\leftarrow}KK^\pi_\epsilon(X,B).
$$
\end{theorem}

\begin{proof}
Inspecting Definition \ref{inv lim def}, a neighborhood basis of $0$ in ${\displaystyle \lim_{\leftarrow}KK^\pi_\epsilon(X,B)}$ consists of the sets 
$$
W(X,\epsilon):=\{([p_{Y,\delta}]_{(Y,\delta)\in \mathcal{X}} \mid [p_{X,\epsilon}]=0\}
$$
as $(X,\epsilon)$ varies over the set $\mathcal{X}$ of Definition \ref{seps}.  On the other hand, inspecting Definition \ref{top}, a neighborhood basis of $0$ in $\KKP^\pi(A,B)$ consists precisely of the sets $V(0;X,\epsilon)$ as $(X,\epsilon)$ ranges over $\mathcal{X}$.   Direct checks show that these sets are in bijective correspondence via the map $\psi$.  It follows from this that the map
$$
\psi:\KKP^\pi(A,B)\to \lim_{\leftarrow}KK^\pi_\epsilon(X,B)
$$
of Lemma \ref{comp map} descends to an open and continuous injection 
$$
\psi: \frac{\KKP^\pi(A,B)}{\overline{\{0\}}}\to \lim_{\leftarrow}KK^\pi_\epsilon(X,B).
$$
The left hand side identifies with $KL(A,B)$ by Theorem \ref{kk iso}, Theorem \ref{tops same} and the definition of $KL(A,B)$ as in line \eqref{kl def} above, so we have an open continuous injection 
$$
\psi: KL(A,B)\to \lim_{\leftarrow}KK^\pi_\epsilon(X,B)
$$
To see that $\psi$ is a homeomorphic isomorphism, it remains to show that it is surjective. 

For this, let us choose a cofinal sequence $(X_n,1/n)_{n=1}^\infty$ of $\mathcal{X}$ as in Remark \ref{seps rem} part \eqref{cof seq}, whence there is a canonical isomorphism 
$$
\lim_{\leftarrow}KK^\pi_\epsilon(X,B)=\lim_{\leftarrow} KK^\pi_{1/n}(X_n,B)
$$
as in Remark \ref{gen inv lim rem} part \eqref{cof iso}.  Hence it suffices to prove surjectivity of the induced map
$$
\KKP^\pi(A,B)\to \lim_{\leftarrow} KK^\pi_{1/n}(X_n,B).
$$
As in Definition \ref{inv lim def} above, let $([p^n])_{n=1}^\infty$ be a sequence defining an element of ${\displaystyle \lim_{\leftarrow} KK^\pi_{1/n}(X_n,B)}$ with $p^n\in \mathcal{P}^\pi_{1/n}(X_n,B)$ for each $n$.  As this sequence defines an element of the inverse limit we must have that for each $n$, the forget control map 
$$
KK^\pi_{1/(n+1)}(X_{n+1},B)\to KK^\pi_{1/n}(X_n,B)
$$
sends $[p^{n+1}]$ to $[p^n]$.  This implies that there is a homotopy $(p^{n}_s)_{s\in [0,1]}$ of elements in $\mathcal{P}^\pi_{1/n}(X_n,B)$ with $p^{n}_0=p^n$ and $p^{n}_1=p^{n+1}$.  Define $p:[1,\infty)\to \LL(E)$ by setting $p_t:=p^n_{t-n}$ whenever $t$ is in $[n,n+1]$, and note that $p$ is then an element of $\mathcal{P}^\pi(A,B)$.  

We claim that $\pi[p]=([p^n])_{n=1}^\infty$, which will complete the proof.  Indeed, it suffices to fix $n$ and show that $\psi_{X_n,1/n}[p]=[p^n]$.  We have $\psi_{X_n,1/n}[p]=[p_{t_p}]$, where $t_p:=\tau_{X_n,1/n}(p)$.  By definition of $p$ and of $\tau_{X_n,1/n}$, the interval $I$ with endpoints $n$ and $t_p$ is such that the path $(p_t)_{t\in I}$ lies entirely in $\mathcal{P}^\pi_{1/n}(X_n,B)$.  Hence 
$$
\psi_{X_n,1/n}[p]=[p_{t_p}]=[p_n]=[p^n]
$$
and we are done.
\end{proof}

\section{The closure of zero and ${\displaystyle \lim_{\leftarrow}{}\!^1}$ groups}\label{zero sec}

Throughout this section, $A$ and $B$ refer to separable $C^*$-algebras.  All Hilbert modules are countably generated, and all are over $B$ unless explicitly stated otherwise.  All representations of $A$ are on Hilbert $B$-modules unless explicitly stated otherwise.

Our goal in this section is to concretely identify the closure of zero in the asymptotic topology on $\KKP^\pi(A,B)$ (see Definitions \ref{pp hom} and \ref{top}) in terms of the controlled $KK$-theory groups $KK_\epsilon^\pi(X,B)$ (see Definition \ref{con kk gp}).  The precise statement we are aiming for is that the \emph{${\displaystyle \lim_{\leftarrow}{}\!^1}$ group}
\begin{equation}\label{gen lim 1 good}
 \lim_{\leftarrow}{}\!^1KK_\epsilon^{1\otimes\pi}(X,SB)
\end{equation}
of the inverse system from Definition \ref{good i l} is isomorphic to the closure of zero inside $\KKP^\pi(A,B)$.  This will complete the proof of Theorem \ref{main} from the introduction.

The general definition of ${\displaystyle \lim_{\leftarrow}{}\!^1}$ is as follows: see \cite[pages 1-3]{Jensen:1972wu} for details, and see for example \cite[Section 2.5]{Weibel:1995ty} for general background on the theory of right derived functors being used here.  Let $I$ be a directed set, and let $(G_i)_{i\in I}$ be an inverse system of abelian groups indexed by $I$ in the sense of Definition \ref{inv lim def} (we will not assume the groups are equipped with a topology).  The collection $\text{Ab}^I$ of all such inverse systems can be arranged into an abelian category, and taking inverse limits defines a left exact functor 
$$
\lim_{\leftarrow}:\text{Ab}^I\to \text{Ab}
$$
from $\text{Ab}^I$ to the category $\text{Ab}$ of abelian groups.  One can show that $\text{Ab}^I$ has enough injective objects so that the right derived functors of ${\displaystyle \lim_{\leftarrow}}$ make sense.  Then by definition ${\displaystyle \lim_{\leftarrow}{}\!^1}$ is the first right derived functor of ${\displaystyle \lim_{\leftarrow}}$; in particular ${\displaystyle \lim_{\leftarrow}{}\!^1}$ is a functor taking inverse systems of abelian groups indexed by $I$ to abelian groups.

We gave the general definition of ${\displaystyle \lim_{\leftarrow}{}\!^1}$ above for completeness, but will not need to use it in the proofs below.  There is a more concrete picture that is available when the index set is $\N$ and that is more useful for computations: see for example \cite[Section 3.5]{Weibel:1995ty} for a detailed exposition of the concrete definition below.

\begin{definition}\label{gen lim1}
Let $(G_n)_{n=1}^\infty$ be an inverse system of abelian groups indexed by $\N\setminus \{0\}$ as in Definition \ref{inv lim def} above, and write $\phi:G_{n+1}\to G_{n}$ for the connecting maps.  Define 
\begin{equation}\label{delta def}
\delta:\prod_{n=1}^\infty G_n \to \prod_{n=1}^\infty G_n,\quad (g_n)_{n=1}^\infty\mapsto (g_n-\phi(g_{n+1}))_{n=1}^\infty.
\end{equation}
The \emph{ ${\displaystyle\lim_{\leftarrow}{}\!^1}$ group} of $(G_n)$, denoted ${\displaystyle\lim_{\leftarrow}{}\!^1}G_n$ or  ${\displaystyle\lim_{\leftarrow,\N}{}\!^1}G_n$, is defined to be the cokernel of $\delta$.
\end{definition}

We record some basic facts as a lemma.  For the statement, recall that a subset $J$ of a directed set $I$ is \emph{cofinal} if for all $i\in I$ there is $j\in J$ with $j\geq i$.  

\begin{lemma}\label{cofin nml1}
Let $(G_i)_{i\in I}$ be an inverse system of abelian groups as in Definition \ref{inv lim def}, and let $J\subseteq I$ be a cofinal subset.  Then there is a canonical isomorphism 
$$
\lim_{\leftarrow,I}{}\!^1 G_i\cong \lim_{\leftarrow,J}{}\!^1 G_j.
$$
In particular, if $((X_n,\epsilon_n))_{n=1}^\infty$ is a cofinal subsequence\footnote{Cofinal subsequences of $\mathcal{X}$ always exist by Remark \ref{seps rem}, part \eqref{cof seq}.} of the directed set $\mathcal{X}$ of Definition \ref{seps}, and $\pi:A\to \LL(E)$ is a balanced, graded, infinite multiplicity representation, then there is a canonical isomorphism
$$
 \lim_{\leftarrow,\mathcal{X}}{}\!^1KK_\epsilon^\pi(X,B)\cong  \lim_{\leftarrow,\N}{}\!^1KK_{\epsilon_n}^\pi(X_n,B)
$$
where the left hand side is the ${\displaystyle\lim_{\leftarrow}{}\!^1}$ group over the directed set $\mathcal{X}$ of Definition \ref{seps}, and the right hand side is the ${\displaystyle\lim_{\leftarrow}{}\!^1}$ group over the sequence $((X_n,\epsilon_n))_{n=1}^\infty$, computed using Definition \ref{gen lim1} above.  
\end{lemma}

\begin{proof}
As pointed out on \cite[page 11]{Jensen:1972wu}, the general statement about cofinal subsets is a consequence of \cite[Th\'{e}or\`{e}me 1.9 and Lemme 1.5]{Jensen:1972wu}.  The second part is a special case of the first part, combined with the discussion on \cite[pages 13-14]{Jensen:1972wu} which shows that the general definition of the ${\displaystyle \lim_{\leftarrow}{}\!^1}$ group agrees with the concrete version from Definition \ref{gen lim1} in situations where both make sense.
\end{proof}

We will need an analogue of (a special case of) Lemma \ref{loc func lem 0} in the controlled setting; the proof is very similar, and left to the reader.  For the statement, recall the notation $\mathcal{P}^\pi_\epsilon(X,B)$ from Definition \ref{alm com}, and recall that $SB:=C_0(0,1)\otimes B$ denotes the suspension of $B$.



\begin{lemma}\label{susp cor}
Let $\pi:A\to \LL(E)$ be a graded representation of $A$ on a Hilbert $B$-module, and let $1\otimes \pi$ be the amplified representation of $A$ on the $SB$-module $C_0(0,1)\otimes E$.  For any finite subset $X$ of $A_1$ and $\epsilon>0$, elements of $\mathcal{P}^{1\otimes \pi}_\epsilon(X,SB)$ can be identified with norm continuous functions
$$
p:[0,1]\to \LL(E), \quad t\mapsto p_t
$$ 
such that:
\begin{enumerate}[(i)]
\item $p_0=p_1=e$;
\item $p_t-e\in \K(E)$ for all $t\in [0,1]$;
\item $\|a(p_t^2-p_t)\|<\epsilon$ and $\|[p_t,a]\|<\epsilon$ for all $a\in X$ and all $t\in [0,1]$.   \qed
\end{enumerate}
\end{lemma}

\begin{proposition}\label{psi lem}
Let $\pi:A\to \LL(E)$ be a graded, balanced, and infinite multiplicity representation, and let $\big((X_n,\epsilon_n)\big)_{n=1}^\infty$ be a cofinal subsequence of the directed set $\mathcal{X}$ of Definition \ref{seps}.  

For each $n$, let $p^n$ be an element of the space $\mathcal{P}_{\epsilon_n}^\pi(X_n,SB)$ of Definition \ref{alm com}.  Use the identification in Lemma \ref{susp cor} to consider each $p^n$ as a function $p^n:[0,1]\to \LL(E)$, and define $p:[1,\infty)\to \LL(E)$ by setting $p_t:=p^n_{t-n}$ whenever $t\in [n,n+1]$.  Then $p$ is in $\mathcal{P}^\pi(A,B)$, and the formula
$$
\psi:\prod_n KK^{1\otimes\pi}_{\epsilon_n}(X_n,SB)\to \KKP^\pi(A,B),\quad ([p^n])_{n=1}^\infty\mapsto [p]
$$
gives a well-defined homomorphism.  

Moreover, the homomorphism $\psi$ takes image in the closure $\overline{\{0\}}$ of the zero element for the asymptotic topology (see Definition \ref{top}), and descends to a well-defined homomorphism 
$$
\psi: \lim_{\leftarrow}{}\!^1 KK_{\epsilon_n}^{1\otimes\pi}(X_n,SB)\to \KKP^\pi(A,B)
$$ 
on the $\lim^1$-group.
\end{proposition}

\begin{proof}
We leave the direct check that $p$ is an element of $\mathcal{P}^\pi(A,B)$ to the reader: for this purpose note that $\epsilon_n\to 0$ as $n\to\infty$, and also that for each $n$, $p^n(0)=p^n(1)=e$.  

To see that $\psi$ is well-defined on the product $\prod_n KK^{1\otimes \pi}_{\epsilon_n}(X_n,SB)$, let $([p^{n,(0)}])_{n=1}^\infty$ and $([p^{n,(1)}])_{n=1}^\infty$ be sequences in $KK_{\epsilon_n}^{1\otimes\pi}(X_n,SB)$ representing the same class in the product $\prod_n KK^{1\otimes\pi}_{\epsilon_n}(X_n,SB)$, and with images $[p^{(0)}]$ and $[p^{(1)}]$ in $\KKP^\pi(A,B)$.  Then for each $n$ there is a homotopy $(p^{n,(s)})_{s\in [0,1]}$ between $p^{n,(0)}$ and $p^{n,(1)}$.  Using the identification in Lemma \ref{loc func lem 0}, define a new function $p:[1,\infty)\to \LL(C[0,1]\otimes E)$ by $p^{(s)}_t:=p^{n,s}_{t-n}$ for $t\in [n,n+1]$.  Then direct checks using the conditions in Lemma \ref{loc func lem 0} show that $(p^{(s)})_{s\in [0,1]}$ is a homotopy between $p^{(0)}$ and $p^{(1)}$, whence $[p^{(0)}]=[p^{(1)}]$ and we have well-definedness.  

The fact that $\psi$ is a homomorphism follows as we may assume the group operations are all defined using the same pair of isometries $(s_1,s_2)$ satisfying the Cuntz relation.

We now show that the image of $\psi$ is contained in $\overline {\{0\}}$.  Using the definition of the asymptotic topology (see Definition \ref{top} above) we must show that if $[p]$ is in the image, then for every finite subset $X\subseteq A_1$ and $\epsilon>0$ there is $t\geq \tau_{X,\epsilon}(p)$ (see Definition \ref{tause}) and a homotopy passing through $\mathcal{P}^\pi_\epsilon(X,B)$ connecting $p_{t}$ to $e$.  Indeed, by construction of $p$, there is a sequence $(t_n)$ tending to infinity such that $p_{t_n}=e$ for all $n$.   Choose then $n$ such that $t_n\geq \tau_{X,\epsilon}(p)$, and set $t=t_n$; the constant homotopy connecting $p_{t}$ to $e$ works.

For the statement that $\psi$ descends to the $\lim^1$ group, we must show that if $([p^n])_{n=1}^\infty$ is a sequence in $\prod_n KK^{1\otimes\pi}_{\epsilon_n}(X_n,SB)$, then the image of $([p^n])$ is the same as that of the sequence $([p^{n+1}])_{n=1}^\infty$.  Indeed, say the image of the former is $p$ and the image of the latter is $q$.  Then by construction we have that $q_t=p_{t-1}$ for all $t\geq 2$.  The path $(p^s)_{s\in [0,1]}$ defined by $p^s_t:=p_{t-s}$ is a homotopy between $p$ and $q$, so we are done.
\end{proof}

Our goal in the remainder of this section is to show that $\psi$ as in Proposition \ref{psi lem} actually defines an isomorphism between ${\displaystyle \lim_{\leftarrow}{}\!^1 KK_{\epsilon_n}^{1\otimes\pi}(X_n,SB)}$ and the closure of $0$ in $\KKP^\pi(A,B)$.  

The next definition and lemma give an alternative description of the group operation on $KK^{1\otimes \pi}_\epsilon(X,SB)$.

\begin{definition}\label{conc def}
Let $\pi:A\to \LL(E)$ be a graded representation of $A$, let $X\subseteq A_1$ be finite, and let $\epsilon>0$.  Let $p,q\in \mathcal{P}^{1\otimes\pi}_\epsilon(X,SB)$ be represented by paths as in Corollary \ref{susp cor}.  Define their \emph{concatenation}, denoted $p\cdot q$, to be the path that follows $p$ then $q$: precisely
$$
(p\cdot q)_t:=\left\{\begin{array}{ll} p_{2t} & 0\leq t\leq 1/2 \\ q_{2t-1} & 1/2<t\leq 1\end{array}\right..
$$
\end{definition}

\begin{lemma}\label{con gp op}
Let $\pi:A\to \LL(E)$ be a graded, balanced, and infinite multiplicity representation of $A$, let $X$ be a finite subset of $A_1$, $\epsilon>0$, and let $SB$ be the suspension of $B$.  Then for any $[p],[q]\in KK^{1\otimes\pi}_\epsilon (X,SB)$ we have that $[p]+[q]=[p\cdot q]$.  Moreover, $-[p]$ is represented by the path $\overline{p}$ that traverses $p$ in the opposite direction.
\end{lemma}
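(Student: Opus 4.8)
The plan is to run the classical Eckmann--Hilton argument, comparing the ``matrix sum'' definition of the group law with concatenation of loops. Recall that $[p]+[q]$ is by definition $[s_1ps_1^*+s_2qs_2^*]$, where $p,q\in\mathcal{P}^\pi_\epsilon(X,SB)$ are realized (via Corollary~\ref{susp cor}) as norm-continuous paths $[0,1]\to\LL(E)$ taking the value $e$ at both endpoints, and $s_1,s_2$ are the fixed Cuntz isometries in the canonical copy of $\mathcal{B}(\ell^2)\subseteq\LL(E)$ from Lemma~\ref{reps}. Since the $s_i$ commute with $e$ we have $s_1es_1^*+s_2es_2^*=e$, so the constant path $e$ represents the identity of $KK^\pi_\epsilon(X,SB)$ (consistent with Corollary~\ref{kkeps mon}). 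Applying Lemma~\ref{id lem 2} to the amplified representation $1\otimes\pi$ of $A$ on $C_0(0,1)\otimes E$ (which is again substantial by inspection of \eqref{hb}), with $v=s_1$ and then $v=s_2$, I get $p\sim s_1ps_1^*+s_2es_2^*$ and $q\sim s_1es_1^*+s_2qs_2^*$ in $\mathcal{P}^\pi_\epsilon(X,SB)$.

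The key elementary fact I would record first is that for $x,y\in\LL(E)$ one has $\|s_1xs_1^*+s_2ys_2^*\|=\max(\|x\|,\|y\|)$: the cross terms in $(s_1xs_1^*+s_2ys_2^*)^*(s_1xs_1^*+s_2ys_2^*)$ vanish because $s_1^*s_2=0$, reducing the claim to $\|s_1as_1^*+s_2bs_2^*\|=\max(\|a\|,\|b\|)$ for positive $a,b$, which follows by sandwiching between $\|s_i^*(-)s_i\|$ and $\max(\|a\|,\|b\|)(s_1s_1^*+s_2s_2^*)$. Since $s_1,s_2$ commute with $A$ and with $e$, this shows that whenever $(p_u)_{u\in[0,1]}$ and $(q_v)_{v\in[0,1]}$ are families of self-adjoint contractions satisfying the conditions of Corollary~\ref{susp cor}, the map $F\colon[0,1]^2\to\LL(E)$, $F(u,v):=s_1p_us_1^*+s_2q_vs_2^*$, is norm continuous, has $F(u,v)-e\in\K(E)$, and obeys $\|[F(u,v),a]\|=\max(\|[p_u,a]\|,\|[q_v,a]\|)<\epsilon$ and $\|a(F(u,v)^2-F(u,v))\|<\epsilon$ for all $a\in X$; in particular $F$ composed with any path from $(0,0)$ to $(1,1)$ in the convex square $[0,1]^2$ lies in $\mathcal{P}^\pi_\epsilon(X,SB)$, as $F(0,0)=F(1,1)=e$.

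Now I would prove $[p]+[q]=[p\cdot q]$. Concatenation is compatible with the equivalence relation: the pointwise concatenation of two sup-norm-continuous families of paths in $\mathcal{P}^\pi_\epsilon(X,SB)$ is again such a family, the two pieces matching at $t=\tfrac12$ because each has value $e$ there, and reparametrization does not affect any of the conditions. Hence the homotopies of the first paragraph give $p\cdot q\sim(s_1ps_1^*+s_2es_2^*)\cdot(s_1es_1^*+s_2qs_2^*)$. Using $p_1=q_0=e$, this last concatenation is exactly $F$ composed with the ``$L$-shaped'' path $t\mapsto(2t,0)$ on $[0,\tfrac12]$, $t\mapsto(1,2t-1)$ on $[\tfrac12,1]$, while $s_1ps_1^*+s_2qs_2^*$ is $F$ composed with the diagonal $t\mapsto(t,t)$. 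The straight-line homotopy between these two paths inside $[0,1]^2$, composed with $F$, is then a sup-norm-continuous homotopy inside $\mathcal{P}^\pi_\epsilon(X,SB)$, so $[p]+[q]=[p\cdot q]$. For the final claim, the part just proved gives $[p]+[\overline p]=[p\cdot\overline p]$, and it remains to note that $(s,t)\mapsto p_{2st}$ for $t\leq\tfrac12$ and $(s,t)\mapsto p_{2s(1-t)}$ for $t\geq\tfrac12$ is a sup-norm-continuous family of paths in $\mathcal{P}^\pi_\epsilon(X,SB)$ (each value equals $p_r$ for some $r\in[0,1]$, hence satisfies every condition of Corollary~\ref{susp cor}, and the two formulas agree at $t=\tfrac12$) connecting the constant path $e$ at $s=0$ to $p\cdot\overline p$ at $s=1$; therefore $[p]+[\overline p]=[e]=0$.

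The only genuinely delicate point is the norm identity $\|s_1xs_1^*+s_2ys_2^*\|=\max(\|x\|,\|y\|)$ and the consequence that forming ``matrix sums'' via the $s_i$ never degrades the quantitative bounds cutting out $\mathcal{P}^\pi_\epsilon(X,-)$ — this is what makes all the ambient homotopies stay within the $\epsilon$-controlled set. Everything else is routine reparametrization, so I expect the write-up to be short once that norm computation is in place.
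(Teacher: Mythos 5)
Your argument is correct, and it reaches the identity $[p]+[q]=[p\cdot q]$ by a mechanism that differs from the paper's in its key step. Both proofs lean on Lemma \ref{id lem 2} (applied to the amplified representation on $C_0(0,1)\otimes E$, with the Cuntz isometries acting as constants in $t$, which is consistent with Lemma \ref{group lem 2} and the paper's conventions), but after that they diverge: the paper first normalizes the supports so that $p$ lives on $[0,1/3]$ and $q$ on $[2/3,1]$, then conjugates $s_1ps_1^*+s_2qs_2^*$ by a $t$-dependent unitary path interpolating between $1$ and the flip $s_1s_2^*+s_2s_1^*$, and appeals to contractibility of the unitary group of $\LL(C_0(0,1)\otimes\ell^2)$ (the Kuiper-type theorem of Cuntz) to see that this conjugation does not change the class, landing on $s_1(p\cdot q)s_1^*+s_2es_2^*$, which Lemma \ref{id lem 2} identifies with $p\cdot q$. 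You instead apply Lemma \ref{id lem 2} separately to $p$ (with $v=s_1$) and to $q$ (with $v=s_2$), and compare the Cuntz sum with the concatenation through the two-parameter family $F(u,v)=s_1p_us_1^*+s_2q_vs_2^*$ on the square, deforming the $L$-shaped boundary path into the diagonal; the norm identity $\|s_1xs_1^*+s_2ys_2^*\|=\max(\|x\|,\|y\|)$ (which you verify correctly) is exactly what keeps every intermediate path inside $\mathcal{P}^{1\otimes\pi}_\epsilon(X,SB)$ with no degradation of the control parameter, and your endpoint checks ($F(0,0)=F(1,1)=e$, matching at $t=\tfrac12$, sup-norm continuity in the homotopy parameter via uniform continuity of $F$) all go through. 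What each approach buys: yours is more elementary for this lemma, since it avoids both the support normalization and the Kuiper-type input for $\mathcal{U}(\LL(C_0(0,1)\otimes\ell^2))$ (the only connectedness fact you need is hidden inside Lemma \ref{id lem 2}, where norm-connectedness of $\mathcal{U}(\mathcal{B}(\ell^2))$ suffices), and it makes the inverse claim explicit with the shrinking homotopy $(s,t)\mapsto p_{2st},\,p_{2s(1-t)}$ where the paper only asserts it is "easily seen"; the paper's route is shorter to write given that the Kuiper-type theorem is invoked elsewhere in the text anyway, and it produces the compressed concatenation $s_1(p\cdot q)s_1^*+s_2es_2^*$ directly. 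No gaps to report.
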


\begin{proof}
Up to homotopy, we may assume that $p_t=e$ for all $t\in [1/3,1]$, and that $q_t=e$ for all $t\in [0,2/3]$.  The sum $[p]+[q]$ is then represented by a function of the form
$$
(s_1ps_1^*+s_2qs_2^*)_t=\left\{\begin{array}{ll} s_1p_ts_1^*+s_2es_2^* & t\in [0,1/3] \\ e & t\in [1/3,2/3] \\ s_1es_1^*+s_2q_ts_2^* & t\in [2/3,1] \end{array}\right..
$$
Let $v=s_1s_2^*+s_2s_1^*$, which is a unitary in $\mathcal{B}(\ell^2)$, which we identity with a unital $C^*$-subalgebra of $\LL(E)$ as in line \eqref{can incl} from Lemma \ref{reps}.  As the unitary group of $\mathcal{B}(\ell^2)$ is connected, there is a path $u=(u_t)_{t\in [0,1]}$ of unitaries in $\mathcal{B}(\ell^2)$ such that $u_t=1$ for $t\leq 1/3$ and $u_t=v$ for $t\geq 2/3$.  We may consider $u$ as an element of the unitary group of $\LL(C_0(0,1)\otimes \ell^2)$, which we identify with a unital $C^*$-subalgebra of $\LL(C_0(0,1)\otimes E)$ compatibly with the inclusion $\mathcal{B}(\ell^2)\subseteq \LL(E)$ from line \eqref{can incl} again.  Using that $u$ commutes with $e$, we have then that  
\begin{equation}\label{conj el}
(u(s_1ps_1^*+s_2qs_2^*)u^*)_t=\left\{\begin{array}{ll} s_1p_ts_1^*+s_2es_2^* & t\in [0,1/3] \\ e & t\in [1/3,2/3] \\ s_1q_ts_1^*+s_2es_2^* & t\in [2/3,1] \end{array}\right..
\end{equation}
On the other hand, the unitary group of $\LL(C_0(0,1)\otimes \ell^2)$ is connected (even contractible) by \cite[Theorem on page 433]{Cuntz:1987ly}, so we may connect $u$ to the identity via some norm continuous path in this unitary group $\LL(C_0(0,1)\otimes \ell^2)$.  As the unitary group of $\LL(C_0(0,1)\otimes \ell^2)$ commutes with both $e$ and $A$, this gives a homotopy showing that $u(s_1ps_1^*+s_2qs_2^*)u^*$ defines the same element of $KK^{1\otimes\pi}_\epsilon (X,SB)$ as $s_1ps_1^*+s_2qs_2^*$.  From the description in line \eqref{conj el}, we have also that $u(s_1ps_1^*+s_2qs_2^*)u^*$ and $s_1(p\cdot q)s_1^*+s_2es_2^*$ define the same element of $KK^{1\otimes\pi}_\epsilon (X,SB)$.  The latter element is homotopic to $p\cdot q$ by Lemma \ref{id lem 2}, so we are done with the proof that $[p]+[q]=[p\cdot q]$.

The fact that $-[p]=[\overline{p}]$ is a consequence of the first part: indeed, $[p]+[\overline{p}]=[p\cdot \overline{p}]$, and $p\cdot \overline{p}$ is easily seen to be homotopic to the constant path $e$, which represents the identity in $KK_\epsilon^{1\otimes\pi}(X,SB)$ by Corollary \ref{kkeps mon}.
\end{proof}

We need one more technical lemma before we can establish the main result.  This is elementary, but a little fiddly: unfortunately, we could not find a more conceptual proof of what we need.  For the proof of the lemma, let us explicitly adopt the convention that $\N$ does not contain zero.

\begin{lemma}\label{abs l1}
Let $(G_n)_{n=1}^\infty$ be an inverse system of abelian groups indexed by $\N$.  For notational simplicity, let us write $\phi:G_{n+1}\to G_n$ for the connecting map, and for $m\geq n$ write $\phi^{m-n}:G_m\to G_n$ for the connecting map\footnote{This is consistent with standard notation for composition of functions, which is how we will use it: in particular, $\phi^1=\phi$, and $\phi^0$ is the identity map.}.  Let $a=(a_n)\in \prod_{n=1}^\infty G_n$, and assume that there exists a sequence $(m_n)_{n=1}^\infty$ in $\N$ and elements $b_n\in G_{m_n}$ with the following properties:
\begin{enumerate}[(i)]
\item for each $n$, $m_n\in \{1,...,n\}$;
\item $m_1\leq m_2\leq \cdots$;
\item $m_n\to\infty$ as $n\to\infty$;
\item for each $n$, there exists $b_n\in G_{m_n}$ such that 
\begin{equation}\label{a and b}
\phi^{n-m_n}(a_n)=b_n-\phi^{m_{n+1}-m_n}(b_{n+1}).
\end{equation}
\end{enumerate}
Then $(a_n)$ is in the image of the map $\delta$ from line \eqref{delta def}.
\end{lemma}

\begin{proof}
Given $g\in G_n$, we abuse notation slightly by also writing $g$ for the element of $\prod G_m$ with $g$ in the $n^\text{th}$ slot, and zero elsewhere.  For an element $a=(a_n)\in \prod G_n$, the \emph{support} of $a$ is the set $\{n\in \N\mid a_n\neq 0\}$.  For integers $k,l\in \N$, we write $[k,l]$ for the set $\{k,k+1,...,l\}$ if $k\leq l$, and with the convention that $[k,l]=\varnothing$ if $k>l$.  We also adopt the convention that a sum indexed by the empty set is zero.  We say that a possibly infinite formal sum 
$$
\sum_{i\in I} a^{(i)}
$$
of elements of $\prod G_n$ is \emph{locally finite} if each $n\in \N$ appears in the support of only finitely many $a^{(i)}$s, and we note that locally finite sums give well-defined elements of $\prod G_n$.

Write $\mu:\N\to \N$ for the map $n\mapsto m_n$.  The properties of $m_n$ guarantee that each $\mu^{-1}(m)$ is a (possibly empty) finite interval of the form $[\alpha(m),\beta(m)]$ with $\alpha(m)\geq m$ as in the schematic below.
\begin{center}
\includegraphics[width=10cm]{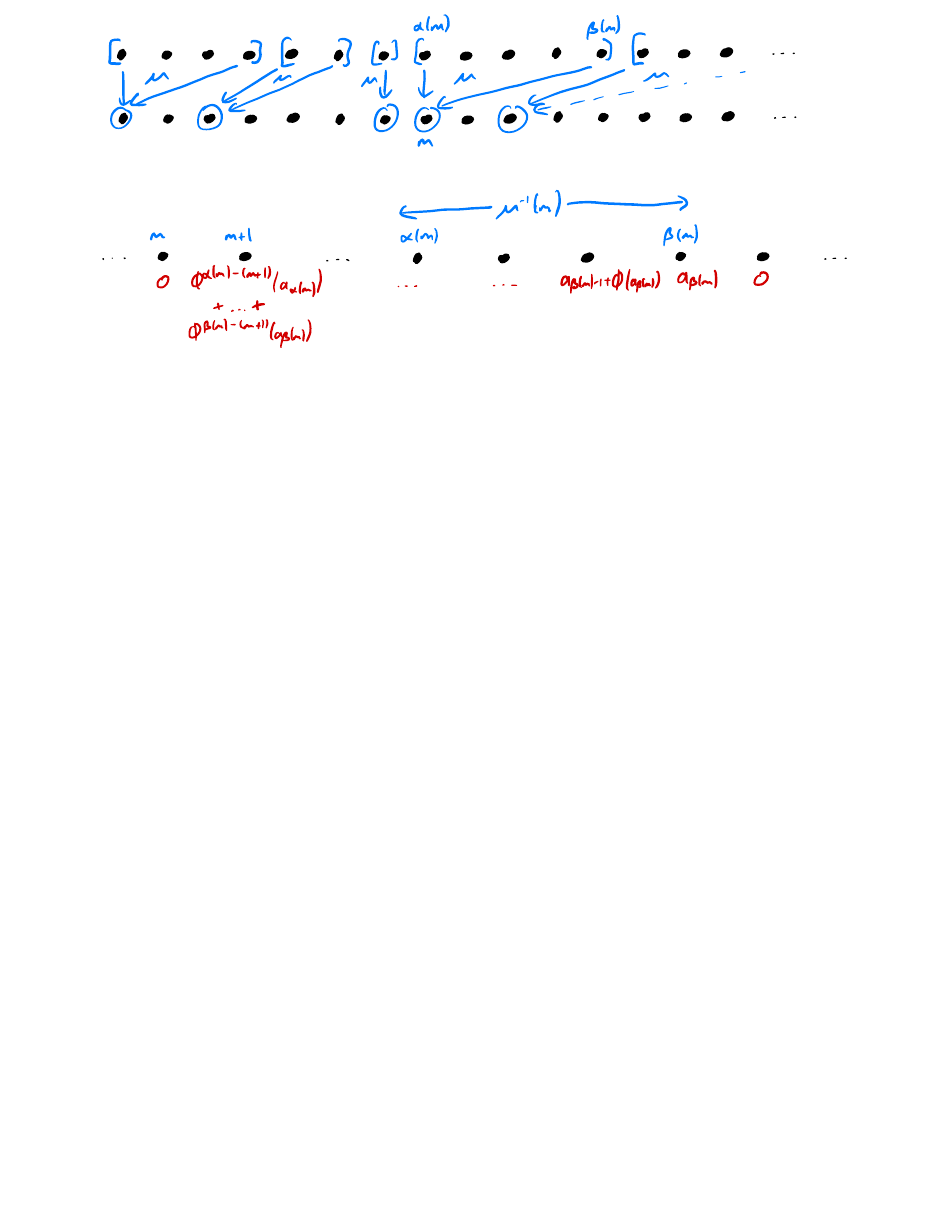}
\end{center}

For each $m\in \N$, define 
$$
c^{(m)}=\sum_{n=\alpha(m)}^{\beta(m)}\phi^{n-m}(a_n)
$$
In words, $c^{(m)}$ takes all the elements $a_n$ with $m_n=m$, moves them all down to be supported at $m$, and sums them.  Note that each $c^{(m)}$ is supported on the singleton $\{m\}$, or has empty support if $\mu^{-1}(m)$ is empty.  Hence the sum $c:=\sum_{m\in \N} c^{(m)}$ is locally finite, so $c$ is a well-defined element of $\prod G_n$.

We first claim that the element $a-c$ is contained in the image of $\delta$.  To see this, for each $m\in \mu(\N)$, define $d^{(m)}$ by the conditions below.  First, if $\mu^{-1}(m)=\varnothing$, define $d^{(m)}=0$.  Second, if $m=\alpha(m)=\beta(m)$, define $d^{(m)}=0$.  Third, in all other cases define
$$
d^{(m)}_n =\left\{\begin{array}{ll} {\displaystyle\sum_{i=n}^{\beta(m)} \phi^{i-n}(a_i)}, & k\in [\alpha(m)+1,\beta(m)] \\
{\displaystyle\sum_{i=\alpha(m)}^{\beta(m)} \phi^{i-n}(a_i)}, & k\in [m+1,\alpha(m)] \\
0, & \text{otherwise}\end{array}\right.
$$
The schematic below pictures $d^{(m)}$ for $\mu^{-1}(m)\neq \varnothing$ and $\beta(m)>\alpha(m)>m$:
\begin{center}
\includegraphics[width=10cm]{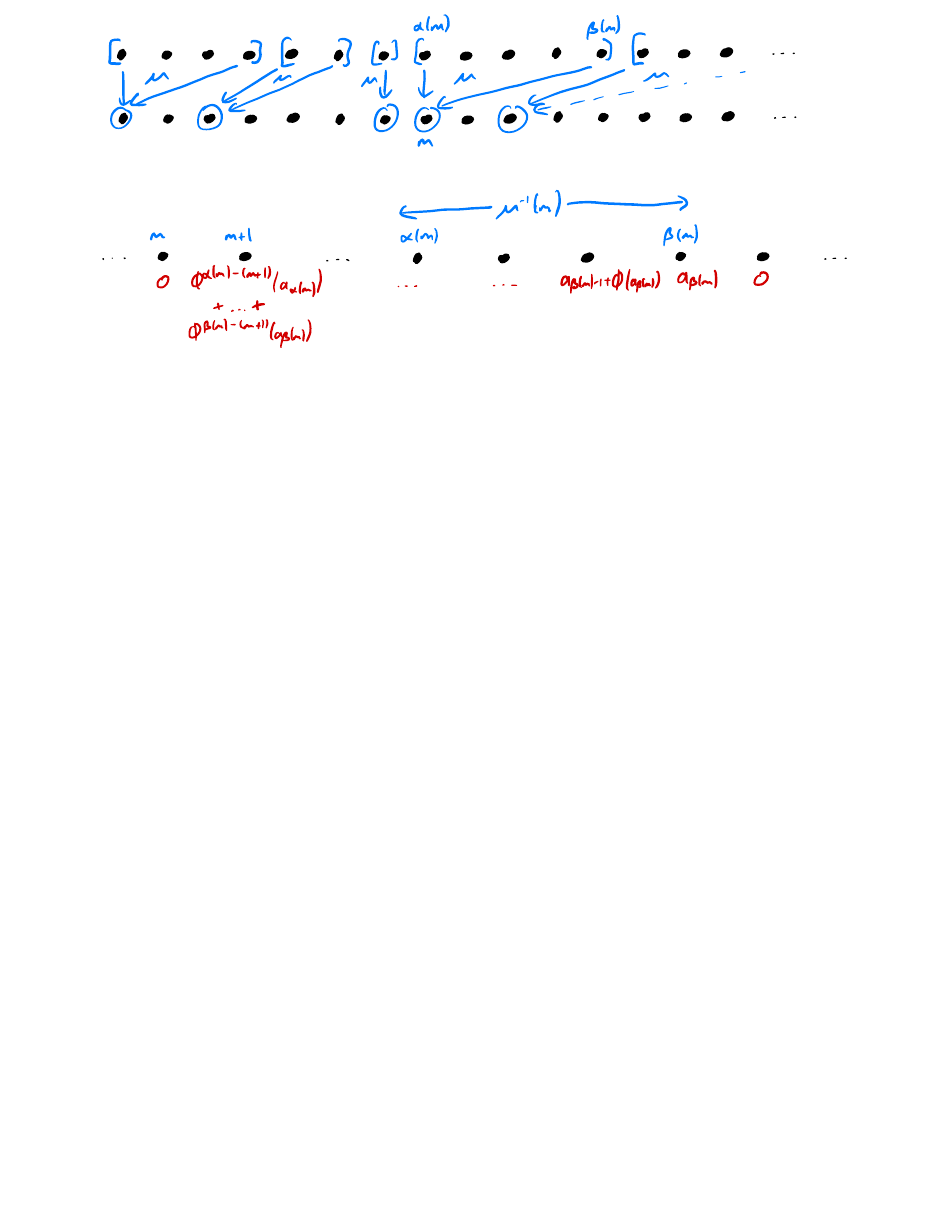}
\end{center}
One checks that for each $m$,
\begin{equation}\label{delta d}
\delta(d^{(m)})=\Bigg(\sum_{k=\alpha(m)}^{\beta(m)}a_k\Bigg)-c^{(m)}.
\end{equation}
As $m_n\to\infty$, the sum $d:=\sum_{m\in \N} d^{(m)}$ is locally finite and makes sense as an element of $\prod G_n$.  The computation in line \eqref{delta d} above shows that one has $\delta(d)=a-c$, so $a-c$ is in the image of $\delta$ as claimed.

To complete the proof, it therefore suffices to show that $c$ is in the image of $\delta$.  Now, for each $m$
\begin{align*}
c^{(m)}=\sum_{n\in \mu^{-1}(m)}\phi^{n-m}(a_n)=\sum_{\{n\mid m_n=m\}} \phi^{n-m_n}(a_n).
\end{align*}
Using the assumption in line \eqref{a and b}, we thus see that
\begin{equation}\label{cm sum}
c^{(m)}=\sum_{n=\alpha(m)}^{\beta(m)}(b_n-\phi^{m_{n+1}-m_n}(b_{n+1})).
\end{equation}
For $n\in [\alpha(m),\beta(m)-1]$, $m_{n+1}=m_n$, and so the sum in line \eqref{cm sum} above telescopes, leaving just
\begin{equation}\label{c m form}
c^{(m)}=b_{\alpha(m)}-\phi^{m_{\beta(m)+1}-m_{\beta(m)}}(b_{\beta(m)+1}).
\end{equation}
Let us write the elements of $\mu(\N)$ as $k_1<k_2<k_3<\cdots$.  For $m\in \N$, let $k(m)$ be the smallest $k_n$ such that $m\leq k_n$, and define $e\in \prod G_m$ by
$$
e_m:=\phi^{k(m)-m}(b_{\alpha(k(m))})
$$
(and $e_m=0$ if $m<k_1$) as in the schematic below
\begin{center}
\includegraphics[width=10cm]{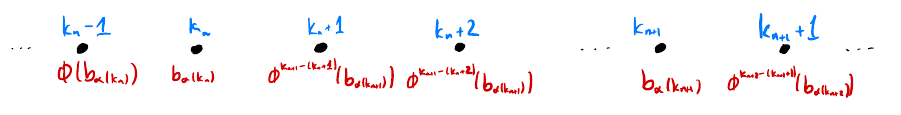}.
\end{center}
Using the formula in line \eqref{c m form} and the fact that $\beta(k_n)+1=\alpha(k_{n+1})$ one checks that $\delta(e)=c$, so we are done.
\end{proof}

We are now ready for the main result of this section.  As already commented above, it completes the proof of Theorem \ref{main}.

\begin{theorem}\label{c0l1}
Let $\pi:A\to \LL(E)$ be a graded, balanced, and strongly absorbing representation, and let $\big((X_n,\epsilon_n)\big)_{n=1}^\infty$ be a cofinal subsequence of the directed set $\mathcal{X}$ of Definition \ref{seps}.   Then the map
$$
\psi:\lim_{\leftarrow}{}\!^1 KK^{1\otimes\pi}_{\epsilon_n}(X_n,SB)\to \KKP^\pi(A,B)
$$
from Proposition \ref{psi lem} is an isomorphism onto the closure of zero in $\KKP^\pi(A,B)$.  In particular, we have an isomorphism 
$$
 \lim_{\leftarrow}{}\!^1KK_\epsilon^\pi(X,B)\cong \overline{\{0\}}
$$
where the ${\displaystyle \lim_{\leftarrow}{}\!^1}$ group on the left is taken over the directed set $\mathcal{X}$ of Definition \ref{seps}.
\end{theorem}

\begin{proof}
The last statement follows from Lemma \ref{cofin nml1} and the statement that $\psi$ is an isomorphism.  We focus then on the statement that $\psi$ is an isomorphism. 

To see that the map $\psi$ is surjective, let $p\in \mathcal{P}^\pi(A,B)$ (see Definition \ref{proj path}) be an element so that $[p]$ is in the closure of zero.  Using the description of neighbourhood bases from Lemma \ref{fc lem}, we may find an increasing sequence $(t_n)$ in $[1,\infty)$ such that $t_n\to\infty$, such that $t_n\geq \tau_{X_n,\epsilon_n}(p)$ for all $n$, and such that for each $n$ there is a homotopy $(q^{n}_s)_{s\in [0,1]}$ such that $q^n_0=p_{t_n}$ and $q^n_1=e$, and such that $q^n_s$ is in $\mathcal{P}^\pi_{\epsilon_n}(X_n,B)$ for all $s$.  For each $n$, build a path $r^n:[0,1]\to \LL(E)$  by concatenating the paths $(q^n_{1-s})_{s\in [0,1]}$, $(p_t)_{t\in [t_n,t_{n+1}]}$, and $(q^{n+1}_s)_{s\in [0,1]}$, and reparametrizing to get the domain equal to $[0,1]$.  Note that the path $(r^n_s)_{s\in [0,1]}$ starts and ends at $e$, and has image contained in $\mathcal{P}^\pi_{\epsilon_n}(X_n,B)$.  One checks directly that $r^n$ lies in $\mathcal{P}^{1\otimes\pi}_{\epsilon_n}(X_n,SB)$ using the conditions in Corollary \ref{susp cor}, and thus we get a class ${\displaystyle ([r^n])\in \lim_{\leftarrow}{}\!^1 KK_{\epsilon_n}^\pi(X_n,SB)}$.  We claim the image of $([r^n])$ in $\KKP^\pi(A,B)$ is $[p]$.

Indeed, up to reparametrizations (which do not affect the resulting class in $\KKP^\pi(A,B)$), the image of $([r^n])$ is represented by concatenating the paths
\begin{align*}
(q^1_{1-s})_{s\in [0,1]}, & ~(p_t)_{t\in [t_1,t_2]},~(q^2_{s})_{s\in [0,1]},~(q^2_{1-s})_{s\in [0,1]},~(p_t)_{t\in [t_2,t_3]}, \\ \quad &(q^3_{s})_{s\in [0,1]},(q^3_{1-s})_{s\in [0,1]},~(p_t)_{t\in [t_3,t_4]},~...
\end{align*}
As each pair $(q^n_s)_{s\in [0,1]},~(q^n_{1-s})_{s\in [0,1]}$ consists of the same path traversed in opposite directions, a homotopy removes all these pairs, so we are left with the concatenation of the paths
$$
(q^1_{1-s})_{s\in [0,1]},~(p_t)_{t\in [t_1,t_2]},~(p_t)_{t\in [t_2,t_3]},~(p_t)_{t\in [t_3,t_4]},~...
$$
or in other words of $(q^1_{1-s})_{s\in [0,1]}$ and $(p_t)_{t\geq t_1}$.  As any element $q\in \mathcal{P}^\pi(A,B)$ is homotopic to the path defined by $t\mapsto q_{t+L}$ for any fixed $L>0$, this path is homotopic to the original $p$ and we  are done with surjectivity.

For injectivity, let $([p^n])_{n=1}^\infty$ be a sequence in $\prod_n KK_{\epsilon_n}^{1\otimes\pi}(X_n,SB)$ that maps to zero in $\KKP^\pi(A,B)$, so there is a homotopy $(p^s)_{s\in [0,1]}$ connecting the resulting image $p$ to $e$.  Here $p$ is the result of concatenating the functions $p^n:[0,1]\to \LL(E)$, so for $t\in [n,n+1]$, $p_t=p^n_{t-n}$.   For each $n$, let $(q^n)_{s\in [0,1]}$ be the path defined by $q^n_s:=p^s_{n}$, which defines an element of $\mathcal{P}^{1\otimes\pi}_{\epsilon}(X,SB)$ for some $\epsilon$ and $X$.  Schematically, we have the following picture:\\
\includegraphics[width=15cm]{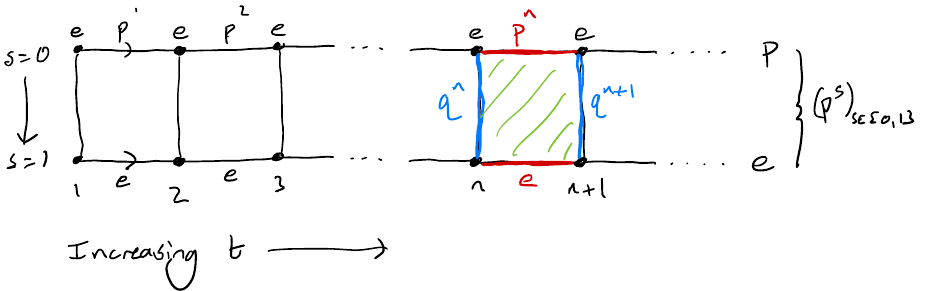}\\
For each $n$, let $m_n\in \{1,...,n\}$ be as large as possible subject to the condition that the elements $(p^s_t)_{s\in [0,1],t\in [n,\infty)}$ are all in $\mathcal{P}^\pi_{\epsilon_{m(n)}}(X_{m_n},B)$.  Note that $m_1\leq m_2\leq \cdots$, and that $m_n\to\infty$ as $n\to\infty$ by definition of a homotopy.  Note moreover that $q^n_s$ is in $\mathcal{P}^\pi_{\epsilon_{m_n}}(X_{m_n},B)$ for all $n$.   

Now, for each $n$, consider the element $-[q^n]+[p^n]+[q^{n+1}]$, which is in $KK^\pi_{\epsilon_{m_n}}(X_{m_n},SB)$ by choice of $m_n$.  This element is represented by the concatenation $\overline{q^n}\cdot p^n\cdot q^{n+1}$ by Lemma \ref{con gp op}, so it forms three sides of the `square' $(p^s_t)_{s\in [0,1],t\in [n,n+1]}$ (pictured as the green square in the diagram above).  The fourth side is the constant function with value $e$, so $-[q^n]+[p^n]+[q^{n+1}]=[e]$ in $KK^\pi_{\epsilon_{m_n}}(X_{m_n},SB)$.  Moreover, $[e]=0$ by Corollary \ref{kkeps mon}, so 
\begin{equation}\label{square rel}
[p^n]=[q^n]-[q^{n+1}] \quad \text{in} \quad KK_{\epsilon_{m_n}}(X_{m_n},SB)
\end{equation}
for all $n$; here we abuse notation slightly, and use the symbol $[p^n]$ for both the original element of $KK_{\epsilon_{n}}(X_{n},SB)$ and its image under the forgetful map 
$$
KK_{\epsilon_{n}}(X_{n},SB)\to KK_{\epsilon_{m_n}}(X_{m_n},SB)
$$
and similarly for the $q$s.  

We are now in the situation of the purely algebraic Lemma \ref{abs l1}, with $([p^n])$ playing the role of $a$, and the $[q^n]$s playing the role of the $b_n$s.  Hence $([p^{(n)}])$ is in the image of the map $\delta$ from line \eqref{delta def}, which is exactly what we wanted to show.
\end{proof}

We conclude this section with a corollary on the general structure of the $KK$ groups; this is not connected to the rest of the paper, but seems of interest in itself.  We thank Claude Schochet for pointing it out to us.

\begin{corollary}
For any separable $C^*$-algebras $A$ and $B$, the closure of $\{0\}$ in $KK(A,B)$ is either $\{0\}$ or uncountable.
\end{corollary}

\begin{proof}
Let $\pi$ be a unitally strongly absorbing representation of $A$.   Standard separability arguments show that for each $\epsilon>0$ and finite $X\subseteq A_1$, the group $KK_\epsilon^{\pi}(X,SB)$ is countable.  It follows from an argument of Gray \cite[page 242]{Gray:1966tq} that a $\lim^1$ group associated to a sequence of countable groups is either zero or uncountable.  Hence ${\displaystyle \lim_{\leftarrow}{}\!^1 KK^{\pi}_\epsilon(X,SB)}$ is either zero or uncountable (here we also use Lemma \ref{cofin nml1} to compute this group using a cofinal sequence in $\mathcal{X}$).  The corollary now follows from Theorem \ref{c0l1}.
\end{proof}

\appendix

\section{Alternative cycles for controlled $KK$-theory}\label{alt pic}

Throughout this appendix, $A$ and $B$ refer to separable $C^*$-algebras.  All Hilbert modules are countably generated, and all are over $B$ unless explicitly stated otherwise.  All representations of $A$ are on Hilbert $B$-modules unless explicitly stated otherwise.

In this appendix, we discuss some technical variants of the groups $KK^\pi_\epsilon(X,B)$ that will be useful in the sequel to this work.

\subsection{Controlled $KK$-groups in the unital case}

In this subsection, we specialize to the unital case and give a picture of the controlled $KK$-groups in this case.  The basic point is that in this case one can use honest projections to define these groups rather than just elements satisfying $\|a(p^2-p)\|<\epsilon$ for suitable $a\in A$ and $\epsilon>0$ as in Definition \ref{alm com}.

\begin{definition}\label{kku}
Let $A$ and $B$ be separable $C^*$-algebras, and let $\pi:A\to \LL(E)$ be a graded representation of $A$ with associated neutral projection $e$ as in Definition \ref{apt}.  Let $X$ be a finite subset of the unit ball $A_1$ of $A$, and let $\epsilon>0$.  Define $\mathcal{P}^{\pi,u}_{\epsilon}(X,B)$\footnote{``$u$'' is for ``unital''.} to be the set of projections in $\LL(E)$ satisfying the following conditions:
\begin{enumerate}[(i)]
\item $p-e$ is in $\K(E)$
\item $\|[p,a]\|<\epsilon$ for all $a\in X$.
\end{enumerate}
Equip $\mathcal{P}^{\pi,u}_{\epsilon}(X,B)$ with the norm topology it inherits from $\LL(E)$, and define $KK^{\pi,u}_\epsilon(X,B):=\pi_0(\mathcal{P}^{\pi,u}_{\epsilon}(X,B))$, i.e.\ $KK^{\pi,u}_\epsilon(X,B)$ is the set of path components of $\mathcal{P}^{\pi,u}_{\epsilon}(X,B)$.
\end{definition}

\begin{definition}\label{kku op}
Let $\pi:A\to \LL(E)$ be a graded, balanced, and infinite multiplicity representation of $A$, and let $KK^{\pi,u}_\epsilon(X,B)$ be as in Definition \ref{kku}.  Let $s_1,s_2\in \mathcal{B}(\ell^2)$ be a pair of isometries satisfying the Cuntz relation $s_1s_1^*+s_2s_2^*=1$, considered as elements of $\LL(E)$ via the inclusion $\mathcal{B}(\ell^2)\subseteq \LL(E)$ of Lemma \ref{reps}.

Define a binary operation on $KK^{\pi,u}_\epsilon(X,B)$ by 
$$
[p]+[q]:=[s_1ps_1^*+s_2qs_2^*]
$$
(it is clear that this definition respects path components, so really does define an operation on $KK^{\pi,u}_\epsilon(X,B)$).
\end{definition}

To show that $KK^{\pi,u}_\epsilon(X,B)$ is a group, we will need an analog of Lemma \ref{id lem 2}. Write ``$\sim$'' for the equivalence relation 

\begin{lemma}\label{id lem 3}
Fix notation as in Definition \ref{kku op}.  Let $e\in \LL(E)$ be the neutral projection, let $p$ be an element of $\mathcal{P}^{\pi,u}_{\epsilon}(X,B)$, and let $v$ be an isometry in the canonical copy of $\mathcal{B}(\ell^2)\subseteq \LL(E)$ from Lemma \ref{reps}.  Then the formula 
$$
vpv^*+(1-vv^*)e
$$
defines an element of $\mathcal{P}^{\pi,u}_{\epsilon}(X,B)$ that is in the same path component as $p$. 
\end{lemma}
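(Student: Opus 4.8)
The plan is as follows. Write $q_v:=vpv^*+(1-vv^*)e$. First one checks $q_v\in\mathcal{P}^{\pi,p}_{\epsilon}(X,B)$: by Lemma~\ref{reps} the isometry $v$ commutes with $e$, so $vv^*$ commutes with $e$ and $(1-vv^*)e$ is a projection, orthogonal to the projection $vpv^*$ since $v^*(1-vv^*)=0$; hence $q_v$ is a projection. Also $q_v-e=v(p-e)v^*\in\K(E)$, and for $a\in X$, since $v$ and $e$ commute with $a$ (Lemma~\ref{reps}), $[q_v,a]=v[p,a]v^*$, so $\|[q_v,a]\|\le\|[p,a]\|<\epsilon$. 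The real content is connecting $p$ to $q_v$ by a path of \emph{genuine} projections inside $\mathcal{P}^{\pi,p}_{\epsilon}(X,B)$. The strategy is: (a) mimic the proof of Lemma~\ref{id lem 2} to produce a path in $\mathcal{P}^\pi_{\epsilon}(X,B)$ from $p$ to $q_v$ whose members stay uniformly $O(\delta)$-close to honest projections and whose commutators with each $a\in X$ are dominated by $\|[p,a]\|$ (here $\delta>0$ is a parameter fixed at the start); (b) round this path to a path of honest projections by functional calculus.

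For (a): as $p-e\in\K(E)$, choose an infinite-rank projection $r\in\mathcal{B}(\ell^2)\subseteq\LL(E)$ with $1-r$ of infinite rank and $\|(1-r)(p-e)\|<\delta$, so $\|[r,p]\|<2\delta$; set $q:=rpr+(1-r)e$, for which $\|q-p\|<3\delta$ and $\|q^2-q\|\le\|[r,p]\|<2\delta$ as in Lemma~\ref{id lem 2}. Concatenate the three segments from that proof: the straight line $s\mapsto sp+(1-s)q$ (each term within $3\delta$ of the projection $p$); $s\mapsto u_squ_s^*$, where $(u_s)$ is a norm-continuous path of unitaries in $\mathcal{B}(\ell^2)$ from $1$ to the unitary $u:=vr+w^*$ of the proof of Lemma~\ref{id lem 2}, which satisfies $uqu^*=vqv^*+(1-vv^*)e$ (each term within $3\delta$ of the projection $u_spu_s^*$); and the straight line $s\mapsto v(sp+(1-s)q)v^*+(1-vv^*)e$ (each term within $3\delta$ of the projection $q_v$). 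Using that $r$, $u_s$, $v$ commute with $e$ and with every $a\in X$, a direct check shows each member $p_s$ of the concatenated path satisfies $p_s-e\in\K(E)$, $\|[p_s,a]\|\le\|[p,a]\|$ for $a\in X$, $\|p_s^2-p_s\|\le 9\delta$ (hence $\sigma(p_s)\subseteq[-\delta',\delta']\cup[1-\delta',1]$ with $\delta'=O(\delta)$), and that $p_s$ lies within $3\delta$ of a genuine projection $P_s$ with $P_s-e\in\K(E)$ and $\|[P_s,a]\|\le\|[p,a]\|$; in particular $p_s\in\mathcal{P}^\pi_{\epsilon}(X,B)$, with endpoints $p$ and $q_v$.

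For (b): fix a smooth $g\colon\R\to[0,1]$ with $g\equiv 0$ on $(-\infty,\delta']$ and $g\equiv 1$ on $[1-\delta',\infty)$, chosen operator Lipschitz with a constant $L$ bounded independently of $\delta$ (e.g.\ with $\|g\|_{C^2}$ bounded uniformly for $\delta\in(0,1/4)$, so $\|g(x)-g(y)\|\le L\|x-y\|$ for self-adjoint contractions $x,y$). Put $\tilde p_s:=g(p_s)$. Then $s\mapsto\tilde p_s$ is norm-continuous; each $\tilde p_s$ is a genuine projection (spectral projection of $p_s$, since $\sigma(p_s)$ avoids $[\delta',1-\delta']$); $\tilde p_0=g(p)=p$ and $\tilde p_1=g(q_v)=q_v$ because $p,q_v$ are projections and $g$ fixes $\{0,1\}$; $\tilde p_s-e=g(p_s)-g(e)\in\K(E)$ (approximate $g$ by polynomials on $[-1,1]$ and use that a polynomial in $p_s,e$ vanishing on the diagonal lies in the ideal $\K(E)$, which is closed); and, using $g(P_s)=P_s$ and $\|a\|\le 1$,
\[
\|[\tilde p_s,a]\|\le\|[g(p_s)-g(P_s),a]\|+\|[P_s,a]\|\le 2\|g(p_s)-g(P_s)\|+\|[p,a]\|\le 6L\delta+\|[p,a]\|
\]
for every $a\in X$. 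Since $\max_{a\in X}\|[p,a]\|<\epsilon$, choosing at the outset $\delta<(\epsilon-\max_{a\in X}\|[p,a]\|)/(6L)$ gives $\|[\tilde p_s,a]\|<\epsilon$ for all $s$ and $a\in X$. Thus $(\tilde p_s)_{s\in[0,1]}$ is a path in $\mathcal{P}^{\pi,p}_{\epsilon}(X,B)$ from $p$ to $q_v$, completing the proof.

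The main obstacle is exactly the rounding step: converting the approximate-projection homotopy into one through \emph{honest} projections without violating the strict inequality $\|[\tilde p_s,a]\|<\epsilon$. This forces one both to control the Lemma~\ref{id lem 2} path carefully (keeping it uniformly near genuine projections with good commutator estimates), and to spend the slack $\epsilon-\max_{a\in X}\|[p,a]\|>0$ to absorb the error introduced by the functional calculus.
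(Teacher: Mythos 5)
Your proof is correct and follows essentially the same route as the paper: both run the homotopy from the proof of Lemma \ref{id lem 2} (the compression $q = rpr+(1-r)e$, the unitary $u = vr+w^*$, and Kuiper's theorem) and then repair it by functional calculus, spending the slack $\epsilon-\max_{a\in X}\|[p,a]\|>0$ to keep the commutator bounds strict. The only cosmetic difference is that you round the entire concatenated path at once using a smooth operator-Lipschitz substitute for the characteristic function $\chi$ of $(1/2,\infty)$, whereas the paper applies $\chi$ stagewise (setting $q:=\chi(rpr+(1-r)e)$ and using $s\mapsto\chi(sp+(1-s)q)$), which amounts to the same argument.
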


\begin{proof}
The proof is essentially the same as that of Lemma \ref{id lem 2}, so we just give a brief sketch, pointing out differences where necessary.  As in the proof of Lemma \ref{id lem 2}, we fix $\delta>0$, and choose an infinite rank projection $r\in \mathcal{B}(\ell^2)$ such that $\|(1-r)(p-e)\|<\delta$ just as in that proof.  Let $\chi:\R\to\{0,1\}$ be the characteristic function of $(1/2,\infty)$, and define $q:=\chi(rpr+(1-r)e)$, which is an element of $\mathcal{P}^{\pi,u}_{\epsilon}(X,B)$ for suitably small $\delta$ by the computations in the proof of Lemma \ref{id lem 2} .  Moreover, for $\delta$ suitably small, the homotopy  
\begin{equation}\label{p q hom}
[0,1]\to\LL(E),\quad s\mapsto \chi(sp+(1-s)q)
\end{equation}
shows that $p$ and $q$ are in the same path component of $\mathcal{P}^{\pi,u}_{\epsilon}(X,B)$ (here we use that there is some $\gamma=\gamma(\delta)$ such that $\gamma\to 0$ as $\delta\to 0$, and such that $\|\chi(sp+(1-s)q)-p\|<\gamma$ for all $s$).  The proof is now finished analogously to that of Lemma \ref{id lem 2} by considering the element $u:=vr+w^*\in \mathcal{B}(\ell^2)\subseteq \LL(E)$ defined just as in that proof, using the element $q$ above in place of the element $q$ from the proof of Lemma \ref{id lem 2}, and using the homotopy in line \eqref{p q hom} where the homotopy $s\mapsto sp+(1-q)$ is used in the proof of Lemma \ref{id lem 2}.
\end{proof}

\begin{lemma}\label{kku group}
Fix notation as in Definition \ref{kku op}.  Then $KK^{\pi,u}_\epsilon(X,B)$ is an abelian group, and does not depend on the choice of Cuntz isometries $s_1$ and $s_2$.
\end{lemma}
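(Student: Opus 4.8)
The plan is to mimic, step by step, the development of the ``non-projection'' groups $KK^\pi_\epsilon(X,B)$ in Section~\ref{kl sec}; the only place where genuine extra thought is needed is in confirming that every homotopy involved passes through honest projections rather than merely through self-adjoint contractions.

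First I would record that $KK^{\pi,p}_\epsilon(X,B)$ is an abelian semigroup whose operation is independent of the Cuntz pair $(s_1,s_2)$, by exactly the argument of Lemma~\ref{group lem 2} (hence Lemma~\ref{group lem}): since the unitary group of $\mathcal{B}(\ell^2)$ is norm-connected and $\mathcal{B}(\ell^2)$ commutes with $A$ and with the neutral projection $e$, conjugation by any unitary of $\mathcal{B}(\ell^2)$ fixes every path component of $\mathcal{P}^{\pi,p}_\epsilon(X,B)$; conjugating by the flip $s_1s_2^*+s_2s_1^*$ and by a suitable Cuntz-algebra ``re-association'' unitary then yields commutativity and associativity, and conjugating by $s_1t_1^*+s_2t_2^*$ shows a second Cuntz pair $(t_1,t_2)$ induces the same operation. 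All these conjugating elements are honest unitaries, so they send projections to projections and nothing has to be adjusted. Next, to see that $[e]$ is a two-sided identity I would apply Lemma~\ref{id lem 3} with $v=s_1$, so that $1-vv^*=s_2s_2^*$ and, using that $s_2$ commutes with $e$, $s_1ps_1^*+s_2es_2^*$ lies in the same path component as $p$.

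The substantive point is the existence of inverses, which I would obtain by following the proof of Proposition~\ref{kkeps gp} essentially verbatim. Given $p\in\mathcal{P}^{\pi,p}_\epsilon(X,B)$, set $u=\begin{pmatrix}0&1\\1&0\end{pmatrix}\in M_2(\C)\subseteq\LL(E)$ and claim that $s_1es_1^*+s_2u(1-p)us_2^*$ represents $-[p]$; this is now an honest projection, since $u(1-p)u$ is a projection and $s_1,s_2$ have orthogonal ranges. To prove the claim one shows $s_1(s_1es_1^*+s_2u(1-p)us_2^*)s_1^*+s_2ps_2^*\sim e$ by the same chain of moves as in Proposition~\ref{kkeps gp}: conjugate by the Cuntz-algebra unitary $v=s_2s_1^*s_1^*+s_1s_1s_2^*+s_1s_2s_2^*s_1^*$ (harmless, being an honest unitary of $\mathcal{B}(\ell^2)$), apply Lemma~\ref{id lem 3}, rotate $u$ to $1$ inside $M_2(\C)$ to reach $s_1ps_1^*+s_2(1-p)s_2^*$, and then run the rotation homotopy $p_t^{(1)}$ of that proof. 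The simplification in the projection setting is that when $p$ is a genuine projection one has $(p_t^{(1)})^2-p_t^{(1)}=0$, so $p_t^{(1)}$ is a path of genuine projections and no functional calculus is needed to remain inside $\mathcal{P}^{\pi,p}_\epsilon(X,B)$; the commutator bound $\|[a,p_t^{(1)}]\|=|\cos t|\,\|[a,p]\|<\epsilon$ for $a\in X$ is unchanged. Concatenating and reparametrizing as there gives a continuous path $(p_t)_{t\in[0,1]}$ of genuine projections, commuting with $A$ up to $\epsilon$, joining $s_1ps_1^*+s_2u(1-p)us_2^*$ to $s_1s_1^*$, but possibly failing $p_t-e\in\K(E)$; one repairs this exactly as in Proposition~\ref{kkeps gp}, passing to the Calkin quotient $\varpi\colon\LL(E)\to\LL(E)/\K(E)$ (injective on the canonical copy of $\mathcal{B}(\C^2\otimes\ell^2)$), using Lemma~\ref{pp lem} to produce a unitary path $(w_t)$ in $\mathcal{B}(\C^2\otimes\ell^2)$ with $w_0=1$ and $\varpi(p_t)=w_t\varpi(p_0)w_t^*$, and replacing $(p_t)$ by $(w_t^*p_tw_t)$, which now lies in $\mathcal{P}^{\pi,p}_\epsilon(X,B)$ and joins $s_1ps_1^*+s_2u(1-p)us_2^*$ to $e$. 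Combined with the first step, this establishes that $KK^{\pi,p}_\epsilon(X,B)$ is an abelian group and that it does not depend on the choice of $s_1,s_2$.

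The only real obstacle is bookkeeping: checking that each homotopy in the inverse argument stays within projections, not just self-adjoint contractions. As indicated, the single functional-calculus step is already isolated in Lemma~\ref{id lem 3}, and every other operation used — conjugation by $\mathcal{B}(\ell^2)$-unitaries, the $M_2(\C)$-rotation of $u$, the rotation homotopy $p_t^{(1)}$, and the Calkin-lift correction via Lemma~\ref{pp lem} — visibly preserves the set of projections, so no new analytic input is required beyond what is already available in Section~\ref{kl sec}.
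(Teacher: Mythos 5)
Your proposal is correct and follows essentially the same route as the paper: semigroup structure and independence of the Cuntz pair exactly as in Lemma \ref{group lem}/\ref{group lem 2}, the identity $[e]$ via Lemma \ref{id lem 3} as in Corollary \ref{kkeps mon}, and inverses by rerunning Proposition \ref{kkeps gp} with the simplification that the $\|a(p^2-p)\|$ estimates become vacuous for genuine projections. Your extra bookkeeping that each homotopy (the $M_2(\C)$-rotation, the rotation path $p_t^{(1)}$, and the Calkin-lift correction via Lemma \ref{pp lem}) stays within projections is exactly the observation the paper leaves implicit.
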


\begin{proof}
The fact that $KK^{\pi,u}_\epsilon(X,B)$ is an abelian semigroup with operation not depending on the choice of $s_1,s_2$ proceeds in exactly the same way as Lemma \ref{group lem}.  The fact that it is a monoid with identity element $[e]$ follows directly from Lemma \ref{id lem 3} just as in Corollary \ref{kkeps mon}.  The proof that inverses exist carries over essentially verbatim from the proof of Proposition \ref{kkeps gp} (with slight simplifications, as estimates of the form ``$\|a(p^2-p)\|<\epsilon$'' no longer need to be checked).
\end{proof}

Let now $A$ be a unital $C^*$-algebra, and let $\pi:A\to \LL(E)$ be a representation of $A$.  We write $\pi_1$ for the corestriction of the representation to a representation on $\pi(1_A)\cdot E$.  Note that if $\pi$ is graded, balanced, and infinite multiplicity (see Definition \ref{apt}), then $\pi_1$ is too.  Consider the collection of all pairs $(X,\epsilon)$, where $X$ is a finite subset of the unit ball $A_1$ of $A$ and $\epsilon>0$, made into a directed set as in Definition \ref{seps}.  Our goal in the rest of this section is to show that there are isomorphisms 
$$
KL(A,B)\to \lim_{\leftarrow} KK^{\pi_1,u}_\epsilon(X,B).
$$ 
and 
$$
\lim_{\leftarrow}{}\!^1 KK^{1\otimes \pi_1,u}_\epsilon(X,SB)\to \overline{\{0\}},
$$
where $\overline{\{0\}}$ is the closure of $KK(A,B)$.  The proof proceeds via the construction of certain intertwining maps.

\begin{definition}\label{u to nu}
Fix notation as in Definition \ref{kku op}.   Provisionally define 
$$
\phi : \mathcal{P}^{\pi_1,u}_\epsilon(X,B)\to \mathcal{P}^\pi_\epsilon(X,B), \quad p\mapsto p+(1-1_A)e.
$$
\end{definition}

\begin{lemma}\label{phi hom}
The map $\phi$ from Definition \ref{u to nu} is well-defined, and descends to a homomorphism 
$$
\phi_*:KK^{\pi_1,u}_\epsilon(X,B)\to KK^\pi_{\epsilon}(X,B).
$$
\end{lemma}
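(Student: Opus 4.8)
The plan is to verify, in order, that $\phi$ takes values in $\mathcal{P}^\pi_\epsilon(X,B)$, that it is norm-continuous (so that it descends to path components), and that it intertwines the two addition operations on the nose; the homomorphism property then follows for free.

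Before anything else I would fix the bookkeeping. Since $A$ is unital and $\pi$ is substantial, the neutral projection $e$ commutes with $\pi(A)$ (indeed with the copies of $M_2(\C)$ and $\mathcal{B}(\ell^2)$ in $\LL(E)$ by Lemma~\ref{reps}). Write $q:=\pi(1_A)\in\LL(E)$, a projection commuting with $e$, and $e_1:=eq=qe\le e$, which is exactly the neutral projection of the corestricted (again substantial) representation $\pi_1$ on $qE$. An element $p\in\mathcal{P}^{\pi_1,p}_\epsilon(X,B)$ is a projection in $\LL(qE)$ with $p-e_1\in\K(qE)$ and $\|[p,\pi_1(a)]\|<\epsilon$ for all $a\in X$; I regard $p$ as an element of $\LL(E)$ extended by zero, so $p=qpq$, and then $\phi(p)=p+(e-e_1)$. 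Because $e-e_1=e(1-q)$ acts on $(1-q)E$ while $p$ acts on $qE$, the operator $\phi(p)$ is block-diagonal for the decomposition $E=qE\oplus(1-q)E$ with a projection in each block; hence $\phi(p)$ is itself a self-adjoint contractive projection, so condition (iii) of Definition~\ref{alm com} holds trivially ($\phi(p)^2-\phi(p)=0$). Condition (i) reduces to $\phi(p)-e=p-e_1$, which lies in $\K(E)$ since $\K(qE)=q\K(E)q\subseteq\K(E)$. For condition (ii), $e$ and $e_1$ commute with $\pi(a)$, so $[\phi(p),\pi(a)]=[p,\pi(a)]$; as $\pi(a)=q\pi(a)q$ this is the extension by zero of $[p,\pi_1(a)]\in\LL(qE)$, whose norm is $<\epsilon$. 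Thus $\phi(p)\in\mathcal{P}^\pi_\epsilon(X,B)$, and $\phi$ is well-defined.

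Next I would observe that $\phi$ is an (affine) isometry: $\|\phi(p)-\phi(p')\|=\|p-p'\|$, so it is continuous, hence carries path components to path components and induces a map $\phi_*\colon KK^{\pi_1,p}_\epsilon(X,B)\to KK^\pi_\epsilon(X,B)$ on $\pi_0$. For the homomorphism property, fix the Cuntz isometries $s_1,s_2$ defining the operation on $KK^\pi_\epsilon(X,B)$ inside $\mathcal{B}(\ell^2)\subseteq\LL(E)$; since $\mathcal{B}(\ell^2)$ commutes with $q$, the restrictions $s_i|_{qE}$ form a Cuntz pair in $\mathcal{B}(\ell^2)\subseteq\LL(qE)$, and we use these to define the operation on $KK^{\pi_1,p}_\epsilon(X,B)$ (legitimate, since by Lemmas~\ref{group lem 2} and~\ref{kku group} neither operation depends on this choice). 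As $s_1,s_2$ commute with both $e$ and $q$, hence with $e-e_1$, and $s_1s_1^*+s_2s_2^*=1_{\LL(E)}$,
\begin{align*}
s_1\phi(p)s_1^*+s_2\phi(p')s_2^* &= s_1ps_1^*+s_2p's_2^* + (e-e_1)(s_1s_1^*+s_2s_2^*) \\
&= \bigl(s_1ps_1^*+s_2p's_2^*\bigr)+(e-e_1) = \phi\bigl(s_1ps_1^*+s_2p's_2^*\bigr),
\end{align*}
so $\phi$ already intertwines the two ``sum'' operations at the level of representatives; passing to $\pi_0$ gives $\phi_*([p]+[p'])=\phi_*[p]+\phi_*[p']$.

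The only delicate point is the first step: one has to keep the three ambient algebras $\LL(qE)$, $\LL((1-q)E)$ and $\LL(E)$ straight, remember the ``extend by zero'' identification, and use that $\K(qE)$ is a corner of $\K(E)$. There is no real obstacle once $e$ and $q=\pi(1_A)$ are noted to commute — in fact the potentially worrying condition (iii) for $\mathcal{P}^\pi_\epsilon$ is automatic here because $\phi(p)$ is an honest projection — so the content of the lemma is entirely in setting up these identifications correctly.
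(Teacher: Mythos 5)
Your argument is correct and follows essentially the same route as the paper: the well-definedness checks are the straightforward ones the paper leaves implicit (your observation that $\phi(p)$ is an honest projection, so condition (iii) of Definition \ref{alm com} is automatic, is exactly the point), and your use of the compressed Cuntz isometries $s_i|_{1_AE}$ is the paper's choice $t_i:=1_As_i$, with the same on-the-nose intertwining computation $s_1\phi(p)s_1^*+s_2\phi(p')s_2^*=\phi(t_1pt_1^*+t_2p't_2^*)$. Nothing further is needed.
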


\begin{proof}
It is straightforward to see that $\phi$ is a well-defined map that takes homotopies to homotopies and so descends to a well-defined set map $\phi_*:KK^{\pi_1,u}_\epsilon(X,B)\to KK^\pi_{\epsilon}(X,B)$.  Let $s_1,s_2$ be Cuntz isometries inducing the group operation, and define $t_1:=1_As_1$ and $t_2:=1_As_2$, which are a pair of Cuntz isometries in $\LL(1_AE)$ which we may use to define the group operation on $KK^{\pi_1,u}_\epsilon(X,B)$.  We compute that for $p,q\in \mathcal{P}^{\pi_1,u}_\epsilon(X,B)$
$$
t_1pt_1^*+t_2qt_2^*+(1-1_A)e = s_1(p+(1-1_A)e)s_1^*+s_2(q+(1-1_A)e)s_2^*
$$
which implies that $\phi_*([p]+[q])=\phi_*[p]+\phi_*[q]$ as claimed. 
\end{proof}

\begin{definition}\label{nu to u}
Fix notation as in Definition \ref{kku op}.  Assume moreover that $\epsilon<1/8$ and that $X$ contains the unit of $A$.  Let $\chi$ be the characteristic function of $(1/2,\infty)$.  Provisionally define 
$$
\psi : \mathcal{P}^{\pi}_\epsilon(X,B)\to \mathcal{P}^{\pi_1,u}_{5\sqrt{\epsilon}}(X,B), \quad p\mapsto \chi(1_Ap1_A).
$$
\end{definition}

\begin{lemma}\label{psi hom}
The map $\psi$ from Definition \ref{nu to u} is well-defined and descends to a well-defined homomorphism 
$$
\psi_*:KK^{\pi}_\epsilon(X,B)\to KK^{\pi_1,u}_{5\sqrt{\epsilon}}(X,B).
$$
\end{lemma}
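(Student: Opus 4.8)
The plan is the following. Fix $p\in\mathcal{P}^\pi_\epsilon(X,B)$ and put $q_0:=1_Ap1_A$, a self-adjoint contraction in $\LL(1_AE)$; here $1_A=\pi(1_A)$ acts as the identity on $1_AE$ and is central in $\pi(A)$. The first step is a functional-calculus estimate showing that $\chi(q_0)$ makes sense. Since $1_A\in X$, we have $\|1_A(p^2-p)\|<\epsilon$ and $\|[p,1_A]\|<\epsilon$; combining the identity $q_0^2-1_Ap^21_A=-\,1_Ap(1-1_A)p1_A$ with the bound $\|(1-1_A)p1_A\|=\|(1-1_A)[p,1_A]\|<\epsilon$ gives $\|q_0^2-q_0\|<\epsilon+\epsilon^2$, which is $<1/4$ because $\epsilon<1/8$. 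Hence $1/2$ is not in the spectrum of $q_0$, so $\chi$ restricts to a continuous function on that spectrum and $q:=\chi(q_0)$ is a genuine projection in $\LL(1_AE)$; moreover the spectrum of $q_0$ lies in $\{t:|t(t-1)|<\epsilon+\epsilon^2\}$, whence $\|q-q_0\|\le 2(\epsilon+\epsilon^2)$.

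Next I would verify that $q$ lies in $\mathcal{P}^{\pi_1,p}_{5\sqrt{\epsilon}}(X,B)$. For the compactness requirement, note $q_0-e_1=1_A(p-e)1_A\in\K(1_AE)$, where $e_1=1_Ae$ is the neutral projection of $\pi_1$; passing to the quotient $\LL(1_AE)/\K(1_AE)$ the class of $q_0$ equals that of the projection $e_1$, and applying $\chi$, which commutes with the quotient map (both spectra involved lying where $\chi$ is continuous), shows $q-e_1\in\K(1_AE)$. For the commutator bound, centrality of $1_A$ gives $[q_0,a]=1_A[p,a]1_A$, so $\|[q_0,a]\|<\epsilon$ for $a\in X$, and therefore $\|[q,a]\|\le 2\|q-q_0\|\,\|a\|+\|[q_0,a]\|<5\epsilon+4\epsilon^2<5\sqrt{\epsilon}$, the last inequality holding since $\epsilon<1/8$. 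Running these estimates uniformly along a norm-continuous path $(p_s)$ in $\mathcal{P}^\pi_\epsilon(X,B)$ shows $s\mapsto\chi(1_Ap_s1_A)$ is norm continuous (the spectral gap at $1/2$ being uniform), so $\psi$ takes homotopies to homotopies and descends to a well-defined map $\psi_*\colon KK^\pi_\epsilon(X,B)\to KK^{\pi_1,p}_{5\sqrt{\epsilon}}(X,B)$ on path components.

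Finally, for additivity I would use Cuntz isometries $s_1,s_2\in\mathcal{B}(\ell^2)\subseteq\LL(E)$ as in Lemma \ref{reps}; these commute with $1_A$, so for $p,q\in\mathcal{P}^\pi_\epsilon(X,B)$ one has $1_A(s_1ps_1^*+s_2qs_2^*)1_A=s_1(1_Ap1_A)s_1^*+s_2(1_Aq1_A)s_2^*$. Since $s_1s_1^*$ and $s_2s_2^*$ are orthogonal projections with sum $1$, this operator is ``block diagonal'', and a short computation (first on polynomials, then by continuity, using that its spectrum is contained in the union of the spectra of the two blocks) gives $\chi(s_1T_1s_1^*+s_2T_2s_2^*)=s_1\chi(T_1)s_1^*+s_2\chi(T_2)s_2^*$. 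Hence $\psi(s_1ps_1^*+s_2qs_2^*)=s_1\psi(p)s_1^*+s_2\psi(q)s_2^*$, which represents $\psi_*[p]+\psi_*[q]$; here one uses the restrictions $1_As_i$ as the Cuntz isometries defining the operation on $KK^{\pi_1,p}_{5\sqrt{\epsilon}}(X,B)$, appealing to Lemma \ref{kku group} for independence of that choice.

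The only point needing care is the legitimacy of applying the discontinuous function $\chi$ in the first step: this works precisely because every operator in sight has a uniform spectral gap around $1/2$, so on the relevant spectra $\chi$ coincides with a fixed continuous (indeed Lipschitz) function, and both continuity of functional calculus along paths and its compatibility with $*$-homomorphisms go through. The rest is routine bookkeeping with insertions of $1_A$, closely parallel to the proofs of Lemma \ref{id lem 3} and Lemma \ref{phi hom}, so I expect no real obstacle.
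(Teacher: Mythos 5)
Your argument is correct and follows essentially the same route as the paper's proof: apply $\chi$ to $1_Ap1_A$ after checking its spectrum avoids a neighbourhood of $1/2$, verify compactness and the commutator bound to land in $\mathcal{P}^{\pi_1,p}_{5\sqrt{\epsilon}}(X,B)$, note homotopies pass through the construction, and prove additivity using the Cuntz isometries $1_As_1,1_As_2$ together with naturality of the functional calculus. The only difference is bookkeeping: your factorization $1_Ap(1-1_A)\cdot(1-1_A)p1_A$ gives $\|q_0^2-q_0\|<\epsilon+\epsilon^2$ and $\|q-q_0\|\le 2(\epsilon+\epsilon^2)$, slightly sharper than the paper's $2\epsilon$ and $\sqrt{2\epsilon}$, but both yield the stated $5\sqrt{\epsilon}$ control.
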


\begin{proof}
First, we check that $\psi$ is well-defined, and takes image where we claim.  Let $p$ be an element of $\mathcal{P}^{\pi}_\epsilon(X,B)$.  As we are assuming that $1_A$ is in $X$, we have that 
\begin{equation}\label{[p,1]}
\|[p,1_A]\|<\epsilon.
\end{equation}
Hence
\begin{align*}
\|(1_Ap1_A)^2-(1_Ap1_A)\| & \leq  \|1_Ap1_Ap-1_Ap\|\|1_A\| \\ & \leq \|1_A\|\|[1_A,p]\|\|p\|+\|1_A(p^2-p)\| \\ & < 2\epsilon.
\end{align*}
The polynomial spectral mapping theorem thus implies that the spectrum of $1_Ap1_A$ is contained in the $\sqrt{2\epsilon}$-neighbourhood of $\{0,1\}$ in $\R$.  As $\epsilon<1/8$, we have that $\sqrt{2\epsilon}<1/2$ and so the characteristic function $\chi$ of $(1/2,\infty)$ is continuous on the spectrum of $1_Ap1_A$.  Hence $\chi(1_Ap1_A)$ makes sense by the continuous functional calculus and moreover  
\begin{equation}\label{1p1}
\|1_Ap1_A-\chi(1_Ap1_A)\|<\sqrt{2\epsilon}.
\end{equation} 
Hence we see that for any $a\in X$,
\begin{align}\label{chi com}
\|[\chi(1_Ap1_A),a]\| & \leq \|[\chi(1_Ap1_A)-1_Ap1_A,a]\|+\|[1_Ap1_A,a]\|< 2\sqrt{2\epsilon}+\epsilon.
\end{align}
Putting the discussion so far together, $\chi(1_Ap1_A)$ is a projection in $\LL(1_AE)$ such that $\|[\chi(1_Ap1_A),a]\|<5\sqrt{\epsilon}$ for all $a\in X$.  We have moreover that $1_Ap1_A-1_Ae=1_A(p-e)1_A\in \K(1_AE)$, whence also $\chi(1_Ap1_A)-1_Ae\in \K(1_AE)$.  In conclusion, we see that $\chi(1_Ap1_A)$ defines an element of $\mathcal{P}^{\pi_1,u}_{5\sqrt{\epsilon}}(X,B)$.  We have thus shown that $\psi$ is well-defined.

It is straightforward to check that homotopies pass through the above construction, so that $\psi$ induces a well-defined map of sets 
$$
\psi_*:KK^{\pi}_\epsilon(X,B)\to KK^{\pi_1,u}_{5\sqrt{\epsilon}}(X,B).
$$
Finally, to see that $\psi_*$ is a homomorphism, we fix Cuntz isometries $s_1,s_2$ inducing the group operation in $KK^{\pi}_\epsilon(X,B)$.  As in the proof of Lemma \ref{phi hom}, we may use the Cuntz isometries $t_1:=1_As_1$ and $t_2:=1_As_2$ to define the group operation on $KK^{\pi_1,u}_{5\sqrt{\epsilon}}(X,B)$.  Using naturality of the functional calculus and the fact that $s_1$ and $s_2$ commute with $1_A$, we see that for $p,q\in KK^{\pi}_\epsilon(X,B)$ we have that 
$$
\chi(1_A(s_1ps_1^*+s_2qs_2^*)1_A)=t_1\chi(1_Ap1_A)t_1^*+t_2\chi(1_Aq1_A)t_2^*,
$$
and thus that $\psi_*([p]+[q])=\psi_*[p]+\psi_*[q]$, completing the proof.
\end{proof}

\begin{lemma}\label{compare unital}
Fix notation as in Definition \ref{kku op}.  Assume moreover that $\epsilon<1/8$ and that $X$ contains the unit of $A$.  Consider the diagrams
\begin{equation}\label{phipsi2}
\xymatrix{ KK^\pi_\epsilon(X,B)  \ar@{=}[r] & KK^\pi_{\epsilon}(X,B) \ar[d]^-{\psi_*} \\ 
KK_{\epsilon}^{\pi_1,u}(X,B) \ar[u]^-{\phi_*} \ar[r] & KK^{\pi_1,u}_{5\sqrt{\epsilon}}(X,B)  }
\end{equation}
and 
\begin{equation}\label{phipsi1}
\xymatrix{ KK^\pi_\epsilon(X,B) \ar[dd]^-{\psi_*} \ar[rr] & & KK^\pi_{8\sqrt{\epsilon}}(X,B) \\ 
& KK_{5\sqrt{\epsilon}}^{\pi_1,u}(X,B) \ar[ur] & \\
KK_{5\sqrt{\epsilon}}^{\pi_1,u}(X,B) \ar@{=}[r] & KK^{\pi_1,p}_{5\sqrt{\epsilon}}(X,B) \ar[u]^-{\phi_*} }
\end{equation}
where the unlabeled arrows are the canonical forget control maps, defined as in Definition \ref{good i l}.  These both commute.
\end{lemma}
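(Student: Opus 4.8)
The plan is to verify that in each diagram the two composites agree as set maps; this suffices, since $\phi_*$, $\psi_*$ and the forget‑control maps are all group homomorphisms (Lemmas \ref{phi hom} and \ref{psi hom}, and Remark \ref{seps rem}, part \eqref{inv lim}), so no separate compatibility with the group operations needs to be checked. For the square \eqref{phipsi2} this is a direct computation: if $p$ is a projection in $\mathcal{P}^{\pi_1,p}_\epsilon(X,B)$, regarded as an operator on $E$ supported on $\pi(1_A)E$, then $1_A\phi(p)1_A=1_A(p+(1-1_A)e)1_A=1_A p1_A=p$ because $\pi(1_A)(1-\pi(1_A))=0$ and $p=1_Ap1_A$; hence $\psi(\phi(p))=\chi(p)=p$. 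Thus the ``up--$=$--down'' composite sends $[p]$ to $[p]$, which is exactly the forget‑control map $KK^{\pi_1,p}_\epsilon(X,B)\to KK^{\pi_1,p}_{5\sqrt{\epsilon}}(X,B)$, and \eqref{phipsi2} commutes.

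For \eqref{phipsi1}, unwinding the definitions reduces the claim to the statement that for every $p\in\mathcal{P}^\pi_\epsilon(X,B)$ the elements $p$ and $\phi(\psi(p))=\chi(1_Ap1_A)+(1-1_A)e$ represent the same class in $KK^\pi_{8\sqrt{\epsilon}}(X,B)$; I would prove this by exhibiting an explicit path from $p$ to $\phi(\psi(p))$ inside $\mathcal{P}^\pi_{8\sqrt{\epsilon}}(X,B)$. The structural fact that makes everything go through is that $a(1-1_A)=(1-1_A)a=0$ for all $a\in A$ (since $a1_A=1_Aa=a$), so any operator of the form $(1-1_A)x(1-1_A)$ is completely invisible to the inequalities of Definition \ref{alm com}: only the conditions ``self-adjoint contraction'' and ``difference with $e$ lies in $\K(E)$'' constrain that corner, and $e$ itself splits as $1_Ae+(1-1_A)e$ because $e$ commutes with $\pi(1_A)$ (both being diagonal with respect to the substantial decomposition of Lemma \ref{reps}). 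Note also that $[1_A,a]=0$, so $[1_Ap1_A,a]=1_A[p,a]1_A$, which keeps the $1_AE$ corner controlled.

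The homotopy is built in three stages. Stage one is the straight-line path $s\mapsto 1_Ap1_A+(1-1_A)p(1-1_A)+(1-s)\big(1_Ap(1-1_A)+(1-1_A)p1_A\big)$, composed with a truncation by the function $f$ from the proof of Theorem \ref{kk iso} to restore contractivity; since $\|1_Ap(1-1_A)\|=\|1_A[p,1_A]\|<\epsilon$ and $1_Ap1_A$ and $(1-1_A)p(1-1_A)$ have orthogonal ranges, a short estimate keeps this in the controlled set with all relevant norms bounded by a small multiple of $\epsilon$, and preserves the $\K(E)$-condition. Stage two applies, on the corner $1_AE$, a functional-calculus homotopy from the identity function to $\chi$ through functions continuous on $\mathrm{spec}(1_Ap1_A)$, which lies in the $\sqrt{2\epsilon}$-neighbourhood of $\{0,1\}$ by the estimates in the proof of Lemma \ref{psi hom}; this perturbs the operator by at most $\sqrt{2\epsilon}$ at each moment and leaves the $(1-1_A)E$ corner fixed. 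Stage three is the straight-line path $s\mapsto \chi(1_Ap1_A)+s(1-1_A)p(1-1_A)+(1-s)(1-1_A)e$, which slides the $(1-1_A)E$ corner from $(1-1_A)p(1-1_A)$ to $(1-1_A)e$; by the structural fact above, $a$ times the moving part vanishes identically, so the control inequalities are literally unchanged along this stage, while $p_s-e$ stays compact since $(1-1_A)(p-e)(1-1_A)\in\K(E)$. Concatenating the three stages lands at $\chi(1_Ap1_A)+(1-1_A)e=\phi(\psi(p))$, and bookkeeping the constants — the worst commutator/defect norm along the path being bounded by $2\sqrt{2\epsilon}+3\epsilon<8\sqrt{\epsilon}$ for $\epsilon<1/8$ — shows the whole path lies in $\mathcal{P}^\pi_{8\sqrt{\epsilon}}(X,B)$.

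The main obstacle is stage three: a priori the corner $(1-1_A)p(1-1_A)$ need not be close to any projection, so connecting it to $(1-1_A)e$ through \emph{controlled} elements looks problematic. The resolution is exactly the identity $a(1-1_A)=0$, which shows this corner never enters the defining inequalities; once this is noticed, the remaining work is routine, the only real care being to keep $p_s$ a contraction (via the truncation $f$) and to track the condition $p_s-e\in\K(E)$ at each stage.
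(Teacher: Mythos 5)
Your proposal is correct, and its overall strategy is the one the paper uses: the square \eqref{phipsi2} is handled by the pointwise identity $\psi(\phi(p))=p$, and the diagram \eqref{phipsi1} is reduced to showing that $p$ and $\phi(\psi(p))=\chi(1_Ap1_A)+(1-1_A)e$ lie in the same path component of $\mathcal{P}^\pi_{8\sqrt{\epsilon}}(X,B)$, which is then proved by an explicit concatenated homotopy whose viability rests on exactly the two facts you isolate, namely $a(1-1_A)=(1-1_A)a=0$ for $a\in A$ and the spectral estimate on $1_Ap1_A$ from Lemma \ref{psi hom}. The only difference is how the homotopy is parametrized: the paper uses two segments, first moving the $(1-1_A)$-part from $e$ to $p$ while holding $\chi(1_Ap1_A)$ fixed, and then a straight line from $\chi(1_Ap1_A)$ to $1_Ap$ in the presence of $(1-1_A)p$; you instead block-diagonalize $p$ with respect to $1_A$, interpolate from the identity function to $\chi$ by functional calculus on the $1_AE$ corner, and finally slide the $(1-1_A)$ corner from $(1-1_A)p(1-1_A)$ to $(1-1_A)e$. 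Your route costs a little more bookkeeping (the truncation $f$ in stage one, and in stage two you should note that contractivity is automatic because $\mathrm{spec}(1_Ap1_A)\subseteq[-1,1]$ and $(1-s)\,\mathrm{id}+s\chi$ maps that set into $[-1,1]$, while $g_s(1_Ap1_A)-1_Ae\in\K(E)$ since $g_s(0)=0$, $g_s(1)=1$), and your quoted worst-case constant $2\sqrt{2\epsilon}+3\epsilon$ slightly undercounts the truncation contributions in stage one, though the true bound (of order $\sqrt{2\epsilon}$ plus a modest multiple of $\epsilon$) is still comfortably below $8\sqrt{\epsilon}$ for $\epsilon<1/8$; in exchange every element along your path is manifestly self-adjoint and block-diagonal where it matters. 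These are differences of packaging, not of substance, and all the checks you defer are routine.
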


\begin{proof}
For any $p\in \mathcal{P}^{\pi_1,u}_\epsilon(X,B)$ we have that $\psi(\phi(p))=p$, and so the diagram in line \eqref{phipsi2} clearly commutes.  For the diagram in line \eqref{phipsi1}, we need to show that if $p\in \mathcal{P}_\epsilon(X,B)$, then the classes of $p$ and of $\chi(1_Ap1_A)+(1-1_A)e$ in $KK_{8\sqrt{\epsilon}}(X,B)$ are the same.  For this, we concatenate two homotopies.  First, consider the homotopy 
$$
t\mapsto p_t:=\chi(1_Ap1_A)+(1-1_A)(te+(1-t)p),\quad t\in [0,1].
$$
As $ap_t=a\chi(1_Ap1_A)$ for all $a\in A$ and all $t\in [0,1]$, we see that $a(p_t^2-p_t)=0$.  Moreover, as $A$ commutes with $e$, as $\|[p,a]\|<\epsilon$ for all $a\in X$, and as $\|[\chi(1_Ap1_A),a]\|<5\sqrt{\epsilon}$ for all $a\in X$, we see that $\|[p_t,a]\|<5\sqrt{\epsilon}+\epsilon<6\sqrt{\epsilon}$ for all $a\in X$.  Hence this homotopy passes through $\mathcal{P}^\pi_{6\sqrt{\epsilon}}(X,B)$, and connects $\chi(1_Ap1_A)+(1-1_A)e$ and $\chi(1_Ap1_A)+(1-1_A)p$.  

For the second homotopy, note first that lines \eqref{1p1} and \eqref{[p,1]} imply that 
\begin{equation}\label{1p1 1p}
\|\chi(1_Ap1_A)-1_Ap\|\leq \|\chi(1_Ap1_A)-1_Ap1_A\|+\|1_A[1_A,p]\|< \sqrt{2\epsilon}+\epsilon.
\end{equation}
Consider now the homotopy 
\begin{equation}\label{q hom}
t\mapsto q_t:=(1-t)\chi(1_Ap1_A)+t1_Ap+(1-1_A)p,\quad t\in [0,1]
\end{equation}
Write $r_t:=(1-t)\chi(1_Ap1_A)+t1_Ap$, so we have $\|r_t-\chi(1_Ap1_A)\|<\sqrt{2\epsilon}+\epsilon$ for all $t$ by line \eqref{1p1 1p}.  Hence for any $a\in A$,
\begin{align*}
\|&a(q_t^2-q_t)\| = \|a(r_t^2-r_t)\| \\ & \leq \|r_t(r_t-\chi(1_Ap1_A))\|+\|(\chi(1_Ap1_A)-r_t)\chi(1_Ap1_A)\|+\|r_t-\chi(1_Ap1_A)\| \\ & <3(\sqrt{2\epsilon}+\epsilon).
\end{align*}
Moreover, for any $a\in X$, lines \eqref{chi com} and \eqref{[p,1]} give that
$$
\|[q_t,a]\|\leq \|[\chi(1_Ap1_A),a]\|+\|[1_Ap,a]\|+\|[(1-1_A)p,a]\|\leq \sqrt{5\epsilon}+2\epsilon.
$$
Putting all this together, the homotopy $t\mapsto q_t$ from line \eqref{q hom} passes through $KK_{8\sqrt{\epsilon}}^\pi(X,B)$.   As this homotopy connects $\chi(1_Ap1_A)+(1-1_A)p$ and $p$, this completes the proof.
\end{proof}

We are now in a position to establish the following, which is the main goal of this subsection.

\begin{proposition}\label{kkl kku}
Let $A$ and $B$ be separable $C^*$-algebras with $A$ unital.   Let $\pi:A\to \LL(E)$ be a graded, balanced, and strongly absorbing representation of $A$ on a Hilbert $B$-module.  Then with notation as in Definition \ref{kku} above there are isomorphisms
\begin{equation}\label{kl uni iso}
KL(A,B)\to \lim_{\leftarrow} KK^{\pi_1,u}_\epsilon(X,B).
\end{equation}
and 
$$
\lim_{\leftarrow}{}\!^1 KK^{1\otimes \pi_1,u}_\epsilon(X,SB)\to \overline{\{0\}},
$$ 
where the limits are taken over the directed set $\mathcal{X}$ of Definition \ref{seps} and $\overline{\{0\}}$ is the closure of $0$ in $KK(A,B)$.  Moreover, the isomorphism in line \eqref{kl uni iso} is a homeomorphism when the right hand side is equipped with the inverse limit topology. 

Finally there is a short exact sequence 
$$
0\to \lim_{\leftarrow}{\!}^1 KK^{1\otimes \pi_1,u}_\epsilon(X,SB)\to KK(A,B)\to \lim_{\leftarrow} KK^{\pi_1,u}_\epsilon(X,B)\to 0.
$$
\end{proposition}

\begin{proof}
Thanks to Theorems \ref{id with KL} and \ref{c0l1} respectively, it will suffice to show that 
\begin{equation}\label{lim iso}
\lim_{\leftarrow} KK^{\pi_1,u}_\epsilon(X,B)\cong \lim_{\leftarrow} KK^{\pi}_\epsilon(X,B)
\end{equation}
and 
\begin{equation}\label{lim1 iso}
\lim_{\leftarrow}{\!}^1 KK^{\pi_1,u}_\epsilon(X,SB)\cong \lim_{\leftarrow}{\!}^1 KK^{\pi}_\epsilon(X,SB).
\end{equation}
Using Lemmas \ref{phi hom}, \ref{psi hom}, and \ref{compare unital} we can construct an increasing sequence $(X_n)$ of finite subsets of $A_1$ with dense union and that all contain the unit, a sequence $(\epsilon_n)$ in $(0,1/8)$ that tends to zero as $n\to\infty$, and a diagram
\begin{equation}\label{ladder}
\xymatrix{ \cdots \ar[r] & KK^{\pi}_{\epsilon_n}(X_n,B) \ar[dr]_{\psi_*^{(n)}} \ar[r] & KK^{\pi}_{\epsilon_{n-1}}(X_{n-1},B) \ar[dr]_{\psi_*^{(n-1)}} \ar[r] & \cdots \ar[r] \ar[dr]_{\psi_*^{(2)}} & KK^{\pi}_{\epsilon_1}(X_1,B) \\
\cdots \ar[r] & KK^{\pi_1,u}_{\epsilon_n}(X_n,B) \ar[u]^-{\phi_*^{(n)}} \ar[r] & KK^{\pi_1,u}_{\epsilon_{n-1}}(X_{n-1},B) \ar[u]^-{\phi_*^{(n-1)}} \ar[r] & \cdots \ar[r] & KK^{\pi_1,u}_{\epsilon_1}(X_1,B)\ar[u]^-{\phi_*^{(1)}} }
\end{equation}
where: the horizontal maps are forget control maps; the maps labeled $\phi_*^{(n)}$ are from Lemma \ref{phi hom}; the maps labeled $\psi_*^{(n)}$ are from Lemma \ref{psi hom}; each triangle of the form 
$$
\xymatrix{ KK^{\pi}_{\epsilon_n}(X_n,B) \ar[dr]^{\psi_*^{(n)}}& \\
KK^{\pi_1,u}_{\epsilon_n}(X_n,B) \ar[u]^-{\phi_*^{(n)}} \ar[r] & KK^{\pi_1,u}_{\epsilon_{n-1}}(X_{n-1},B) }
$$
and each triangle of the form 
$$
\xymatrix{ KK^{\pi}_{\epsilon_n}(X_n,B) \ar[dr]_{\psi_*^{(n)}} \ar[r] & KK^{\pi}_{\epsilon_{n-1}}(X_{n-1},B) \\
& KK^{\pi_1,u}_{\epsilon_{n-1}}(X_{n-1},B)  \ar[u]_-{\phi_*^{(n-1)}} }
$$
commutes.   Now, by assumption that $(X_n)$ is increasing and has dense union in $A_1$, and by assumption that $\epsilon_n\to 0$, the sequence $(X_n,\epsilon_n)$ is cofinal in the directed set of Definition \ref{seps} (compare Remark \ref{seps rem} part \eqref{cof seq}).  Hence by Remark \ref{gen inv lim rem}, part \eqref{cof iso} (respectively, Lemma \ref{cofin nml1}), the top (respectively, bottom) row of diagram \eqref{ladder} computes ${\displaystyle \lim_{\leftarrow} KK^{\pi}_\epsilon(X,B)}$ (respectively, ${\displaystyle \lim_{\leftarrow} KK^{\pi_1,u}_\epsilon(X,B)}$).  The isomorphism in line \eqref{lim iso} follows directly from this.  The isomorphism in line \eqref{lim1 iso} also follows from the commuting diagram of line \eqref{ladder}, but with $B$ replaced by $SB$: we leave the direct algebraic checks involved to the reader.
\end{proof}

\subsection{Unitally absorbing representations}

In Proposition \ref{kkl kku} above we established isomorphisms 
$$
KL(A,B)\to \lim_{\leftarrow} KK^{\pi_1,u}_\epsilon(X,B).
$$
and 
$$
\lim_{\leftarrow}{}\!^1 KK^{1\otimes \pi_1,u}_\epsilon(X,SB)\to \overline{\{0\}},
$$ 
where $\pi$ is a graded, balanced, and strongly absorbing representation, and $\pi_1$ is the associated corestriction to a unital representation.  This is a little unnatural, however: it would be better to establish these isomorphisms with $\pi_1$ replaced by a more general unital representation satisfying appropriate assumptions.  Our goal in this section is to make that precise.

First, we recall a definition, which is essentially \cite[Definition 2.2]{Thomsen:2000aa} (compare also condition (2) from \cite[Theorem 2.1]{Thomsen:2000aa}).  It should be compared to Definition \ref{ab def} above.

\begin{definition}\label{ab unital def}
Let $A$ and $B$ be separable $C^*$-algebras with $A$ unital, and let $F$ be a Hilbert $B$-module.  A  unital representation $\pi:A \to \LL(F)$ is \emph{unitally absorbing} (for the pair $(A,B)$) if for any Hilbert $B$-module $E$ and ucp map $\sigma:A \to \LL(E)$, there is a sequence $(v_n)$ of isometries in $\LL(E,F)$ such that:
\begin{enumerate}[(i)]
\item $\sigma(a)-v_n^*\pi(a)v_n\in \K(E)$ for all $a\in A$ and $n\in\N$;
\item $\|\sigma(a)-v_n^*\pi(a)v_n\|\to 0$ as $n\to\infty$ for all $a\in A$.
\end{enumerate}
The representation $\pi$ is \emph{strongly unitally absorbing} if it is an infinite amplification of an absorbing representation.
\end{definition}

The following lemma, which follows ideas of Kasparov \cite{Kasparov:1980sp} (compare also \cite[Theorem 2.1]{Thomsen:2000aa}), says that unitally absorbing representations are essentially unique.  It is well-known; we could not find exactly what we needed in the literature, so give a proof.

\begin{lemma}\label{sau unique}
Let $A$ and $B$ be separable $C^*$-algebras with $A$ unital, and let $\pi:A\to \LL(F)$ and $\sigma:A\to \LL(E)$ be  unitally absorbing representations.  Then there is a sequence $(u_n)$ of unitaries in $\LL(E,F)$ such that 
\begin{enumerate}[(i)]
\item $\sigma(a)-u_n^*\pi(a)u_n\in \K(E)$ for all $a\in A$ and $n\in\N$;
\item $\|\sigma(a)-u_n^*\pi(a)u_n\|\to 0$ as $n\to\infty$ for all $a\in A$.
\end{enumerate}
\end{lemma}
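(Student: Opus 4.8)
The plan is to run the standard Kasparov ``two covering isometries, then rotate'' argument, with Lemma \ref{de2.3} serving as the rotation device and no new idea beyond careful bookkeeping.

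First I would produce approximate covering isometries in both directions. Since $\sigma$ is unitally absorbing and the infinite amplification $\pi^\infty:A\to\LL(F^\infty)$ is a unital representation, hence a ucp map, Definition \ref{ab unital def} supplies a sequence $(v_n)$ of isometries in $\LL(F^\infty,E)$ with $v_n^*\sigma(a)v_n-\pi^\infty(a)\in\K(F^\infty)$ for all $a,n$ and $\|v_n^*\sigma(a)v_n-\pi^\infty(a)\|\to0$ for all $a\in A$. The purely algebraic identity underlying Lemma \ref{v on both sides} (applied to these operators rather than to $C_0$-valued functions, and run on $a$, $a^*$ and $a^*a$) then gives $\sigma(a)v_n-v_n\pi^\infty(a)\in\K(F^\infty,E)$ with $\|\sigma(a)v_n-v_n\pi^\infty(a)\|\to0$. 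Feeding each $v_n$ into Lemma \ref{de2.3}, with the roles of its ``$\pi$'' and ``$\sigma$'' played by our $\sigma$ and $\pi$ respectively, yields unitaries $u_n^E\in\LL(E,F\oplus E)$ such that $\pi(a)\oplus\sigma(a)-u_n^E\sigma(a)(u_n^E)^*\in\K(F\oplus E)$ for all $a,n$, with norm at most $6\|v_n\pi^\infty(a)-\sigma(a)v_n\|+4\|v_n\pi^\infty(a^*)-\sigma(a^*)v_n\|$, which tends to $0$ for each fixed $a$.

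By symmetry — using that $\pi$ is unitally absorbing, applying the same steps to the unital representation $\sigma^\infty$, and then composing with the flip unitary $E\oplus F\to F\oplus E$ so that the target summand is again $F\oplus E$ — I get unitaries $u_n^F\in\LL(F,F\oplus E)$ with $\pi(a)\oplus\sigma(a)-u_n^F\pi(a)(u_n^F)^*\in\K(F\oplus E)$ for all $a,n$ and norm tending to $0$. Setting $u_n:=(u_n^F)^*u_n^E\in\LL(E,F)$ gives a unitary, and
$$
u_n^*\pi(a)u_n=(u_n^E)^*\big(u_n^F\pi(a)(u_n^F)^*\big)u_n^E\approx(u_n^E)^*\big(\pi(a)\oplus\sigma(a)\big)u_n^E\approx\sigma(a),
$$
where each ``$\approx$'' is up to an element of $\K(E)$ (compacts are stable under composition with adjointable operators) whose norm tends to $0$. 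This gives conditions (i) and (ii).

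The routine parts are the algebraic identity and the norm estimates; the only place that demands care is matching up the various Hilbert modules and keeping straight which representation plays which role in Lemmas \ref{v on both sides} and \ref{de2.3}, together with the flip step needed so that the symmetric application lands in the \emph{same} summand $F\oplus E$. No genuinely new difficulty arises: this is the usual uniqueness-via-rotation argument of Kasparov, now carried out in the unital setting (compare \cite[Theorem 2.1]{Thomsen:2000aa}).
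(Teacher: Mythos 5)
Your proposal is correct and follows essentially the same route as the paper's own proof: apply the unitally absorbing property to the infinite amplification of the other representation, upgrade the compression estimate to an intertwining estimate via the algebraic identity behind Lemma \ref{v on both sides}, and feed the resulting isometries into Lemma \ref{de2.3} to get unitaries showing each of $\pi$ and $\sigma$ is approximately unitarily equivalent (modulo compacts) to $\pi\oplus\sigma$, then compose. The paper phrases the last step via transitivity of an equivalence relation $\sim$ rather than writing $u_n=(u_n^F)^*u_n^E$ explicitly, but this is only a cosmetic difference.
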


\begin{proof}
Let $(\sigma^\infty,E^\infty)$ be the infinite amplification of $(\sigma,E)$, and let $(v_n)$ be a sequence of isometries in $\LL(E^\infty,F)$ such that $v_n^*\pi(a)v_n-\sigma^\infty(a)\to 0$ for all $a\in A$, and such that $v_n^*\pi(a)v_n-\sigma^\infty(a)\in \K(E^\infty)$ for all $a\in A$ and all $n$.  Using that
$$
(\pi(a)v_n-v_n\sigma^\infty(a))^*(\pi(a)v_n-v_n\sigma(a)^\infty)
$$
equals 
$$
v_n^*\pi(a^*a)v_n-\sigma^\infty(a^*a) -(v_n^*\pi(a^*)v_n-\sigma^\infty(a^*))\sigma^\infty(a)-\sigma^\infty(a^*)(v^*_n\pi(a)v_n-\sigma^\infty(a))
$$
for all $n$ and all $a\in A$, we see that we also have $\pi(a)v_n-v_n\sigma^\infty(a)\in \K(E^\infty,F)$ for all $n$ and all $a\in A$, and that $\|\pi(a)v_n-v_n\sigma^\infty(a)\|\to 0$ as $n\to\infty$ for all $a\in A$.  

Now, for representations $\phi:A\to \LL(G)$ and $\psi:A\to \LL(H)$ on Hilbert $B$-modules, let us write $\phi\sim \psi$ if there is a sequence of unitaries $(u_n)$ in $\LL(G,H)$ such that $\phi(a)-u_n^*\psi(a)u_n\in \K(G)$ for all $a\in A$ and $n\in\N$ and $\|\phi(a)-u_n^*\psi(a)u_n\|\to 0$ as $n\to\infty$ for all $a\in A$.   Let $u_n^F\in \LL(F,E\oplus F)$ be the unitary built from $v_n$ as in Lemma \ref{de2.3}.  Then the sequence $(u^F_n)$ in $\LL(F,E\oplus F)$ shows that $\pi\sim \sigma\oplus \pi$.  As the situation is symmetric in $\sigma$ and $\pi$, we also see that $\sigma \sim \sigma \oplus \pi$.  As $\sim$ is transitive, we see that $\pi\sim \sigma$ and are done.
\end{proof}

\begin{corollary}\label{usa unique}
Let $A$ and $B$ be separable $C^*$-algebras with $A$ unital, and let $\pi:A\to \LL(E)$ be a unitally absorbing representation.  Then $E\cong \ell^2\otimes B$.
\end{corollary}

\begin{proof}
Using \cite[Theorem 2.4]{Thomsen:2000aa}, if $A$ and $B$ are separable with $A$ unital, there always exists a unitally absorbing representation $\pi:A\to \LL(\ell^2\otimes B)$.  Hence if $\sigma:A\to \LL(E)$ is any unitally absorbing representation, we must have that $E$ is isomorphic as a Hilbert $B$-module to $\ell^2\otimes B$.
\end{proof}

\begin{remark}\label{usa unique 2}
The same conclusion as in Corollary \ref{usa unique} holds if $A$ is not necessarily unital, and $\pi:A\to \LL(E)$ is absorbing; essentially the same argument works.
\end{remark}

We will need a lemma relating unitally absorbing representations to absorbing representations.

\begin{lemma}\label{sa vs usa}
Let $A$ and $B$ be separable $C^*$-algebras with $A$ unital, and let $\pi:A\to \LL(F)$ be an absorbing representation.  Then the corestriction of $\pi_1$ of $\pi$ to a unital representation $\pi_1:A\to \LL(\pi(1_A) F)$ is a unitally absorbing representation.

Conversely, if $\pi:A\to \LL(F)$ is a unitally absorbing, then the representation $\pi\oplus 0:A\to \LL(F\oplus F)$ is absorbing.
\end{lemma}

\begin{proof}
Let $\sigma:A\to \LL(E)$ be a ucp map with $E$ a Hilbert $B$-module.  As $\pi$ is absorbing, there is a sequence $(v_n)$ of isometries in $\LL(E,F)$ such that 
$$
\sigma(a)-v_n^*\pi(a)v_n\in \K(E)
$$
for all $a\in A$ and $n\in\N$, and such that 
$$
\|\sigma(a)-v_n^*\pi(a)v_n\|\to 0
$$
as $n\to\infty$ for all $a\in A$.  As $\sigma$ is unital we in particular have that $\|1_E-v_n^*\pi(1_A)v_n\|\to 0$ as $n\to\infty$.  Set $w_n:=\pi(1_A)v_n\in \LL(E,\pi(1_A)F)\subseteq \LL(E,F)$.  We compute that 
$$
w_n^*w_n-1_E=v_n^*\pi(1_A)v_n-1_E
$$
so $w_n^*w_n$ is a compact perturbation of $1_E$, and $\|w_n^*w_n-1_E\|\to 0$ as $n\to\infty$.  Passing to a subsequence, we may assume in particular that $w_n^*w_n$ is invertible for all $n$.  Note then that for all $n$
\begin{equation}\label{w props}
(w_n^*w_n)^{-1/2}-1_E\in \K(E), \quad \text{and} \quad (w_n^*w_n)^{-1/2}-1_E\to 0~\text{as}~n\to\infty.
\end{equation}
Define $u_n:=w_n(w_n^*w_n)^{-1/2}$.  Then $(u_n)$ is a sequence of isometries in $\LL(E,\pi(1_A)F)$ such that 
$$
\sigma(a)-u_n^*\pi(a)u_n=\sigma(a)-(w_n^*w_n)^{-1/2}v_n\pi(a)v_n(w_n^*w_n)^{-1/2}
$$
for all $a\in A$.  This computation combined with line \eqref{w props} shows that $(u_n)$ has the properties needed to show that $\pi_1$ is unitally absorbing.

Conversely, say $\pi:A\to \LL(F)$ is unitally absorbing, and let $\sigma:A\to \LL(E)$ be a ccp map.  As in \cite[Proposition 2.2.1]{Brown:2008qy}, $\sigma$ extends uniquely to a ucp map $\sigma^+\:A^+\to \LL(E)$ from the unitization $A^+$ of $A$.  Let $(\pi\oplus 0)^+:A^+\to \LL(F\oplus F)$ be the usual unitization of $\pi\oplus 0$, so $\pi^+(1_{A})$ maps to the unit of the first copy of $F$, and $\pi(1_{A^+}-1_A)$ maps to the unit of the second copy.  We claim that $(\pi\oplus 0)^+$ is unitally absorbing as a representation of $A^+$.  Indeed, with respect to the usual isomorphism $A^+\cong A\oplus \C$ for a unital $C^*$-algebra $A$ (the copy of $\C$ is generated by $1_{A^+}-1_A$), $(\pi\oplus 0)^+$ splits as a direct sum of representations $\pi\oplus \tau:A\oplus \C\to \LL(F)\oplus \LL(F)$, where $\tau$ is the unital representation on $\LL(F)$.  The representation $\tau$ is unitally absorbing by Kasparov's stabilization theorem \cite[Theorem 2]{Kasparov:1980sp}, and a direct sum of unitally absorbing representations in unitally absorbing by the equivalence of (2) and (3) from \cite[Theorem 2.1]{Thomsen:2000aa}, completing the proof of the claim.  It follows that there is a sequence of isometries $v_n:E\to F\oplus F$ such that $\|\sigma^+(a)-v_n^*(\pi\oplus 0)^+(a)v_n\|\to 0$ and $\sigma^+(a)-v_n^*(\pi\oplus 0)^+(a)v_n\in \K(E)$ for all $a\in A^+$.  The same holds if we remove the superscripts ``$^+$'' and quantify over $a\in A$, so we are done.
\end{proof}

We will need a unital variant of Definition \ref{apt}.

\begin{definition}\label{sub uni}
A representation $\pi:A\to \LL(E)$ is \emph{graded, balanced, and strongly unitally absorbing} if it comes with a fixed grading $(\pi,E)=(\pi_0\oplus \pi_0,E_0\oplus E_0)$ such that $(\pi_0,E_0)$ is strongly unitally absorbing.
\end{definition}

Given this, the following corollary of Lemma \ref{sa vs usa} is immediate.

\begin{corollary}\label{sa vs usa cor}
Let $A$ and $B$ be separable $C^*$-algebras with $A$ unital, and let $\pi:A\to \LL(F)$ be a strongly absorbing representation on a Hilbert $B$-module.  Then the corestriction of $\pi_1$ of $\pi$ to a unital representation $\pi:A\to \LL(\pi(1_A)F)$ is a strongly unitally absorbing representation.

Conversely, if $\pi:A\to \LL(F)$ is a strongly unitally absorbing, then the representation $\pi\oplus 0:A\to \LL(F\oplus F)$ is strongly absorbing. \qed
\end{corollary}

Our main goal in this section is the following result, which says essentially that any strongly unitally absorbing representation can be used to compute $KL(A,B)$ as an inverse limit.  

\begin{proposition}\label{usa rep gives kl}
Let $A$ and $B$ be separable $C^*$-algebras with $A$ unital.  Then for any graded, balanced, and strongly unitally absorbing representation $\pi$ of $A$ there are isomorphisms
\begin{equation}\label{kl usa}
KL(A,B)\to \lim_{\leftarrow} KK^{\pi,u}_\epsilon(X,B).
\end{equation}
and 
$$
\lim_{\leftarrow}{}\!^1 KK^{{1\otimes }\pi,u}_\epsilon(X,SB)\to \overline{\{0\}},
$$ 
where the limits are taken over the directed set $\mathcal{X}$ of Definition \ref{seps} and $\overline{\{0\}}$ is the closure of $0$ in $KK(A,B)$.  Moreover, the isomorphism in line \eqref{kl usa} is a homeomorphism when the right hand side is equipped with the inverse limit topology. 

Finally there is a short exact sequence 
$$
0\to \lim_{\leftarrow}{\!}^1 KK^{1\otimes \pi,u}_\epsilon(X,SB)\to KK(A,B)\to \lim_{\leftarrow} KK^{\pi,u}_\epsilon(X,B)\to 0.
$$
\end{proposition}

\begin{proof}
Proposition \ref{kkl kku} establishes this in the special case that $\pi_1$ is the unital corestriction of a graded, balanced, strongly absorbing representation (it is a special case by Corollary \ref{sa vs usa cor}).  Lemma \ref{sa vs usa cor} implies that any graded, balanced, strongly unitally absorbing representation is the unital corestriction of a graded, balanced, strongly absorbing representation, however, so we are done.
\end{proof}

\begin{remark}\label{kas rem}
Let $A$ and $B$ be separable, with $A$ unital.  Assume also that at least one of $A$ and $B$ is nuclear.  It follows from \cite[Theorem 5]{Kasparov:1980sp} that if $\pi:A\to \mathcal{B}(\ell^2)$ is a faithful unital representation such that $\pi^{-1}(\K(\ell^2))=\{0\}$, then the amplification $\pi\otimes 1:A\to \LL(\ell^2\otimes B)$ is unitally absorbing.  We will not use this remark directly in the paper, but it is needed to justify the simplified picture of controlled $KK$-theory that we use in the introduction.
\end{remark}

\subsection{Matricial representations of controlled $KK$-groups}

In this subsection we give a formulation of controlled $KK$-theory in terms of matrices, which is perhaps closer to standard formulations of elementary $C^*$-algebra $K$-theory.  Although the definitions in the main body of the paper are more convenient for establishing the theory (particularly with regard to the topology on $KK$), this definition will make computations easier in some subsequent applications \cite{Willett:2021te}.

For a representation $\pi:A\to \LL(E)$ we use the amplifications $1_{M_n}\otimes \pi:A\to M_n(\LL(E))$ to identify $A$ with a (diagonal) $C^*$-subalgebra of $M_n(\LL(E))$ for all $n$.

\begin{definition}\label{alm com 0}
Let $A$ be unital, and let $\pi:A\to \LL(E)$ be a unital representation.  Let $\K(E)^+$ be the unitization of $\K(E)$.

Let $X$ be a finite subset of $A_1$, let $\epsilon>0$, and let $n\in \N$.  Define $\mathcal{P}^{\pi,\text{mx}}_{n,\epsilon}(X,B)$\footnote{The ``$mx$'' is for ``matrix''.} to be the collection of pairs $(p,q)$ of projections in $M_n(\K(E)^+)$ satisfying the following conditions:
\begin{enumerate}[(i)]
\item $\|[p,a]\|< \epsilon$ and $\|[q,a]\|< \epsilon$ for all $a\in X$;
\item the classes $[p],[q]\in K_0(\C)$ formed by taking the images of $p$ and $q$ under the canonical quotient map $M_n(\K(E)^+)\to M_n(\C)$ are the same.
\end{enumerate}
If $(p_1,q_1)$ is an element of $\mathcal{P}^{\pi,\text{mx}}_{n_1,\epsilon}(X,B)$ and $(p_2,q_2)$ is an element of $\mathcal{P}^{\pi,\text{mx}}_{n_2,\epsilon}(X,B)$, define 
$$
(p_1\oplus p_2,q_1\oplus q_2):=\Bigg(\begin{pmatrix} p_1 & 0 \\ 0 & p_2 \end{pmatrix} ~,~\begin{pmatrix} q_1 & 0 \\ 0 & q_2 \end{pmatrix}\Bigg) \in \mathcal{P}^{\pi,\text{mx}}_{n_1+n_2,\epsilon}(X,B).
$$
Define  
$$
\mathcal{P}^{\pi,\text{mx}}_{\infty,\epsilon}(X,B):=\bigsqcup_{n=1}^\infty \mathcal{P}^{\pi,\text{mx}}_{n,\epsilon}(X,B),
$$
i.e.\ $\mathcal{P}^{\pi,\text{mx}}_{\infty,\epsilon}(X,B)$ is the \emph{disjoint} union of all the sets $ \mathcal{P}^{\pi,\text{mx}}_{n,\epsilon}(X,B)$.

Equip each $\mathcal{P}^{\pi,\text{mx}}_{n,\epsilon}(X,B)$ with the norm topology it inherits from $M_n(\LL(E))\oplus M_n(\LL(E))$, and equip $\mathcal{P}^{\pi,\text{mx}}_{\infty,\epsilon}(X,B)$ with the disjoint union topology.  Let $\sim$ be the equivalence relation on $\mathcal{P}^{\pi,\text{mx}}_{\infty,\epsilon}(X,B)$ generated by the following relations:
\begin{enumerate}[(i)]
\item \label{kkm sim2} $(p,q)\sim (p\oplus r,q\oplus r)$ for any element $(r,r)\in \mathcal{P}^{\pi,\text{mx}}_{\infty,\epsilon}(X,B)$ with both components the same;
\item \label{kkm sim1} $(p_1,q_1)\sim (p_2,q_2)$ whenever these elements are in the same path component of $\mathcal{P}^{\pi,\text{mx}}_{\infty,\epsilon}(X,B)$.\footnote{Equivalently, both are in the same $\mathcal{P}^{\pi,\text{mx}}_{n,\epsilon}(X,B)$, and are in the same path component of this set.}
\end{enumerate}
Finally, define $KK^{\pi,\text{mx}}_{\epsilon}(X,B)$ to be $\mathcal{P}^{\pi,\text{mx}}_{\infty,\epsilon}(X,B)/\sim$.
\end{definition}

\begin{lemma}\label{kkm group}
Let $A$ and $B$ be separable $C^*$-algebras with $A$ unital.  Let $X\subseteq A_1$ be a finite set, and let $\epsilon>0$.  If $\pi:A\to \LL(E)$ is any unital representation, then $KK^{\pi,\text{mx}}_{\epsilon}(X,B)$ is an abelian group.
\end{lemma}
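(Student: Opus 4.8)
The plan is to verify the abelian-group axioms directly, the only real input being the following trivial but pervasive observation: the scalar matrices $M_n(\C)\subseteq M_n(\K(E)^+)$ commute with the diagonal copy $1_{M_n}\otimes a$ of $A$, so conjugation by a unitary $u\in M_n(\C)$ preserves all three conditions defining $\mathcal{P}^{\pi,m}_{n,\epsilon}(X,B)$: it maps $M_n(\K(E)^+)$ to itself, leaves the norms $\|[\,\cdot\,,a]\|$ unchanged for $a\in X$, and preserves the $K_0(\C)$-class of each component, hence the required equality of those two classes. Since $U(n)$ is connected, any permutation matrix, and more generally any scalar unitary, is joined to $1$ by a path of scalar unitaries, and conjugating a pair in $\mathcal{P}^{\pi,m}_{n,\epsilon}(X,B)$ along such a path gives a path inside $\mathcal{P}^{\pi,m}_{n,\epsilon}(X,B)$.

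First I would check that $\big([(p_1,q_1)],[(p_2,q_2)]\big)\mapsto[(p_1\oplus p_2,q_1\oplus q_2)]$ is well defined. Since $\sim$ is generated by the two relations \ref{kkm sim2} and \ref{kkm sim1} of Definition \ref{alm com 0}, it suffices to see that the operation respects each. Compatibility with \ref{kkm sim1} is immediate: take the direct sum of the given path with a constant path. For \ref{kkm sim2} one compares $(p_1\oplus r\oplus p_2,\,q_1\oplus r\oplus q_2)$ with $(p_1\oplus p_2\oplus r,\,q_1\oplus q_2\oplus r)$; these differ by conjugation by a block-permutation matrix in $M_N(\C)$ acting identically on both components, so the observation above produces a path between them in $\mathcal{P}^{\pi,m}_{N,\epsilon}(X,B)$, hence the desired equivalence.

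Associativity and commutativity follow the same way from the identities $(p_1\oplus p_2,q_1\oplus q_2)=w(p_2\oplus p_1,q_2\oplus q_1)w^*$, and its ternary analogue, for suitable scalar permutation matrices $w$. For the unit, by definition of the operation and relation \ref{kkm sim2} with $r=0$ we get $[(p,q)]+[(0,0)]=[(p\oplus 0,q\oplus 0)]=[(p,q)]$, where $(0,0)$ is the zero pair in $M_1(\K(E)^+)$; the same relation together with a permutation path shows $[(r,r)]=[(0,0)]$ for every $(r,r)\in\mathcal{P}^{\pi,m}_{\infty,\epsilon}(X,B)$, so this class is a two-sided identity. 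Finally, for inverses I would prove $[(p,q)]+[(q,p)]=[(0,0)]$: because $[p]=[q]$ in $K_0(\C)$, the path
\[
[0,\pi/2]\ni t\mapsto\Bigg(\begin{pmatrix}p & 0\\ 0 & q\end{pmatrix},\,\begin{pmatrix}\cos t & \sin t\\ -\sin t & \cos t\end{pmatrix}\begin{pmatrix}q & 0\\ 0 & p\end{pmatrix}\begin{pmatrix}\cos t & -\sin t\\ \sin t & \cos t\end{pmatrix}\Bigg),
\]
whose $2\times 2$ block-rotation matrices are scalar (each entry $\cos t\cdot 1_n$ or $\pm\sin t\cdot 1_n$), lies in $\mathcal{P}^{\pi,m}_{2n,\epsilon}(X,B)$ by the observation above and connects $(p\oplus q,q\oplus p)$ to $(p\oplus q,p\oplus q)$, which has the form $(r,r)$ and so represents $0$. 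This is exactly the bookkeeping already carried out in Lemma \ref{dm iso}.

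The proof carries no real difficulty: it is the familiar ``$K_0$ of a double'' computation. The one point needing genuine care — and the place where the almost-commutation feature of the definition interacts with the homotopies — is the repeated verification that conjugating by scalar unitaries (permutations and rotations) neither disturbs the estimates $\|[\,\cdot\,,a]\|<\epsilon$ for $a\in X$ nor breaks the equality of the $K_0(\C)$-classes of the two components; both follow at once from $M_n(\C)$ commuting with the diagonal copy of $A$.
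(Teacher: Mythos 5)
Your proof is correct and follows essentially the same route as the paper: the monoid structure and commutativity come from conjugation by scalar unitaries (permutations/rotations, which commute with the diagonal copy of $A$ and so preserve the defining estimates), and the inverse of $[p,q]$ is $[q,p]$, established by the rotation homotopy in the second variable connecting $(p\oplus q,q\oplus p)$ to $(p\oplus q,p\oplus q)$, which is trivial by the equivalence relation. Your write-up just spells out the well-definedness checks that the paper leaves implicit.
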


\begin{proof}
It follows directly from the definition that $KK^{\pi,\text{mx}}_{\epsilon}(X,B)$ is a monoid with identity element the class $[0,0]$.  A standard rotation homotopy shows that $KK^{\pi,\text{mx}}_{\epsilon}(X,B)$ is commutative.  To complete the proof, we claim that $[q,p]$ is the inverse of $[p,q]$.  Indeed, applying a rotation homotopy to the second variable shows that $(p\oplus q,q\oplus p)\sim (p\oplus q,p\oplus q)$, and the element $(p\oplus q,p\oplus q)$ is trivial by definition of the equivalence relation.
\end{proof}

Now, let $\pi:A\to \LL(E)$ be a unitally strongly absorbing representation as in Definition \ref{sub uni}.
Our goal in this section is to establish isomorphisms 
$$
KL(A,B)\to \lim_{\leftarrow} KK^{\pi,m}_{\epsilon}(X,B)
$$
and 
$$
\lim_{\leftarrow}{}\!^1 KK^{1\otimes \pi,m}_{\epsilon}(X,SB)\to \overline{\{0\}}
$$
analogously to Propositions \ref{kkl kku} and \ref{usa rep gives kl} above; here the limits are (as usual) taken over the directed set $\mathcal{X}$ of Definition \ref{seps}.

Let then $(\pi,E)=(\pi_0\oplus \pi_0,E_0\oplus E_0)$ be a graded, balanced, unitally strongly absorbing representation, so $\pi_0$ is any graded balanced representation.  First, we provisionally define 
$$
\phi:\mathcal{P}^{\pi,u}_\epsilon(X,B) \to \mathcal{P}^{\pi_0,\text{mx}}_{2,\epsilon}(X),\quad p\mapsto \Big(\,p~,\,\begin{psmallmatrix} 1 & 0 \\ 0 & 0 \end{psmallmatrix}\,\Big),
$$
where we have used the identification $\LL(E)=M_2(\LL(E_0))$ to make sense of the right hand side.  

\begin{lemma}\label{phi map 2}
The map $\phi$ above is well-defined, and descends to a group homomorphism 
$$
\phi_*:KK^{\pi,u}_\epsilon(X,B)\to KK^{\pi_0,\text{mx}}_{\epsilon}(X,B).
$$
\end{lemma}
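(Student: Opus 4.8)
The plan is to verify the two required properties of $\phi$ in turn: first that it lands in $\mathcal{P}^{\pi_0,m}_{2,\epsilon}(X,B)$ and respects the relevant equivalence relations, so that $\phi_*$ is a well-defined map of sets; then that it is compatible with the group operations.

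\textbf{Well-definedness as a set map.} First I would check that for $p\in \mathcal{P}^{\pi,p}_\epsilon(X,B)$ the pair $\big(p,\,\begin{pmatrix}1&0\\0&0\end{pmatrix}\big)$ really lies in $\mathcal{P}^{\pi_0,m}_{2,\epsilon}(X,B)$. Recall that $p$ is a projection in $\LL(E)$ with $p-e_\pi\in\K(E)$ and $\|[p,a]\|<\epsilon$ for all $a\in X$; under the identification $\LL(E)=M_2(\LL(E_0))$ with $e_\pi=\begin{pmatrix}1&0\\0&0\end{pmatrix}$, the condition $p-e_\pi\in\K(E)=M_2(\K(E_0))$ says exactly that $p$ is a projection in $M_2(\K(E_0)^+)$ whose image in $M_2(\C)$ is $\begin{pmatrix}1&0\\0&0\end{pmatrix}$; the commutator estimate for $p$ is hypothesized, and $\begin{pmatrix}1&0\\0&0\end{pmatrix}$ is a scalar matrix so commutes with the diagonal copy of $A$ exactly. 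Thus both conditions (i) and (ii) of Definition \ref{alm com 0} hold, using that the $K_0(\C)$-classes of $p$ and of $\begin{pmatrix}1&0\\0&0\end{pmatrix}$ agree. Next I would observe that a norm-continuous path in $\mathcal{P}^{\pi,p}_\epsilon(X,B)$ maps (under $\phi$, with the second coordinate held constant at $\begin{pmatrix}1&0\\0&0\end{pmatrix}$) to a path in $\mathcal{P}^{\pi_0,m}_{2,\epsilon}(X,B)$, so homotopic elements have $\sim$-equivalent images. Hence $\phi$ descends to a map $\phi_*:KK^{\pi,p}_\epsilon(X,B)\to KK^{\pi_0,m}_{\epsilon}(X,B)$ of sets.

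\textbf{Homomorphism property.} Fix Cuntz isometries $s_1,s_2\in\mathcal{B}(\ell^2)\subseteq\LL(E)$ implementing the operation on $KK^{\pi,p}_\epsilon(X,B)$ as in Definition \ref{kku op}; these commute with $A$ and with $e_\pi$. For $p,q\in\mathcal{P}^{\pi,p}_\epsilon(X,B)$ we must compare $\phi_*([p]+[q])=\big[\,s_1ps_1^*+s_2qs_2^*\,,\,\begin{pmatrix}1&0\\0&0\end{pmatrix}\,\big]$ with $\phi_*[p]+\phi_*[q]=\big[\,p\oplus q\,,\,\begin{pmatrix}1&0\\0&0\end{pmatrix}\oplus\begin{pmatrix}1&0\\0&0\end{pmatrix}\,\big]$, the latter using the block-sum operation of Definition \ref{alm com 0}. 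The point is that $s_1(\cdot)s_1^*+s_2(\cdot)s_2^*$ is a ``spatial'' realization of the block sum: since $s_1,s_2$ commute with $e_\pi$ and satisfy $s_1s_1^*+s_2s_2^*=1$, conjugating the pair $(p\oplus q,\,e_\pi\oplus e_\pi)$ by a unitary (in $\LL$ amplified to $M_2$) built from $s_1,s_2$ — which is homotopic to the identity through unitaries commuting with $A$ and with the neutral projection, since the unitary group of $\mathcal{B}(\ell^2)$ is connected — carries it to $(s_1ps_1^*+s_2qs_2^*,\,e_\pi)$ while passing through $\mathcal{P}^{\pi_0,m}_{\infty,\epsilon}(X,B)$; and $e_\pi\oplus e_\pi$ is $\sim$-equivalent to $e_\pi$ by relation \eqref{kkm sim2} after cancelling the common block. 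This is the same bookkeeping used in the proofs of Lemma \ref{group lem}, Lemma \ref{phi hom}, and Lemma \ref{psi hom}, so I would cite those patterns rather than redo the manipulations. Assembling these observations gives $\phi_*([p]+[q])=\phi_*[p]+\phi_*[q]$.

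\textbf{Main obstacle.} The only mildly delicate point is bookkeeping the identifications $\LL(E)=M_2(\LL(E_0))$, the diagonal embeddings $A\subseteq M_n(\LL(E_0))$, and the block-sum versus Cuntz-sum descriptions of the group law simultaneously, so that the unitary implementing the passage between $(p\oplus q,e_\pi\oplus e_\pi)$ and $(s_1ps_1^*+s_2qs_2^*,e_\pi)$ is seen to commute with the diagonal $A$ and preserve the relevant commutator bounds along the homotopy. Once the conventions are pinned down this is routine and parallels earlier arguments in the paper; there is no genuinely new difficulty.
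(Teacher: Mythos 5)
Your well-definedness argument is fine and matches the paper's (brief) treatment: under $\LL(E)=M_2(\LL(E_0))$ the condition $p-e_\pi\in\K(E)$ says exactly that $p\in M_2(\K(E_0)^+)$ with scalar part $\begin{pmatrix}1&0\\0&0\end{pmatrix}$, so both conditions of Definition \ref{alm com 0} hold and homotopies pass through $\phi$.

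The homomorphism step, however, has a genuine gap. Your central move is to carry $(p\oplus q,\,e_\pi\oplus e_\pi)$ to $(s_1ps_1^*+s_2qs_2^*,\,e_\pi)$ by ``conjugating by a unitary built from $s_1,s_2$'' that is homotopic to the identity. No such unitary conjugation exists: the first pair lives in $M_4(\K(E_0)^+)$ and the second in $M_2(\K(E_0)^+)$, and the natural intertwiner is the isometry $\begin{pmatrix}s_1& s_2\end{pmatrix}$ (equivalently the unitary $E\oplus E\to E$ it induces), not a unitary within a fixed matrix size. Conjugation by such an isometry is not among the moves generating the equivalence relation defining $KK^{\pi_0,m}_\epsilon(X,B)$ (homotopy at fixed size, and adding a common block $(r,r)$ to \emph{both} coordinates), and ``cancelling the common block $e_\pi$'' as you describe is not an instance of relation \eqref{kkm sim2}, since the two pairs do not differ by appending the same block to both entries. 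The paper bridges exactly this mismatch between the Cuntz-sum and block-sum pictures: it adds the trivial block $(e,e)$ to get $[s_1ps_1^*+s_2qs_2^*\oplus e,\,e\oplus e]$, performs a rotation homotopy inside $M_4$ to separate this into the block sum $[s_1ps_1^*+s_2es_2^*,\,e]+[s_2qs_2^*+s_1es_1^*,\,e]$, and then — crucially — invokes Lemma \ref{id lem 3} to conclude $[s_1ps_1^*+s_2es_2^*,\,e]=[p,e]$ and $[s_2qs_2^*+s_1es_1^*,\,e]=[q,e]$. That lemma (compression by an isometry $v\in\mathcal{B}(\ell^2)$ plus the filler $(1-vv^*)e$ does not change the path component) is the nontrivial ingredient your sketch omits; connectedness of the unitary group of $\mathcal{B}(\ell^2)$ alone does not give it, because $vpv^*+(1-vv^*)e$ is not a unitary conjugate of $p$ by an element of $\mathcal{B}(\ell^2)$ — its proof needs the cut-down-by-a-finite-rank-projection trick. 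The earlier lemmas you cite (\ref{group lem}, \ref{phi hom}, \ref{psi hom}) do not cover this point, since there both sides carry Cuntz-sum operations of the same kind and no size-changing comparison arises. To repair your proof, replace the claimed unitary conjugation with the add-a-block-then-rotate argument and an explicit appeal to Lemma \ref{id lem 3}.
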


\begin{proof}
Using the correspondence $e_\pi\leftrightarrow \begin{psmallmatrix} 1 & 0 \\ 0 & 0\end{psmallmatrix}$ one sees that the image of $\phi$ is indeed contained in $\mathcal{P}^{\pi_0,\text{mx}}_{2,\epsilon}(X,B)$.  Moreover, $\phi$ takes homotopies to homotopies, so descends to a well-defined map of sets $\phi_*:KK^{\pi,u}_\epsilon(X,B)\to KK^{\pi_0,\text{mx}}_{\epsilon}(X,B)$.  It remains to show that this set map is a homomorphism.  

For this, let $s_1,s_2\in \mathcal{B}(\ell^2)\subseteq \LL(E)$ be a pair of Cuntz isometries inducing the operation on $KK^{\pi,u}_\epsilon(X,B)$ as in Definition \ref{kku op}.  For simplicity of notation, let us write $e=\begin{psmallmatrix} 1 & 0 \\ 0 & 0 \end{psmallmatrix} \in M_2(\K(E_0)^+)$.  Then for $[p],[q]\in KK^{\pi,u}_{\epsilon}(X,B)$, we see that 
$$
\phi_*([p]+[q])=[s_1ps_1^*+s_2qs_2^*,e]
$$
(the entries on the right should be considered as matrices in $M_2(\K(E_0)^+)$).  According to the definition of the equivalence relation defining $KK^{\pi_0,\text{mx}}_\epsilon(X,B)$, this is the same element as 
$$
[s_1ps_1^*+s_2qs_2^*\oplus e,e\oplus e].
$$
For $t\in [0,\pi/2]$, write 
$$
u_t:=s_1s_1^*\otimes 1_2+(s_2s_2^*\otimes 1_2)\Bigg(1_{M_2(\LL(E_0))}\otimes \begin{pmatrix} \cos(t) & -\sin(t) \\ \sin(t) & \cos(t) \end{pmatrix} \Bigg)
$$
(here $1_2\in M_2(\C)$, so we are considering each $u_t$ as an element of $\LL(E)\otimes M_2(\C)=M_2(\LL(E_0))\otimes M_2(\C)$).  Consider now the path 
\begin{equation}\label{ut hom}
(u_t(s_1ps_1^*+s_2qs_2^*\oplus e)u_t^*,u_t(e\oplus e)u_t^*),\quad t\in [0,\pi/2].
\end{equation}
We have that $u_t(e\oplus e)u_t^*=e\oplus e$ for all $t$.  As 
$$
((s_1ps_1^*+s_2qs_2^*)\oplus e)-(e\oplus e)
$$
is in $M_4(\K(E_0))$, we thus see that 
$$
u_t((s_1ps_1^*+s_2qs_2^*)\oplus e)u_t^*-u_t(e\oplus e)u_t^*=u_t(s_1ps_1^*+s_2qs_2^*\oplus e)u_t^*-e\oplus e.
$$
is also in $M_4(\K(E_0))$ 
It follows from this that $u_t(s_1ps_1^*+s_2qs_2^*\oplus e)u_t^*$ is in $M_4(\K(E_0)^+)$ for all $t\in [0,\pi/2]$, and therefore the path in line \eqref{ut hom} passes through $\mathcal{P}_{4,\epsilon}^{\pi_0,\text{mx}}(X,B)$.  As such, it shows that in $KK_\epsilon^{\pi_0,\text{mx}}(X,B)$ we have the identity
$$
[s_1ps_1^*+s_2qs_2^*\oplus e,e\oplus e]=[s_1ps_1^*+s_2es_2^*\oplus s_2qs_2^*+s_1es_1^*,e\oplus e].
$$
As the left hand side above is $\phi_*([p]+[q])$ we thus get 
$$
\phi_*([p]+[q])=[s_1ps_1^*+s_2es_2^*,e]+[s_2qs_2^*+s_1es_1^*,e].
$$
To complete the proof, it this suffices to show that $[s_1ps_1^*+s_2es_2^*,e]=\phi_*[p]$ and $[s_2qs_2^*+s_1es_1^*,e]=\phi_*[q]$, i.e.\ that $[s_1ps_1^*+s_2es_2^*,e]=[p,e]$ and $[s_2qs_2^*+s_1es_1^*,e]=[q,e]$.  These identities follow from Lemma \ref{id lem 3} (the first with $v=s_1$ on using the identity $(1-s_1s_1^*)e=s_2s_2^*e=s_2es_2^*$, and the second similarly with $v=s_2$), which completes the proof.
\end{proof}

We now define a map going in the other direction to $\phi$; this is more complicated.  Recall first that (as throughout the paper) ``$\ell^2$'' is shorthand for $\ell^2(\N)$.  We write $(\ell^2)^{\oplus n}$ for the direct sum of $\ell^2$ with itself $n$ times; of course, this is isomorphic to $\ell^2$, but the distinction can help keep track of notation.

To start, for each $n$, fix a unitary isomorphism $v_n\in \mathcal{B}(\C^2\otimes \ell^2,(\ell^2)^{\oplus 2n})$ such that if $p_n:(\ell^2)^{\oplus 2n}\to (\ell^2)^{\oplus 2n}$ is the projection onto the first $n$ components, then $v_np_nv_n^*=e$, where $e$ is (as usual) the projection of $\C^2\otimes \ell^2$ onto $\ell^2$ arising by projecting $\C^2$ onto its first coordinate.  Use the usual (compatible) identifications of $E$ with $\C^2\otimes \ell^2\otimes F$ and $E_0$ with $\ell^2\otimes F$ for some Hilbert module $F$, identify $\mathcal{B}(\C^2\otimes \ell^2,(\ell^2)^{\oplus 2n})$ with a subspace of $\LL(E,E_0^{\oplus 2n})$ and consider $v_n$ as an element here.  Up to the canonical identification $\LL(E_0^{\oplus 2n})=M_{2n}(\LL(E_0))$, we thus see that $v_nM_{2n}(\LL(E_0))v_n^*=\LL(E)$, and that $v_n\begin{psmallmatrix} 1 & 0 \\ 0 & 0 \end{psmallmatrix} v_n^*=e$,
where the entries of the matrix on the left are understood as $n\times n$ blocks.  

Now, let $(p,q)$ be an element of $\mathcal{P}^{\pi,\text{mx}}_{n,\epsilon}(X,B)$ for some $n$.  As the images of $p$ and $q$ under the canonical quotient map $\sigma:M_n(\K^+)\to M_n(\C)$ are the same in $K_0(M_n(\C))$, there is a unitary $u\in M_n(\C)$ such that $\sigma(p)=u\sigma(q)u^*$.  Define 
$$
v:=\begin{pmatrix} uqu^* & 1-uqu^* \\ 1-uqu^* & uqu^* \end{pmatrix}v_n \in \LL(E,E_0^{\oplus 2n}).
$$
Provisionally define a map 
$$
\psi: \mathcal{P}^{\pi_0,\text{mx}}_{\infty,\epsilon}(X,B)\to \mathcal{P}^{\pi,u}_{5\epsilon}(X,B),\quad (p,q)\mapsto v^*\begin{pmatrix} p & 0 \\ 0 & 1-uqu^* \end{pmatrix} v.
$$

\begin{lemma}\label{psi map 2}
The map $\psi$ above is well-defined, and descends to a group homomorphism 
$$
\psi_*:KK^{\pi_0,\text{mx}}_{\epsilon}(X,B)\to KK^{\pi,u}_{5\epsilon}(X,B)
$$
that does not depend on the choice of $u$ or $v_n$.
\end{lemma}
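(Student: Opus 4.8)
The plan is to verify in turn that $\psi$ actually maps $\mathcal{P}^{\pi_0,m}_{\infty,\epsilon}(X,B)$ into $\mathcal{P}^{\pi,p}_{5\epsilon}(X,B)$, that the induced class in $KK^{\pi,p}_{5\epsilon}(X,B)$ is independent of the choices of $u$ and $v_n$ and is invariant under homotopies, and finally that the induced map is additive for the monoid operations, which will simultaneously force it to kill the stabilisation relation and to be a homomorphism.

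For the first point, abbreviate $r:=uqu^{*}$ (a projection in $M_n(\K(E_0)^{+})$), $s:=1-r$, and write $W:=\begin{pmatrix} r & s \\ s & r \end{pmatrix}$ (a self-adjoint unitary, since $r$ is a projection) and $P:=\begin{pmatrix} p & 0 \\ 0 & s \end{pmatrix}$, so that, up to the $*$-isomorphism $M_{2n}(\LL(E_0))\cong\LL(E)$ implemented by the scalar unitary $v_n$, we have $\psi(p,q)=\mathrm{Ad}_{v_n}(WPW)$. A direct multiplication gives $WPW=\begin{pmatrix} rpr+s & rps \\ spr & sps \end{pmatrix}$; since $\sigma(p)=u\sigma(q)u^{*}=\sigma(r)$ we have $p-r\in M_n(\K(E_0))$, and substituting $p=r+(p-r)$ and using $r^{2}=r$, $rs=sr=0$ shows $WPW-\begin{pmatrix} 1 & 0 \\ 0 & 0 \end{pmatrix}\in M_{2n}(\K(E_0))$. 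As $v_n$ carries $\begin{pmatrix} 1 & 0 \\ 0 & 0 \end{pmatrix}$ (with $n\times n$ blocks) to the neutral projection $e$, this gives $\psi(p,q)-e\in\K(E)$, and $\psi(p,q)$ is visibly a projection. For the commutator bound, $v_n$ intertwines $\pi$ with the diagonal amplification $\widetilde{a}:=1_{M_{2n}}\otimes\pi_0(a)$, so $\|[\psi(p,q),\pi(a)]\|=\|[WPW,\widetilde{a}]\|$; expanding $[WPW,\widetilde{a}]=[W,\widetilde{a}]PW+W[P,\widetilde{a}]W+WP[W,\widetilde{a}]$ and using $[W,\widetilde{a}]=[r,\widetilde{a}]\otimes\begin{pmatrix} 1 & -1 \\ -1 & 1 \end{pmatrix}$, hence $\|[W,\widetilde{a}]\|=2\|[r,\widetilde{a}]\|=2\|[q,\widetilde{a}]\|<2\epsilon$ for $a\in X$ (because $u$ is a scalar matrix), together with $\|[P,\widetilde{a}]\|=\max(\|[p,\widetilde{a}]\|,\|[q,\widetilde{a}]\|)<\epsilon$, we obtain $\|[\psi(p,q),\pi(a)]\|<2\cdot 2\epsilon+\epsilon=5\epsilon$. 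This is the source of the constant $5$.

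For the second point, for a fixed $(p,q)$ the valid unitaries $u$ form a coset of the stabiliser of $\sigma(q)$ in $U(M_n(\C))$, which is connected, so any two choices are joined by a path $u_t$ of valid choices; feeding this path into the construction above gives a norm-continuous path in $\mathcal{P}^{\pi,p}_{5\epsilon}(X,B)$, so the class is independent of $u$. Two choices of $v_n$ differ by a scalar unitary $g$ commuting with $e$ and with $\pi(A)$ and lying in a connected unitary group (a copy of $U(M_{2n}(\mathcal{B}(\ell^2)))$ amplified by the identity on the module); conjugating $\psi(p,q)$ along a connecting path of unitaries that still commute with $e$ — available from the decomposition of that group with respect to $e$ — stays inside $\mathcal{P}^{\pi,p}_{5\epsilon}(X,B)$ since $g$ commutes with $\pi(A)$, so the class is independent of $v_n$. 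The same device, applied to a path $(p_t,q_t)$ in $\mathcal{P}^{\pi_0,m}_{\infty,\epsilon}(X,B)$ together with a continuously varying $u_t$ (obtained by applying Lemma \ref{pp lem} to the continuous, constant-rank paths of projections $\sigma(p_t),\sigma(q_t)$ in $M_n(\C)$), shows that $(p,q)\mapsto[\psi(p,q)]$ respects path components of $\mathcal{P}^{\pi_0,m}_{\infty,\epsilon}(X,B)$; thus we have a well-defined map $\mathcal{P}^{\pi_0,m}_{\infty,\epsilon}(X,B)\to KK^{\pi,p}_{5\epsilon}(X,B)$.

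For the third point, take $(p,q)$ of size $n$ and $(p',q')$ of size $m$ with chosen $u,u'$; for the sum use $u\oplus u'$, whose associated operators $W_{\mathrm{big}},P_{\mathrm{big}}$ become, after conjugation by the coordinate permutation $\Pi$ that groups the $n$-entries together and the $m$-entries together, exactly $W\oplus W'$ and $P\oplus P'$. Hence $\psi(p\oplus p',q\oplus q')=\mathrm{Ad}_{v_{n+m}\Pi^{*}}\big((WPW)\oplus(W'P'W')\big)$, whereas $s_1\psi(p,q)s_1^{*}+s_2\psi(p',q')s_2^{*}=\mathrm{Ad}_{v''}\big((WPW)\oplus(W'P'W')\big)$ for the evident unitary $v'':=(s_1v_n\mid s_2v_m)$. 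So the two elements differ by conjugation by $g:=v_{n+m}\Pi^{*}(v'')^{*}$, which one checks commutes with $\pi(A)$ (all of $v_{n+m},\Pi,s_1,s_2,v_n,v_m$ respect the relevant amplified $A$-actions) and with $e$ (using $v''(\begin{pmatrix} 1 & 0 \\ 0 & 0 \end{pmatrix}\oplus\begin{pmatrix} 1 & 0 \\ 0 & 0 \end{pmatrix})(v'')^{*}=e=v_{n+m}\begin{pmatrix} 1 & 0 \\ 0 & 0 \end{pmatrix}v_{n+m}^{*}$ and the fact that $\Pi$ matches the two $\begin{pmatrix} 1 & 0 \\ 0 & 0 \end{pmatrix}$'s), and again lies in a connected unitary group as above; hence $[\psi(p\oplus p',q\oplus q')]=[\psi(p,q)]+[\psi(p',q')]$. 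Specialising $(p',q')$ to a degenerate pair $(q_0,q_0)$ with $u'=1$, the identity $q_0^{2}=q_0$ makes $W'P'W'=\begin{pmatrix} 1 & 0 \\ 0 & 0 \end{pmatrix}$ exactly, so $\psi(q_0,q_0)=e$ and $[\psi(q_0,q_0)]=0$; thus the map kills relation (i) of Definition \ref{alm com 0}, and combined with the second point it descends to a homomorphism $\psi_{*}:KK^{\pi_0,m}_{\epsilon}(X,B)\to KK^{\pi,p}_{5\epsilon}(X,B)$, independent of all choices. I expect the main obstacle to be exactly the bookkeeping in this last step: reconciling the three reshuffling unitaries $v_{n+m}$, $v''$, $\Pi$ and confirming that their composite $g$ is connected to the identity within the unitaries commuting with $\pi(A)$ and $e$.
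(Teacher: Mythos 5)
Your proposal is correct, and its skeleton matches the paper's: the explicit computation of $WPW$, the observation that $p-uqu^*\in M_n(\K(E_0))$ gives $\psi(p,q)-e\in\K(E)$, connectedness of the set of admissible scalar unitaries for independence of $u$ and $v_n$, and Lemma \ref{pp lem} for homotopy invariance (your explicit $2\cdot2\epsilon+\epsilon<5\epsilon$ commutator estimate fills in the bound the paper dismisses as "not difficult to see"). Where you genuinely diverge is in the final two steps, and both variants work. The paper verifies relation (i) of Definition \ref{alm com 0} directly, computing $\psi(p\oplus r,q\oplus r)$ as a $4\times4$ block matrix, recognising it as $v\,\psi(p,q)\,v^*+(1-vv^*)e$ for an explicit scalar isometry $v$, and invoking Lemma \ref{id lem 3}; and it proves additivity by choosing Cuntz isometries adapted to the block decomposition and appealing to Lemma \ref{kku group}. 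You instead prove additivity first, by exhibiting both $\psi(p\oplus p',q\oplus q')$ and the Cuntz sum $s_1\psi(p,q)s_1^*+s_2\psi(p',q')s_2^*$ as conjugates of $(WPW)\oplus(W'P'W')$ by scalar unitaries, so that they differ by conjugation by a single scalar unitary $g$ commuting with $\pi(A)$ and with $e$, hence connected to $1$ inside the commutant of $e$ in $\mathcal{B}(\C^2\otimes\ell^2)$; relation (i) then comes for free from the exact identity $\psi(r,r)=e$ and the fact that $[e]$ is the neutral element. This buys a mild streamlining (Lemma \ref{id lem 3} enters only through the already-established neutrality of $[e]$, and the choice of Cuntz isometries never has to be varied), at the cost of having to check that $g$ commutes with $e$ — which you rightly do, via $v_{n+m}$ carrying $e$ to $p_{n+m}$, the permutation carrying $p_{n+m}$ to $p_n\oplus p_m$, and $T^*(p_n\oplus p_m)T=s_1es_1^*+s_2es_2^*=e$ — since without that the compactness condition $q-e\in\K(E)$ would not survive along the connecting path of unitaries.
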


\begin{proof}
We first have to see that $\psi$ takes image in $\mathcal{P}^{\pi,u}_{5\epsilon}(X,B)$.  For simplicity of notation, let us replace $q$ with $uqu^*$, so we have that $p-q$ is in $M_{n}(\K)$.  Hence 
\begin{align*}
\begin{pmatrix} q & 1-q \\ 1-q & q \end{pmatrix}& \begin{pmatrix} p & 0 \\ 0 & 1-q \end{pmatrix} \begin{pmatrix} q & 1-q \\ 1-q & q \end{pmatrix} \\ & - \begin{pmatrix} q & 1-q \\ 1-q & q \end{pmatrix} \begin{pmatrix} q & 0 \\ 0 & 1-q \end{pmatrix} \begin{pmatrix} q & 1-q \\ 1-q & q \end{pmatrix} 
\end{align*}
is in $M_{2n}(\K)$, or in other words 
$$
\begin{pmatrix} q & 1-q \\ 1-q & q \end{pmatrix} \begin{pmatrix} p & 0 \\ 0 & 1-q \end{pmatrix} \begin{pmatrix} q & 1-q \\ 1-q & q \end{pmatrix}-\begin{pmatrix} 1 & 0 \\ 0 & 0 \end{pmatrix} 
$$
is in $M_{2n}(\K)$.  Conjugating by $v_n$, and identifying $\LL(E)=M_2(\LL(E_0))$ with the top left corner of $M_{2n}(\LL)$, and also recalling the correspondence $e_\pi\leftrightarrow \begin{psmallmatrix} 1 & 0 \\ 0 & 0\end{psmallmatrix}$, we see that 
$$
v_n^*\begin{pmatrix} q & 1-q \\ 1-q & q \end{pmatrix}\begin{pmatrix} p & 0 \\ 0 & 1-q \end{pmatrix} \begin{pmatrix} q & 1-q \\ 1-q & q \end{pmatrix}v_n-e_\pi
$$
is in $M_2(\K(E_0))$.  Direct checks show that the projection 
$$
\begin{pmatrix} q & 1-q \\ 1-q & q \end{pmatrix} \begin{pmatrix} p & 0 \\ 0 & 1-q \end{pmatrix} \begin{pmatrix} q & 1-q \\ 1-q & q \end{pmatrix}
$$ 
commutes with elements of $X$ up to error $5\epsilon$.  At this point, we have that $\psi$ does indeed define a function $\psi: \mathcal{P}^{\pi_0,\text{mx}}_{\infty,\epsilon}(X,B)\to \mathcal{P}^{\pi,u}_{5\epsilon}(X,B)$.

We now pass to the quotient on the right hand side, so getting a map $\psi_\flat: \mathcal{P}^{\pi_0,\text{mx}}_{\infty,\epsilon}(X,B)\to KK^{\pi,u}_{5\epsilon}(X,B)$.  We will show that this map does not depend on the choice of $u$ or $v_n$, which will certainly imply the same thing for $\psi_*$ once we show the latter exists.  To see that $\psi_\flat$ does not depend on the choices of $u$ such that $\sigma(p)=u\sigma(q)u^*$, note that if $U_n(\C)$ is the unitary group of $M_n(\C)$, then the collection of all such unitaries is homeomorphic to $\sigma(p)U_n(\C)\sigma(q)\times (1-\sigma(p))U_n(\C)(1-\sigma(q))$, so path connected.  Hence any two such choices give rise to homotopic elements of $\mathcal{P}^{\pi,u}_{5\epsilon}(X,B)$.  One can argue that $\psi_\flat$ does not depend on the choice of $v_n$ similarly: any two such choices are connected by a path that passes through such elements.

We now show that $\psi$ descends to a well-defined map $\psi_*:KK^{\pi_0,\text{mx}}_{\epsilon}(X,B)\to KK^{\pi,u}_{5\epsilon}(X,B)$.  First we look at part \eqref{kkm sim1} of the definition of the equivalence relation defining $KK^{\pi_0,\text{mx}}_{\epsilon}(X,B)$ from Definition \ref{alm com 0}.  Let $(p_t,q_t)_{t\in [0,1]}$ be a homotopy in some $\mathcal{P}^{\pi_0,\text{mx}}_{n,\epsilon}(X,B)$.  Using for example Lemma \ref{pp lem} we may choose a continuous path of unitaries $(u_t)_{t\in [0,1]}$ in $M_n(\C)$ such that $\sigma(p_t)=u_t\sigma(q)u_t^*$ for all $t\in [0,1]$, and use these to define $\phi_*[p_t,q_t]$ for each $t$.  From here, it is straightforward to see that $\psi$ takes homotopies to homotopies, so we are done with this part of the equivalence relation.

We now look at part \eqref{kkm sim2} of the equivalence relation from Definition \ref{alm com 0}.  We compute the image of $(p\oplus r,q\oplus r)$ under $\psi$ as follows, where $p,q\in M_n(\K(E_0)^+)$ and $r\in M_k(\K(E_0)^+)$ for some $n,k\in \N$.  Let $u\in M_n(\C)$ be a unitary such that $\sigma(p)=u\sigma(q)u^*$ in $M_n(\C)$, and set $q'=uqu^*$.  Then one computes that $\psi$ sends $(p\oplus r,q\oplus r)$ to
\begin{equation}\label{psi p,q plus r}
v_{n+k}^*\begin{pmatrix} q'pq' + 1-q' & 0 & q'p(1-q') & 0 \\
0 & 1 & 0 & 0 \\
(1-q')pq' & 0 & (1-q')p(1-q') & 0 \\
0 & 0 & 0 & 0 \end{pmatrix}v_{n+k}
\end{equation}
(the odd rows (respectively, columns) have height (resp. width) $n$, and the even rows (resp. columns) have height (resp. width) $k$).  On the other hand, $\psi$ sends $(p,q)$ to 
\begin{equation}\label{psi p,q}
v_{n}^*\begin{pmatrix} q'pq' + 1-q' &  q'p(1-q')  \\
(1-q')pq' &  (1-q')p(1-q')  \end{pmatrix}v_{n},
\end{equation}
so we must show that the elements in lines \eqref{psi p,q plus r} and \eqref{psi p,q} define the same class in $KK^{\pi,u}_{5\epsilon}(X,B)$.  Let now $i:E_0^{\oplus 2n}\to E_0^{\oplus 2(n+k)}$ be the canonical inclusion, and let $w_n:=i\circ v_n\in \mathcal{B}(\C^2\otimes \ell^2,(\ell^2)^{\oplus 2(n+k)})\subseteq \LL(E,E_0^{2(n+k)})$.  Set $v:=v_{n+k}^*w_n$, which is an isometry in $\mathcal{B}(\ell^2)\subseteq \LL(E)$.  Looking back at line \eqref{psi p,q plus r}, we have that 
\begin{align*}
&v_{n+k}^*\begin{pmatrix} q'pq' + 1-q' & 0 & q'p(1-q') & 0 \\
0 & 1 & 0 & 0 \\
(1-q')pq' & 0 & (1-q')p(1-q') & 0 \\
0 & 0 & 0 & 0 \end{pmatrix}v_{n+k}  \\ &= 
v_{n+k}^*\begin{pmatrix} q'pq' + 1-q' & 0 & q'p(1-q') & 0 \\
0 & 0 & 0 & 0 \\
(1-q')pq' & 0 & (1-q')p(1-q') & 0 \\
0 & 0 & 0 & 0 \end{pmatrix}v_{n+k} +v_{n+k}^*\begin{pmatrix} 0 & 0 & 0 & 0 \\
0 & 1 & 0 & 0 \\
0 & 0 & 0 & 0 \\
0 & 0 & 0 & 0 \end{pmatrix}v_{n+k}.
\end{align*}
The terms on the left and right above are equal to 
$$
vv_n^*\begin{pmatrix} q'pq' + 1-q' &  q'p(1-q')  \\
(1-q')pq' &  (1-q')p(1-q')  \end{pmatrix}v_nv^* \quad \text{and} \quad (1-vv^*)e
$$
respectively.  Putting all this together, we see that 
\begin{equation}\label{psi*}
\psi_*[p\oplus r,q\oplus r]=\Bigg[vv_n^*\begin{pmatrix} q'pq' + 1-q' &  q'p(1-q')  \\
(1-q')pq' &  (1-q')p(1-q')  \end{pmatrix}v_nv^*+(1-vv^*)e\Bigg].
\end{equation}
Lemma \ref{id lem 3} implies that the class on the right hand side of line \eqref{psi*} equals the class of the element in line \eqref{psi p,q}.  Hence we are done with this case of the equivalence relation too.  

At this point, we know that $\psi_*:KK^{\pi_0,\text{mx}}_{\epsilon}(X,B)\to KK^{\pi,u}_{5\epsilon}(X,B)$ is a well-defined set map.  It  remains to show that $\psi_*$ is a group homomorphism.  Let then $(p_1,q_1)$ and $(p_2,q_2)$ be elements of $\mathcal{P}_{n_1,\epsilon}^{\pi_0,\text{mx}}(X,B)$ and $\mathcal{P}_{n_2,\epsilon}^{\pi_0,\text{mx}}(X,B)$ respectively.  For notational simplicity, assume $p_i-q_i\in M_{n_i}(\K(E_0))$ for $i\in \{1,2\}$ by conjugating by an appropriate unitary as in the definition of $\psi$; this makes no real difference to the computations below.  The sum $[p_1,q_1]+[p_2,q_2]$ is represented by $[p_1\oplus p_2,q_1\oplus q_2]$, and this is mapped by $\psi_*$ to the class of the product
\begin{align}\label{big prod}
v_{n_1+n_2}^* & \begin{pmatrix} q_1 & 0 & 1-q_1 & 0 \\ 
0 & q_2 & 0 & 1-q_2 \\ 
1-q_1 & 0 & q_1 & 0 \\ 
0 & 1-q_2 & 0 & q_2 \end{pmatrix} 
\begin{pmatrix} p_1 & 0 & 0 & 0 \\ 
0 & p_2 & 0 & 0 \\ 
0 & 0 & 1-q_1 & 0 \\ 
0 & 0 & 0 & 1-q_2 \end{pmatrix} \nonumber  \\ &\quad\quad\quad\quad\quad\quad\cdot
\begin{pmatrix} q_1 & 0 & 1-q_1 & 0 \\ 
0 & q_2 & 0 & 1-q_2 \\ 
1-q_1 & 0 & q_1 & 0 \\ 
0 & 1-q_2 & 0 & q_2 \end{pmatrix} v_{n_1+n_2}.
\end{align}
Let now $s$ be the permutation unitary in $\mathcal{B}((\ell^2)^{\oplus 2(n_1+n_2)})\subseteq \LL(E_0^{\oplus 2(n_1+n_2)})$ such that conjugation by $s$ flips the second and third rows and columns in the matrices above.  Let $w_{1}:=i_{n_1}v_{n_1}$, where $i_{n_1}:E_0^{\oplus 2n_1}\to E_0^{\oplus 2(n_1+n_2)}$ is the natural inclusion, and similarly for $w_{2}$.  Set $s_1:=v_{n_1+n_2}^*sw_1$ and $s_2:=v_{n_1+n_2}^*sw_2$, so $s_1,s_2\in \mathcal{B}(\ell^2)\subseteq \LL(E)$, and they satisfy the Cuntz relation $s_1s_1^*+s_2s_2^*$ (this follows as $w_1w_1^*+w_2w_2^*=1$).  According to Lemma \ref{kku group}, we may use $s_1$ and $s_2$ to define the group operation on $KK^{\pi,u}_{5\epsilon}(X,B)$, and so 
\begin{align*}
\psi_*&[p_1,q_1]  +\psi_*[p_2,q_2] \\
& =\Bigg[~s_1v_{n_1}^*\begin{pmatrix} q_1 & 1-q_1 \\ 1-q_1 & q_1 \end{pmatrix}\begin{pmatrix} p_1 & 0 \\ 0 & 1-q_1\end{pmatrix} \begin{pmatrix} q_1 & 1-q_1 \\ 1-q_1 & q_1 \end{pmatrix}v_{n_1}s_1^* \\ & \quad \quad + s_1v_{n_2}^*\begin{pmatrix} q_2 & 1-q_2 \\ 1-q_2 & q_2 \end{pmatrix}\begin{pmatrix} p_2 & 0 \\ 0 & 1-q_2\end{pmatrix} \begin{pmatrix} q_2 & 1-q_2 \\ 1-q_2 & q_2 \end{pmatrix} v_{n_2}s_2^*~\Bigg].
\end{align*}
A direct computation shows that this equals the element in line \eqref{big prod} above, however, so we are done.
\end{proof}

We need one more technical lemma before we get to the main point.

\begin{lemma}\label{comp matrix}
Let $A$ and $B$ be separable $C^*$-algebras with $A$ unital.  Let $\pi:A\to \LL(E)$ be a graded, balanced, strongly unitally absorbing representation of $A$ on a Hilbert $B$-module.  Let $X\subseteq A_1$ be finite, and let $\epsilon>0$.  Consider the diagrams 
\begin{equation}\label{kkm1}
\xymatrix{ KK^{\pi,u}_{\epsilon}(X,B) \ar[d]^-{\phi_*} \ar[r] & KK^{\pi,u}_{5\epsilon}(X,B)\\
 KK^{\pi_0,\text{mx}}_{\epsilon}(X,B)  \ar@{=}[r] &  KK^{\pi_0,\text{mx}}_{\epsilon}(X,B) \ar[u]^-{\psi_*}} 
\end{equation}
and 
\begin{equation}\label{kkm2}
\xymatrix{ KK^{\pi,u}_{5\epsilon}(X,B)  \ar@{=}[r] & KK^{\pi_0,p}_{5\epsilon}(X,B) \ar[d]^-{\phi_*} \\
 KK^{\pi_0,\text{mx}}_{\epsilon}(X,B) \ar[u]^-{\psi_*} \ar[r] &  KK^{\pi_0,\text{mx}}_{5\epsilon}(X,B) } 
\end{equation}
where the horizontal arrows are the canonical forget control maps.  These commute.
\end{lemma}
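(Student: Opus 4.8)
The plan is to notice first that both squares assert only that a composition of the maps already built equals a forget-control map. Indeed, diagram \eqref{kkm1} says that $\psi_*\circ\phi_*$ is the canonical map $KK^{\pi,p}_\epsilon(X,B)\to KK^{\pi,p}_{5\epsilon}(X,B)$, and diagram \eqref{kkm2} says that $\phi_*\circ\psi_*$ is the canonical map $KK^{\pi_0,m}_\epsilon(X,B)\to KK^{\pi_0,m}_{5\epsilon}(X,B)$. Because these forget-control maps are induced by the set inclusions $\mathcal{P}^{\pi,p}_\epsilon(X,B)\subseteq\mathcal{P}^{\pi,p}_{5\epsilon}(X,B)$ and $\mathcal{P}^{\pi_0,m}_{\infty,\epsilon}(X,B)\subseteq\mathcal{P}^{\pi_0,m}_{\infty,5\epsilon}(X,B)$ followed by the passage to the quotients defining the controlled groups, it suffices to show, for a single representative in each case, that $\psi(\phi(p))$ lies in the same path component of $\mathcal{P}^{\pi,p}_{5\epsilon}(X,B)$ as $p$, and that $\phi(\psi(p,q))$ is equivalent to $(p,q)$ in $KK^{\pi_0,m}_{5\epsilon}(X,B)$.

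For the first of these I would unwind the two definitions. Given $p\in\mathcal{P}^{\pi,p}_\epsilon(X,B)$, the two components of $\phi(p)=\bigl(p,\,\begin{pmatrix}1&0\\0&0\end{pmatrix}\bigr)$ agree modulo $M_2(\K(E_0))$, since $p-e_\pi\in\K(E)$ and $e_\pi$ is $\begin{pmatrix}1&0\\0&0\end{pmatrix}$ under the identification $\LL(E)=M_2(\LL(E_0))$; so one may take the auxiliary scalar unitary in the definition of $\psi$ to be $1$, giving $\psi(\phi(p))=v^*\bigl(p\oplus(1-e_\pi)\bigr)v$ for the explicit unitary $v=\begin{pmatrix}e_\pi&1-e_\pi\\1-e_\pi&e_\pi\end{pmatrix}v_2$. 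Expanding this product and using the fixed identification of $E$ with $E_0^{\oplus4}$ carried by $v_2$, one sees that $\psi(\phi(p))$ becomes, up to conjugation by a unitary of the canonical copy of $\mathcal{B}(\ell^2)\subseteq\LL(E)$ of Lemma \ref{reps} (which does not change path components, the unitary group of $\mathcal{B}(\ell^2)$ being norm-connected, cf.\ the proof of Lemma \ref{group lem}), an operator of the form $wpw^*+(1-ww^*)e_\pi$ with $w$ an isometry in that copy of $\mathcal{B}(\ell^2)$. Lemma \ref{id lem 3}, applied with $v=w$ exactly as in the monoid part of the proof of Lemma \ref{kku group}, then puts $wpw^*+(1-ww^*)e_\pi$ in the same path component as $p$. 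No commutator bound needs attention here: $v$, $w$ and all conjugating unitaries commute with $A$, and $\psi(\phi(p))\in\mathcal{P}^{\pi,p}_{5\epsilon}(X,B)$ is already guaranteed by Lemma \ref{psi map 2}.

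For the second, I would similarly compute $\phi(\psi(p,q))=\bigl(v^*(p\oplus(1-q'))v,\ e_\pi\bigr)$ with $q'=uqu^*$ and $v=\begin{pmatrix}q'&1-q'\\1-q'&q'\end{pmatrix}v_n$, and reduce it to $(p,q)$ by the standard manipulations on pairs of almost-commuting projections used in the proof of Lemma \ref{kkm group} and in the surjectivity half of the proof of Theorem \ref{kk iso}: stabilise by a diagonal pair $(1-q',1-q')$ (which leaves the class unchanged by the first clause of the equivalence relation in Definition \ref{alm com 0}), replace $q$ by $q'$ via the scalar unitary $u\in M_n(\C)$, rotate $q'\oplus(1-q')$ to $1_n\oplus 0_n$ in the second variable, and then recognise — using the defining property that $v_n$ carries $\begin{pmatrix}1_n&0\\0&0\end{pmatrix}$ to $e_\pi$ — that the pair so obtained is precisely $\phi(\psi(p,q))$, up to absorbing the isometry coming from $v_n$ by another appeal to Lemma \ref{id lem 3} (or the first clause of that equivalence relation). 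Again all operators used commute with $A$, so every intermediate pair stays in $\mathcal{P}^{\pi_0,m}_{\infty,5\epsilon}(X,B)$.

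The step I expect to be the real work is the bookkeeping: keeping straight the nested identifications $E\cong E_0^{\oplus2}$ (the grading), $E\cong E_0^{\oplus2n}$ (via $v_n$), and $E\cong\C^2\otimes\ell^2\otimes F$, together with the block sizes of the matrices, so that the explicitly written $v$, $v_n$, $w$ genuinely lie in the canonically included copies of $\mathcal{B}(\ell^2)\subseteq\LL(E)$ and $M_k(\C)\subseteq M_k(\K(E_0)^+)$ to which Lemmas \ref{reps}, \ref{group lem} and \ref{id lem 3} apply, and checking that the intermediate projections remain $5\epsilon$-controlled (which is automatic given that the conjugators commute with $A$ and given the slack built into the constant $5$). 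Conceptually there is nothing beyond Lemma \ref{id lem 3} and the pair-manipulation arguments already carried out in the proofs of Lemma \ref{kkm group} and Theorem \ref{kk iso}.
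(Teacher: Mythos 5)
Your proposal is correct and follows essentially the same route as the paper's proof: it checks both composites on a single representative, computes $\psi\phi(p)$ to be of the form $vpv^*+(1-vv^*)e$ for an isometry $v$ in the canonical copy of $\mathcal{B}(\ell^2)\subseteq\LL(E)$ and applies Lemma \ref{id lem 3}, and reduces $\phi\psi(p,q)$ to $(p,q)$ by the same stabilization and rotation homotopies (the paper rotates via $u_t=r+e^{it}(1-r)$ built from the self-adjoint unitary $\begin{pmatrix} q & 1-q \\ 1-q & q\end{pmatrix}$, landing on $(p\oplus(1-q),q\oplus(1-q))$). The only citation slip is in the matricial square: Lemma \ref{id lem 3} and the stabilization clause of Definition \ref{alm com 0} do not by themselves absorb the conjugation by $v_n$, and the paper instead stabilizes by $0_{2n-2}$ and homotopes the co-isometry $v_np_n$ through co-isometries in $\mathcal{B}((\ell^2)^{\oplus 2n})$ --- precisely the norm-connectedness argument you already invoke for the first square, so this is a matter of attribution rather than a gap.
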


\begin{proof}
We first look at diagram \eqref{kkm1}.  We compute that for $p\in \mathcal{P}_{\epsilon}^{\pi,u}(X,B)$, 
$$
\psi\phi(p)=v_2^*\begin{pmatrix} e & 1-e \\ 1-e & e \end{pmatrix} \begin{pmatrix} p & 0 \\ 0 & 1-e \end{pmatrix}\begin{pmatrix} e & 1-e \\ 1-e & e \end{pmatrix}v_2.
$$
The entries appearing above can be identified with $2\times 2$ matrices, with diagonal matrix units corresponding to $e$ and $1-e$, and $p$ corresponding to the matrix $\begin{pmatrix} epe & ep(1-e) \\ (1-e)pe & (1-e)p(1-e)\end{pmatrix}$.  With respect to this picture, one computes that the above equals 
$$
v_2^*\begin{pmatrix} epe & 0 & 0 & ep(1-e) \\ 
0 & 1 & 0 & 0 \\
0 & 0 & 0 & 0 \\
(1-e)pe & 0 & 0 & (1-e)p(1-e) \end{pmatrix} v_2.
$$
Define 
$$
i:E_0^{\oplus 2}\to E_0^{\oplus 4},\quad (v,w)\mapsto (v,0,0,w),
$$
and define $v:=v_2^*i$, which is an isometry inside the copy $\mathcal{B}(\ell^2)\subseteq \LL(E)$ from Lemma \ref{reps}.  One computes using the above that 
$$
\psi\phi(p)=vpv^*+(1-vv^*)e,
$$
whence $[\psi\phi(p)]=[p]$ by Lemma \ref{id lem 3}, as required.

Now let us look at diagram \eqref{kkm2}.  Let $(p,q)$ be an element of $\mathcal{P}^{\pi_0,\text{mx}}_{n,\epsilon}(X,B)$ for some $n$.  For notational convenience, assume that $p-q\in M_n(\K(E_0))$; this can be achieved by conjugating by a unitary in $M_n(\C)$, and helps streamline notation below, while making no real difference to the argument.  Again adopting the notation $e$ for $\begin{psmallmatrix} 1 & 0 \\ 0 & 0 \end{psmallmatrix}$, we compute that 
$$
\phi\psi(p,q)=\Bigg(\,v_n^*\begin{pmatrix} q & 1-q \\ 1-q & q \end{pmatrix} \begin{pmatrix} p & 0 \\ 0 & 1-q \end{pmatrix} \begin{pmatrix} q & 1-q \\ 1-q & q \end{pmatrix}v_n\, ,\,e\,\Bigg).
$$
Let $0_{2n-2}$ be the zero element in $M_{2n-2}(\K(E_0)^+)$.  Then the element above has the same class as
\begin{equation}\label{first phipsi}
\Bigg(\,v_n^*\begin{pmatrix} q & 1-q \\ 1-q & q \end{pmatrix} \begin{pmatrix} p & 0 \\ 0 & 1-q \end{pmatrix} \begin{pmatrix} q & 1-q \\ 1-q & q \end{pmatrix}v_n\oplus 0_{2n-2}\, ,\,e\oplus 0_{2n-2}\,\Big).
\end{equation}
Now, let $p_n:E_0^{\oplus 2n}\to E_0^{\oplus 2}$ be defined by projecting onto the first two coordinates, and define $w_n:=v_np_n$, which is a co-isometry in $\mathcal{B}((\ell^2)^{\oplus 2n})\subseteq \LL(E_0^{\oplus n})$ with source projection the projection onto the first two coordinates in $E_0^{\oplus 2n}$.  The space of all co-isometries in $\mathcal{B}((\ell^2)^{\oplus 2n})$ with source projection dominating the projection onto the first two coordinates is path connected\footnote{If $n=1$, such co-isometries are automatically unitary, but not in general.  Either way, it is a path connected space.} (in the norm topology).  Hence  we may connect $w_n$ through such co-isometries to one that acts as the identity on the first two coordinates, from which it follows that the element in line \eqref{first phipsi} represents the same class as
\begin{align*}
\Bigg(\,w_n\Big(v_n^*\begin{pmatrix} q & 1-q \\ 1-q & q \end{pmatrix} \begin{pmatrix} p & 0 \\ 0 & 1-q \end{pmatrix} & \begin{pmatrix} q & 1-q \\ 1-q & q \end{pmatrix}v_n\oplus 0_{2n-2}\Big)w_n^*\, ,\\&\,w_n(e\oplus 0_{2n-2})w_n^*\,\Bigg).
\end{align*}
Computing, this equals 
\begin{equation}\label{second phi psi}
\Bigg(\,\begin{pmatrix} q & 1-q \\ 1-q & q \end{pmatrix} \begin{pmatrix} p & 0 \\ 0 & 1-q \end{pmatrix}  \begin{pmatrix} q & 1-q \\ 1-q & q \end{pmatrix}\, ,\, \begin{pmatrix} 1 & 0 \\ 0 & 0 \end{pmatrix}\,\Bigg),
\end{equation}
where all blocks in the matrices appearing above are $n\times n$.  Note now if we write 
$$
r:=\frac{1}{2}\Bigg(\begin{pmatrix} q & 1-q \\ 1-q & q \end{pmatrix}+\begin{pmatrix} 1 & 0 \\ 0 & 1 \end{pmatrix}\Bigg),
$$
then $r$ is a projection such that $\|[r,a]\|<\epsilon$ for all $a\in X$.  For $t\in [0,\pi]$ define $u_t:=r+\exp(it)(1-r)$, so $(u_t)$ is a path of unitaries connecting $\begin{psmallmatrix} q & 1-q \\ 1-q & q \end{psmallmatrix}$ to the identity, and all $(u_t)$ satisfy $\|[u_t,a]\|<\epsilon$ for all $a\in X$.  Hence the path 
$$
\Bigg(\,u_t\begin{pmatrix} q & 1-q \\ 1-q & q \end{pmatrix} \begin{pmatrix} p & 0 \\ 0 & 1-q \end{pmatrix} \begin{pmatrix} q & 1-q \\ 1-q & q \end{pmatrix}u_t^*\, ,\, u_t\begin{pmatrix} 1 & 0 \\ 0 & 0 \end{pmatrix}u_t^*\,\Bigg)
$$
shows that the element in line \eqref{second phi psi} defines the same class as 
$$
\Bigg(\,\begin{pmatrix} p & 0 \\ 0 & 1-q \end{pmatrix} \, ,\, \begin{pmatrix} q & 1-q \\ 1-q & q \end{pmatrix}\begin{pmatrix} 1 & 0 \\ 0 & 0 \end{pmatrix}\begin{pmatrix} q & 1-q \\ 1-q & q \end{pmatrix}\,\Bigg),
$$
which equals $(p\oplus 1-q,q\oplus 1-q)$.  This last element defines the same class as $(p,q)$ by definition, however, so we are done.
\end{proof}

We are finally ready for the main result of this subsection.

\begin{proposition}\label{kku kkm}
Let $A$ and $B$ be separable $C^*$-algebras with $A$ unital.  Let $\pi:A\to \LL(E)$ be a graded, balanced, and strongly unitally absorbing representation of $A$ on a Hilbert $B$-module.  Write $(\pi,E)=(\pi_0\oplus \pi_0,E_0\oplus E_0)$ with $(\pi_0,E_0)$ unitally strongly absorbing.  

Then there are isomorphisms
\begin{equation}\label{kl mx iso 0}
KL(A,B)\to \lim_{\leftarrow} KK^{\pi_0,\text{mx}}_\epsilon(X,B).
\end{equation}
and 
$$
\lim_{\leftarrow}{}\!^1 KK^{{1\otimes \pi_0},\text{mx}}_\epsilon(X,SB)\to \overline{\{0\}},
$$ 
where the limits are taken over the directed set $\mathcal{X}$ of Definition \ref{seps} and $\overline{\{0\}}$ is the closure of $0$ in $KK(A,B)$.  Moreover, the isomorphism in line \eqref{kl mx iso 0} is a homeomorphism when the right hand side is equipped with the inverse limit topology. 

Finally, there is a short exact sequence 
$$
0\to \lim_{\leftarrow}{\!}^1 KK^{1\otimes\pi_0,\text{mx}}_\epsilon(X,SB)\to KK(A,B)\to \lim_{\leftarrow} KK^{\pi_0,\text{mx}}_\epsilon(X,B)\to 0.
$$
\end{proposition}

\begin{proof}
The proof follows from Lemma \ref{comp matrix}, quite analogously to that of Proposition \ref{kkl kku}.  We leave the details to the reader.
\end{proof}

Let us conclude with a final lemma on representation-independence, which is an analogue of Proposition \ref{usa rep gives kl} above.

\begin{corollary}\label{ua rep gives kl}
Let $A$ and $B$ be separable $C^*$-algebras with $A$ unital..  Then for any unitally absorbing representation $\pi:A\to \LL(E)$ we have that 
\begin{equation}\label{kl mx iso}
KL(A,B)\to \lim_{\leftarrow} KK^{\pi,\text{mx}}_\epsilon(X,B).
\end{equation}
and 
$$
\lim_{\leftarrow}{}\!^1 KK^{\pi,\text{mx}}_\epsilon(X,SB)\to \overline{\{0\}},
$$ 
where the limits are taken over the directed set $\mathcal{X}$ of Definition \ref{seps}, and $\overline{\{0\}}$ is the closure of $0$ in $KK(A,B)$.  Moreover, the isomorphism in line \eqref{kl mx iso} is a homeomorphism when the right hand side is equipped with the inverse limit topology. 

Finally, there is a short exact sequence 
$$
0\to \lim_{\leftarrow}{\!}^1 KK^{\pi,\text{mx}}_\epsilon(X,SB)\to KK(A,B)\to \lim_{\leftarrow} KK^{\pi,\text{mx}}_\epsilon(X,B)\to 0.
$$
\end{corollary}

\begin{proof}
Proposition \ref{kku kkm} tells us that the result is true whenever there is a balanced, graded, unitally strongly absorbing representation $(\sigma,F)$ with $(\sigma,F)=(\pi\oplus\pi,E\oplus E)$.  However, this is always true, so we are done.
\end{proof}

\bibliography{Generalbib}

\end{document}